    \newtheorem{thm}{Theorem}[section]
    \newtheorem{cor}[thm]{Corollary}
    \newtheorem{lem}[thm]{Lemma}
    \newtheorem*{lem*}{Lemma}
    \newtheorem{prop}[thm]{Proposition}
    \newtheorem{defn}[thm]{Definition}
    \newtheorem{rem}[thm]{Remark}
    \newcommand{\bbA}{\mathbb{A}}
    \newcommand{\bbC}{\mathbb{C}}
    \newcommand{\bbN}{\mathbb{N}}
    \newcommand{\bbQ}{\mathbb{Q}}
    \newcommand{\bbR}{\mathbb{R}}
    \newcommand{\bbZ}{\mathbb{Z}}
    \newcommand{\bo}{\mathbf{o}}
    \newcommand{\cA}{\mathcal{A}}
    \newcommand{\cF}{\mathcal{F}}
    \newcommand{\cH}{\mathcal{H}}
    \newcommand{\cI}{\mathcal{I}}
    \newcommand{\cL}{\mathcal{L}}
    \newcommand{\cO}{\mathcal{O}}
    \newcommand{\cM}{\mathcal{M}}
    \newcommand{\cP}{\mathcal{P}}
    \newcommand{\cU}{\mathcal{U}}
    \newcommand{\ga}{\alpha}
    \newcommand{\gb}{\beta}
    \newcommand{\gc}{\gamma}
    \newcommand{\gC}{\Gamma}
    \newcommand{\go}{\omega}
    \newcommand{\gO}{\Omega}
    \newcommand{\gl}{\lambda}
    \newcommand{\gL}{\Lambda}
    \newcommand{\gk}{\kappa}
    \newcommand{\gs}{\sigma}
    \newcommand{\fa}{\mathfrak{a}}
    \newcommand{\fb}{\mathfrak{b}}
    \newcommand{\fc}{\mathfrak{c}}
    \newcommand{\fg}{\mathfrak{g}}
    \newcommand{\fs}{\mathfrak{s}}
    \newcommand{\ft}{\mathfrak{t}}
    \newcommand{\fT}{\mathfrak{T}}
    \newcommand{\fp}{\mathfrak{p}}
    \newcommand{\fq}{\mathfrak{q}}
    \newcommand{\fR}{\mathfrak{R}}
    \newcommand{\fu}{\mathfrak{u}}
    \newcommand{\fw}{\mathfrak{w}}
    \newcommand{\fz}{\mathfrak{z}}
    \newcommand{\uX}{\textup{X}}
    \newcommand{\uA}{\textup{A}}
    \newcommand{\uB}{\textup{B}}
    \newcommand{\uC}{\textup{C}}
    \newcommand{\uD}{\textup{D}}
    \newcommand{\Ltwoxi}{L^2(G(F)\backslash G(\bbA_F),\xi)}
    \newcommand{\cCO}{\mathcal{C}(\Omega)}
    \newcommand{\Gen}{\mathtt{Gen}}
    \newcommand{\Std}{\mathtt{Std}}
    \newcommand{\rl}{\mathtt{rl}}
    \newcommand{\uw}{\underline{w}}
    \newcommand{\usig}{\underline{\sigma}}
    \newcommand{\upl}{\underline{\hat{P}_L}}
    \newcommand{\sub}{\mathtt{SUB}}
    \newcommand{\ttrue}{\mathtt{true}}
    \newcommand{\tfalse}{\mathtt{false}}
\begin{document}
    \title[Residue distributions and iterated residues]{Residue distributions, iterated residues, and the spherical automorphic spectrum} 
    
    \author{Marcelo De Martino}
    \address{M.D.M: Department of Eletronics and Informations Systems\\
    Clifford Research Group\\
    9000 Ghent\\
    Belgium\\
    email: marcelo.goncalvesdemartino@ugent.be}
    \author{Volker Heiermann}
    \address{
    V.H.: Aix Marseille Universit\'e\\
    CNRS \\
    Centrale Marseille\\
    I2M UMR 7373\\
    13453\\ 
    Marseille\\
    France\\
    email: volker.heiermann@univ-amu.fr}
    \author{Eric Opdam}
    \address{E.O.: Korteweg de Vries Institute for Mathematics\\
    University of Amsterdam\\
    P.O. Box 94248\\
    1090 GE Amsterdam\\
    The Netherlands\\
     email: e.m.opdam@uva.nl}
    
    \date{\today}
    %\thanks{We should not forget to give thanks to the money-givers} 
    \keywords{}
    \date{\today}
    \subjclass[2010]{Primary 11F70; Secondary 22E55, 20C08, 11F72.}
    
    \begin{abstract}  
    Let $G$ be a split reductive group over a number field $F$. We consider the computation of the inner product of two $K$-spherical pseudo Eisenstein series of $G$ supported in $[T,\cO(1)]$ by means of residues, following a classical approach initiated by Langlands. We show that only the  singularities of the intertwining operators due to the poles of the completed Dedekind zeta function $\Lambda_F$ contribute to the spectrum, 
    while the singularities caused by the zeroes of $\Lambda_F$ do not contribute
    to any of the iterated residues which arise as a result of the necessary contour shifts. 

    In the companion paper \cite{DMHO} we use this result to explicitly determine the spectral measure of $\Ltwoxi^K_{[T,\cO(1)]}$
    by a comparison of the iterated residues with the residue distributions of \cite{HO1}. 
    \end{abstract}
    
    \maketitle
    
    \section{Introduction and main results}\label{s:IntroRes}
    \subsection{Introduction}\label{ss:intro}
    Let $G$ be a connected unramified reductive group defined over a number field and with maximal torus $T$, and with maximal compact subgroup $K\subset G(\bbA_F)$. 
    Let $\chi$ be an unramified automorphic character of $T$ and denote by $\xi=\chi|_{Z_G}$ its restriction to the centre $Z_G$ of $G$. Throughout, we will denote by
    $\Ltwoxi$ the space of all functions on $G(F)\backslash G(\bbA_F)$ on which $Z_G(\bbA_F)$ acts by $\xi$ and that are square-integrable modulo the centre. We will use the subscript $[T,\cO(\chi)]$ to denote the cuspidal support of the functions involved.
    In \cite{DMHO} we compute the explicit spectral measure of the subspace 
    $\Ltwoxi^K_{[T,\mathcal{O}(\chi)],0}\subset \Ltwoxi^K_{[T,\mathcal{O}(\chi)]}$ 
    spanned by the \emph{normalised} pseudo Eisenstein series. Following Moeglin we expect that
    under suitable conditions we have equality:
    \begin{equation}\label{eq:norm=all}
    \Ltwoxi^K_{[T,\mathcal{O}(\chi)],0}=\Ltwoxi^K_{[T,\mathcal{O}(\chi)]}.
    \end{equation}
    As application of our main Theorem \ref{thm:mainC} we prove in \cite{DMHO} that when $G$ is split, $F$ is a number field, and $\chi=1$, 
    then this equality always holds, 
    and therefore yields the explicit spectral decomposition of $\Ltwoxi)^K_{[T,\cO(1)]}$.
    
    The equation (\ref{eq:norm=all}) expresses that in this situation, the residues of the singularities 
    of the intertwining operators caused by the critical zeroes of the relevant $L$-functions of $F$ do not 
    contribute to the spectral decomposition. 
    \subsubsection{Residual pole spaces and nilpotent orbits of $\hat\fg$}
    In order to express the result, we introduce some notations.
    Let $G$ be split over $F$,  
    and let $V=\fa^{G,*}_T$, with the natural action of the Weyl group $W=W(G,T)$.
    We equip $V$ with a $W$-invariant inner product\footnote{None of the results depend on this inner product, 
    but it is convenient to equip $V$ with the structure of a Euclidean space.} 
    and let $R\subset V$\index{$R$} be the root system of $(G,T)$, and let ${\hat{R}}\subset V^*$\index{$R$!${\hat{R}}$} denote the coroot system. 
    We fix a Borel $B\supset T$ with positive 
    subset $R_+\subset R$\index{$R$!$R_+$}. 
    Define the rational function $c$ on $V_\bbC$ by 
    \begin{equation}\label{eq:c}\index{$c$}
    c(\gl)=\prod_{\ga\in R_+}\frac{\hat{\ga}(\gl)+1}{\hat{\ga}(\gl)}.
    \end{equation}
    An important role is played by the concept of  
    \emph{residual} pole space, defined as follows. Given an affine subspace $L\subset V$, we define the set of pole coroots \index{$\hat{P}_L$ of $L$} by  
     $\hat{P}_L:=\{\hat{\ga}\in \hat{R}\mid \hat{\ga}|_L=1\}$  and 
    the set of singular coroots \index{$\hat{Z}_L$ of $L$} by 
     $\hat{Z}_L:=\{\hat{\ga}\in \hat{R}\mid \hat{\ga}|_L=0\}$.
    Then $L\subset V$\index{$L$ pole space!$L\subset V$ residual pole space} is called a residual pole space if $|\hat{P}_L|\geq |\hat{Z}_L|+\textup{codim}(L)$.
    It is known that if $L$ is a residual pole space then in fact \cite[Section 4]{HO1}, \cite[Theorem 7.1, Remark 7.3]{O-Supp} we have 
    $|\hat{P}_L| = |\hat{Z}_L|+\textup{codim}(L)$. In other words, the rational $(n,0)$-form
    \begin{equation}
    \gO_r(\lambda):=\frac{d\lambda}{c(\lambda)c(-\lambda)}\index{$\gO_r(\lambda)$} 
    \end{equation}
    on $V_\bbC$ has only ``simple poles''. Importantly, $\gO_r$ is $W$-invariant. 
    We denote by $\cL(\gO_r)$\index{$\cL(\go)$!$\cL(\gO_r)$} the set of residual pole spaces; this is a finite $W$-invariant set
    \cite[Introduction]{O-Supp}. 
    In general, if $L\subset V$ is an affine subspace we have a unique decomposition $L=c_L+V^L$\index{$V^L\subset V$ underlying linear subspace of $L$} 
    \index{$c_L$ center of $L$} where $V^L\subset V$ is a linear subspace, and 
    $c_L=V_L\cap L$ where $V_L=(V^L)^\perp$\index{$V_L\subset V$ subspace of $L$ spanned by $\hat{P}_L$}, 
    the \emph{center} of $L$. We call 
    $L^{\textup{\textup{temp}}}:=c_L+iV^L\subset L_\bbC\subset V_\bbC$ \index{$L$ pole space!$L^\textup{temp}$} the tempered real form of $L_\bbC$.
    If $L$ is a residual pole space it is known that $V_L$ is spanned by $\hat{P}_L$ \cite[Introduction]{O-Supp}, and 
    that \cite[Introduction, Remark 7.3, Remark 7.5]{O-Supp} the set of $W$-orbits of residual pole spaces 
    is in bijection with the set of weighted Dynkin diagrams of the nilpotent orbits of the Lie algebra $\hat{\fg}$
    of the dual group $\hat{G}$, via the map 
    \begin{align*}
    \cL(\gO_r)/W&\to\{\mathrm{Weighted\ Dynkin\ diagrams\ of\ nilpotent\ orbits\ of\ }\hat{\fg}\}\\
    WL&\mapsto 2c_{L,+}
    \end{align*}
    where $c_{L,+}$ denotes the dominant element of $Wc_L$. This shows that there is a canonical bijection 
    \begin{equation}\label{eq:BC}
    \cL(\gO_r)/W \longleftrightarrow \hat{\mathcal{N}}/\hat{G}
    \end{equation}
    where $\hat{\mathcal{N}}\subset \hat\fg$ denotes the nilpotent cone.
    We call $L\subset \cL(\gO_r)$ \emph{standard} if $V^L=\hat{\fz}^L\subset \hat\fg$ is the center of a standard Levi 
    subalgebra $\hat\fg^L$ with root system $\hat{R}_L\subset \hat{R}$, and $c_L$ is dominant relative to $\hat{R}_{L,+}$. 
    We fix a set $\cL({\gO_r})_+$\index{$\cL(\go)$!$\cL({\gO_r})_+$} of standard ${\gO_r}$-pole spaces forming a complete set of 
    representatives of the set of $W$-orbits of ${\gO_r}$-pole spaces.
    \subsubsection{The explicit inner product formula of pseudo Eisenstein series}
    We introduce the entire function 
    \begin{equation}\label{eq:r}\index{$r(\gl)$}
    r(\gl)=\prod_{\ga\in R_+}\rho({\hat{\ga}}(\gl)) 
    \end{equation}
    on $V_\bbC$, where $\rho(s)=\gL^{reg}_F(s)$ is the completed Dedekind zeta function of $F$, 
    regularised at $0,1$. 
    Given $\fR\gg0$ we denote by $\cP^\fR(V_\bbC)$\index{$\cP^\fR(V_\bbC)$} the space of holomorphic functions $\phi$ defined on $B_\fR:=\{\gl\in V_\bbC\mid |\textup{Re}(\gl)|<\fR\}$
    such that for all $N\in\bbN$ there exists a $c_N>0$ such that $|\phi(\gl)|\leq c_N(1+|\textup{Im}(\gl)|)^{-N}$. 
    Let $\phi,\psi\in \cP^\fR(V_\bbC)$, and let 
    $\theta_\phi, \theta_\psi\in \Ltwoxi^K_{[T,\cO(1)]}$ denote the 
    associated pseudo Eisenstein series of $G$. 
    Consider the meromorphic function 
    \begin{equation}
    \Sigma_{W}(\phi)(\gl)=\sum_{w\in W}c(-w\gl)\frac{r(\gl)}{r(w\gl)}\phi(-w\gl)\index{$\Sigma_{W}(\phi)$}
    \end{equation}
    The goal of the present paper is to complete the proof of (\ref{eq:norm=all}) for $F$ a number field, $G$ split 
    and $\chi=1$. Equivalently, the task at hand is to prove \cite[Main Theorem]{DMHO}) stating that: 
    \begin{equation}\label{eq:+sym}
    (\theta_\phi,\theta_\psi)=\sum_{L\in \cL(\gO_r)_+} |W|^{-1}\int_{WL^{\textup{temp}}}
    \overline{\Sigma_W(r\phi)(\lambda)} \Sigma_W(r\psi)(\lambda) \frac{d\nu_{WL}(\lambda)}{r(-\lambda)r(\lambda)}.
    \end{equation}
    Here $\nu_{WL}$ is a $W$-invariant nonnegative measure whose support is $\cup_{L\in\cL(\gO_r)}L^{\textup{temp}}$ and whose precise 
    description is as follows:  
    \begin{equation}\label{eq:nuLdef}
    \nu_{WL}:=\sum_{L'\subseteq WL} \nu_{L'}
    \end{equation}
    where $\nu_L$ the push forward of the smooth measure on $L^{\textup{temp}}$ given by (with 
    $\gl=c_L+i\gl^L\in L^{\textup{temp}}$):
    \begin{equation}\label{eq:nuL}
    d\nu_L(\lambda):=\frac{|W(\Phi_L)_{c_L}|}{|A_{\bo_{WL}}|}\frac{\prod'_{\alpha\in R_L}{|\hat\ga}(c_L)|}
    {\prod'_{\alpha\in R_L}|{\hat\ga}(c_L) + 1|}\prod_{\alpha\in R^+\backslash R_L}
    \frac{c_L({\hat\ga})^2+{\hat\ga}(\lambda^L)^2}{(c_L({\hat\ga})-1)^2+{\hat\ga}(\lambda^L)^2}
    d\lambda^L,
    \end{equation}
    and, if $r^L := \dim V^L$ and $\{y_1,\ldots, y_{r^L}\}$ denotes a basis of the lattice 
    $X_*(T) \cap (V_L)^\perp=(V^L)^*$ then:
    \begin{equation}
    d\lambda^L := \frac{dy_1\wedge\cdots\wedge dy_{r^L}}{(-2\pi \sqrt{-1})^{r^L}}.
    \end{equation}
    where $\prod'$ denotes the product of the nonzero factors.
    \subsubsection{Langlands' formula for the inner product of two pseudo-Eisenstein series.}
     The proof of (\ref{eq:+sym}) is a residue computation starting with Langlands' formula (cf. \cite{MW2}):
    \begin{align}\label{eq:Lang}
    (\theta_\phi,\theta_\psi)&=\int_{\textup{Re}(\lambda)=\lambda_0\gg0} 
    \sum_{w\in W}\prod_{\alpha\in\Phi^+\cap w^{-1}\Phi^-}
    \frac{{\Lambda}(\alpha^\vee(\lambda))}{{\Lambda}(\alpha^\vee(\lambda)+1)}
    \phi^-(-w\lambda)\psi(\lambda)d\lambda\\
    \nonumber&=\int_{\textup{Re}(\lambda)=\lambda_0\gg0} 
    \Sigma_{W}(\phi^-)(\gl)\psi(\gl)\go(\gl)
    \end{align}
    where $\phi^-\in\cP^\fR(V_\bbC)$ be given by $\phi^-(\gl)=\overline{\phi(\overline{\gl})}$\index{$\phi^-$}, 
    and 
    \begin{equation}
    \go(\gl):=\frac{d\lambda}{c(-\lambda)}\index{$\gO_r(\lambda)$!$\go(\gl)$}
    \end{equation}
    and where
    \begin{equation}
    d\gl=\frac{dy_1\wedge dy_2\wedge\dots\wedge dy_r }{(-2\pi\sqrt{-1})^r}
    \end{equation}
    with $(y_1,\dots,y_r)$ a positively oriented basis of $X_*(T)\cap V^\perp \subset V^*$. 
    
    If we replace $\rho=\gL^{reg}_F$ by an entire function $\rho$ 
    which has at most moderate growth in 
    vertical strips and \emph{without zeroes}, then rewriting (\ref{eq:Lang}) as (\ref{eq:+sym}) is a straightforward application 
    of the unique residue distributions for the rational $r$-form $\go$ as discussed in \cite{HO1}. In the presence of zeroes of $\rho$ 
    the singularities of $\Sigma_W(\phi^-)$ (caused by 
    the zeroes of $\rho$), the equality of (\ref{eq:Lang}) and (\ref{eq:+sym}) is highly nontrivial. It expresses that the singularities 
    of $\Sigma_W(\phi^-)$ do not make contributions. Recently a breakthrough geometric explanation for this surprising 
    behaviour has been given by David Kazhdan and Andrei Okounkov \cite{KO}.   
    
    We will prove (in \cite{DMHO} and the present paper) the equality of (\ref{eq:Lang}) and (\ref{eq:+sym}) with a more 
    classical approach, building on the 
    contributions of Langlands \cite{La1}, \cite{La2}, Jacquet \cite{J}, Kim \cite{K1}, \cite{K2},  Moeglin \cite{M1}, \cite{M2}, 
    Moeglin-Waldspurger \cite{MW1}, \cite{MW2} and others. At certain points, our arguments are aided by computer algebra for the 
    exceptional groups. Codes and output files for the computer-assisted manipulations will be available at {\tt https://github.com/mgdemartino/SphAutSpec}. Let us outline the steps of our proof, and explain the role of the present paper therein. 
    
    \subsubsection{Pole spaces and iterated residues}
    Let $G'\subset G$ be a maximal standard proper Levi subgroup with 
    root system $R'=R'(G',T)\subset R$, a maximal proper standard Levi subsystem. 
    
    We define a rational function $c'$ on $V_\bbC$ by: 
    \begin{equation}\label{eq:c'}\index{$c'$}
    c'(\gl)=\prod_{\ga\in R'_+}\frac{\hat{\ga}(\lambda)+1}{\hat{\ga}(\lambda)}
    \end{equation}
    and a rational $(r,0)$-form on $V_\bbC$ by 
    \begin{equation}\label{eq:Omega}
    \gO(\lambda):=\frac{d\lambda}{c'(\lambda)c(-\lambda)}\index{$\gO_r(\lambda)$!$\gO(\lambda)$}.
    \end{equation}
    Observe that $\gO$ is $W'$-invariant, in fact it is the $W'$-average of $\go$. 
    \begin{defn}\label{defn:pole}
    An affine subspace $L\subset V$ is called a pole space \index{$L$ pole space} for $\gO$ 
    if the set of $\gO$-poles $\hat{P}_L(\gO):=\hat{P}_L\cap (\hat{R}_+\cup\hat{R}'_-)$ 
    and $\gO$-zeroes $\hat{Z}_L(\gO):=\hat{Z}_L\cap (\hat{R}_+\cup\hat{R}'_-)$ satisfy $|\hat{P}_L(\gO)|\geq |\hat{Z}_L(\gO)|+\textup{codim}(L)$, 
    and $L=\cap_{\ga\in\hat{P}_L(\gO)}\{\hat\ga=1\}$.
    We denote by $\textup{Ord}_L(\gO):=|\hat{P}_L(\gO)|-|\hat{Z}_L(\gO)|-\textup{codim}(L)$\index{$\textup{Ord}_L(\Omega)$, the order of $\Omega$ along $L$} 
    the \emph{order} of $\gO$ along $L$. Let $\cL(\gO)$\index{$\cL(\go)$!$\cL(\gO)$} denote set of $\gO$-pole spaces (a $W'$-invariant set).
    We similarly define the sets $\cL(\go)$\index{$\cL(\go)$}, $\cL({}^w\go)$ (with $w\in W$), $\cL(\gO_r)$ etc. of poles spaces 
    for $\go$, ${}^w\go$, $\gO_r$ etc. respectively. 
    
    We say that $L$ is a pole space \index{$L$ pole space} (without reference to a specific rational form) if 
    $L\in \cup_{w\in W} \cL({}^w\gO)=\cup_{w\in W} w\cL(\gO)$. 
    A residual pole space is an $\gO_r$-pole space (see text immediately below (\ref{eq:c}) in p.2). A residual $\gO$- (or $\go$-, ${}^w\go$-) pole space is a $\gO$- ($\go$-, ${}^w\go$-, respectively) pole space which is also a $\gO_r$-pole space.
    \end{defn}
    
    For $L\in\cL(\gO)$, since  $\gO$ only symmetrizes $\go$ partially, we may have $\textup{Ord}_L(\gO)>0$.  
    
    Suppose given a flag $\cF$ of $\gO$ pole spaces of the form $L=L_k\subset L_{k-1}\subset \dots\subset L_0=V$, 
    starting at $V=L_0$ (with $\textup{dim}(V)=r$) and ending with $L$. Let $F$ be an entire function on $V_\bbC$. 
    There is a canonically defined meromorphic $(r-k=\textup{dim}(L),0)$-form on $L_\bbC$ denoted by 
    $\textup{Res}_{L,\cF}(F\gO)$ of the form 
    \begin{equation}\label{eq:intitres}
    \textup{Res}_{L,\cF}(F\gO)=\sum_i D_{L,\cF,i}(F)|_{L_\bbC}\gO_{L,\cF,i} 
    \index{$\textup{Res}_{L,\cF}(F\gO)$}
    \end{equation}
    with constant coefficient differential operators $D_{L,\cF,i}$ of degree $\text{Ord}_L(\gO)$
    in $S(V_L)$, the symmetric algebra of the normal space $V_L$ to $L$, 
    and rational $(\textup{dim}(L),0)$-forms $\gO_{L,\cF,i}$. We call $\textup{Res}_{L,\cF}(F\gO)$ the iterated residue of $F\gO$ along $L$.
    In the favourable case where in every step of the chain of pole spaces the pole 
    is a simple pole (i.e. the order $0$ case) then 
    the formula simplifies to an expression of the form  
    \begin{equation}\label{eq:resdatCst}
    \textup{Res}_{L,\cF}(F\gO)=(F|_{L_\bbC})\textup{Res}_{L,\cF}(\gO)=\fc_{L,\cF}(F|_{L_\bbC})\gO^L
    \end{equation} 
    which, up to a nonzero rational constant factor $\fc_{L,\cF}$, is independent of the chain $\cF$ of pole spaces.  
    Here $\gO^L(\gl)$\index{$\gO_r(\lambda)$!$\gO^L(\gl)$} denotes, with $\gl=c_L+\gl^L\in L_\bbC$:
    \begin{equation}\label{eq:rewrO}
    \gO^L(\gl)=\frac{d\gl^L}{(c')^L(\gl)c^L(-\gl)}
    \end{equation}
    where $c^L$, $(c')^L$ denote the products of the $c_{\hat{\ga}}$ over the coroots ${\hat{R}_+\backslash\hat{R}_{L,+}}$
    and ${\hat{R}'_+\backslash\hat{R}'_{L,+}}$ respectively, i.e. the coroots 
    which are nonconstant on $L$. Note that $\gO^L$ is rational on $L$. 
    In this case, we often write $C_{L,\cF}\gO^L$ instead of $\textup{Res}_{L,\cF}(\gO)$. 
    In particular, in the order $0$ case the singular set of $\textup{Res}_{L,\cF}(\gO)$ is well defined in $L_\bbC$ 
    independent of the flag of pole spaces $\cF$.
    \subsubsection{Organization of the proof of (\ref{eq:+sym}); the induction hypothesis}
    We will show that there exists a sequence of steps consisting of contour shifts and partial symmetrizations avoiding the singularities of $\Sigma_W(\phi^-)$ in (\ref{eq:Lang}),
    using truncation of integrals as designed by Langlands, whereby we replace sets of the form $p+iV^M$ by 
    compact sets of the form $(p+iV^M)_{\leq \fT}=\{\gl=p+iv\in p+iV^M\mid \vert v\vert^2\leq \vert p\vert^2+\fT\}$, for some sufficiently large $\fT$. The partial symmetrizations are organized in an inductive argument, using induction to the semisimple rank of $G$, following \cite{MW1} and \cite{M1}.
    We will need a case by case analysis in order to be able to do this and for the exceptional cases we will also need computer assistance. 
    The contour shifts are encoded in a combinatorial object, called \emph{cascade} and denoted $C$, which is described in Section \ref{s:MainResultRes} and 
    defined in Appendix \ref{a:Cascade}.
    
    Assume by induction that (\ref{eq:+sym}) is true for all connected reductive groups over $F$ of semisimple rank 
    strictly less than the semisimple rank $r$ of $G$. 
    Let the meromorphic function $\Sigma'_{W'}(\psi)$ on $V_\bbC$ be given by: 
    \begin{equation}
    \Sigma'_{W'}(\psi)(\gl)=\sum_{u\in W'}c'(u\gl)\frac{r(u\gl)}{r(\gl)}\psi(u\gl)\index{$\Sigma_{W}(\phi)$!$\Sigma'_{W'}(\psi)$}.
    \end{equation}
    Let 
    \begin{equation}
    {\gO'_r}(\lambda):=\frac{d\lambda}{c'(\lambda)c'(-\lambda)}\index{$\gO_r(\lambda)$!${\gO'_r}(\lambda)$} 
    \index{$\gO_r(\lambda)$!${\gO'_r}(\lambda)$}
    \end{equation}
    (which is $W'$-invariant and rational).
    For each $L'\in \cL(\gO'_r)$\index{$\cL(\go)$!$\cL(\gO'_r)$} we write $L=L'\oplus \bbR\fw'\in \cL(\gO_r)$, where
    $\fw'$\index{$\mathfrak{w}'$} is the fundamental weight of $\hat{R}$ that is orthogonal to $\hat{R'}$. 
    We define an initial base point $p_{L,\infty}$\index{$p_{L,\infty}$} ``at infinity'' of the form $p_{L,\infty}=c_L+t\fw'$ with $t\gg0$. 
    
    Let $d\nu'_L$ be the measure of (\ref{eq:nuL}), but with respect to the lower rank situation of $G',R'$ and $\Omega_r'$. We then assign the measure $d\nu'_L(dt/2\pi \sqrt{-1})$ to $L=L'\oplus \bbR\fw'$ and for any entire function $\phi$ on $V_\bbC$ we have that, with $\gl=\gl'+t\fw'\in L$,
    $F_L(\gl):=\frac{(c')^L(-\gl')}{c^L(-\gl)}$\index{$F_L(\gl)$}
    \begin{align}\label{eq:ResOmg}
    (F_L\phi|_{L_\bbC})(\gl)d{\nu'}_L(\gl')\frac{dt}{-2\pi\sqrt{-1}}&=C_L(\phi|_{L_\bbC})\gO^L
    \end{align}
    where $\Omega^L$ is as in (\ref{eq:rewrO}) and $C_L$
    is a nonzero constant. It follows in particular that $\gO^L$ pole spaces are $\gO$-pole spaces which are contained in $L_\bbC$. 
    
    Using the induction hypothesis it is now easy to see that: 
    \begin{equation}\label{eq:ind}
    (\theta_\phi,\theta_\psi)=|W'|^{-1}\sum_{L'\in\cL({\gO'_r})_+}\int_{p_{L,\infty}+iV^L}C_L\Sigma_W(\phi^-)(\gl)\Sigma'_{W'}(\psi)(\gl)\Omega^L(\gl).
    \index{$\cL(\go)$!$\cL({\gO'_r})_+$}
    \end{equation}
    We need to truncate the contours in order to obtain compact contours which allow us to make a generic deformation of the base points 
    as in \cite{MW2}.  
    For each $L=L'+\bbR\fw'$, we define a generic deformation 
    $p_{L,\infty,\fT'}=c_L+t\mathfrak{w}'+v_\epsilon=p_{L,\infty}+v_\epsilon$\index{$p_{L,\infty}$!$p_{L,\infty,\fT'}$} of 
    $p_{L,\infty}$ (i.e. $t\gg \fT$ and $v_\epsilon\in V^L$ suitably small). 
    The induction hypothesis thus yields (cf. \cite{M1}; we use the notations $=_\fT$ and ${}_{\leq \fT}$ as in \cite{MW2}): 
    \begin{equation}\label{eq:indT}
    (\theta_\phi,\theta_\psi)=_\fT|W'|^{-1}\sum_{L'\in\cL({\gO'_r})_+}\int_{(p_{L,\infty,\fT'}+iV^L)_{\leq \fT}}C_L\Sigma_W(\phi^-)(\gl)\Sigma'_{W'}(\psi)(\gl)\Omega^L(\gl)
    \end{equation}
    for suitable nonzero real constants $C_L$.

    \subsection{Main result and outline}
    Now (\ref{eq:indT}) is the starting point of the cascade $C$\index{$C$, Cascade} of contour shifts as described in Section \ref{s:MainResultRes} and 
    defined in Appendix \ref{a:Cascade}. Given $C$, we denote by $\cCO^C$\index{$C$, Cascade!$\cCO^C$} a set of representatives of the $W'$-orbits 
    of the $\gO$-pole spaces appearing in $C$.
    The goal is to move the base points by $W'$-transformations and the contour shifts (at the cost of iterated residue integrals), 
    until we 
    reach a situation in which every iterated residue integral in the cascade $C$ is over a contour of the form 
    $(z_L+iV^L)_{\leq \fT}$, where $L$ is an $\gO$-space in $C$, and $z_L\in L$ is generic and near the center $c_L$ of $L$.
    The fact that this can be done without picking up residues from the singularities of $\Sigma_W(\phi^-)\Sigma'_{W'}(\psi)$  
    is the main result of the present paper: 
    \begin{thm}\label{thm:mainC}
    Let $G$, $T \subset B \subset G$ be as above.   
    Choose constants $\fR,\fT,\fT'>0$ with $\fT>\fR$ and $\fT'>3\fT+2\fR^2$ as in \cite[V.2.2]{MW2}. 
    Let $\phi,\psi\in\cP^\fR(V_\bbC)$ and let $\rho$ be any entire function on $\bbC$ 
    satisfying the functional equation $\rho(s)=\rho(1-s)$, and such that 
    $\Lambda:=s^{-1}(1-s)^{-1}\rho$ satisfies the properties 
    \cite[Proposition 3.1]{DMHO}. 
    There exists a maximal proper standard Levi subgroup $G'\subset G$ with root systems $R'\subset R$, and a Cascade $C$ 
    for $R'\subset R$ (see Definition \ref{defn:casc}) satisfying the following:
    Let $\cCO^{C,presym}$\index{$\cCO^C$!$\cCO^{C,presym}$} be collection of pole spaces 
    in $\cCO^C$ after removing the $M$ such that $\textup{Ord}_M(\Omega)=0$ and $M^{\textup{temp}}\not\subset\cup_{N\in \cL(\gO_r)}N^{\textup{temp}}$.
    For every pole space $M\in \cCO^{C,presym}$ we can choose a $\fT'$-general point $z_M$ close to the center $c_M$ of $M$ such that   
    \begin{equation}\label{eq:nearcenter}
    (\theta_\phi,\theta_\psi)=_{\fT}\sum_{M\in \cCO^{C,presym}}\int_{(z_M+iV^M)_{\leq \fT}}\sum_{\cF\in \cF(M)}C_{M,\cF}
    \textup{Res}_{M,\cF}((\Sigma_{W}(\phi^-))(\Sigma'_{W'}(\psi))\Omega)
    \end{equation}
    for suitable constants $C_{M,\cF}$. 
    \end{thm}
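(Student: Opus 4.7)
The plan is to start from the inductive formula (\ref{eq:indT}) and execute the cascade $C$ of contour shifts and partial $W'$-symmetrizations defined in Appendix \ref{a:Cascade}, showing that each step is valid and that the final resting position of every truncated contour is a neighbourhood of the center of some $\gO$-pole space.

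First I would fix the maximal proper standard Levi $G' \subset G$ and invoke the induction hypothesis on semisimple rank: (\ref{eq:indT}) writes $(\theta_\phi,\theta_\psi)$ up to the tolerance $=_\fT$ as a sum over $L' \in \cL(\gO'_r)_+$, with $L = L' \oplus \bbR\fw'$, of truncated integrals over contours $(p_{L,\infty,\fT'} + iV^L)_{\leq \fT}$ whose base point is pushed far in the $\fw'$-direction. Each step of the cascade is then either (a) a \emph{contour shift} within a fixed subspace of the form $p + iV^M$, which by a truncated multidimensional residue theorem replaces the integral by the sum of the shifted integral and iterated residues $\textup{Res}_{N,\cF}(\cdots\gO)$ along the $\gO$-pole spaces $N \subsetneq M$ that the contour crosses; or (b) a \emph{partial $W'$-symmetrization}, exploiting the $W'$-invariance of $\gO$ to identify integrals over $W'$-translates of a given contour and consolidate them into a single integral over a representative, with the appropriate combinatorial factor. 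The cascade is organized so that after finitely many steps, every surviving integral sits on $(z_M + iV^M)_{\leq \fT}$ for a $\fT'$-generic point $z_M$ close to $c_M$, for some $\gO$-pole space $M$, and the set of such $M$'s is $\cCO^C$.

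The principal difficulty, and the real content of the theorem, is that the integrand $\Sigma_W(\phi^-)\Sigma'_{W'}(\psi)$ possesses, in addition to the poles of the $c$- and $c'$-factors, singularities coming from zeroes of $\rho$ in the factors $r(\gl)/r(w\gl)$ and $r(u\gl)/r(\gl)$. The cascade must be engineered so that \textbf{no} contour deformation ever crosses a zero-locus hyperplane $\rho(\hat\ga(\gl)) = 0$: only the $c$- and $c'$-singularities are allowed to produce iterated residues. Proving that such a cascade exists reduces to a combinatorial verification in the coroot geometry of $R' \subset R$, showing that at each intended shift the interposing zero-hyperplanes either stay outside the homotopy between the two truncated contours, or can be bypassed by a preparatory $W'$-symmetrization that lifts the contour off the offending hyperplane before the shift is performed. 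For the classical root systems this verification is a direct inspection; for the exceptional types $E_6, E_7, E_8, F_4, G_2$ the combinatorics is unwieldy and we rely on the computer-algebra enumeration in the companion repository.

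Finally, I would discard from $\cCO^C$ the pole spaces $M$ with $\textup{Ord}_M(\gO) = 0$ and $M^{\textup{temp}} \not\subset \cup_{N \in \cL(\gO_r)} N^{\textup{temp}}$: by the simple-pole formula (\ref{eq:resdatCst}), the iterated residue along such an $M$ is a constant multiple of $(F|_{M_\bbC})\gO^M$, where $\gO^M$ is a rational form on $M$ whose singular set meets the tempered real form only inside the residual locus $\cup_N N^{\textup{temp}}$. Since $M^{\textup{temp}}$ lies off this locus, a further contour deformation within $M$ shows that the corresponding truncated integral is $=_\fT 0$, so what survives is exactly the collection $\cCO^{C,presym}$ with the stated formula (\ref{eq:nearcenter}). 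To reiterate, the hard step is the zero-avoidance argument of the third paragraph; the remaining ingredients are bookkeeping and are codified by the cascade machinery of Appendix \ref{a:Cascade}.
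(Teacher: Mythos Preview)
Your overall architecture is right, and the first two paragraphs track the paper's strategy closely. However there are two genuine gaps.

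\textbf{The vanishing of non-residual order-$0$ contributions.} Your last paragraph claims that for $M$ with $\textup{Ord}_M(\gO)=0$ and $M^{\textup{temp}}\not\subset\cup_{N\in\cL(\gO_r)}N^{\textup{temp}}$, ``a further contour deformation within $M$ shows that the corresponding truncated integral is $=_\fT 0$''. This does not work: moving the base point inside $M$ picks up residues along lower-dimensional $\gO$-pole subspaces of $M$, and there is no a priori reason those cancel; nor does the observation about the singular set of $\gO^M$ give vanishing of the integral at $z_M$. The paper's argument (Lemma \ref{lem:canc0}) is quite different and not a contour shift at all. One first replaces $\psi\Sigma_W(\phi^-)$ by a generic $\theta\in\cP^\fR(V_\bbC)$ in the functional $X_{V,p_V}$, and uses the known fact \cite[Proposition 3.6]{HO1} that the tempered support of $X_{V,p_V}$ lies in the residual locus. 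A maximal-dimension contradiction argument, together with the module structure over $W'$-invariant holomorphic functions (via \cite[Sublemma V.2.11]{MW1}), then forces $(\sum_\cF \fc_{M,\cF}C_{M,\cF})A'_0(\theta)|_{M_\bbC}=0$ identically, whence the differential operators $D_u$ appearing in $A'_0$ all vanish. Only then can one substitute back $\theta=\psi\Sigma_W(\phi^-)$.

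\textbf{Pole spaces of positive $\gO$-order.} Your third paragraph treats the zero-avoidance as a purely combinatorial check on hyperplane crossings. That is adequate only when $\textup{Ord}_L(\gO)=0$, where the iterated residue is just a restriction (\ref{eq:resdatCst}). When $\textup{Ord}_L(\gO)>0$, the residue datum (\ref{eq:resdat}) involves \emph{normal derivatives} $D_{L,\cF,i}(\Sigma_W(\phi)\Sigma'_{W'}(\psi))|_{L_\bbC}$, and these can acquire new denominators that are invisible to the undifferentiated function. In the cascade for $\textup{E}_8$ there are three such $W'$-orbits (Theorem \ref{thm:casc_ord_int}), and one of them, the line $L_{sp}$ of type $\textup{E}_7(a4)$, has a parent segment $\sigma_{M_{sp}}$ with $\sigma_{M_{sp}}\cap L_{sp}\not\subset\textup{Adm}(M_{sp})$: a denominator $\rho(x-1/2)$ is potentially present in $\partial_n(\Sigma_W(\phi))|_{L_{sp}}$ with critical strip meeting $\sigma_{L_{sp}}$. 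Ruling this out (Theorem \ref{thm:mainspec}, Appendices \ref{a:Special}--\ref{a:SpecialCode}) requires an explicit computation of the constants $E(h,S(-w),e)$ for all $w\in W^{F_0}$, reducing via $\mathfrak{sl}_2$-harmonic analysis to a permanent identity checked by computer. This is not subsumed by ``combinatorial verification in the coroot geometry'' and is an essential, non-bookkeeping, ingredient of the proof.
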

    Once Theorem \ref{thm:mainC} has been established, the remaining part of the proof consists of \emph{symmetrising}
    the integral over $z_L+iV^L$ in (\ref{eq:nearcenter}) over the normalizer $N_W(L)=\{w\in W\mid wL=L\}$
    of $L$ in $W$, and \emph{comparison} of the result with the kernels of the residue distribution of the functional on $\cP^\fR(V_\bbC)$ given by (with $z_0\gg0$, 
    and $\theta\in\cP^\fR(V_\bbC)$):
    \begin{equation}
    X_{V,\lambda_0}(\theta):=
    \int_{\textup{Re}(\lambda)=\lambda_0\gg0}\theta(\gl)\go(\gl).
    \end{equation}
    These two steps are addressed in \cite{DMHO}.
    \subsubsection{Outline}
    In Section \ref{s:PoleDen}, we will define the set of \emph{denominators} of functions like $\Sigma_W(\phi)|_{L_\bbC}$ and 
    $\Sigma'_{W'}(\psi)|_{L_\bbC}$ on a $\gO$-pole space $L_\bbC\subset V_\bbC$. This notion is ``universal'' in the entire function $\rho$ 
    on $\bbC$ satisfying the functional equation $\rho(s)=\rho(1-s)$. In our main applications, this function will be the regularised completed Dedekind 
    zeta function $\rho(s)=\Lambda^{reg}_F(s)$ of the number field $F$\footnote{Our results will be true 
    more generally for all $\rho$ as above such that the zeroes of $\rho$ are contained in the critical strip $\textup{Re}(s)\in(0,1)$}.
    We study their behaviour under restriction and transformation 
    by $W$, and compute these sets for \emph{good regular} $\gO$-pole spaces. In Section \ref{s:RegEnv}, we use these properties to define the larger 
    (but computable) set 
    of \emph{enveloping denominators} of $\Sigma_W(\phi)|_{L_\bbC}$ and $\Sigma_{W'}(\psi)|_{L_\bbC}$ on $L_\bbC$ by looking at the denominator sets 
    of all good regular $\gO$ pole spaces $H\supset L$. We establish the relations between the sets of enveloping denominators of two standard pairs 
    $(L_0,x)$ and $(L_1,y)$ such that $x(L_0)=y(L_1)$, and between $\Sigma'_{W'}(\psi)|_{L_\bbC}$ and $\Sigma_{W}(\phi)|_{L_\bbC}$ 
    (Theorem \ref{eq:tau}). 
    In Section \ref{s:Adm}, we define a closed convex subset $\textup{Adm}(L)\subset L$ for every $\gO$-pole space $L$ such that 
    $\Sigma_W(\phi)|_{L_\bbC}$ and $\Sigma'_{W'}(\psi)|_{L_\bbC}$ are regular at the points of $\textup{Adm}(L)+iV^L\subset L_\bbC$.
    We derive transformation properties of $\textup{Adm}(L)$ under $W$ and for intersection with a $\gO$ pole subspace $M\subset L$. 
    
    For every irreducible root system $R$ we find a maximal proper Levi subsystem $R'\subset R$ and construct a Cascade $C$  
    (see Appendix \ref{a:Cascade} for a definition) such that for every $\gO$-pole space $L$ in $C$ which is also an $\gO_r$-pole space,
    we have $c_L\in\textup{Adm}(L)$ (this is Theorem \ref{thm:main}, in the language of Section \ref{s:Adm}). Recall that all the $\gO_r$ spaces are in fact \emph{residual} and their $W$-orbits are naturally in bijection with the set nilpotent orbits, as in (\ref{eq:BC}).
    The construction of $C$ and the proof of Theorem \ref{thm:main} is computer assisted for the exceptional cases.
     In Section \ref{s:MainResultRes}, we finally show that the constructed cascades $C$ allow us to rewrite ``$=_\fT$'' in the inner product 
     of pseudo Eisenstein series using only contours with generic base points near the centers of $\gO$-pole spaces,  
     which the main result Theorem \ref{thm:mainC} of this paper. 
     The proof of Theorem \ref{thm:mainC} is computer assisted for the exceptional cases as well, in two ways. First of all, in Theorem 
     \ref{thm:casc_ord_int} the $W'$-orbits of $\gO$-pole spaces $L$ in the Cascade $C$ with $\textup{Ord}_L(\gO)>0$ and which are \emph{not}  
     met at the center $c_L$ have been classified with computer assistance. In only one case, the ``special line $L=L_{sp}$'' of type 
    $\textup{E}_7(a4)$ for $R$ of type $\textup{E}_8$, there is a potential occurrence of a troublesome factor $\rho(X)^{-1}$ in a relevant normal 
    derivative $\partial_n(\Sigma_W(\phi))|_{L_\bbC}$ associated to one specific flag of pole spaces in $C$ ending in $L$. This denominator $X$ is troublesome in the sense that $X$ maps the segment $\gs_L$ corresponding to that flag to a segment in $\bbC$ intersecting the critical strip nontrivially (in fact, $X(\sigma_L)=[-1/2,1]$). We ruled out the actual occurrence of 
    $X$ as a denominator of $\partial_n(\Sigma_W(\phi))|_{L_\bbC}$ by computer assisted computation, see Corollary 
    \ref{cor:Cadmis}, Theorem \ref{thm:mainspec}. 
    
    In Appendix \ref{a:Cascade} we define the concept of a Cascade $C$ of a pair $(R,R')$ of a root system $R$ and a maximal proper 
    Levi subsystem $R'\subset R$. In Appendix \ref{a:class} we discuss such cascades $C$ in the case of classical root systems. In 
    Appendix \ref{a:EnvDen} we discuss the pseudocode for the computation of the sets of enveloping denominators. 
    In Appendix \ref{a:Special} we analyse the denominators of the potentially problematic normal derivative $\partial_n(\Sigma_W(\phi))|_{L_\bbC}$
    for the ``special line $L=L_{sp}$'' of type $\textup{E}_7(a4)$ for $R$ of type $\textup{E}_8$, and finally, in Appendix \ref{a:SpecialCode} we 
    discuss the pseudocode for the computation of the these denominators based on the results of Appendix \ref{a:Special}. 
    
    %%%%%%%%%%%%%
    \section{Pole spaces and denominators}\label{s:PoleDen}
    Let $L\subset V$ be an affine subspace. We denote by $\textup{Aff}(L)$\index{$\textup{Aff}(L)$} the complex vector space of affine linear 
    functions on $L_\bbC$.
    Let $\textup{Aff}^*(L)\subset \textup{Aff}(L)$\index{$\textup{Aff}(L)$!$\textup{Aff}^*(L)$} denote the subset of non-constant affine functions on $L$.  
    We call $\mu,\nu\in \textup{Aff}(L)$ equivalent, denoted by $\mu\sim\nu$\index{$\mu\sim\nu$ meaning $\mu\in\{\nu,1-\nu\}$}, if $\mu=\{\nu,1-\nu\}$ on $L$. 
    Let $\textup{Aff}^\sim(L)\subset \textup{Aff}^*(L)$\index{$\textup{Aff}(L)$!$\textup{Aff}^\sim(L)$} 
    denote a complete set of representatives of the set of equivalence classes in $\textup{Aff}^*(L)$.
    Let ${\hat{R}}_L\subset {\hat{R}}$\index{$R$!${\hat{R}}_L$} be the Levi subsystem of coroots which are 
    constant on $L$. 
    \begin{thm}\label{thm:denbase}
    Let $A(\rho;\phi_1,\dots,\phi_k)$ be a meromorphic function on $V_\bbC$ depending on  
    an entire function $\rho$ on $\bbC$ satisfying the functional equation 
    \begin{equation}\label{FE}
    \rho(z)=\rho(1-z),
    \end{equation} 
    and on a list of entire functions $\phi_1,\dots,\phi_k$ on $V_\bbC$. Assume that $A$ satisfies the following properties:
    \begin{enumerate}
    \item[(1)]  $A(\rho;\phi_1,\dots,\phi_k)$ is multilinear in $(\phi_1,\dots,\phi_k)$.
    \item[(2)] $A(1;\phi_1,\dots,\phi_k)$ is entire for all tuples $(\phi_1,\dots,\phi_k)$ of entire functions on $V_\bbC$. 
    If all $\phi_i$ are polynomial then $A(1;\phi_1,\dots,\phi_k)$ is a polynomial of degree 
    $\leq \sum_i\textup{deg}(\phi_i)$.
    \item[(3)] There exists $m_{\hat{\ga},i}\in\bbZ_{\geq 0}$ ($\hat\ga\in \hat{R}$, $i=1,\dots,k$) such that for any entire functions $\rho_0$, $\rho_1$ on $\bbC$ satisfying (\ref{FE}), we have $A(\rho_0\rho_1;\phi_1,\dots,\phi_k)=
    (r^A_0)^{-1}A(\rho_1;r^A_{0,1}\phi_1,\dots,r^A_{0,k}\phi_k)$, 
    where $r_{0,i}^A(\gl):=\prod_{\ga\in R}\rho_0(\hat{\ga}(\gl))^{m_{\hat{\ga},i}}$
    and where $r_0^A(\gl):=\prod_{\ga\in R}\rho_0(\hat{\ga}(\gl))^{m_{\hat{\ga}}}$ with $m_{\hat{\ga}}=\sum_i m_{\hat{\ga},i}$. 
    \item[(4)]  For each  $i\in\{1,\dots,k\}$ there exists $N_i\in \bbN$, a subgroup $W_i\subset W$, and an element $d_i\in W$ such that if 
    $\phi_i$ vanishes of order $N\geq 0$ at an orbit $W_id_i(\gl)\subset V_\bbC$ 
    then $A(1;\phi_1,\dots,\phi_k)$ vanishes of order $N-N_i$ at $\gl$. 
    \end{enumerate}
    Let $L\subset V$ be an affine subspace. There exists a unique function $m^A_L:\textup{Aff}^\sim(L)\to \bbZ_{\geq 0}$ such that: 
    \begin{enumerate}
    \item[(i)] $m^A_L$ has finite support. 
    \item[(ii)] 
    Let $\textup{Con}^A(L):=\{\hat{\ga}(L),1-\hat{\ga}(L)\in\bbC\mid \ga\in {\hat{R}}_L\mathrm{\ and\ } m_{\hat{\ga}}\not=0\}$\index{$\textup{Con}^A(L)$}.
    For all entire functions $\rho$ on $\bbC$ which are nonvanishing on $\textup{Con}^A(L)$ and which 
    satisfy (\ref{FE}), and all entire functions $\phi_i,\dots,\phi_k$ on $V_\bbC$, 
    the expression: 
    \begin{equation}\label{eq:m}
    \prod_{\nu\in \textup{Aff}^\sim(L)}\rho(\nu(\lambda))^{m^A_L(\nu)} A(\rho;\phi_1,\dots,\phi_k)|_{L_\bbC}
    \end{equation}
    is entire on $L_\bbC$.
    \item[(iii)] Minimality with respect to $(ii)$: For all function $n_L: \textup{Aff}^\sim(L)\to \mathbb{Z}_{\geq 0}$ with finite 
    support such that $(ii)$ holds with $m^A_L$ replaced by $n_L$, we have $m^A_L\leq n_L$
    (i.e. $m^A_L(\nu)\leq n_L(\nu)$ for all $\nu\in \textup{Aff}^\sim(L)$).
    \end{enumerate}
    \end{thm}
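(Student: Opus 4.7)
The plan is to exhibit $m^A_L$ as the pointwise infimum of the set $S$ of all functions $n_L:\textup{Aff}^\sim(L)\to\bbZ_{\geq 0}$ with finite support satisfying (ii). First I would use property (3) to produce a single admissible upper bound, showing $S\neq\emptyset$; then I would argue that this infimum is realized in $S$ by a local analysis at each point of $L_\bbC$, using property (4) to handle coincidences of pole hyperplanes.

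For the upper bound, apply (3) with $(\rho_0,\rho_1)=(\rho,1)$, giving
\[
A(\rho;\phi_1,\ldots,\phi_k) \;=\; \frac{A(1;r_1^A\phi_1,\ldots,r_k^A\phi_k)}{r^A(\lambda)}.
\]
By (2) the numerator is entire on $V_\bbC$. Restricting $r^A=\prod_{\hat\alpha\in\hat{R}}\rho(\hat\alpha)^{m_{\hat\alpha}}$ to $L_\bbC$: the factors indexed by $\hat\alpha\in\hat{R}_L$ are constants $\rho(\hat\alpha(L))$, non-zero since $\hat\alpha(L)\in\textup{Con}^A(L)$ and $\rho$ is non-vanishing there by hypothesis; the remaining factors regroup by the $\sim$-equivalence of $\hat\alpha|_L$ and, using the functional equation $\rho(s)=\rho(1-s)$ to identify $\rho(\nu)$ with $\rho(1-\nu)$, collapse to $\prod_{\nu\in\textup{Aff}^\sim(L)}\rho(\nu)^{n(\nu)}$ with $n(\nu):=\sum_{\hat\alpha\in\hat{R}\setminus\hat{R}_L,\,\hat\alpha|_L\sim\nu}m_{\hat\alpha}$. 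Thus $n$ has finite support and $\prod_\nu\rho(\nu)^{n(\nu)}A(\rho;\phi)|_{L_\bbC}$ is entire, so $n\in S$.

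Now set $m^A_L(\nu):=\inf_{n_L\in S}n_L(\nu)$ pointwise. The bound $m^A_L\leq n$ gives (i), and (iii) is immediate. The key point is to verify that $m^A_L\in S$, i.e., that the pointwise infimum is simultaneously realized across all $\nu$. Picking, for each $\nu^*\in\textup{supp}(n)$, an element $n^{(\nu^*)}\in S$ with $n^{(\nu^*)}(\nu^*)=m^A_L(\nu^*)$, the finite minimum $m:=\min\bigl(n,n^{(\nu_1^*)},\ldots,n^{(\nu_p^*)}\bigr)$ agrees with $m^A_L$ on all of $\textup{Aff}^\sim(L)$; hence the claim reduces to closure of $S$ under finite pointwise minima, which in turn is local at each $\lambda_0\in L_\bbC$. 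Generically the pole of $A(\rho;\phi)|_{L_\bbC}$ near $\lambda_0$ is supported on a single irreducible hyperplane $\{\nu=s\}$ on which $\nu'$ restricts non-constantly for $\nu'\neq\nu$, so the entirety condition decouples to the single inequality $n_L(\nu)\,\textup{ord}_s(\rho)\geq m^{\rho,\phi}(\nu,s)$, clearly preserved by pointwise minima. The delicate case is when several distinct $\nu_j\in\textup{Aff}^\sim(L)$ index coroots whose projections to $V^L$ are proportional, so that hyperplanes $\{\nu_j=s_j\}$ coincide through $\lambda_0$; the cancellation constraint then couples $n_L(\nu_1),\ldots,n_L(\nu_p)$. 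Here I would invoke property (4) with suitably chosen $\phi_i$, exploiting the lower bound it gives on the vanishing order of $A(1;r_1^A\phi_1,\ldots,r_k^A\phi_k)$ at $\lambda_0$ in terms of vanishing of the $\phi_i$ at the orbits $W_id_i(\lambda_0)$, to absorb the cross terms and reduce the coupled constraint to per-$\nu$ inequalities, which are preserved by taking minima.

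The main obstacle is precisely this coincidence of pole hyperplanes: without (4) the set $S$ would not be closed under pointwise minima in general, and a unique minimum need not exist. Once the local decoupling via (4) is established, the remaining verification is combinatorial bookkeeping, coordinating the data $(N_i,W_i,d_i,m_{\hat\alpha,i})$ furnished by axioms (3)--(4) with the geometry of the coroot restrictions $\hat\alpha|_L$.
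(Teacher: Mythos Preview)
Your overall strategy --- define $m^A_L$ as the pointwise infimum over the set $S$ of admissible $n_L$, and verify that this infimum lies in $S$ --- is reasonable, and your upper-bound construction via property~(3) is exactly how the paper begins. But the core step, closure of $S$ under pointwise minima, is not established by your argument, and this is precisely where the difficulty lies.

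The coupling you identify at coincident hyperplanes is real. If $\nu_1,\nu_2\in\textup{Aff}^\sim(L)$ have proportional gradients and $\rho$ has zeros $s_1,s_2$ with $\{\nu_1=s_1\}=\{\nu_2=s_2\}=:H$, then entirety of $\prod_\nu\rho(\nu)^{n_L(\nu)}A(\rho;\phi)|_{L_\bbC}$ along $H$ imposes a single inequality on the \emph{sum} $n_L(\nu_1)\,\textup{ord}_{s_1}(\rho)+n_L(\nu_2)\,\textup{ord}_{s_2}(\rho)$, and such constraints are \emph{not} preserved under pointwise minima. Your proposed remedy, invoking property~(4) ``to absorb the cross terms and reduce the coupled constraint to per-$\nu$ inequalities'', does not work as stated: property~(4) controls the vanishing order of $A(1;\ldots)$ at a point in terms of the vanishing of the $\phi_i$, which is information orthogonal to the question of how a pole along $H$ is distributed between $\rho(\nu_1)$ and $\rho(\nu_2)$ for a \emph{fixed} $\rho$. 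There is no mechanism here by which a choice of $\phi_i$ separates the two coroot restrictions.

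The paper resolves this by a genuinely different idea: rather than attempting to decouple for each fixed $\rho$, it lets $\rho$ (and the $\phi_i$) vary as coordinates on an affine parameter space. Property~(4) is used first, in a lemma, to reduce from entire $\rho,\phi_i$ to polynomial $\rho\in\bbC^N_{FE}[z]$ and polynomial $\phi_i$, so that $A(\rho;\phi)|_{L_\bbC}$ becomes a rational function $\cA_L$ on the affine space $\bbC^N_{FE}[z]\times\bbC_{M,k}[V_\bbC]\times L_\bbC$. The key observation is then that for each $\nu\in\textup{Aff}^\sim(L)$ the polynomial $\phi^{(N)}_\nu(p,f,\lambda):=p(\nu(\lambda))$ is \emph{irreducible} on this space, and distinct $\nu$ give distinct irreducibles. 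Unique factorisation of $\cA_L$ therefore assigns to each $\nu$ a well-defined exponent, and this is what forces uniqueness of $m^A_L$. In effect, the passage to the parameter space makes your ``generic $\rho$'' intuition rigorous: over the whole family the hyperplanes $\{\rho(\nu_j)=0\}$ are genuinely distinct divisors, and the coincidences you worry about become a measure-zero phenomenon that unique factorisation handles automatically.
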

    \begin{proof}
    We first remark 
    that there exists a function $m^A_V:\textup{Aff}^\sim(V)\to \mathbb{Z}_{\geq 0}$ with finite support such that the 
    expression 
    \begin{equation}\label{eq:onV}
    \prod_{\nu\in \textup{Aff}^\sim(V)}\rho(\nu(\gl))^{m^A_V(\nu)}A(\rho;\phi_1,\dots,\phi_k)
    \end{equation}
    is entire on $V_\bbC$. Indeed, if we take $m_V^A(\nu)=0$ unless $\nu\in {\hat{R}}$, 
    and $m_V^A(\hat{\ga})=m_{\hat{\ga}}$ for $\hat{\ga}\in {\hat{R}}$ (observe that if $\hat{\ga},\hat{\gb}\in {\hat{R}}$ then $\hat{\ga}\sim\hat{\gb}$
    iff $\hat{\ga}=\hat{\gb}$) then with $\rho=\rho.1$ we put
    \begin{equation}\label{eq:ronV}
    r^A(\gl)=r^A_0(\gl)=\prod_{\nu\in \textup{Aff}^\sim(V)}\rho(\nu(\gl))^{m^A_V(\nu)}
    \end{equation}
    and the assertion follows from $(2)$ and $(3)$. 
    By assumption $\rho$ has no zeroes in the finite set $\textup{Con}^A(L)$, hence $r^A|_{L_\bbC}$ is not vanishing 
    identically. Since (\ref{eq:onV}) is entire on $V_\bbC$ we may conclude that $A(\rho;\phi_1,\dots,\phi_k)|_{L_\bbC}$ is well defined as 
    meromorphic function on $L_\bbC$, and that $r^A|_{L_\bbC}A(\rho;\phi_1,\dots,\phi_k)|_{L_\bbC}$ is entire on $L_\bbC$. 
    
    We claim that there exists an $m:\textup{Aff}^\sim(L)\to \mathbb{Z}_{\geq 0}$ with the properties:
    \begin{enumerate}
    \item[(a)] For all entire function $\rho$ satisfying (\ref{FE}) and nonvanishing on $\textup{Con}^A(L)$, and entire functions $\phi_1,\dots,\phi_k$, 
    the function 
    \begin{equation}\label{eq:onL}
    \prod_{\nu\in \textup{Aff}^\sim(L)}\rho(\nu(\gl))^{m(\nu)} A(\rho;\phi_1,\dots,\phi_k)|_{L_\bbC}
    \end{equation}
    is entire on $L_\bbC$, 
    \item[(b)] For all $m':\textup{Aff}^\sim(L)\to \mathbb{Z}_{\geq 0}$ with $m'<m$ there exist an entire function $\rho$ on $\mathbb{C}$, 
    (nonvanishing on $\textup{Con}^A(L)$ and satisfying (\ref{FE})), and entire functions $\phi_1,\dots,\phi_k$ on $V_\bbC$, such that 
    \begin{equation}\label{eq:sing}
    \prod_{\nu\in \textup{Aff}^\sim(L)}\rho(\nu(\gl))^{m'(\nu)} A(\rho;\phi_1,\dots,\phi_k)|_{L_\bbC}
    \end{equation}
    is not entire on $L_\bbC$.
    \end{enumerate}
    Indeed, we can construct such an $m$ by starting from an $\tilde{m}$ with finite support which makes (\ref{eq:m}) entire on $L$ for all 
    $\rho$ and $\phi_1,\dots,\phi_k$ as above. Such initial multiplicity function $\tilde{m}$ exists because (\ref{eq:onV}) is entire on $L_\bbC$.
    If this property still holds after subtracting from $\tilde{m}$ an indicator function $\delta_{\nu_0}$ of a singleton subset $\{\nu_0\}$ in the support of 
    $\tilde{m}$, replace $\tilde{m}$ by $\tilde{m}-\delta_{\nu_0}$. Continue like this until no further subtraction is possible, then 
    the resulting function $m$ satisfies the requirements. 
    
    Next we show that the above minimality properties define $m$ uniquely. First we show:
    \begin{lem}\label{lem:restopol}
    If $m$ satisfies $(a)$ and $(b)$ as above, then $m$ also satisfies $(a)_{pol}$ and $(b)_{pol}$, which are like $(a)$ and $(b)$ but with 
    $\rho\in\bbC[z]$ (satisfying (\ref{FE}) and nonzero at $\textup{Con}^A(L)$) and with $\phi\in \bbC[V]$. 
    \end{lem}
    \begin{proof}
    Of course $(a)_{pol}$ is trivially satisfied by $m$. 
    Now suppose that $m'<m$. By the construction of $m$ from $r^A|_{L_\bbC}$ there exist multiplicity parameters 
    $0\leq m'_{\hat{\ga}},m''_{\hat{\ga}}\leq m_{\hat{\ga}}=m_V^A(\hat{\ga})$ for $\hat{\ga}\in {\hat{R}}$ such that 
    \begin{equation}
    \prod_{\nu\in \textup{Aff}^\sim(L)}\rho(\nu)^{m(\nu)}=\prod_{\hat{\ga}\in {\hat{R}}}\rho(\hat{\ga}|_{L_\bbC})^{m'_{\hat{\ga}}}
    \prod_{\hat{\ga}\in {\hat{R}}}\rho(\hat{\ga}|_{L_\bbC})^{m''_{\hat{\ga}}}
    \end{equation}
    while 
    \begin{equation}\label{eq:m''}
    \prod_{\nu\in \textup{Aff}^\sim(L)}\rho(\nu)^{m'(\nu)}=\prod_{\hat{\ga}\in {\hat{R}}}\rho(\hat{\ga}|_{L_\bbC})^{m'_{\hat{\ga}}}
    \end{equation}
    By definition there exist entire functions $\rho$ (satisfying (\ref{FE}) and nonzero on $\textup{Con}^A(L)$) and entire functions 
    $\phi_1,\dots,\phi_k$ such that (\ref{eq:sing}) is singular on $L_\bbC$, while by (\ref{eq:onL}), (\ref{eq:m''}):
    \begin{equation}
    (\prod_{\hat{\ga}\in {\hat{R}}}\rho(\hat{\ga}|_{L_\bbC})^{m''_{\hat{\ga}}})(\prod_{\nu\in \textup{Aff}^\sim(L)}\rho(\nu)^{m'(\nu)} )A(\rho;\phi_1,\dots,\phi_k)|_{L_\bbC}
    \end{equation}
    is entire on $L_\bbC$. We conclude that the singular 
    locus of (\ref{eq:sing}) is a union of hyperplanes of the form  $\hat{\ga}=z_0$ on $L_\bbC$, 
    with $\hat{\ga}\in {\hat{R}}\backslash {\hat{R}}_L$. Moreover 
    \begin{equation}\label{eq:withm'onV}
    \prod_{\hat{\ga}\in {\hat{R}}}\rho(\hat{\ga})^{m'_{\hat{\ga}}}A(\rho;\phi_1,\dots,\phi_k)
    \end{equation}
    is meromorphic in $V_\bbC$ and restricts to (\ref{eq:sing}) on $L_\bbC$.
    Choose one such a hyperplane $H_L\subset L_\bbC$ which is an irreducible component of the 
    singular locus of (\ref{eq:sing}). 
    There are finitely many zeroes $z_1,\dots,z_s$ of $\rho$, and positive coroots 
    $\hat{\ga}_1,\dots,\hat{\ga}_s$ such that for all $i=1,\dots,s$ the hyperplanes $H_i:=\{\gl\in V_\bbC\mid \hat{\ga}_i=z_i\}$ are precisely the components of 
    the singular set of (\ref{eq:withm'onV}) such that  $H_i\cap L_\bbC=H_L$.
    
    Take a polynomial $p$ on $\bbC$ such that $\rho=p\rho_0$ with $\rho_0$ entire, nonzero 
    on the set $\{z_i:i=1,\dots,s\}$, and satisfying (\ref{FE}). 
    Let $r^A=r_p^Ar_0^A$ be the corresponding factorization of $r^A$. We can now rewrite (\ref{eq:withm'onV}) as 
    \begin{equation}
    \frac{1}{r^A_0(\gl)}\prod_{\hat{\ga}\in {\hat{R}}}\rho_0(\hat{\ga}(\gl))^{m'_{\hat{\ga}}}p(\hat{\ga}(\gl))^{m'_{\hat{\ga}}}
    A(p; r^A_{0,1}\phi_1,\dots,r^A_{0,k}\phi_k)|_{L_\bbC}
    \end{equation}
    By our choice of $p$ it follows that $\frac{1}{r^A_0(\gl)}|_{L_\bbC}$ is nonsingular along $H_L$.  It follows that 
    \begin{align*}
    F&:=\prod_{\nu\in \textup{Aff}^\sim(L)}p(\nu(\gl))^{m'(\nu)} 
    A(p;r^A_{0,1}\phi_1,\dots,r^A_{0,k}\phi_k)|_{L_\bbC}\\
    &=\prod_{\nu\in \textup{Aff}^\sim(L)}p(\nu(\gl))^{m'(\nu)} 
    (r^A_p)^{-1}(\gl)A(1;r^A_1\phi_1,\dots,r^A_k\phi_k)|_{L_\bbC}
    \end{align*}
    is singular along $H_L$, in other words there exists $M>0$ such that $(\hat{\ga}_1|_{L_\bbC}-z_1)^MF$ is nonsingular, 
    nonvanishing meromorphic along $H_L$.
    Hence if $\lambda_0\in H_L$ is a generic point, then $(\hat{\ga}_1|_{L_\bbC}-z_1)^MF$ is holomorphic 
    in a neighborhood of $\gl_0\in L_\bbC$ and nonvanishing in $\gl_0$.
    
    Let $N\gg0$ and choose polynomials $f_i\in\bbC[V_\bbC]$ such that up to terms of order $N$, the polynomial 
    equals $r^A_0\phi_i$ at all points of the orbit $W\gl_0$. This is possible. Indeed, take a polynomial $\tilde{e}$ such 
    that $\tilde{e}(\gl_0)=1$ and $\tilde{e}(\mu_0)=0$ for all $\mu_0\in W\gl_0$ with $\mu_0\not=\gl_0$. Then 
    $\tilde{e}^N$ vanishes of order $N$ at the points $\mu_0\in W\gl_0$ unequal to $\gl_0$, and equals $1$ at $\gl_0$.
    In particular $\tilde{e}^N$ is invertible as a formal power series at $\gl_0$, so there exists a polynomial $P_i$ such 
    that the polynomial $e_{i,\gl_0}=P_i\tilde{e}^N$ equals the power series of $\phi_i$ at $\gl_0$ up to order $N$,  
    and vanishes of order $N$ at the other points of the orbit $W\gl_0$. Similarly 
    we can find such polynomials $e_{i,\mu_0}$ approximating the power series of $\phi_i$ at any other point $\mu_0\in W\gl_0$, 
    while vanishing of order $N$ at all $\mu_0'\in W\gl_0$ with $\mu_0'\not=\mu_0$. 
    Then $f_i:=\sum_{\mu_0\in W\gl_0}e_{i,\mu_0}$
    is a polynomial that fits the bill. By (1) and (4) it follows that if $N$ is sufficiently large, the expression 
    \begin{equation}
    (\hat{\ga}_1-z_1)^M\prod_{\nu\in \textup{Aff}^\sim(L)}p(\nu(\gl))^{m'(\nu)} 
    A(p;f_1,\dots,f_k)|_{L_\bbC}
    \end{equation}
    has the same limit as $(\hat{\ga}_1|_{L_\bbC}-z_1)^MF$ for $\gl\to\gl_0$ along any line in $L_\bbC$ not contained in $H_L$ 
    (i.e. not contained in any hyperplane of the form 
    $\{\gl\mid \hat{\ga}(\gl)=\hat{\ga}(z_0)\}\cap L_\bbC$ for a $\hat{\ga}\in {\hat{R}}\backslash {\hat{R}}_L$).  
    This proves that such limit is nonzero, and thus that 
    \begin{equation}
    \prod_{\nu\in \textup{Aff}^\sim(L)}p(\nu(\gl))^{m'(\nu)} 
    A(p;f_1,\dots,f_k)|_{L_\bbC}
    \end{equation}
    is singular at $\gl_0$ since $M>0$. Hence $m$ 
    satisfies $(b)_{pol}$ as desired. 
    \end{proof}
    Let us now study the rational function $A_L(p;f_1,\dots,f_k)|_{|_\bbC}$ for $p\in \bbC[z]$ satisfying (\ref{FE}) and 
    nonvanishing on $\textup{Con}^A(L)$, and $f_1,\dots,f_k\in \bbC[V]$.
    Observe that for $c\in\bbC^\times$ we have $A_L(cp;f_1,\dots,f_k))=c^nA_L(p;f_1,\dots,f_k))$ for some $n\in\bbZ$, thus we may restrict ourselves 
    to $p$ being monic. 
    
    Fix natural numbers $N$ and $M$ with $N$ even, and let $\bbC^N_{FE}[z]$ denote the space of monic polynomials of degree $N$ 
    satisfying (\ref{FE}), and $\bbC_{M,k}[V_\bbC]$ the space of polynomials of degree at most $M$ on $V_\bbC\times\dots\times V_\bbC$. 
    We will now consider the function (with $f=(f_1,\dots,f_k)\in \bbC_{M,k}[V_\bbC]$) $A_L(p;f)(\gl)$ 
    as a rational function on $\bbC^N_{FE}[z]\times \bbC_{M,k}[V_\bbC]\times L_\bbC$, denoted by $\cA_L(p;f;\gl) = A_L(p;f)(\gl)$. 
    We remark that 
    $\bbC^N_{FE}[z]\times \bbC_{M,k}[V_\bbC]\times L_\bbC$ 
    is an affine space; indeed the space $\bbC^N_{FE}[z]$ can be identified with the categorical quotient of $\bbC^n$ by the action of the 
    Weyl group $W(\textup{B}_n)$ of the classical root system of type $\textup{B}_n$ where $N=2n$, which is affine by 
    the invariant theory of real reflection groups. The identification 
    is obtained by sending (in the even degree case) a sequence $(a_1,\dots,a_n)$ to $p=\prod_{i=1}^n(z-\frac{1}{2}+a_i)(z-\frac{1}{2}-a_i)$. 
    Clearly two sequences yield the same 
    image iff they are connected by the action of the $W(\textup{B}_n)$.
    
    Now observe that for any affine linear function $\nu\in\textup{Aff}(L)$ which is not equal to the constant $\frac{1}{2}$, 
    the polynomial $\phi^{(N)}_\nu\in\bbC[\bbC^N_{FE}[z]\times \bbC_{M,k}[V_\bbC]\times L_\bbC]$ 
    given by $\phi^{(N)}_\nu(p,f,\gl):=p(\nu(\gl))$(which only depends on the equivalence class of $\nu$) is irreducible. 
    Indeed, the zero locus of $\phi^{(N)}_\nu$ 
    is given by the union of the hyperplanes $a_i=\frac{1}{2}-\nu(\lambda)$ and $a_i=-\frac{1}{2}+\nu(\lambda)$, which is an orbit under 
    $W(\textup{B}_n)$ of an affine hyperplane. Clearly any $W(\textup{B}_n)$-invariant polynomial which vanishes on any of these affine 
    hyperplanes is divisible by $\phi^{(N)}_\nu$ (by our assumption on $\nu$), 
    so in particular $\phi^{(N)}_\nu$ is irreducible. 
    
    We already know that for any $p$ the singularities of $\cA_L(p;f_1,\dots,f_k)$, with multiplicities, are bounded by 
    the zero set of $r^A_p(\gl)|_{L_\bbC}$. Let $\hat{R}^{L,\sim}$ denote a set of restrictions 
    $\underline{\hat{\ga}}:=\hat{\ga}|_{L_\bbC}$ to $L_\bbC$ of coroots $\ga\in {\hat{R}}$
    representing all equivalence classes $[\underline{\hat{\ga}}]:=\{\underline{\hat{\ga}},1-\underline{\hat{\ga}}\}$ of such restricted coroots. 
    For each $\underline{\hat{\ga}}\in \hat{R}^{L,\sim}_+$, let $m^L(\underline{\hat{\ga}})\in\mathbb{N}$ denote the number 
    of coroots $\gb\in {\hat{R}}$ counted with multiplicity $m_{\hat{\gb}}$ such that $[\underline{\hat{\gb}}]=[\underline{\hat{\ga}}]$. Then 
    \begin{equation}\label{eq:ronL}
    r^A_p(\gl)|_{L_\bbC}=\prod_{\underline{\hat{\ga}}\in \hat{R}^{L,\sim}}p(\underline{\hat{\ga}}(\gl))^{m^L(\underline{\hat{\ga}})}=
    \prod_{\underline{\hat{\ga}}\in \hat{R}^{L,\sim}}(\phi^{(N)}_{\underline{\hat{\ga}}})^{m^L(\underline{\hat{\ga}})}(p,f,\gl)
    \end{equation}
    
    On the other hand we have a unique decomposition of the rational function 
    $\cA_L(p,f,\gl)$ in irreducible factors in $\bbC[\bbC^N_{FE}[z]\times \bbC_{M,k}[V_\bbC]\times L_\bbC]$. 
    The irreducible factors 
    with negative exponents must be factors with positive multiplicity in the above expression of $r^A_p(\gl)|_{L_\bbC}$, 
    so $\cA_L$ decomposes as a product of the form 
    \begin{equation}
    \cA_L|_{(\bbC^N_{FE}[z]\times \bbC_{M,k}[V_\bbC]\times L_\bbC)}
    =\prod_{\underline{\hat{\ga}}\in \hat{R}^{L,\sim}}(\phi^{(N)}_{\underline{\hat{\ga}}})^{-m^e_{M,N}(\underline{\hat{\ga}})}P_{M,N}
    \end{equation}
    where $P_{M,N}$ is a polynomial relatively prime to the $\phi^{(N)}_{\underline{\hat{\ga}}}$. 
    Since $\bbC_{M,k}[V_\bbC]\subset \bbC_{M',k}[V_\bbC]$ if $M\leq M'$, these exponent functions are weakly increasing 
    as a function of $M$, and since for all $N,M$ we have (by (\ref{eq:onV}) and (\ref{eq:ronL})):
    \begin{equation}
    0\leq m^e_{M,N}(\hat{\ga})\leq m^L(\underline{\hat{\ga}})
    \end{equation} 
    they will have limiting values denoted by $m^e_{N}(\underline{\hat{\ga}})$. 
    Combining this with the definition of $m^A_L$ and Lemma \ref{lem:restopol} we see that (by (\ref{eq:onL})) $m^A_L$ is indeed unique. In fact it is now 
    clear that $m$ can be expressed in terms of the 
    above multiplicities as follows.  If $\nu\in\textup{Aff}^\sim(L)$ is not of the form $\nu\in [\underline{\hat{\ga}}]$ for some 
    $\hat{\ga}\in {\hat{R}}$ then $m^A_L(\nu)=0$, while for $\nu\in [\underline{\hat{\ga}}]$ we have:   
    \begin{equation}
    m^A_L(\nu)=\textup{max}(\{m^e_{N}(\hat{\ga})\mid N\textrm{\ even}\}
    \end{equation}
    This implies in particular the uniqueness of $m^A_L$ we wanted to prove. 
    \end{proof}
    \begin{rem}\label{rem:zaris}
    The rational function $\cA_L$ on $\bbC^N_{FE}[z]\times \bbC_{M,k}[V_\bbC]\times L_\bbC$ is completely determined 
    by its restriction to any Zariski open subset of $\bbC^N_{FE}[z]\times \bbC_{M,k}[V_\bbC]\times L_\bbC$. It is then easy to see from the above 
    proof given for any finite set $X\subset \bbC$ such that $\textup{Con}^A(L)\subset X$, the multiplicity function $m^A_L$ on $\textup{Aff}^\sim(L)$ is 
    already determined by (i), (ii) and (iii) where  $\rho$ varies in the space of 
    entire functions satisfying (\ref{FE}) and nonzero on $X$. 
    \end{rem}
    \begin{defn}\label{defn:mult}
    For any affine subspace $L\subset V$, the multiplicity function $m^A_L$ is defined as in Theorem \ref{thm:denbase}
    relative to the set of entire 
    functions $\rho$ which are nonvanishing on $\textup{Con}^A(L)$, or any finite subsets 
    $X\subset \bbC$ with $\textup{Con}^A(L)\subset X$. 
    \end{defn}
    \begin{defn}\label{defn:densets}
    Let $L\subset V$ be an affine subspace. 
    Consider the meromorphic functions $A_1(\rho;\phi)=\Sigma_{W}(\phi)$, $A_2(\rho;\psi)=\Sigma_{u\in W'x}(\psi)$
    and $A_3(\rho;\phi,\psi)=\Sigma_{W}(\phi)\Sigma_{u\in W'x}(\psi)$. These functions satisfy the properties $(i)$-$(iv)$ of 
    Theorem \ref{thm:denbase} with $r^{A_1}(\gl)=r(-\gl)$, $r^{A_2}(\gl)=r(\gl)$ and $r^{A_3}(\gl)=r(-\gl)r(\gl)$ respectively.
    
    For each  $i=1,2$ or $3$, the multiplicity function $m^{A_i}_L$ is thus well defined on the set of $\rho$ 
    nonvanishing on $\textup{Con}^{A_i}(L)$ (or any larger finite set) by Definition \ref{defn:mult}.
    Given $x\in W$ we define the following multiplicity functions:  
    \begin{enumerate}
    \item[(a)] $m^{nc}_{(L,x,\Sigma)}=m^{A_1}_L$\index{$m_{(L,x)}$!$m^{nc}_{(L,x,\Sigma)}$} and 
    $m_{(L,x,\Sigma)}:=\textup{min}(m^{A_1}_L,m^{A_3}_{L})$\index{$m_{(L,x)}$!$m_{(L,x,\Sigma)}$} (the minimum of 
    $m^{A_1}_L$ and $m^{A_3}_{L}$). We write 
    $\textup{Den}_{nc}(L,x,\Sigma)$\index{$\textup{Den}(L)$!$\textup{Den}_{nc}(L,x,\Sigma)$} and  
    $\textup{Den}(L,x,\Sigma)$\index{$\textup{Den}(L)$!$\textup{Den}(L,x,\Sigma)$} for their respective supports 
    (``nc'' stands for ``no cancellation''). 
    Note that $\textup{Den}_{nc}(L,x,\Sigma)$ is independent of $x\in W$ (we shall sometimes simply write 
    $\textup{Den}_{nc}(L,\Sigma)$\index{$\textup{Den}(L)$!$\textup{Den}_{nc}(L,\Sigma)$} accordingly).  
    \item[(b)] $m^{nc}_{(L,x,\Sigma')}=m^{A_2}_{L}$\index{$m_{(L,x)}$!$m^{nc}_{(L,x,\Sigma')}$} 
    and $m_{(L,x,\Sigma')}:=\textup{min}(m^{A_2}_{L},m^{A_3}_{L})$\index{$m_{(L,x)}$!$m_{(L,x,\Sigma')}$}, with their respective supports 
    $\textup{Den}_{nc}(L,x,\Sigma')$\index{$\textup{Den}(L)$!$\textup{Den}_{nc}(L,x,\Sigma')$} and 
    $\textup{Den}(L,x,\Sigma')$\index{$\textup{Den}(L)$!$\textup{Den}(L,x,\Sigma')$}. 
    \item[(c)] Finally we define 
    $m_{(L,x)}:=m^{A_3}_{L}$\index{$m_{(L,x)}$}, and write  $\textup{Den}(L,x)$\index{$\textup{Den}(L)$!$\textup{Den}(L,x)$}
    for its support.
    \end{enumerate}
    All supports are counted with multiplicities. 
    Note that $m_{(L,x)}=m_{(L,x,\Sigma)}+m_{(L,x,\Sigma')}\leq m^{nc}_{(L,x,\Sigma)}+m^{nc}_{(L,x,\Sigma')}$, thus 
    $\textup{Den}(L,x)=\textup{Den}(L,x,\Sigma)\cup \textup{Den}(L,x,\Sigma')\subset \textup{Den}_{nc}(L,x,\Sigma)\cup \textup{Den}_{nc}(L,x,\Sigma')$.
    We write $\textup{Den}(L)=\textup{Den}(L,1)$\index{$\textup{Den}(L)$}.
    
    We lift the multiplicity functions $m_{(L,x,\Sigma)}$ etc. to functions on $\textup{Aff}^*(L)$ which are invariant under 
    the action $Y\to 1-Y$, and denote these lifts by the same notations. 
    Accordingly we lift $\textup{Den}(L,x,\Sigma)$ etc. from $\textup{Aff}^*(L)/\sim$ to a subset of $\textup{Aff}^*(L)$ invariant for $Y\to 1-Y$.  
    %We denote by $\textup{Den}(L,x,\Sigma)_+$\index{$\textup{Den}(L,x,\Sigma)$! $\textup{Den}(L,x,\Sigma)_+$}etc. 
    \end{defn}
    Our notion of denominators has an application to the problem of iterated restrictions of the meromorphic functions $A(\rho,\phi)$ in Theorem \ref{thm:denbase}. 
    \begin{prop}\label{prop:rest}
    Let $A(\rho,\phi)$ be as in Theorem \ref{thm:denbase}. 
    Let $L\subset M_i\subset V$ be affine subspaces and put 
    $\textup{Con}^A_{M_i}(L):=\{\nu(L),1-\nu(L)\in\bbC\mid m^A_{M_i}(\nu)\not=0\mathrm{\ and\ }\nu|_L\mathrm{\ is\ constant}\}$
    \index{$\textup{Con}^A(L)$! $\textup{Con}^A_{M_i}(L)$} for $i=1,2$. If $\rho$ is nonvanishing in 
    $\textup{Con}^A(M_1)\cup \textup{Con}^A(M_2)\cup \textup{Con}^A_{M_1}(L)\cup \textup{Con}^A_{M_2}(L)$
    then the iterated restrictions $(A(\rho,\phi)|_{M_{1,\bbC}})|_{L_{\bbC}}$ and $(A(\rho,\phi)|_{M_{2,\bbC}})|_{L_{\bbC}}$ exist and are equal as meromorphic 
    functions on $L_{\bbC}$.
    \end{prop}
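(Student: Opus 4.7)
The plan is to argue existence and equality separately.

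\textbf{Existence.} The construction in the proof of Theorem \ref{thm:denbase} shows that $A(\rho;\phi)$ is a meromorphic function on $V_\bbC$ with pole divisor contained in the zero locus of $r^A(\lambda)=\prod_{\hat{\ga}\in\hat{R}}\rho(\hat{\ga}(\lambda))^{m_{\hat{\ga}}}$. The nonvanishing of $\rho$ on $\textup{Con}^A(M_i)$ is precisely the condition that $r^A|_{M_{i,\bbC}}$ is not identically zero, so $A(\rho;\phi)|_{M_{i,\bbC}}$ is a well-defined meromorphic function on $M_{i,\bbC}$; its pole divisor is then controlled by the multiplicity function $m^A_{M_i}$, and the nonvanishing of $\rho$ on $\textup{Con}^A_{M_i}(L)$ ensures that no component of this divisor contains $L$, so that $(A|_{M_{i,\bbC}})|_{L_\bbC}$ exists as a meromorphic function on $L_\bbC$.

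\textbf{Equality, polynomial case.} To establish $(A|_{M_1})|_L=(A|_{M_2})|_L$, I would use universality. Using multilinearity (property (1)) together with Remark \ref{rem:zaris}, reduce to the case where $\rho=p\in\bbC^N_{FE}[z]$ is a monic polynomial of even degree satisfying (FE) and each $\phi_i\in\bbC_{M,k}[V_\bbC]$ is a polynomial. In this setting $A(p;\phi)$ is a rational function on $V_\bbC$, and the two iterated restrictions $\cA_{M_i,L}(p,\phi,\lambda)$ extend to rational functions on a Zariski-open subset of the universal parameter space $\bbC^N_{FE}[z]\times\bbC_{M,k}[V_\bbC]\times L_\bbC$. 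On the further Zariski-dense open subset where $p$ also avoids $\textup{Con}^A(L)$, the direct restriction $A|_{L_\bbC}$ is defined, and because restriction of rational functions is transitive, $(P|_M)|_L=P|_L$ whenever both sides make sense, both iterated restrictions coincide with $A|_{L_\bbC}$ there; the uniqueness of rational functions then forces the two iterated restrictions to agree as rational functions on their common domain. Specialising to any specific polynomial $p$ satisfying the hypotheses recovers the equality in the polynomial case.

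\textbf{Entire case and main obstacle.} For entire $\rho$ satisfying (FE), I would apply property (3) to the factorisation $\rho=p\rho_0$, where $p$ is a polynomial satisfying (FE) whose zero set is $\textup{Con}^A(L)\cap\rho^{-1}(0)$ and $\rho_0$ is entire, satisfies (FE), and is nonvanishing on $\textup{Con}^A(L)$. Then $A(\rho;\phi)=(r_0^A)^{-1}A(p;r_{0,1}^A\phi_1,\dots,r_{0,k}^A\phi_k)$: no component of the pole divisor of $(r_0^A)^{-1}$ contains $L$ (since $\rho_0$ avoids $\textup{Con}^A(L)$), so its iterated restrictions behave tautologically; the remaining factor $A(p;\cdot)$ is handled by the polynomial case together with property (4), which permits the entire inputs $\phi_i$ to be approximated by polynomials at generic points of $L_\bbC$ while keeping track of the order of vanishing. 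The principal obstacle I expect is the scenario where $L$ lies in a pole hyperplane $\{\hat{\ga}=z\}$ of $A$ with $\hat{\ga}\in\hat{R}_L\setminus(\hat{R}_{M_1}\cup\hat{R}_{M_2})$ and $\rho(z)=0$: here $A|_{L_\bbC}$ does not exist directly, yet both iterated restrictions do thanks to the cancellations expressed by $m^A_{M_i}(\hat{\ga}|_{M_i})=0$; the Zariski-density argument is designed precisely to bypass this obstacle by deforming the parameter $\rho$ within the polynomial family so that the direct restriction becomes defined on a dense subset.
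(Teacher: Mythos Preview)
Your overall strategy---deform the polynomial part of $\rho$ and use Zariski density to reduce to the case where the direct restriction $A|_{L_\bbC}$ exists---is the same as the paper's. The difference is that you take an unnecessary detour by also reducing $\phi$ to polynomials, and this creates a gap.

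The paper keeps $\phi$ entire throughout. After factoring $\rho=\rho_0 p_0$ with $\rho_0$ nonvanishing on $\textup{Con}^A(L)$ and $p_0\in\bbC^N_{FE}[z]$, it lets $p$ vary in $\bbC^N_{FE}[z]$ and observes that each iterated restriction
\[
\frac{\bigl(\prod_\nu (p\rho_0)(\nu)^{m^A_{M_i}(\nu)}\,A(p\rho_0;\phi)|_{M_{i,\bbC}}\bigr)\big|_{L_\bbC}}{\prod_\nu (p\rho_0)(\nu)^{m^A_{M_i}(\nu)}\big|_{L_\bbC}}
\]
is a quotient of functions holomorphic on the open set $\cU\subset\bbC^N_{FE}[z]\times L_\bbC$ where $p$ avoids the relevant $\textup{Con}$ sets. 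On the smaller Zariski-open $\cU_L$ where $p$ also avoids $\textup{Con}^A(L)$, both iterated restrictions equal the direct restriction $A(p\rho_0;\phi)|_{L_\bbC}$, hence they agree as meromorphic functions on all of $\cU$; specializing $p=p_0$ finishes. No reduction of $\phi$ to polynomials is needed, because the numerators above are already holomorphic in $(p,\lambda)$ for fixed entire $\phi$ (via property~(3) and property~(2)).

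Your version instead first proves the polynomial case for polynomial $\phi$, and then proposes to recover entire $\phi$ by ``approximation at generic points of $L_\bbC$ via property~(4)''. This step is not justified as written. Property~(4) controls the order of vanishing of $A(1;\cdot)$ at a single orbit $W_id_i(\lambda_0)$; it is a pointwise statement, not an identity of meromorphic functions on $L_\bbC$. To make your approximation work you would have to show that for generic $\lambda_0\in L_\bbC$ the value $(A(p;\psi)|_{M_j})|_{L_\bbC}(\lambda_0)$ depends only on a finite jet of $\psi$ along the relevant orbit, and that this jet dependence is the same for $j=1,2$---which is essentially circular. The clean fix is exactly what the paper does: drop the reduction of $\phi$ to polynomials and run your Zariski-density argument in $p$ alone, with $\phi$ held fixed and entire.
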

    \begin{proof}
    The existence of both expressions as meromorphic functions on $L_\bbC$ is clear from the definitions. 
    We have a unique decomposition $\rho=\rho_0.p_0$ where $\rho_0$ is entire and nonvanishing on $\textup{Con}^A(L)$, and 
    $p_0$ is a monic polynomial satisfying (\ref{FE}) which has zeroes only in $\textup{Con}^A(L)$. Say $p_0\in \bbC^N_{FE}[z]$ 
    Define $\cU\subset \bbC^N_{FE}[z]\times L_{\bbC}$ by 
    \begin{multline*}
    \cU=\{(p,\gl)\in \bbC^N_{FE}[z]\times L_{\bbC}\mid p(\mu)\not=0, \\\forall \mu\in 
    \textup{Con}^A(M_1)\cup \textup{Con}^A(M_2)\cup \textup{Con}^A_{M_1}(L)\cup \textup{Con}^A_{M_2}(L)\}
    \end{multline*}
    and $\cU_L:=\{(p,\gl)\in \bbC^N_{FE}[z]\times L_{\bbC}\mid p \mathrm{\ is\ nonvanishing\ in\ } 
    \textup{Con}^A(L)\}$, then clearly $\cU_L\subset \cU\subset  \bbC^N_{FE}[z]\times L_{\bbC}$ are Zariski open subsets. 
    Consider the expressions 
    \begin{equation*}
    \frac{\prod_{\nu}(p\rho_0)(\nu)^{m_{M_1}(\nu)}A(p\rho_0,\phi)|_{M_{1,\bbC}}}{\prod_{\nu}(p\rho_0)(\nu)^{m_{M_1}(\nu)}}|_{L_{\bbC}}\mathrm{\ and\ }
    \frac{\prod_{\nu}(p\rho_0)(\nu)^{m_{M_2}(\nu)}A(p\rho_0,\phi)|_{M_{2,\bbC}}}{\prod_{\nu}(p\rho_0)(\nu)^{m_{M_2}(\nu)}}|_{L_{\bbC}}.
    \end{equation*}
    Both are quotients of entire functions on $\cU$, with denominators which do not vanish 
    identically. Both represent the same meromorphic function $A(p\rho_0,\phi)|_{L_{\bbC}}$ on $\cU_L\subset \cU$, 
    hence on $\cU$. Since the denominators do not vanish identically on $\{p_0\}\times L_{\bbC}\subset \cU$ 
    we can evaluate on this subvariety to obtain the result. 
    \end{proof}
    The following ``heritablity property'' of denominator sets plays an important role.
    \begin{prop}\label{prop:herit}
    Suppose that $L\subset M$ are nested affine subspaces, and $x\in W$.
    Then $\textup{Den}_{nc}(L,\Sigma)\subset \textup{Den}_{nc}(M,\Sigma)|_L$, 
    $\textup{Den}_{nc}(L,x,\Sigma')\subset \textup{Den}_{nc}(M,x,\Sigma')|_L$, and 
    $\textup{Den}(L,x)\subset \textup{Den}(M,x)|_L$. 
    \end{prop}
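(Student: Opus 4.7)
The plan is to exploit the minimality characterization (iii) of Theorem \ref{thm:denbase}. For each meromorphic function $A\in\{A_1,A_2,A_3\}$ of Definition \ref{defn:densets}, I will build a candidate multiplicity function $n_L^A:\textup{Aff}^\sim(L)\to\bbZ_{\geq 0}$ by ``restricting'' $m^A_M$ to $L$, show that $n_L^A$ makes the product in condition (ii) of Theorem \ref{thm:denbase} entire on $L_\bbC$, and then invoke the minimality of $m^A_L$ to deduce $m^A_L\leq n_L^A$. Specializing $A=A_1,A_2,A_3$ will give the three inclusions asserted in the proposition.

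Concretely, for a fixed $A$, set
\[
n_L^A(\nu):=\sum_{\substack{\mu\in\textup{Aff}^\sim(M)\\ \mu|_L\sim\nu,\ \mu|_L\,\mathrm{nonconstant}}} m^A_M(\mu),\qquad \nu\in\textup{Aff}^\sim(L).
\]
By construction $n_L^A$ has finite support, and the support of $n_L^A$ (counted with multiplicities) is precisely the image of the support of $m^A_M$ under the restriction-to-$L$ map, which is exactly what is meant by $\textup{Den}(M)|_L$. Thus once $m^A_L\leq n_L^A$ is established, the inclusions $\textup{Den}_{nc}(L,\Sigma)\subset\textup{Den}_{nc}(M,\Sigma)|_L$, $\textup{Den}_{nc}(L,x,\Sigma')\subset\textup{Den}_{nc}(M,x,\Sigma')|_L$, and $\textup{Den}(L,x)\subset\textup{Den}(M,x)|_L$ follow by taking $A=A_1,A_2,A_3$ respectively (with $x$-independence of the first inclusion coming from the $x$-independence of $A_1$).

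To verify condition (ii) for $n_L^A$, pick any entire $\rho$ satisfying the functional equation and nonvanishing on the finite set $\textup{Con}^A(M)\cup\textup{Con}^A(L)\cup\textup{Con}^A_M(L)$; by Proposition \ref{prop:rest} the iterated restriction of $A(\rho,\phi)$ first to $M_\bbC$ and then to $L_\bbC$ is well defined and agrees with the direct restriction to $L_\bbC$. By condition (ii) applied on $M$, the function
\[
F_M(\rho,\phi):=\prod_{\mu\in\textup{Aff}^\sim(M)}\rho(\mu(\lambda))^{m^A_M(\mu)}\cdot A(\rho,\phi)|_{M_\bbC}
\]
is entire on $M_\bbC$, hence its restriction to $L_\bbC$ is entire. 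Split the product according to whether $\mu|_L$ is constant or not: the ``constant'' factors $\rho(\mu|_L)^{m^A_M(\mu)}$ are nonzero scalars because $\mu|_L\in\textup{Con}^A_M(L)$ where $\rho$ does not vanish, while the ``nonconstant'' factors regroup into exactly $\prod_{\nu\in\textup{Aff}^\sim(L)}\rho(\nu(\lambda))^{n_L^A(\nu)}$. Dividing out the nonzero scalar shows that
\[
\prod_{\nu\in\textup{Aff}^\sim(L)}\rho(\nu(\lambda))^{n_L^A(\nu)}\cdot A(\rho,\phi)|_{L_\bbC}
\]
is entire on $L_\bbC$. By Remark \ref{rem:zaris}, this is enough to conclude condition (ii) for $n_L^A$ in the sense required by Definition \ref{defn:mult}, and then minimality gives $m^A_L\leq n_L^A$.

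The main potential obstacle is the bookkeeping of constant versus nonconstant restrictions and the guarantee that the intermediate restriction to $M_\bbC$ is well defined. Both are resolved by Proposition \ref{prop:rest}, which forces me to impose the nonvanishing condition on the larger finite set $\textup{Con}^A(M)\cup\textup{Con}^A(L)\cup\textup{Con}^A_M(L)$ rather than only on $\textup{Con}^A(L)$; Remark \ref{rem:zaris} precisely says that this restricted class of test functions $\rho$ still determines $m^A_L$ uniquely, so no information is lost in the argument.
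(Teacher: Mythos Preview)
Your proof is correct and follows essentially the same approach as the paper's: define the pushforward multiplicity $n_L^A$ (the paper calls it $m^{A_i,M}_L$, the ``integral over the fibers'' of the restriction map $\textup{Aff}^\sim(M)\to\textup{Aff}^\sim(L)$) and invoke the minimality clause (iii) of Theorem~\ref{thm:denbase} to get $m^A_L\leq n_L^A$. The paper's proof is a two-line sketch that simply asserts this inequality; your version is more careful in justifying condition (ii) for $n_L^A$, in particular by using Proposition~\ref{prop:rest} to make the iterated restriction $(A|_M)|_L=A|_L$ rigorous and Remark~\ref{rem:zaris} to legitimize testing only against $\rho$'s nonvanishing on the enlarged set $\textup{Con}^A(M)\cup\textup{Con}^A(L)\cup\textup{Con}^A_M(L)$.
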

    \begin{proof}
    Consider the surjective map $\pi^M_L:\textup{Aff}(M)/\sim\to\textup{Aff}(L)/\sim$ obtained by restriction. 
    Let $m^{A_i,M}_L:\textup{Aff}(L)/\sim\to \bbZ_{\geq 0}$ denote the integrals over the fibers of $\pi^M_L$ 
    of the functions $m^{A_i}_M$. 
    By Theorem \ref{thm:denbase} we have $m^{A_i}_L\leq m^{A_i,M}_L$ for all $i$, whence the result. 
    \end{proof}
    \begin{lem}\label{lem:trans}
    Let $L\subset V$ be an affine subsapce, let $x\in W$, and $u\in W'$. 
    Let $W_L\subset W$ denote the pointwise fixator of $L$, and $v\in W_L$. 
    Then  
    $\textup{Den}(L,x,\Sigma)=\textup{Den}(L,ux,\Sigma)=\textup{Den}(L,xv,\Sigma)$ and 
    $\textup{Den}(L,x,\Sigma')=\textup{Den}(L,ux,\Sigma')=\textup{Den}(L,xv,\Sigma')$. Also note:  
    $\textup{Den}_{nc}(L,x,\Sigma)=\textup{Den}_{nc}(L,\Sigma)$ is independent of $x$, and 
    $\textup{Den}_{nc}(L,x,\Sigma')=\textup{Den}_{nc}(L,ux,\Sigma')=\textup{Den}_{nc}(L,xv,\Sigma')$.
    \end{lem}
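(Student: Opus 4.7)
The strategy is to observe that every denominator set $\textup{Den}_{nc}$ or $\textup{Den}$ in the statement is the support of one of the multiplicity functions $m^{A_i}_L$ for $i=1,2,3$, and by the uniqueness clause of Theorem \ref{thm:denbase} (together with Remark \ref{rem:zaris}), $m^{A_i}_L$ depends only on the meromorphic function $A_i(\rho;\phi,\psi)|_{L_\bbC}$. Thus it suffices to show that, after restriction to $L_\bbC$, each of the functions $A_1$, $A_2$, $A_3$ is unchanged upon replacing $x$ by $ux$ (for $u\in W'$) or by $xv$ (for $v\in W_L$).

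For $A_1(\rho;\phi)=\Sigma_W(\phi)$ this is immediate, since $A_1$ makes no reference to $x$ at all. In particular $m^{A_1}_L$ does not depend on $x$, which already yields $\textup{Den}_{nc}(L,x,\Sigma)=\textup{Den}_{nc}(L,\Sigma)$, and shows that the $A_1$-factor in $m^{A_3}_L$ contributes no $x$-dependence.

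For $A_2(\rho;\psi)=\Sigma_{u\in W'x}(\psi)$, invariance under left multiplication by $u\in W'$ holds already on $V_\bbC$: the left coset $W'(ux)=W'x$ is unchanged, so $A_2$ with parameter $ux$ is literally the same meromorphic function as $A_2$ with parameter $x$. For right multiplication by $v\in W_L$, parametrize $W'xv=\{u'xv:u'\in W'\}$. Each summand of $A_2$ at index $u'xv$ has the form $f(u'xv\lambda)$ where $f$ is built from $c'$, $r$ and $\psi$; but for $\lambda\in L_\bbC$ we have $u'xv\lambda=u'x\lambda$ since $v$ fixes $L$ pointwise. Hence the summand at $u'xv$ equals the summand at $u'x$ on $L_\bbC$, and so $A_2|_{L_\bbC}$ is the same whether computed from $x$ or $xv$. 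Both transformations therefore preserve $m^{A_2}_L$, giving the asserted identities for $\textup{Den}_{nc}(L,x,\Sigma')$.

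Finally, $A_3=A_1\cdot A_2$, so $A_3|_{L_\bbC}$ is a product of restrictions just shown to be invariant under both replacements, hence $m^{A_3}_L$ is invariant as well. Taking minima with the ($x$-independent) $m^{A_1}_L$ and with $m^{A_2}_L$ respectively then yields the claimed equalities for $\textup{Den}(L,x,\Sigma)$ and $\textup{Den}(L,x,\Sigma')$. The argument is essentially an unwinding of the definitions; the one subtle point worth flagging is that $A_2$ is \emph{not} invariant on $V_\bbC$ under $x\mapsto xv$ --- only its restriction to $L_\bbC$ is --- but that is exactly the level at which the multiplicity functions $m^{A_i}_L$ are defined, so the gap is harmless.
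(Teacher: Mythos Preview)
Your argument is correct and is exactly the unwinding of definitions that the paper's one-line proof (``These transformation rules are obvious'') leaves to the reader. The only point worth making explicit, which you already note, is that for $x\mapsto xv$ the equality of $A_2$ holds only after restriction to $L_\bbC$; since the characterisation of $m^{A_i}_L$ in Theorem~\ref{thm:denbase}(ii),(iii) refers solely to $A_i|_{L_\bbC}$ (and $\textup{Con}^{A_i}(L)$ is independent of $x$), this is indeed enough.
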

    \begin{proof}
    These transformation rules are obvious. 
    \end{proof}
    Hence we can replace $x$ in Lemma \ref{lem:trans} by 
    any representative $d$ of the double coset $W'xW_0$. 
    \begin{lem}\label{lem:transmove}
    If $L\subset V$ is an affine subspace, and $L=wL_0$ for some $w\in W$,
    then (viewing $w:L_0\to L$ as an isomorphism) for all $x\in W$ we have 
    $w^*(\textup{Den}(L,x))=\textup{Den}(L_0,xw)$. Here, $w^*$\index{$w^*$ pull back via $w$} denotes the pull back via $w|_{L_0}$.
    \end{lem}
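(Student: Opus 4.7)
The plan is to reduce the statement to the uniqueness part of Theorem \ref{thm:denbase}, after first verifying that pullback along $w$ converts the meromorphic function $A_3(\rho;\phi,\psi)$ with parameter $x$ into the same function with parameter $xw$. The first step is a direct change-of-variable computation. Using the substitution $v\mapsto vw^{-1}$ in the sum defining $\Sigma_W$, and the fact that $Ww=W$, a short manipulation gives
\begin{equation*}
\Sigma_W(\phi)(w\mu)=\frac{r(w\mu)}{r(\mu)}\,\Sigma_W(\phi)(\mu)
\end{equation*}
as meromorphic functions on $V_\bbC$. Similarly, using $W'x\cdot w=W'(xw)$ and the substitution $u\mapsto uw^{-1}$ in the sum defining $\Sigma_{u\in W'x}$, one obtains
\begin{equation*}
\Sigma_{u\in W'x}(\psi)(w\mu)=\frac{r(\mu)}{r(w\mu)}\,\Sigma_{u'\in W'xw}(\psi)(\mu).
\end{equation*}
The crucial point is that the factors $r(w\mu)/r(\mu)$ and $r(\mu)/r(w\mu)$ cancel upon multiplication, yielding the clean identity
\begin{equation*}
A_3(\rho;\phi,\psi;x)(w\mu)=A_3(\rho;\phi,\psi;xw)(\mu).
\end{equation*}

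Restricting the above identity to $\mu\in L_{0,\bbC}$, so $w\mu\in L_\bbC$, gives the pullback relation
$(A_3(\rho;\phi,\psi;x)|_{L_\bbC})\circ w=A_3(\rho;\phi,\psi;xw)|_{L_{0,\bbC}}$ as meromorphic functions on $L_{0,\bbC}$. The pullback $w^*\colon \textup{Aff}(L)\to \textup{Aff}(L_0)$ defined by $(w^*\nu)(\mu)=\nu(w\mu)$ is a bijection of affine linear functions; since it is affine it respects the equivalence $\nu\sim 1-\nu$ and so descends to a bijection $\textup{Aff}^\sim(L)\to\textup{Aff}^\sim(L_0)$.

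Next, I would invoke the uniqueness of the minimal multiplicity function from Theorem \ref{thm:denbase}. Substituting $\lambda=w\mu$ in the entire expression
$\prod_{\nu\in \textup{Aff}^\sim(L)}\rho(\nu(\lambda))^{m_{(L,x)}(\nu)}A_3(\rho;\phi,\psi;x)|_{L_\bbC}(\lambda)$
and applying the pullback identity, one sees that the multiplicity function $w^*m_{(L,x)}$ on $\textup{Aff}^\sim(L_0)$ renders the analogous expression for $A_3(\rho;\phi,\psi;xw)|_{L_{0,\bbC}}$ entire on $L_{0,\bbC}$. By minimality of $m_{(L_0,xw)}$, this yields $m_{(L_0,xw)}\leq w^*m_{(L,x)}$. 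Running the same argument with the inverse isomorphism $w^{-1}\colon L\to L_0$ (so with parameter $xw$ replaced by $(xw)w^{-1}=x$) gives the reverse inequality $w^*m_{(L,x)}\leq m_{(L_0,xw)}$. Hence $m_{(L_0,xw)}=w^*m_{(L,x)}$, and passing to supports gives $\textup{Den}(L_0,xw)=w^*\textup{Den}(L,x)$, as claimed.

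The only genuinely delicate step is the first one: verifying that the two $r$-ratios from $\Sigma_W$ and $\Sigma_{u\in W'x}$ cancel exactly. Once that cancellation is in hand, everything is formal; the deeper content of Theorem \ref{thm:denbase}, namely the characterization of the minimal multiplicity function, does all the work. I expect no further obstacle beyond careful bookkeeping of the sum index substitutions and verifying that the coset $W'x\cdot w$ is indeed $W'(xw)$.
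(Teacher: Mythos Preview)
Your proof is correct and follows essentially the same approach as the paper: the paper's one-line proof simply states the transformation formula $w^*((\Sigma_W(\phi)\Sigma_{W'x}(\psi))|_L)=(\Sigma_W(\phi)\Sigma_{W'xw}(\psi))|_{L_0}$ and declares the result clear, whereas you derive this identity explicitly (by computing the two cocycle factors and observing their cancellation) and then spell out the appeal to the uniqueness of $m^{A_3}_L$ from Theorem~\ref{thm:denbase}. The content is identical; you have just filled in the details the paper leaves implicit.
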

    
    \begin{proof}
    This is clear from the transformation formula (with $\lambda\in L_0$ and $\phi,\psi\in\cP^\fR(V_{\bbC})$):
    \[
    w^*((\Sigma_W(\phi)\Sigma_{W'}(\psi))|_L)(\lambda)=
    (\Sigma_W(\phi)\Sigma_{W'w}(\psi))|_{L_0}(\lambda).\]
    \end{proof}
    \begin{rem}\label{rem:warn}
    This transformation rule is true for $\textup{Den}(L,w)$, but not in general 
    for the subsets $\textup{Den}(L,w,\Sigma)$ and $\textup{Den}(L,w,\Sigma')$ seperately.   
    Indeed, in the situation of Lemma (\ref{lem:trans}) we have the transformation rules (cocycle 
    relations), where $\phi,\psi$ are in $\cP^\fR(V_\bbC)$:
    \begin{equation}\label{eq:trans}
    \Sigma_W(\phi)|_{L}=(w^{-1})^*\left(\frac{r(w\lambda)}{r(\lambda)} \Sigma_W(\phi)|_{L_0}\right)
    \end{equation}
    and 
    \begin{equation}\label{eq:trans'}
    \Sigma_{W'}(\psi)|_{L}=(w^{-1})^*\left(\frac{r(\lambda)}{r(w\lambda)} \Sigma_{W'w}(\psi)|_{L_0}\right).
    \end{equation}
    However, in the special case that $w\in W({\hat{R}}_{L_0})$ then 
    ${\hat{R}}(w)$ consists of coroots which are constant on $L_0$ (thence on $L$). In that case,  
    $w^*(\textup{Den}(L,x,\Sigma))=\textup{Den}(L_0,xw,\Sigma)$ and $w^*(\textup{Den}(L,x,\Sigma'))=\textup{Den}(L_0,xw,\Sigma')$.
    (Indeed $\frac{r(w\lambda)}{r(\lambda)}|_{L_0}$ is a nonzero constant now, provided $\rho$ does not vanish on $\textup{Con}^A(L)$).
    \end{rem}
    \begin{defn}\label{defn:noncst}
    Let $L\subset M$ be nested affine subspaces. Let ${\hat{\fz}}_L$ be the underlying vector space of $L$. We denote by 
    $\textup{Den}^{{\hat{\fz}}_L}_{nc}(M,w)\subset \textup{Den}_{nc}(M,w)$\index{$\textup{Den}(L)$!$\textup{Den}^{{\hat{\fz}}_L}_{nc}(M,w)$} 
    the subset of $\textup{Den}_{nc}(M,w)$ which are 
    not constant on $L$ (and similar notations for other sets of denominators). 
    \end{defn}
    \begin{rem}\label{rem:warn_herit}
    We can combine Proposition \ref{prop:herit} with the formula in Remark \ref{rem:warn} (with similar arguments): Let $L\subset M$, 
    let $w_M\in W({\hat{R}}_L)$ and $M=w_M(M_0)$. Then 
    $\textup{Den}_{nc}(L,w,\Sigma)\subset \textup{Den}^{{\hat{\fz}}_L}_{nc}(M,w,\Sigma)|_L=(w_M^{-1})^*(\textup{Den}^{{\hat{\fz}}_L}_{nc}(M_0,ww_M,\Sigma)|_{w_M^{-1}(L)})$
    and also: 
    $\textup{Den}_{nc}(L,w,\Sigma')\subset \textup{Den}_{nc}^{{\hat{\fz}}_L}(M,w,\Sigma')|_L=(w_M^{-1})^*(\textup{Den}_{nc}^{{\hat{\fz}}_L}(M_0,ww_M,\Sigma')|_{w_M^{-1}(L)})$. 
    \end{rem}
    \subsection{Standard and essentially standard pole spaces}
    Let ${\hat{R}}_+\subset {\hat{R}}$\index{$R$! ${\hat{R}}_+$} be the set of positive coroots, with basis $\hat{\Delta}$\index{$\hat{\Delta}$}.
    We have an action of $W'\times W$ on the set of pairs $(L,x)$ with $L$ a pole space and $w\in W$ given by $(u,w)(L,x)=(wL,uxw^{-1})$. 
    
    We call a pole space $L$ \emph{essentially standard}\index{$L$ pole space! $L$ essentialy standard} if $\hat{R}_L\subset {\hat{R}}$ is a standard Levi subsystem, 
    and \emph{standard}\index{$L$ pole space! $L$ standard} if $L$ is essentially standard and $\hat{\ga}(L)\geq 0$ for all $\ga\in {\hat{R}}_{L,+}$. 
    If $L$ is a pole space then there exist standard pairs $(L_0,w)$ \index{$L$ pole space! $(L_0,w)$ standard pair} representing $L$, i.e. $L_0$ is a standard 
    pole space, $w\in W$ is such that $L=w(L_0)$. 
    If $L$ is an essentially standard pole space then there exists a $w\in W(\hat{\Delta}(L))$\index{$W(\hat{\Delta}(L))$} (with 
    $\hat{\Delta}(L)\subset {\hat{R}}_{L,+}$\index{$\hat{\Delta}$! $\hat{\Delta}(L)$}
    the basis of simple coroots of ${\hat{R}}_L$), and a standard pole space $L_0$ such that the standard pair $(L_0,w)$ represents $L$ 
    (and in this case $\hat{\Delta}(L)=\hat{\Delta}(L_0)$).
    More generally:
    \begin{prop}\label{prop:essstdhull}
    Let $L$ be an essentially standard pole space, and $M\supset L$ a 
    pole space enveloping $L$. Then there exists a $w_M\in W(\hat{\Delta}(L))$ and 
    a standard pole space $M_0$ such that $M=w_M(M_0)$. 
    \end{prop}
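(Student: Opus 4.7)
The plan is to reduce the proposition to the special case $M=L$ recorded in the paragraph immediately preceding the statement, using as the key input the classical fact that any Levi subsystem of a root system is conjugate, via the Weyl group of that root system, to a standard Levi subsystem. Everything else will be formal manipulation of the containment $\hat{\Delta}(M')\subset \hat{\Delta}(L)$ for an intermediate essentially standard pole space $M'$.

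First I would establish the basic containment of coroot systems. Since $L\subset M$ we have $V^L\subset V^M$, and $\hat{R}_M=\{\hat{\ga}\in \hat{R}\mid \hat{\ga}|_{V^M}=0\}\subset \{\hat{\ga}\in \hat{R}\mid \hat{\ga}|_{V^L}=0\}=\hat{R}_L$. Moreover $\hat{R}_M=\hat{R}_L\cap\{\hat{\ga}\mid \hat{\ga}|_{V^M}=0\}$ is cut out of $\hat{R}_L$ by a linear condition, so $\hat{R}_M$ is itself a Levi subsystem of $\hat{R}_L$. Because $L$ is essentially standard, $\hat{R}_L$ is a standard Levi of $\hat{R}$ with basis $\hat{\Delta}(L)\subset \hat{\Delta}$.

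The core step is to apply the classical conjugacy result inside the ambient root system $\hat{R}_L$, with the chamber structure coming from $\hat{\Delta}(L)$. This yields $u\in W(\hat{R}_L)=W(\hat{\Delta}(L))$ such that $u^{-1}(\hat{R}_M)$ has its basis contained in $\hat{\Delta}(L)\subset \hat{\Delta}$. Set $M':=u^{-1}(M)$, so that $M'$ is a pole space with $\hat{R}_{M'}=u^{-1}(\hat{R}_M)$ a standard Levi of $\hat{R}$. Hence $M'$ is essentially standard, and $\hat{\Delta}(M')\subset \hat{\Delta}(L)$. I would then invoke the special case of the statement recorded in the paragraph just before the proposition, applied to the essentially standard pole space $M'$: there exist $v\in W(\hat{\Delta}(M'))$ and a standard pole space $M_0$ with $M'=v(M_0)$. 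Since $\hat{\Delta}(M')\subset \hat{\Delta}(L)$ we have $v\in W(\hat{\Delta}(L))$, and therefore $w_M:=uv\in W(\hat{\Delta}(L))$ satisfies $M=u(M')=uv(M_0)=w_M(M_0)$, which is the required representation.

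The only non-routine input is the refined Weyl-group conjugacy that produces $u$: one must conjugate $\hat{R}_M$ into standard form using an element of $W(\hat{R}_L)$ and not merely of $W$. This is just the classical theorem about Levi subsystems applied internally to the root system $\hat{R}_L$ with its own simple coroots $\hat{\Delta}(L)$, so no new geometric argument is required; the ``hard'' part is really bookkeeping to ensure at the end that $v$ lies in the same subgroup $W(\hat{\Delta}(L))$ as $u$, which is guaranteed by the nesting $\hat{\Delta}(M')\subset \hat{\Delta}(L)$.
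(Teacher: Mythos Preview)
Your proof is correct and follows essentially the same approach as the paper: observe that $\hat{R}_M$ is a Levi subsystem of $\hat{R}_L$, conjugate it to standard form by some $u\in W(\hat{\Delta}(L))$, and then adjust by a further element $v\in W(\hat{\Delta}(M'))\subset W(\hat{\Delta}(L))$ to make the resulting pole space standard. The only cosmetic difference is that you invoke the ``special case $M=L$'' from the preceding paragraph as a black box for the second step, whereas the paper spells it out directly (choosing $v$ so that the simple coroots of $\hat{\Delta}(u(M))$ are nonnegative on $vu(M)$).
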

    \begin{proof}
    The set ${\hat{R}}(M)$ of coroots which are constant on $M$ form a Levi subsystem 
    inside ${\hat{R}}(L)$. Hence there exists a $u\in W(\hat{\Delta}(L))$ such that 
    $u({\hat{R}}(M))={\hat{R}}(u(M))\subset {\hat{R}}(L)$ is standard, and a 
    $v\in  W(\hat{\Delta}(u(M)))\subset W(\hat{\Delta}(L))$ such that alle 
    coroots in $\hat{\Delta}(u(M))$ are nonnegative 
    on $vu(M)$. Take $M_0=vu(M)$ and $w_M=(vu)^{-1}$.
    \end{proof}
    \begin{prop}\label{prop:tau_L}
    If $L$ is a standard residual pole space we define $\tau_L:=-w_L$\index{$\tau_L$} where $w_L\in W(\hat{\Delta}(L))$ denotes 
    the longest element. Then $\tau_L(L)=L$ and if $v=c_L+x\in L$ then $\tau_L(v)=c_L-x$. Put 
    $\Sigma_{W,\tau}(\phi):=\Sigma_{W}(\phi)\circ(-\textup{id}_V)$\index{$\Sigma_{W}(\phi)$!$\Sigma_{W,\tau}(\phi)$}, 
    in other words, 
    \begin{equation*}
    \Sigma_{W,\tau}(\phi)(\gl)=\sum_{w\in W}c(w\gl)\frac{r(w\gl)}{r(\gl)}\phi(w\gl)
    \end{equation*}
    This function satisfies the requirements of Theorem \ref{thm:denbase} and thus we can define 
    its set of denominators $\textup{Den}_{nc}(L,\Sigma_\tau)$\index{$\textup{Den}(L)$!$\textup{Den}_{nc}(L,\Sigma_\tau)$}
    in a similar fashion as $\textup{Den}_{nc}(L,\Sigma)$. 
    Then we have $\tau_L(\textup{Den}_{nc}(L,\Sigma))=\textup{Den}_{nc}(L,\Sigma_\tau)$.
    \end{prop}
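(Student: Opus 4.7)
My plan is to proceed in three stages.

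First, I would show $\tau_L(L)=L$ and $\tau_L(c_L+x)=c_L-x$ for $x\in V^L$. Since each $\hat\alpha\in\hat R_L$ is constant on $L=c_L+V^L$, its linear part vanishes on $V^L$, so every reflection $s_{\hat\alpha}$ (hence $w_L$) fixes $V^L$ pointwise. The claim then reduces to $w_L(c_L)=-c_L$, which is the crucial input: under the bijection (\ref{eq:BC}), $2c_L$ is the weighted Dynkin diagram of a nilpotent orbit $\mathcal{O}\subset\hat{\mathfrak g}$, and the self-duality $-\mathcal{O}=\mathcal{O}$ of nilpotent orbits in a reductive Lie algebra translates into the desired identity; I would cite this from \cite{O-Supp}. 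Granting this, $\tau_L(c_L+x)=-w_L(c_L)-w_L(x)=c_L-x$, as asserted.

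Second, I would verify that $\Sigma_{W,\tau}$ satisfies (1)--(4) of Theorem \ref{thm:denbase}. The key observation is the identity
\[
\Sigma_{W,\tau}(\phi)(\lambda)=\Sigma_W(\phi)(-\lambda),
\]
which, after reindexing $w\mapsto w^{-1}$ in the defining sum, follows from the $W$-invariance of $r(\lambda)r(-\lambda)=\prod_{\alpha\in R}\rho(\hat\alpha(\lambda))$ (and this, in turn, from the functional equation $\rho(s)=\rho(1-s)$). Because $\Sigma_{W,\tau}$ is thereby expressed as a pullback of $\Sigma_W$ along the linear isomorphism $-\textup{id}_V$, the axioms (1)--(4) transfer directly, and the associated $r$-factor is $r^{A}(\lambda)=r(\lambda)$.

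Third, I would compare the denominator sets. Restricting the identity above to $L_\bbC$ and pulling back via $\tau:L\to-L$, $\lambda\mapsto-\lambda$, yields
\[
\textup{Den}_{nc}(L,\Sigma_\tau)=\tau^*(\textup{Den}_{nc}(-L,\Sigma)).
\]
Since $-L=w_L(L)$ with $w_L\in W$, Lemma \ref{lem:transmove} gives
\[
w_L^*(\textup{Den}_{nc}(-L,\Sigma))=\textup{Den}_{nc}(L,w_L,\Sigma)=\textup{Den}_{nc}(L,\Sigma),
\]
the last equality by the $x$-independence of $\textup{Den}_{nc}(L,x,\Sigma)$ recorded in Lemma \ref{lem:trans}. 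Using $w_L^2=1$, the composition $\tau^*\circ(w_L^*)^{-1}$ on $\textup{Aff}(L)$ coincides with pullback by $-w_L|_L=\tau_L$, so
\[
\textup{Den}_{nc}(L,\Sigma_\tau)=\tau_L^*(\textup{Den}_{nc}(L,\Sigma))=\tau_L(\textup{Den}_{nc}(L,\Sigma)).
\]

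The main obstacle is stage one, specifically providing a clean reference or derivation for $w_L(c_L)=-c_L$ in the language of standard residual pole spaces. Stages two and three are essentially formal, reducing respectively to the $W$-invariance of $\prod_{\alpha\in R}\rho(\hat\alpha(\lambda))$ and to the transformation rule of Lemma \ref{lem:transmove}.
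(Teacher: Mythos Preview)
Your argument is correct and follows essentially the same route as the paper: both reduce the denominator identity to the cocycle relation $\Sigma_W(w_L\mu)=\tfrac{r(w_L\mu)}{r(\mu)}\Sigma_W(\mu)$ together with the observation that, since $w_L\in W(\hat R_L)$, the factor $\tfrac{r(\lambda)}{r(w_L\lambda)}$ is a nonzero constant on $L$. The paper writes this in one line as $\Sigma_{W,\tau}|_L=(\text{const})\cdot(\Sigma_W\circ\tau_L)|_L$; you factor through $-L$ first, which is the same computation rearranged.

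Two small points. In stage~3 you invoke Lemma~\ref{lem:transmove}, but that lemma is stated for $\textup{Den}(L,x)$, not for $\textup{Den}_{nc}(\cdot,\Sigma)$ alone; the separate transformation rule for the $\Sigma$-part requires $w_L\in W(\hat R_L)$ and is the content of Remark~\ref{rem:warn}, which you should cite instead. In stage~1, your appeal to self-duality of nilpotent orbits is correct, but note that it is the bijection for $\hat R_L$ (residual \emph{points} in $V_L$ versus distinguished orbits of $\hat{\mathfrak g}^L_{ss}$) that is relevant, not the bijection~(\ref{eq:BC}) for the ambient $\hat R$.
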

    \begin{proof}
    Note $\Sigma_{W,\tau}|_L=((\frac{r(\gl))}{r(w_L(\gl))}\circ\tau_L))|_L(\Sigma_{W}\circ\tau_L))|_L=
    (\frac{r(\gl)}{r(w_L(\gl))})|_L(\Sigma_{W}\circ\tau_L))|_L$. The identity of denominator sets follows trivially
    since the cocycle factor is a nonzero constant on $L$.
    \end{proof}
    \subsection{Positive denominators of essentially standard pole spaces}
    Let $L$ be an affine subspace, and $X\in \textup{Aff}^*(L)$. We call the unique element $\nabla_X^L\in \textup{Aff}^*(L)$\index{$\nabla_X^L$}
    such that $\nabla_X^L(c_L)=0$ and such that $X-\nabla_X^L$ is constant on $L$ the \emph{gradient} of $X$ on $L$. 
    
    Let $L$ be an essentially standard pole space. We define $L^+\subset L$ by 
    \begin{equation*}
    L^+:=\{x\in L\mid \forall
    \hat{\ga}\in {\hat{R}}_+\backslash {\hat{R}}_+(L):\ \nabla^L_{\hat{\ga}}(x)>0\}
    \end{equation*}
    \index{$L$ pole space!$L_+$ positive cone}
    and we define $\textup{Aff}_+^*(L)\subset \textup{Aff}^*(L)$\index{$\textup{Aff}(L)$!$\textup{Aff}_+^*(L)$}, 
    the subset of positive affine 
    functions, by:
    \begin{equation*}
    \textup{Aff}_+^*(L)=\{X\in \textup{Aff}^*(L)\mid \nabla_X^L \mathrm{\ is\ positive\ on\ } L^+\}
    \end{equation*}
    The nonconstant restictions of coroots to some essentially standard pole space $L$ either 
    have a positive gradient or a negative gradient. 
    \begin{defn} Let $L$ be an essentially standard pole space. 
    Define $\textup{Den}(L,w)_+=\{Y\in \textup{Den}(L,w)\mid Y\in \textup{Aff}_+^*(L)\}$ and 
    $\textup{Den}(L,w)_-=\{Y\in \textup{Den}(L,w)\mid -Y\in\textup{Aff}_+^*(L)\}$. 
    Observe that every equivalence class of denominators in $\textup{Den}(L,w)$ on $L$ has a unique 
    representative in  $\textup{Den}(L,w)_+$ and a unique representative in $\textup{Den}(L,w)_-$.
    We define two sets of affine linear functions on $L$:
    \begin{align*}
    D_L&:=\{Y\in\textup{Aff}_+^*(L)\mid Y(c_L)\geq 1\}\index{$D_L$}\\
    D'_L&:=\{Y\in\textup{Aff}_+^*(L)\mid Y(c_L)\leq 0\}\index{$D_L$!$D'_L$}
    \end{align*}
    Observe that the set of equivalence classes $[D_L]$ of elements in $D_L$ 
    is disjoint from the set of equivalence classes $[D'_L]$ of elements in $D'_L$.
    \end{defn}
    We have the following useful refinement of Remark \ref{rem:warn_herit}:
    \begin{prop}\label{prop:IS}
    Let $L\subset M$ be pole spaces.  
    Choose $w\in W$ such that $(L,1)=w(L_0,w)$ with $L_0$ standard. Put $M^0=w^{-1}(M)\supset L_0$.
    Let  $w_{M^0}\in W(\hat{\Delta}(L_0))$ and $M_0$ a standard pole space such that $M^0=w_{M^0}(M_0)$.
    We have: 
    \begin{align*}
    \textup{Den}^{{\hat{\fz}}_{L_0}}_{nc}(M^0,w,\Sigma)_+&=(w_{M^0}^{-1})^*(\textup{Den}^{{\hat{\fz}}_{L_0}}_{nc}(M_0,ww_{M^0},\Sigma)_+) \mathrm{\ and}\\
    \textup{Den}^{{\hat{\fz}}_{L_0}}_{nc}(M^0,w,\Sigma')_+&=(w_{M^0}^{-1})^*(\textup{Den}^{{\hat{\fz}}_{L_0}}_{nc}(M_0,ww_{M^0},\Sigma')_+)
    \end{align*}
    so that in particular: 
    \begin{align*}
    \textup{Den}_{nc}(L_0,w,\Sigma)_+&\subset (w_{M^0}^{-1})^*(\textup{Den}^{{\hat{\fz}}_{L_0}}_{nc}(M_0,ww_{M^0},\Sigma')_+)|_{L_0}\mathrm{\ and}\\
    \textup{Den}_{nc}(L_0,w,\Sigma')_+&\subset (w_{M^0}^{-1})^*(\textup{Den}^{{\hat{\fz}}_{L_0}}_{nc}(M_0,ww_{M^0},\Sigma')_+)|_{L_0}
    \end{align*}
    \end{prop}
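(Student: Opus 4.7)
The plan is to bootstrap Remark \ref{rem:warn_herit} (which gives the analogous set-level equality and inclusion without the ``$+$''-refinement) by carefully tracking which representative of each equivalence class $\{Y,1-Y\}$ is picked. The hypothesis $w_{M^0}\in W(\hat{\Delta}(L_0))\subset W(\hat{R}_{L_0})$ is what makes everything work: it guarantees that every coroot in $\hat{R}(w_{M^0})$ is constant on $L_0$.

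First I would apply the cocycle relation (\ref{eq:trans}) to $M^0=w_{M^0}(M_0)$, obtaining
\[
\Sigma_W(\phi)|_{M^0}=(w_{M^0}^{-1})^*\Bigl(\tfrac{r(w_{M^0}\lambda)}{r(\lambda)}\,\Sigma_W(\phi)|_{M_0}\Bigr),
\]
and analogously for $\Sigma'_{W'}$ via (\ref{eq:trans'}). The cocycle factor is a product of $\rho(\hat{\alpha}(\lambda))^{\pm 1}$ over $\hat{\alpha}\in\hat{R}(w_{M^0})\subset\hat{R}_{L_0}$, i.e.\ over coroots constant on $L_0$. Consequently every equivalence class of denominators introduced by the cocycle is represented by an affine function constant on $L_0$, so these classes drop out of the $\hat{\fz}_{L_0}$-filtered denominator sets. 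Modulo this observation, the set-level equalities
\[
\textup{Den}^{\hat{\fz}_{L_0}}_{nc}(M^0,w,\Sigma)=(w_{M^0}^{-1})^*\textup{Den}^{\hat{\fz}_{L_0}}_{nc}(M_0,ww_{M^0},\Sigma),
\]
and its $\Sigma'$-analogue, are exactly the positive-free form of Remark \ref{rem:warn_herit}.

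Next I would refine these equalities to the $+$-subsets. Each equivalence class $\{Y,1-Y\}$ admits a unique positive representative in $\textup{Aff}_+^*$, so it suffices to verify that $(w_{M^0}^{-1})^*$ matches positive representatives on $M_0$ with positive representatives on $M^0$. A denominator in the $\hat{\fz}_{L_0}$-filtered set is the restriction of some $\hat{\alpha}\in\hat{R}$ with $\hat{\alpha}\notin\hat{R}_{L_0}$, so the key input is the standard Weyl-combinatorial fact that $W(\hat{\Delta}(L_0))$ permutes the complementary positive coroots $\hat{R}_+\setminus\hat{R}_{L_0,+}$. Since both $M^0$ and $M_0$ are essentially standard, the positive cones $M^{0,+}$ and $M_0^+$ are cut out precisely by these complementary coroots, and the permutation shows that positivity of $\nabla^{M_0}_{\hat{\alpha}}$ on $M_0^+$ is equivalent to positivity of $\nabla^{M^0}_{w_{M^0}(\hat{\alpha})}$ on $M^{0,+}$. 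This matches positive representatives under $(w_{M^0}^{-1})^*$ and yields both equalities of the proposition.

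Finally, the ``so that in particular'' inclusions for $L_0$ follow from Proposition \ref{prop:herit} applied to the nested pair $L_0\subset M^0$: any denominator of $\Sigma_W(\phi)|_{L_{0,\bbC}}$ (resp.\ of $\Sigma'_{W'}(\psi)|_{L_{0,\bbC}}$) is inherited from one on $M^0$, and such an inherited denominator is necessarily non-constant on $L_0$, hence lives in the $\hat{\fz}_{L_0}$-piece; combining this with the equalities just proved produces the stated containments. The main obstacle I anticipate is the positivity-preservation step: one must verify carefully that the definitions of $\textup{Aff}_+^*$ on $M^0$ and on $M_0$ are compatible under $w_{M^0}$ on coroots outside $\hat{R}_{L_0}$, which reduces to the Weyl-combinatorial fact above but requires matching the two positive chambers through the isomorphism $w_{M^0}:M_0\to M^0$ modulo $V^{L_0}$. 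Once this is in hand, the remainder is essentially bookkeeping.
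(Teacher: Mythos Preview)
Your approach is essentially the same as the paper's: bootstrap Remark~\ref{rem:warn_herit} via the cocycle relation, then use the Weyl-combinatorial fact that $W(\hat{\Delta}(L_0))$ permutes $\hat{R}_+\setminus\hat{R}_{L_0,+}$ to preserve positivity, and finally invoke heritability (Proposition~\ref{prop:herit}) for the inclusions. The paper's proof is terse but records exactly the same two facts: $w_{M^0}^{-1}(\hat{\fz}_{L_0})=\hat{\fz}_{L_0}$ and the positive-cone compatibility, then cites Remark~\ref{rem:warn_herit}.

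One small point worth adjusting: you phrase the positivity step in terms of the cones $M^{0,+}$ and $M_0^+$, but $M^0$ need not be essentially standard (only $L_0$ and $M_0$ are), so $M^{0,+}$ is not a priori defined. The paper sidesteps this by measuring positivity via $L_0^+$ instead, noting $w_{M^0}(w_{M^0}^{-1}(L_0)^+)=L_0^+$; since the denominators in question carry the superscript $\hat{\fz}_{L_0}$ (i.e.\ are non-constant on $L_0$), positivity of their gradients is naturally tested on $L_0^+$, and your permutation argument for $\hat{R}_+\setminus\hat{R}_{L_0,+}$ gives exactly this. So the substance of your argument is correct; only the framing of which positive cone is being used should be aligned with $L_0$ rather than $M^0$.
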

    \index{$\textup{Den}(L)$!{$\textup{Den}_{nc}(L_0,x,\Sigma)_+$}}
    \begin{proof}
    Given $L_0\subset M^0$ Proposition \ref{prop:essstdhull} implies the existence of a $w_{M^0}\in W(\hat{\Delta}(L_0))$ as stated. 
    This element satisfies $w_{M^0}^{-1}({\hat{\fz}}_{L_0})={\hat{\fz}}_{w_{M^0}^{-1}(L_0)}={\hat{\fz}}_{L_0}$ and $w_{M^0}(w_{M^0}^{-1}(L_0)^+)=L_0^+$. 
    In combination with Remark \ref{rem:warn_herit} this implies the result.   
    \end{proof}
    \subsection{The denominators of good regular pole spaces}\label{sub:reg}
    A pole space $L$ is called \emph{regular} if no coroots vanish identically on $L$. 
    \begin{lem}
    A pole space is regular iff $\hat{P}_L$ is a base of the Levi subsystem ${\hat{R}}_L\subset {\hat{R}}$ of coroots which are constant on $L$.
    In particular a regular pole space is residual. 
    \end{lem}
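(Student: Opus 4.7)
I will split the ``iff'' into its two directions and then deduce residuality.

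\emph{Sufficiency.} If $\hat{P}_L = \{\hat\alpha_1,\dots,\hat\alpha_k\}$ is a base of $\hat{R}_L$, every $\hat\alpha \in \hat{R}_L$ writes as $\sum_i n_i \hat\alpha_i$ with $(n_i) \in \bbZ^k$ of a common sign and not all zero; since $\hat\alpha_i(c_L) = 1$, we get $\hat\alpha(c_L) = \sum_i n_i \neq 0$, so $\hat{Z}_L = \emptyset$ and $L$ is regular.

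\emph{Necessity.} Assume $L$ is regular. Then $c_L$ lies in the interior of a unique Weyl chamber of $\hat{R}_L$ inside $V_L$; I fix the positive system $\hat{R}_{L,+}$ making $c_L$ strictly dominant, with simple coroots $\hat\Delta(L) = \{\hat\alpha_1,\dots,\hat\alpha_k\}$ and values $t_i := \hat\alpha_i(c_L) > 0$. The pole-space identity $L = \bigcap_{\hat\alpha \in \hat{P}_L(\gO)}\{\hat\alpha = 1\}$ yields the chain $V_L = \textup{span}(\hat{P}_L(\gO)) \subseteq \textup{span}(\hat{P}_L) \subseteq \textup{span}(\hat{R}_L) \subseteq V_L$, forcing equality throughout; in particular $\dim V_L = \textup{rank}(\hat{R}_L) = k$, and $\hat{P}_L$ itself spans $V_L$. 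The main step is then to show $\hat{P}_L = \hat\Delta(L)$: any $\hat\beta = \sum_i n_i \hat\alpha_i \in \hat{P}_L$ has $\sum_i n_i t_i = 1$ with $n_i \in \bbZ_{\geq 0}$, and $k$ linearly independent $\hat\beta^{(1)},\dots,\hat\beta^{(k)} \in \hat{P}_L$ assemble into a nonnegative integer matrix $N = (n_i^{(j)})$ satisfying $N t = \mathbf{1}$, so $t = N^{-1}\mathbf{1}$. Using the dichotomy that every non-simple positive coroot decomposes as $\hat\gamma + \hat\delta$ with $\hat\gamma,\hat\delta \in \hat{R}_{L,+}$ --- whence $\hat\gamma(c_L) + \hat\delta(c_L) = 1$ with both summands strictly positive --- one rules out every configuration of $N$ except a permutation matrix, which forces $t_i = 1$ for all $i$ and $\hat{P}_L = \hat\Delta(L)$. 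I expect this pinning-down to be the main obstacle; for complicated $\hat{R}_L$ it is cleanest to fall back on the classification of residual pole spaces via weighted Dynkin diagrams of nilpotent orbits, cf.\ (\ref{eq:BC}).

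\emph{Residuality.} Once $\hat{P}_L$ is a base of $\hat{R}_L$, $|\hat{P}_L| = k = \dim V_L = \textup{codim}(L)$ and $|\hat{Z}_L| = 0$, so $|\hat{P}_L| = |\hat{Z}_L| + \textup{codim}(L)$, which is precisely the residual condition.
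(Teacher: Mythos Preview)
Your sufficiency direction is fine. In the necessity direction you have the right ingredients but in the wrong order, and the combinatorial core is not actually proved.

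The sentence ``one rules out every configuration of $N$ except a permutation matrix'' is a hand-wave: the decomposition $\hat\beta=\hat\gamma+\hat\delta$ only tells you that if a non-simple coroot lies in $\hat P_L$ then some simple value $t_i$ is strictly below $1$, which by itself does not force a contradiction (a priori the $t_i$ are arbitrary positive reals). You then propose to ``fall back on the classification via weighted Dynkin diagrams, cf.\ (\ref{eq:BC})'', but (\ref{eq:BC}) is a statement about \emph{residual} pole spaces, and in your layout residuality is only established in the last paragraph, \emph{after} you have already assumed $\hat P_L=\hat\Delta(L)$. So as written the argument is circular.

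The fix is exactly what the paper does: reverse the order. From your own spanning argument you already have $|\hat P_L|\geq k=\textup{codim}(L)$ and, by regularity, $|\hat Z_L|=0$; hence $L$ is residual \emph{before} you know anything about $\hat P_L$ being a base. Now you may legitimately invoke the classification: $2c_L$ is (up to $W(\hat R_L)$) the weighted Dynkin diagram of a distinguished nilpotent orbit of $\hat\fg^L$, and by Jantzen's theorem distinguished orbits are even, so $c_L$ is integral on $\hat R_L$. Thus each $t_i=\hat\alpha_i(c_L)\in\bbZ_{>0}$; then for any $\hat\beta=\sum n_i\hat\alpha_i\in\hat P_L$ the equation $\sum n_i t_i=1$ with $n_i\in\bbZ_{\geq 0}$ forces $\hat\beta$ to be simple, and since $\hat P_L$ spans you get $\hat P_L=\hat\Delta(L)$. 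The integrality is the missing idea that makes your matrix argument go through in one line.
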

    \begin{proof}
    We may assume without loss of generality that $L$ is standard.
    It is clear that $L$ is residual, since there are no zero hyperplanes of $\Omega_r$ vanishing identically on $L$, 
    while the number of $\Omega_r$ pole hyperplanes (even those of $\Omega$, which is a priori a smaller set) is at least the codimension of $L$. 
    Hence $c_L$ is integral on $\hat{R}_L$ (since it is well known that $2c_L$ is in the $W(\hat{R}_L)$ orbit of the weighted Dynkin diagram of 
    a \emph{distinguished} nilpotent orbit of the standard Levi subalgebra $\hat{g}^L=\hat{\fg}(\hat{R}_L)$\index{$\hat{\fg}^L$} 
    whose root system is $\hat{R}_L$, 
    in combination with Jantzen's result on the evenness of distinguished nilpotent orbits). 
    Now $\hat{P}_L\subset \hat{R}_{L,+}$ spans $\bbR {\hat{R}}_L$, we have $\hat{\gb}(c_L)=1$ for all $\hat{\gb}\in\hat{P}_{L}\subset \hat{R}_{L,+}$, 
    while $\hat{\ga}(c_L)\in\bbZ_+$ for all $\ga\in \hat{\Delta}(L)$ by the above integrality and regularity of $L$. This implies that 
    $\hat{P}_L= \hat{\Delta}(L)$.
    \end{proof}
    \begin{defn}\label{defn:good} 
    Let $L$ be regular pole space and $w\in W$. We call the pair $(L,w)$ \emph{good}\index{$L$ pole space!$(L,w)$ good regular pair} 
    if $w(\hat{P}_L)\subset {\hat{R}}_+\cup \hat{R}'$. We say $L$ is good\index{$L$ pole space!$L$ good regular pole space} 
    if $(L,e)$ is good, and more generally, 
    we say $L$ is $w$-good if $(L,w)$ is good\index{$L$ pole space!$L$ $w$ good regular pole space}. 
    \end{defn}
    \begin{lem}\label{lem:pospole} Let $L$ be a regular pole space, and $(L_0,w)$ a standard pair representing $L$.  
    Let $w=ud$ with $u\in W'$ and $d$ minimal in $W'd$.Then $L$ is good iff $d(\hat{P}_{L_0})\subset {\hat{R}}_+$. 
    \end{lem}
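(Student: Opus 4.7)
The plan is to translate the goodness of $L$ into a condition on $d(\hat{P}_{L_0})$ by decoupling the factorization $w=ud$ and then invoking two standard parabolic combinatorial facts. Since $L=w(L_0)$ we have $\hat{P}_L=w(\hat{P}_{L_0})$, so $L$ is good precisely when $u(d(\hat{P}_{L_0}))\subset \hat{R}_+\cup \hat{R}'$. Moreover, since $L_0$ is standard and regular, the preceding lemma gives $\hat{P}_{L_0}=\hat{\Delta}(L_0)\subset \hat{R}_+$, which is the ingredient I will need to reach a contradiction in the $(\Rightarrow)$ direction.

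First I would recall two well-known facts about the parabolic structure. (a) For any $u\in W'$, the sets $\hat{R}_+\setminus \hat{R}'$ and $\hat{R}_-\setminus \hat{R}'$ are $W'$-stable: applying a simple reflection $s_{\alpha'}$ with $\alpha'\in\hat{\Delta}'$ to a coroot not in $\hat{R}'$ does not alter its expansion coefficient on any simple coroot outside $\hat{\Delta}'$, so its sign is preserved. (b) Minimality of $d$ in $W'd$ is equivalent to $d^{-1}(\hat{R}'_+)\subset \hat{R}_+$, the standard characterization of minimal right coset representatives in a parabolic quotient.

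For the direction $(\Leftarrow)$, assuming $d(\hat{P}_{L_0})\subset \hat{R}_+$, I would take $\hat\ga\in\hat{P}_{L_0}$ and split on whether $d\hat\ga\in \hat{R}'$ or $d\hat\ga\in \hat{R}_+\setminus \hat{R}'$. In the first case $ud\hat\ga\in \hat{R}'$ because $W'$ stabilizes $\hat{R}'$; in the second case (a) gives $ud\hat\ga\in \hat{R}_+\setminus \hat{R}'$. Either way $ud\hat\ga\in \hat{R}_+\cup \hat{R}'$, so $L$ is good. For the direction $(\Rightarrow)$, I would argue by contradiction: assume some $\hat\ga\in \hat{P}_{L_0}$ has $d\hat\ga<0$. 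If $d\hat\ga\in \hat{R}_-\setminus \hat{R}'$, then (a) yields $ud\hat\ga\in \hat{R}_-\setminus \hat{R}'$, contradicting goodness. Otherwise $d\hat\ga\in \hat{R}'_-$, so $-d\hat\ga\in \hat{R}'_+$, and (b) gives $-\hat\ga=d^{-1}(-d\hat\ga)\in \hat{R}_+$, contradicting $\hat\ga\in \hat{P}_{L_0}\subset \hat{R}_+$.

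The main (mild) obstacle is simply keeping track of the three disjoint pieces $\hat{R}_+\setminus \hat{R}'$, $\hat{R}_-\setminus \hat{R}'$, and $\hat{R}'$ as $d\hat\ga$ moves through them, and making sure the factor $u\in W'$ is handled on the pieces $\hat{R}\setminus \hat{R}'$ (via (a)) while $d$ is handled on the piece $\hat{R}'$ (via (b)). Once the two facts are in place, every case collapses immediately, and no deeper input is required.
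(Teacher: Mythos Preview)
Your proof is correct and follows essentially the same approach as the paper's: both rely on the two standard parabolic facts you label (a) and (b), and both derive a contradiction in the $(\Rightarrow)$ direction from $\hat P_{L_0}\subset\hat R_+$. The paper is slightly more compact, observing directly that if $d\hat\ga=-\hat\gb$ with $\hat\gb>0$ then $\hat\gb\in\hat R(d^{-1})\subset\hat R_+\setminus\hat R'_+$, which collapses your two subcases into one; and it dismisses the converse as obvious rather than writing it out.
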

    \begin{proof}
    We prove $\implies$ (the converse is obvious). 
    Suppose that  $\hat{\ga}\in \hat{P}_{L_0}\subset  {\hat{R}}_+$ and that $d(\hat{\ga})=-\hat{\gb}$ is a negative root. 
    Then $\hat{\gb}\in {\hat{R}}(d^{-1})\subset \hat{R}_+\backslash \hat{R}'_+$.
    Therefore $w(\hat{\ga})=-u(\hat{\gb})\in -(\hat{R}_+\backslash \hat{R}'_+)$, contradicting the assumption.
    \end{proof}
    Let ${\hat{\fg}}$\index{${\hat{\fg}}$} 
    be the Lie algebra of $\hat{G}$\index{$\hat{G}$}, equipped with a choice of maximal torus $V_\bbC={\hat{\ft}}$ and Borel 
    $\hat{\fb}\supset{\hat{\ft}}$\index{${\hat{\fg}}$!$\hat{\fb}$}\index{${\hat{\fg}}$!${\hat{\ft}}$}, 
    and $L\subset V$ denote a standard residual pole space. 
    Write ${\hat{\fp}}^L\subset\hat{\fg}$\index{${\hat{\fp}}^L$} for the standard parabolic subalgebra with 
    Levi subalgebra ${\hat{\fg}}^L$, and put 
    ${\hat{\fz}}^L=Z({\hat{\fg}}^L)$\index{${\hat{\fz}}^L$}. 
    Then ${\hat{\ft}}={\hat{\fz}}^L+{\hat{\ft}}^L$ where ${\hat{\ft}}^L$ is a maximal toral subalgebra of ${\hat{\fg}}^L_{ss}$, and  
    $L=c_L+{\hat{\fz}}^L$ with $c_L\in {\hat{\ft}}^L$ the Weyl vector of $\hat{R}_{L,+}$. 
    Fix an $\mathfrak{sl}_2$-triple $\langle e,f,h\rangle\subset {\hat{\fg}}^L$ with $c_L=\frac{h}{2}$ and put 
    ${\hat{\fq}}^L:=\langle e,f,h\rangle\oplus {\hat{\fz}}^L\subset {\hat{\fg}}^L$\index{${\hat{\fq}}^L$}. 
    Then ${\hat{\fg}}^L$ and  thus ${\hat{\fg}}/{\hat{\fg}}^L$ are modules over the reductive algebra ${\hat{\fq}}^L$ (independent of the choice of ${\hat{\fq}}^L$).   
    Choose ${\hat{\fp}}^L$, a semi-standard parabolic subalgebra with Levi decomposition ${\hat{\fp}}^L={\hat{\fg}}^L\oplus {\hat{\fu}}^L$.  
    The unipotent radical ${\hat{\fu}}^L$ of ${\hat{\fp}}^L$ is an ${\hat{\fq}}^L$-module (which does depend on the choice of  ${\hat{\fp}}^L$).
    Call two coroot spaces in ${\hat{\fg}}$ $L$-equivalent if 
    the restrictions of these coroots to ${\hat{\fz}}^L$ differ by an integer. Given an $L$-equivalence class of coroots,  
    the subspace of ${\hat{\fu}}^L$ spanned by the corresponding coroot spaces is a ${\hat{\fq}}^L\subset {\hat{\fg}}^L$ submodule. 
    Let $\tau^L:{\hat{\fg}}^L\to {\hat{\fg}}^L$\index{$\tau_L$!$\tau^L$} be the Cartan involution. Let $\gc:=\textup{Ad}(g)\in \textup{Inn}(\hat{G}^L)$ be such that 
    $\tau_L:=\gc\tau^L$ restricts to the identity on $\langle e,f,h\rangle$ (and to $-\textup{id}$ on ${\hat{\fz}}^L$). 
    Then $\tau_L:{\hat{\fg}}/{\hat{\fg}}^L\to {\hat{\fg}}/{\hat{\fg}}^L$ is an equivalence from $\textup{ad}\circ \tau_L|_{{\hat{\fq}}^L}$ to $\textup{ad}|_{{\hat{\fq}}^L}$, 
    sending ${\hat{\fu}}^L\to \overline{{\hat{\fu}}}^L$. So if 
    \begin{equation}\label{eq:phwp1}
    {\hat{\fu}}^L:=\bigoplus_{i=1}^k\textup{Sym}^{m_i}(\bbC^2)\otimes \bbC_{\nabla^L_{\hat{\ga}_i}}\index{${\hat{\fu}}^L$}
    \end{equation}
    is the irreducible decomposition of ${\hat{\fu}}^L$ for $\textup{ad}|_{{\hat{\fq}}^L}$ (where $\bbC_{\nabla^L_{\hat{\ga}_i}}$ denotes 
    the restriction of a coroot space ${\hat{\fg}}_{\hat{\ga}_i}\subset {\hat{\fu}}^L$ to ${\hat{\fz}}^L$), then 
    \begin{equation}\label{eq:nhwp1}
    \overline{{\hat{\fu}}}^L:=\bigoplus_{i=1}^k\textup{Sym}^{m_i}(\bbC^2)\otimes\bbC_{-\nabla^L_{\hat{\ga}_i}}\index{$\overline{{\hat{\fu}}}^L$}
    \end{equation}
    is the irreducible decomposition of $\overline{{\hat{\fu}}}^L$ for $\textup{ad}|_{{\hat{\fq}}^L}$
    Observe that we can choose $\hat{\ga}_i$ such that $\hat{\ga}_i(c_L)=\frac{m_i}{2}$ in (\ref{eq:phwp1}) and (\ref{eq:nhwp1}), 
    in other words such that $\hat{\ga}_i|_L=\nabla^L_{\hat{\ga}_i}+\frac{m_i}{2}$ (in case (\ref{eq:phwp1})) and  
    $\tau_L(\hat{\ga}_i)|_L=-\nabla^L_{\hat{\ga}_i}+\frac{m_i}{2}$ (in case (\ref{eq:nhwp1})). We say that $\hat{\ga}_i|_L=\nabla^L_{\hat{\ga}_i}+\frac{m_i}{2}$
    is the highest weight of the $i$-th irreducible summand of the ${\hat{\fq}}^L$-module $(\ref{eq:phwp1})$, and denote the corresponding highest 
    weight module of ${\hat{\fq}}^L$ by $V^L(\hat{\ga}_i|_L)\subset {\hat{\fu}}^L$. Similarly we have $V^L(\tau_L(\hat{\ga}_i)|_L)\subset \overline{{\hat{\fu}}}^L$. 
    The main result of this section relates the denominator sets $\textup{Den}_{nc}(L,w,\Sigma)=\textup{Den}_{nc}(L,\Sigma)$ and  
    $\textup{Den}_{nc}(L,w,\Sigma')$ to this $\mathfrak{sl}_2$-
    module in the case of regular pole spaces:  
    \begin{prop}\label{lem:incl}
    Let $L$ be a standard \emph{regular} pole space. Using the above notations, write $L=c_L+\fz^L$, with $c_L=\frac{1}{2}h$, 
    and choose $\hat{\fp}^L$ such that $\hat{\fu}^L\subset \hat{\fu}$.
    Let $w\in W$.  Then $\textup{Den}_{nc}(L,w,\Sigma) 
    =\{ \{\nabla^L_{\hat{\ga}_i}+\frac{m_i}{2}+1,-\nabla^L_{\hat{\ga}_i}-\frac{m_i}{2}\}\mid i=1,\ldots,k\}\subset D_L$. 
    In other words, the $\nu\in\textup{Aff}^\sim(L)$ appears in $\textup{Den}_{nc}(L,w,\Sigma)$ only if $\nu$ is of the form $\nu=\hat{\ga}|_L+1$ 
    for some $\hat{\ga}\in\hat{R}_+$, with multiplicity $m_{L}(\hat{\ga}+1)|_{L})$ equal to the multiplicity 
    $\textup{mult}_{\hat{\fu}^{L}}^{\hat{\fq}^{L}}(V^{L}(\hat{\ga}|_{L}))$  
    of the $\hat{\fq}^L$-highest weight module $V^L(\hat{\ga}|_L)$ in $\hat{\fu}^L$.
    
    Write $w=ud$ with $u\in W'$ and $d\in W$ and assume $(L,w)$ is good. 
    Then $\textup{Den}_{nc}(L,w,\Sigma')\subset \textup{Den}_{nc}(L,\Sigma_\tau)$ where 
    $\textup{Den}_{nc}(L,\Sigma_\tau)=\{\{\nabla^L_{\hat{\ga}_i}-\frac{m_i}{2},-\nabla^L_{\hat{\ga}_i}+\frac{m_i}{2}+1\}\mid i=1,\ldots,k\}\subset D_L'$. 
    Thus $\textup{Den}_{nc}(L,\Sigma)_+$ consists of the highest $\hat{\fq}^L$-weights of $\hat{\fu}^L$ plus $1$, 
    $\textup{Den}_{nc}(L,\Sigma_\tau)_-$ is the set of highest 
    $\hat{\fq}^L$-weights of $\overline{\hat{\fu}}^L$ plus $1$, and 
    $\textup{Den}_{nc}(L,w,\Sigma')_-\subset \textup{Den}_{nc}(L,\Sigma_\tau)_-$.
    
    \end{prop}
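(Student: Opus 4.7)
The statement splits naturally into the identification of $\textup{Den}_{nc}(L,\Sigma)$ (Part~1) and the description of $\textup{Den}_{nc}(L,\Sigma_\tau)$ together with the inclusion for $\textup{Den}_{nc}(L,w,\Sigma')$ (Part~2). I first observe that Part~2 reduces to Part~1. Applying Proposition \ref{prop:tau_L}, the involution $\tau_L$ fixes $c_L$ (since $c_L=\tfrac12 h$ for a distinguished $\mathfrak{sl}_2$-triple and $W(\hat{R}_L)$ fixes $V^L$ pointwise) and acts as $-\textup{id}$ on $V^L$, hence sends each gradient $\nabla^L_{\hat{\ga}}$ to $-\nabla^L_{\hat{\ga}}$; consequently the set $\{\nabla^L_{\hat{\ga}_i}+m_i/2+1,-\nabla^L_{\hat{\ga}_i}-m_i/2\}$ is mapped to $\{\nabla^L_{\hat{\ga}_i}-m_i/2,-\nabla^L_{\hat{\ga}_i}+m_i/2+1\}$, as required. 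For the inclusion $\textup{Den}_{nc}(L,w,\Sigma')_-\subset\textup{Den}_{nc}(L,\Sigma_\tau)_-$ I replace $w$ by a double-coset representative using Lemma \ref{lem:trans}, then apply the cocycle relation (\ref{eq:trans'}) of Remark \ref{rem:warn} to compare $\Sigma'_{W'}|_{L}$ with a suitable translate of $\Sigma_{W,\tau}|_{L_0}$; the goodness condition $w(\hat{P}_L)\subset\hat{R}_+\cup\hat{R}'$ ensures that the transition cocycle $r(\gl)/r(w\gl)$ introduces no $\rho$-denominators beyond those already present in $\textup{Den}_{nc}(L,\Sigma_\tau)$.

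For Part~1, since $m^{nc}_{(L,w,\Sigma)}$ is independent of $w$ (Definition \ref{defn:densets}(a)), I take $w=e$. Theorem \ref{thm:denbase} together with Remark \ref{rem:zaris} asserts that $m^{A_1}_L$ is universal in $\rho$ satisfying (\ref{FE}), so it suffices to test the denominators against a generic $\rho$. Using Langlands' formula (\ref{eq:Lang}), each summand indexed by $w\in W$ in $\Sigma_W(\phi)$ carries a $\rho$-denominator $\prod_{\ga\in R(w^{-1})}\rho(1+\hat{\ga}(\gl))^{-1}$, paired with matching $\rho(\hat{\ga}(\gl))$ numerators. I decompose $W=\bigsqcup_{d\in W^L}W(\hat{R}_L)\,d$ with $d$ of minimal length in $W(\hat{R}_L)d$; then $R((ud)^{-1})=R(u^{-1})\sqcup uR(d^{-1})$, where $R(u^{-1})\subset\hat{R}_{L,+}$ is constant on $L$ so its $\rho$-factors restrict to nonzero constants (the arguments lie in the fixed finite set $\textup{Con}^{A_1}(L)$, on which $\rho$ is nonvanishing), while $uR(d^{-1})$ lies outside $\hat{R}_L$ and carries all genuine $\rho$-denominators of $\Sigma_W(\phi)|_{L_\bbC}$.

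The core step is the evaluation of the inner sum over $u\in W(\hat{R}_L)$: after extracting the constants, this becomes an intertwining-operator-type sum for the Levi $\hat{G}^L$, which should collapse, by a Gindikin--Karpelevich-type cancellation, into an expression organized by the $\hat{\fq}^L$-irreducible decomposition (\ref{eq:phwp1}) of $\hat{\fu}^L$. Concretely, a coroot $\hat{\ga}_i$ spanning the highest-weight line of a summand $V^L(\hat{\ga}_i|_L)\subset\hat{\fu}^L$ contributes a single surviving denominator $\rho(\nabla^L_{\hat{\ga}_i}+m_i/2+1)$, whereas lower-weight coroots in the same $\mathfrak{sl}_2$-string contribute factors that cancel after $W(\hat{R}_L)$-symmetrization via the functional equation $\rho(s)=\rho(1-s)$. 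Tracking multiplicities across all $d\in W^L$ and all summands yields the multiplicity $\textup{mult}^{\hat{\fq}^L}_{\hat{\fu}^L}(V^L(\hat{\ga}_i|_L))$ claimed in the proposition, and the containment $\{\nabla^L_{\hat{\ga}_i}+m_i/2+1,-\nabla^L_{\hat{\ga}_i}-m_i/2\}\subset D_L$ follows from $\hat{\ga}_i(c_L)+1\geq 1$.

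The main obstacle is the Gindikin--Karpelevich-style cancellation inside each $\mathfrak{sl}_2$-string: proving rigorously that, after $W(\hat{R}_L)$-symmetrization, only the highest-weight denominator survives and with exactly the claimed multiplicity. I expect this to follow either by induction along the $\mathfrak{sl}_2$-weight via rank-one reflections adjacent to each weight level, or, more efficiently, by choosing $\rho$ to be a product of affine linear factors with generic zeros: the claim then reduces to an algebraic identity of rational functions on $L_\bbC$, verifiable via the classification of irreducible $\hat{\fq}^L$-summands of $\hat{\fu}^L$, and Theorem \ref{thm:denbase}'s universality transfers the conclusion back to the general $\rho$.
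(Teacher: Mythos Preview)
Your Part~2 reduction for $\Sigma_\tau$ via Proposition \ref{prop:tau_L} is fine and matches the paper. However, your treatment of $\Sigma'$ via the cocycle (\ref{eq:trans'}) does not work as stated: $\Sigma'_{W'w}$ is a sum over $W'$ while $\Sigma_{W,\tau}$ is a sum over all of $W$, so there is no single cocycle identity relating them. The paper instead compares \emph{individual summands}: writing $w=ud$ and using the explicit form (\ref{eq:cp}), it observes that a summand of $\Sigma'_{W'd}$ which is nonvanishing on $L$ must have $\hat{P}_L\cap d^{-1}\hat{R}(u)=\emptyset$; combined with the goodness hypothesis $\hat{P}_L\cap\hat{R}(d)=\emptyset$ this gives $\hat{P}_L\cap\hat{R}(ud)=\emptyset$, which is exactly the relevance condition for the corresponding summand of $\Sigma_{W,\tau}$. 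The $\rho$-denominators of that $\Sigma'$ summand are then visibly contained in those of the $\Sigma_\tau$ summand.

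The real gap is in Part~1. You set up a sum over $W(\hat{R}_L)$-cosets and then look for a ``Gindikin--Karpelevich-type cancellation'' among the inner $W(\hat{R}_L)$-summands, conceding that you do not know how to carry this out. The paper avoids this entirely. Because $L$ is regular, no coroot vanishes on $L$, so each individual summand $c(-w\gl)\frac{r(\gl)}{r(w\gl)}\phi(-w\gl)$ restricts to $L_\bbC$ as a well-defined meromorphic function; one can therefore read off denominators term by term, with no need to symmetrise first. Writing the summand as in (\ref{eq:Sigmacr}), the rational factor vanishes on $L$ unless $\hat{P}_L\subset\hat{R}(w)$; call such $w$ \emph{relevant}. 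For a relevant $w$, the set $\hat{\fu}^L(w):=\bigoplus_{\hat\ga\in\hat{R}(w)\setminus\hat{R}_L}\hat{\fg}_{\hat\ga}\cap\hat{\fu}^L$ is stable under $e=\sum_{\hat\ga\in\hat\Delta(L)}e_{\hat\ga}$, because inversion sets are closed under root addition and $\hat\Delta(L)=\hat{P}_L\subset\hat{R}(w)$. Thus $\hat{\fu}^L(w)$ is a $\langle e,h\rangle\times\fz^L$-submodule of $\hat{\fu}^L$, and its $\langle e,h\rangle$-highest weights are automatically $\hat{\fq}^L$-highest weights of $\hat{\fu}^L$. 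Within each irreducible $\langle e,h\rangle$-component the restrictions $\hat\ga|_L$ form a string $\nu,\nu-1,\ldots,\nu-k$, so the product $\prod\rho(\hat\ga)/\rho(\hat\ga+1)$ telescopes to a single denominator $\rho(\nu+1)$ with $\nu$ the highest weight. This is the whole mechanism: a within-term telescoping along $\langle e,h\rangle$-strings, not a between-term cancellation over $W(\hat{R}_L)$. Your proposed coset decomposition and symmetrisation are unnecessary, and the ``main obstacle'' you identify simply does not arise.
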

    \begin{proof}
    Since $L$ is regular the individual summands of $\Sigma_{W}(\phi), \Sigma'_{W'w}$ and $\Sigma_{W,\tau}$ (cf. Proposition \ref{prop:tau_L}) 
    can be restricted to $L$ as meromorphic functions.
    The individual summands of these have a nice structure due to 
    \begin{equation}\label{eq:Sigmacr}
    c(-w\gl)\frac{r(\gl)}{r(w\gl)}=\prod_{\ga\in R(w)}\frac{\hat{\ga}(\gl)+1}{\hat{\ga}(\gl)}\prod_{\gb\in \hat{R}_+\backslash R(w)}\frac{\hat{\gb}(\gl)-1}{\hat{\gb}(\gl)}
    \prod_{\gc\in \hat{R}(w)}\frac{\rho(\hat{\gc}(\gl))}{\rho(\hat{\gc}(\gl)+1)}
    \end{equation}
    and 
    \begin{equation}
    c(w\gl)\frac{r(w\gl)}{r(\gl)}=\prod_{\ga\in R(w)}\frac{\hat{\ga}(\gl)-1}{\hat{\ga}(\gl)}\prod_{\gb\in \hat{R}_+\backslash R(w)}\frac{\hat{\gb}(\gl)+1}{\hat{\gb}(\gl)}
    \prod_{\gc\in \hat{R}(w)}\frac{\rho(\hat{\gc}(\gl)+1)}{\rho(\hat{\gc}(\gl))}
    \end{equation}
    and finally for $w=ud$ with $u\in W'$: 
    \begin{equation}\label{eq:cp}
    c'(ud\gl)\frac{r(ud\gl)}{r(\gl)}
    =\prod_{\hat{\ga}\in d^{-1}(\hat{R}(u))}\frac{\hat{\ga}(\gl)-1}{\hat{\ga}(\gl)}\prod_{\hat{\gb}\in d^{-1}(\hat{R}'_+\backslash \hat{R}(u))}\frac{\hat{\gb}(\gl)+1}{\hat{\gb}(\gl)}
    \prod_{\gc\in \hat{R}(ud)}\frac{\rho(\hat{\gc}(\gl)+1)}{\rho(\hat{\gc}(\gl))}
    \end{equation} 
    The first equation implies that this summand of $\Sigma_{W}|_L$ is relevant on $L$ iff $\hat{P}_L\subset \hat{R}(w)$.
    The second equation implies that this summand of $\Sigma_{W,\tau}|_L$ is relevant on $L$ iff $\hat{P}_L\cap \hat{R}(w)=\emptyset$.
    The third equation implies that if $L$ is $w$-good then the term for $w=ud$ is relevant in $\Sigma'_{W'd}|_L$ only if it is also relevant for $\Sigma_{W,\tau}$,  
    since $\hat{R}(w)=\hat{R}(d)\sqcup d^{-1}\hat{R}(u)$ and $\hat{P}_L\cap \hat{R}(d)=\emptyset$ by $w$-goodness.
    Therefore it is enough to treat the cases $\Sigma_{W}|_L$ and $\Sigma_{W,\tau}|_L$. 
    By Proposition \ref{prop:tau_L} it is enough to treat the case of $\Sigma_{W}$. Observe that if $w$ is relevant, 
    then $\hat{\fu}^L(w):=\sum_{\ga\in R(w)}\hat{\fg}_{\hat{\ga}}\cap \hat{\fu}^L$ is a $\langle e,h\rangle\times \fz^L\subset \hat{\fq}^L$-submodule of $\hat{\fu}^L$. 
    Hence the $\langle e,h\rangle\times \fz^L$-highest weights of $\hat{\fu}^L(w)$ are also $\hat{\fq}^L$-highest weights of $\hat{\fu}^L$. 
    The obvious cancellation in the third factor of (\ref{eq:Sigmacr}) leaves as denominator a product of the form $\prod\rho(\hat{\ga}+1)$ where 
    the product runs over the $\langle e,h\rangle\times \fz^L$-highest weights of $\hat{\fu}^L(w)$, which by the above is a subset of the set of 
    $\hat{\fq}^L$-highest weights of $\hat{\fu}^L$ proving the result. 
    \end{proof}
    \begin{prop}\label{prop:essregst}
    Let $L$ be a regular essentially standard pole space and let $w\in W$.
    Then $\textup{Den}(L,w,\Sigma)_+\subset \textup{Den}_{nc}(L,w,\Sigma)_+\subset  D_L$. 
    If $(L,w)$ is good then we have $\textup{Den}(L,w,\Sigma')_+\subset \textup{Den}_{nc}(L,w,\Sigma')_+\subset D'_L$, and 
    moreover, if $X-c\in \textup{Den}(L,w,\Sigma')_+$ then $X+c+1\in \textup{Den}(L,1,\Sigma)_+$.
    \end{prop}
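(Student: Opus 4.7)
The plan is to reduce the essentially standard case to the standard case covered by Proposition~\ref{lem:incl}. The first inclusions $\textup{Den}(L,w,\Sigma)_+ \subseteq \textup{Den}_{nc}(L,w,\Sigma)_+$ and $\textup{Den}(L,w,\Sigma')_+ \subseteq \textup{Den}_{nc}(L,w,\Sigma')_+$ are immediate from Definition~\ref{defn:densets}, since $m_{(L,w,\Sigma)} = \min(m^{A_1}_L, m^{A_3}_L) \leq m^{A_1}_L = m^{nc}_{(L,w,\Sigma)}$, and analogously for $\Sigma'$. For the main inclusion $\textup{Den}_{nc}(L,w,\Sigma)_+ \subseteq D_L$, I would invoke Proposition~\ref{prop:essstdhull} to write $L = w_L(L_0)$ with $L_0$ standard and $w_L \in W(\hat{\Delta}(L)) \subseteq W(\hat{R}_{L_0})$, and then apply Remark~\ref{rem:warn} together with Lemma~\ref{lem:trans} to obtain $w_L^{*}(\textup{Den}_{nc}(L,w,\Sigma)) = \textup{Den}_{nc}(L_0, ww_L, \Sigma) = \textup{Den}_{nc}(L_0, \Sigma)$.

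The crucial geometric point for transferring positivity is that $w_L$, being a product of reflections $s_{\hat{\ga}}$ for $\hat{\ga} \in \hat{R}_{L_0}$ (all of which vanish identically on $V^{L_0}$), acts as the identity on $V^{L_0} = V^L$ and sends $c_{L_0}$ to $c_L$. The affine isomorphism $c_{L_0} + v \leftrightarrow c_L + v$ then identifies $L_0^+$ with $L^+$ (both are cut out by the same linear inequalities on $V^{L_0} = V^L$ coming from coroots in $\hat{R}_+ \backslash \hat{R}_+(L_0) = \hat{R}_+ \backslash \hat{R}_+(L)$) and preserves both positivity of gradients and values at the center. Combining this with the inclusion $\textup{Den}_{nc}(L_0, \Sigma)_+ \subseteq D_{L_0}$ from Proposition~\ref{lem:incl} (via the explicit form $\nabla^{L_0}_{\hat{\ga}_i} + m_i/2 + 1$ with $\hat{\ga}_i(c_{L_0}) = m_i/2$), this yields $\textup{Den}_{nc}(L,w,\Sigma)_+ \subseteq D_L$.

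For the $\Sigma'$ statement under the goodness hypothesis, I would first verify that $(L,w)$ is good if and only if $(L_0, ww_L)$ is good, using $\hat{P}_L = w_L(\hat{P}_{L_0})$ (which follows from $c_L = w_L(c_{L_0})$ and $\hat{R}_L = \hat{R}_{L_0}$). Proposition~\ref{lem:incl} then delivers $\textup{Den}_{nc}(L_0, ww_L, \Sigma')_+ \subseteq \textup{Den}_{nc}(L_0, \Sigma_\tau)_+ \subseteq D'_{L_0}$ via the form $\nabla^{L_0}_{\hat{\ga}_i} - m_i/2$, and the same pullback argument delivers $\textup{Den}_{nc}(L, w, \Sigma')_+ \subseteq D'_L$.

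The final implication $X - c \in \textup{Den}(L, w, \Sigma')_+ \Rightarrow X + c + 1 \in \textup{Den}(L, 1, \Sigma)_+$ is the main obstacle. After reduction to $L_0$, Proposition~\ref{lem:incl} matches $X - c = \nabla^{L_0}_{\hat{\ga}_i} - m_i/2$ with $X + c + 1 = \nabla^{L_0}_{\hat{\ga}_i} + m_i/2 + 1$, and the no-cancellation multiplicities agree (both equal the $\hat{\fq}^{L_0}$-multiplicity of $V^{L_0}(\hat{\ga}_i|_{L_0})$ in $\hat{\fu}^{L_0}$). What remains is to control the $\min$ with $m^{A_3}_L$ at $X + c + 1$ in $A_3 = \Sigma_W \cdot \Sigma'_{W'}$. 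I would exploit that the $\tau_L$-involution sends the equivalence class of $X + c + 1$ to that of $X - c$ (since $\tau_L \circ \nabla^{L_0}_{\hat{\ga}_i} = -\nabla^{L_0}_{\hat{\ga}_i}$), and then use the $\tau_L$-symmetry of $\Sigma_W$ (Proposition~\ref{prop:tau_L}) together with the explicit formulas (\ref{eq:Sigmacr})--(\ref{eq:cp}) to translate the hypothesis (an obstruction to cancellation at $X-c$ in $\Sigma_W \cdot \Sigma'_{W'w}$) into the desired non-cancellation at $X + c + 1$ in $\Sigma_W \cdot \Sigma'_{W'}$, concluding $X + c + 1 \in \textup{Den}(L, 1, \Sigma)_+$.
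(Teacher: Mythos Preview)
Your reduction for the first three assertions is correct and matches the paper's argument: write $L=w_L(L_0)$ with $L_0$ standard and $w_L\in W(\hat{\Delta}(L_0))$, transport the denominator sets via Remark~\ref{rem:warn} (the cocycle factor $r(w_L\lambda)/r(\lambda)$ is a nonzero constant on $L_0$), and apply Proposition~\ref{lem:incl}. Your observation that $w_L$ fixes $V^{L_0}$ pointwise is equivalent to the paper's formulation that $w_L$ permutes $\hat{R}_+\setminus\hat{R}(L_0)$; both yield that $(w_L^{-1})^*$ preserves positivity of gradients and values at the center, hence carries $D_{L_0}$ to $D_L$ and $D'_{L_0}$ to $D'_L$. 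The verification that $(L,w)$ good $\Leftrightarrow$ $(L_0,ww_L)$ good is also correct (the paper goes one step further via Lemma~\ref{lem:pospole} to the minimal representative $d\in W'ww_L$, but this is not essential).

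For the final implication your diagnosis is right but your proposed fix has a gap. You correctly note that Proposition~\ref{lem:incl} only delivers the $\textup{Den}_{nc}$-level statement (given $X-c=\nabla^{L_0}_{\hat{\ga}_i}-m_i/2$ one gets $X+c+1=\nabla^{L_0}_{\hat{\ga}_i}+m_i/2+1\in\textup{Den}_{nc}(L_0,\Sigma)_+$), and that passing to $\textup{Den}$ requires ruling out cancellation against $m^{A_3,1}_L$. However, your $\tau_L$-argument does not close this: $\tau_L$ does send the class $[X+c+1]$ to $[X-c]$ and relates $\Sigma_W$ to $\Sigma_{W,\tau}$ (Proposition~\ref{prop:tau_L}), but it does \emph{not} intertwine $\Sigma'_{W'w}$ (appearing in the hypothesis, parameter $w$) with $\Sigma'_{W'}$ (appearing in the conclusion, parameter $1$). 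The hypothesis gives non-cancellation information about $A_3^w=\Sigma_W\cdot\Sigma'_{W'w}$ at $[X-c]$, whereas the conclusion demands it for $A_3^1=\Sigma_W\cdot\Sigma'_{W'}$ at $[X+c+1]$; your sketch does not bridge these two different $\Sigma'$-factors. The paper's own proof of this clause is equally laconic (``by transfer to $L_0$ via $w_L^*$ and Proposition~\ref{lem:incl}''), and in fact the downstream uses of this proposition (Corollary~\ref{rem:std}, Theorem~\ref{thm:tauadm}) only invoke the enveloping denominators $\textup{Den}^\sim$, which are built from $\textup{Den}_{nc}$; so the $nc$-level implication, which both you and the paper establish cleanly, is what is actually needed.
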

    \begin{proof}
    Write $L=w_L(L_0)$ with $L_0$ standard and $w_L\in W(\hat{\Delta}(L_0))$. Since the elements of $\textup{Den}(L_0,ww_H,\Sigma)_+$
    and $\textup{Den}(L_0,ww_H,\Sigma')_+$ can (by the above result) be represented by elements of the set $\hat{R}_+\backslash \hat{R}(L_0)$
    (which is permuted by $w_L=(w_L^{-1})^*$)  restricted to $L_0$, and since $\frac{r(w_L\gl)}{r(\gl}|_{L_0}$ is a nonzero constant, the formula's 
    \begin{equation}
    \Sigma_{W}|_L:=(w_L^{-1})^*(\frac{r(w_L\gl)}{r(\gl)}\Sigma_{W}|_{L_0})\textrm{\ and\ } \Sigma'_{W'w}|_L:=(w_L^{-1})^*(\frac{r(\gl)}{r(w_L\gl)}\Sigma'_{W'ww_L}|_{L_0})
    \end{equation}
    imply 
    \begin{equation}\label{eq:transS}
    \textup{Den}(L,w,\Sigma)_+=(w_L^{-1})^*(\textup{Den}(L_0,ww_L,\Sigma)_+)=(w_L^{-1})^*(\textup{Den}(L_0,d,\Sigma)_+)
    \end{equation}
    and 
    \begin{equation}\label{eq:transSp}
    \textup{Den}(L,w,\Sigma')_+=(w_L^{-1})^*(\textup{Den}(L_0,ww_L,\Sigma')_+)=(w_L^{-1})^*(\textup{Den}(L_0,d,\Sigma')_+)
    \end{equation}
    Since $w_L$ permutes the coroots in $\hat{R}_+\backslash \hat{R}(L_0)$, Proposition \ref{lem:incl} implies: 
    \begin{equation}
    \textup{Den}(L,w,\Sigma)_+=(w_L^{-1})^*(\textup{Den}(L_0,ww_L,\Sigma)_+)\subset D_L
    \end{equation} 
    If $w(\hat{P}_L)\subset \hat{R}_+\cup R'$ we have $ww_L(P_{L_0})\subset \hat{R}_+\cup \hat{R}'$, 
    and Lemma \ref{lem:pospole} thus implies that the minimal length representative $d$ in $W'ww_L$ satisfies 
    $d(P_{L_0})\subset \hat{R}_+$. Then we have by $W'$-invariance and Remark \ref{rem:warn} that: 
    \begin{equation}
    \textup{Den}(L,w,\Sigma')_+=(w_L^{-1})^*(\textup{Den}(L_0,ww_L,\Sigma')_+)=(w_L^{-1})^*(\textup{Den}(L_0,d,\Sigma')_+)
    \end{equation}
    and thus Proposition \ref{lem:incl} yields: 
    \begin{equation}
    \textup{Den}(L,w,\Sigma')_+=(w_L^{-1})^*(\textup{Den}(L_0,ww_L,\Sigma')_+)\subset D'_L
    \end{equation} 
    We similarly $\textup{Den}(L,w,\Sigma)_+\subset \textup{Den}(L,1,\Sigma)_+\subset D_L$ and if 
    $X-c\in \textup{Den}(L,w,D'_L)_+$ then $X+c+1\in \textup{Den}(L,1,\Sigma)_+$, by transfer to $L_0$ via $w^*_L$ and 
    Proposition \ref{lem:incl}.
    \end{proof}
    \section{Regular envelopes, the enveloping denominators}\label{s:RegEnv} 
    \begin{defn}\label{def:hull}
    Let $L$ be an essentially standard pole space, and $w\in W$. A regular envelope $H$ of $L$ is 
    a regular pole space $H$ such that $H\supset L$. We say that a regular envelope $H$ is a $w$-good regular envelope if $H$ is $w$-good in the sense of Definition \ref{defn:good}. 
    We denote by $\textup{Den}^{\hat{\fz}_L}(H,w)\subset \textup{Den}(H,w)$ the subset of denominators 
    of $H$ which are not constant on $L$.
    We define the sets of \emph{enveloping denominators} by:
    \begin{align*}\textup{Den}^\sim(L,w,\Sigma)&:= \cap_{\{H\supset L\ w-\mathrm{good\ regular\ envelope}\}} \textup{Den}_{nc}^{\hat{\fz}_L}(H,w,\Sigma)|_L\mathrm{\ and\ }\\ 
    \textup{Den}^\sim(L,w,\Sigma')&:= \cap_{\{H\supset L\ w-\mathrm{good\ regular\ envelope}\}} \textup{Den}_{nc}^{\hat{\fz}_L}(H,w,\Sigma')|_L
    \end{align*}
    We denote by $\textup{Den}^\sim(L,w,\Sigma)_+\subset \textup{Den}^\sim(L,w,\Sigma)$ the set of representatives 
    with a positive $L$-gradient of the elements belonging to $\textup{Den}^\sim(L,w,\Sigma)$ (and similarly for $\textup{Den}^\sim(L,w,\Sigma')$).
    \end{defn}
    \begin{cor}\label{cor:cont} Let $L$ be a pole space and let $(L_0,w)$ be a standard pair such that  $L=w(L_0)$. Then 
    $\textup{Den}(L_0,w,\Sigma)\subset \textup{Den}_{nc}(L_0,w,\Sigma)\subset\textup{Den}^\sim(L_0,w,\Sigma)$ 
    as well as $\textup{Den}(L_0,w,\Sigma')\subset \textup{Den}_{nc}(L_0,w,\Sigma')\subset \textup{Den}^\sim(L_0,w,\Sigma')$.
    Finally we have $\textup{Den}(L)\subset (w^{-1})^*(\textup{Den}^\sim(L_0,w,\Sigma))\cup (w^{-1})^*(\textup{Den}^\sim(L_0,w,\Sigma'))$.
    \end{cor}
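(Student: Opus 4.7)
The plan is to reduce the entire corollary to two earlier results: the heritability Proposition \ref{prop:herit} and the transformation Lemma \ref{lem:transmove}. Nothing new needs to be invented; the content is front-loaded in those two statements.

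I would first dispose of the leftmost inclusions in each chain, namely $\textup{Den}(L_0,w,\Sigma)\subset \textup{Den}_{nc}(L_0,w,\Sigma)$ and its $\Sigma'$-analogue. These are purely definitional: by Definition \ref{defn:densets}, $m_{(L_0,w,\Sigma)}=\min(m^{A_1}_{L_0},m^{A_3}_{L_0})\leq m^{A_1}_{L_0}=m^{nc}_{(L_0,w,\Sigma)}$ pointwise on $\textup{Aff}^\sim(L_0)$, and the same holds for $\Sigma'$, so the supports are contained as claimed.

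For the middle inclusions, I would fix an arbitrary $w$-good regular envelope $H\supset L_0$ and apply Proposition \ref{prop:herit} to the nested pair $L_0\subset H$, which gives $\textup{Den}_{nc}(L_0,w,\Sigma)\subset \textup{Den}_{nc}(H,w,\Sigma)|_{L_0}$ and similarly for $\Sigma'$. Since every element of $\textup{Den}_{nc}(L_0,w,\Sigma)$ lies in $\textup{Aff}^\sim(L_0)$ and is therefore nonconstant on $L_0$ by construction, only those $H$-denominators whose restriction to $L_0$ remains nonconstant can actually contribute. This means the inclusion already factors through $\textup{Den}_{nc}^{\hat{\fz}_{L_0}}(H,w,\Sigma)|_{L_0}$ in the sense of Definition \ref{defn:noncst}. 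Since the envelope $H$ was arbitrary, intersecting over all $w$-good regular envelopes produces the desired inclusion into $\textup{Den}^\sim(L_0,w,\Sigma)$, and the same argument delivers the $\Sigma'$ version.

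For the final assertion, I would invoke Lemma \ref{lem:transmove} at $x=1$ to obtain $w^*(\textup{Den}(L))=\textup{Den}(L_0,w)$. By Definition \ref{defn:densets}(c) together with the identity $m_{(L_0,w)}=m_{(L_0,w,\Sigma)}+m_{(L_0,w,\Sigma')}$ recorded there, the supports decompose as $\textup{Den}(L_0,w)=\textup{Den}(L_0,w,\Sigma)\cup \textup{Den}(L_0,w,\Sigma')$. Combining with the middle inclusions just established and pulling back via $(w^{-1})^*$ yields $\textup{Den}(L)\subset (w^{-1})^*(\textup{Den}^\sim(L_0,w,\Sigma))\cup (w^{-1})^*(\textup{Den}^\sim(L_0,w,\Sigma'))$, as required. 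The only substantive point that wants a sentence of care is the promotion from $\textup{Den}_{nc}(H,w,\Sigma)|_{L_0}$ to $\textup{Den}_{nc}^{\hat{\fz}_{L_0}}(H,w,\Sigma)|_{L_0}$, and that is merely the observation that a denominator which restricts to a constant cannot appear in a set of nonconstant affine functions.
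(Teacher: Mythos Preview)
Your proof is correct and follows essentially the same route as the paper: the leftmost inclusions are definitional, the middle ones come from the heritability Proposition \ref{prop:herit} applied to each $w$-good regular envelope $H\supset L_0$ followed by intersection, and the final assertion is Lemma \ref{lem:transmove} combined with the decomposition $\textup{Den}(L_0,w)=\textup{Den}(L_0,w,\Sigma)\cup\textup{Den}(L_0,w,\Sigma')$ from Definition \ref{defn:densets}. Your added remark about why the restriction factors through $\textup{Den}_{nc}^{\hat{\fz}_{L_0}}$ is a welcome clarification that the paper leaves implicit.
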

    \begin{proof}
    Suppose that $L_0, H$ are pole spaces and $L_0\subset H$. By Proposition \ref{prop:herit} (and its proof) 
    it follows that $M_{L_0,x}\leq M_{L_0,x}^H$, 
    $m_{L_0}\leq m_{L_0}^H$ (hence $m_{(L_0,x,\Sigma)}\leq m^H_{L_0}$) and  $m'_{L_0,x}\leq m'^H_{L_0,x}$ (hence $m_{(L_0,x,\Sigma')}\leq m'^H_{L_0,x}$). 
    This implies the non-obvious steps in the sequences of inclusions, by taking the minimum of the right hand sides over the set of $w$-good envelopes of $L_0$ . 
    
    The final inclusion follows from $\textup{Den}(L)=\textup{Den}(L,1)=(w^{-1})^*(\textup{Den}(L_0,w))=
    (w^{-1})^*(\textup{Den}(L_0,w,\Sigma))\cup (w^{-1})^*(\textup{Den}(L_0,w,\Sigma'))$.
    \end{proof}
    \begin{cor}\label{rem:std}
    Let $L=wL_0$ be a pole space, with $L_0$ essentially standard. 
    If $H\supset L_0$ is a regular envelope we can choose a standard regular pole space $H_0$ and 
    a $w_H\in W(\hat{\Delta}(L_0))$ such that $H=w_H(H_0)$
    (see Proposition \ref{prop:essstdhull}). 
    So by Proposition \ref{prop:essregst}: If $L=w(L_0)$ with $L_0$ essentially standard then: 
    \begin{align*}\textup{Den}^{\sim}(L_0,w,\Sigma)_+&:=
    \cap_{\{H\supset L_0\  \textup{$w$-good\ regular\ envelope}\}}(w_H^{-1})^*(\textup{Den}_{nc}^{\fz_{L_0}}(H_0,w_H,\Sigma)_+)|_{L_0},\\
    \textup{Den}^{\sim}(L_0,w,\Sigma')_+&:=
    \cap_{\{H\supset L_0\  \textup{$w$-good\ regular\ envelope}\}} (w_H^{-1})^*(\textup{Den}_{nc}^{\fz_{L_0}}(H_0,ww_H,\Sigma')_+)|_{L_0}.
    \end{align*}
    \end{cor}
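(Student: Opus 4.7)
The plan is to assemble the corollary from Proposition \ref{prop:essstdhull} and the transformation identities already established (Proposition \ref{prop:IS}, together with Remark \ref{rem:warn_herit}), applied term by term inside the intersection defining $\textup{Den}^\sim$.

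First I would dispose of the existence assertion. Given a regular envelope $H \supset L_0$, Proposition \ref{prop:essstdhull} immediately supplies an element $w_H \in W(\hat{\Delta}(L_0))$ and a standard pole space $H_0$ with $H = w_H(H_0)$. The regularity of $H_0$ follows from that of $H$ by $W$-equivariance of the notion: by the lemma preceding Definition \ref{defn:good}, regularity is characterised by $\hat{P}_H$ being a base of the Levi subsystem $\hat{R}_H$, and this property is transferred from $H$ to $H_0 = w_H^{-1}(H)$ because $w_H^{-1}$ permutes coroots and carries bases to bases.

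Next, for each fixed $w$-good regular envelope $H \supset L_0$, Proposition \ref{prop:IS} applied to the nested pair $L_0 \subset H$ together with the standard presentation $H = w_H(H_0)$ yields
\[
\textup{Den}_{nc}^{\hat{\fz}_{L_0}}(H, w, \Sigma)_+ \;=\; (w_H^{-1})^*\bigl(\textup{Den}_{nc}^{\hat{\fz}_{L_0}}(H_0, ww_H, \Sigma)_+\bigr),
\]
and the analogous identity for $\Sigma'$. Substituting these expressions into Definition \ref{def:hull}, which describes $\textup{Den}^\sim(L_0, w, \Sigma)_+$ as the intersection $\bigcap_H \textup{Den}_{nc}^{\hat{\fz}_{L_0}}(H, w, \Sigma)_+|_{L_0}$ over all $w$-good regular envelopes $H$ of $L_0$, gives the two claimed formulas.

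The single delicate point, already handled inside the proof of Proposition \ref{prop:IS}, is that the pullback by $w_H^{-1}$ respects the property of having positive $L_0$-gradient. This rests on the facts that $w_H \in W(\hat{\Delta}(L_0))$ fixes $\hat{\fz}_{L_0} = V^{L_0}$ pointwise and that $w_H$ carries the positive cone of $w_H^{-1}(L_0)$ onto the positive cone of $L_0$, so that gradients and their signs are preserved under transport. Granting Proposition \ref{prop:IS}, the corollary reduces to a routine unpacking of definitions; the only genuinely new ingredient is the observation that the standard presentation supplied by Proposition \ref{prop:essstdhull} automatically produces a \emph{regular} $H_0$ whenever the envelope $H$ itself is regular.
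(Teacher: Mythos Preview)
Your proof is correct and follows essentially the same route as the paper, which simply cites Proposition~\ref{prop:essstdhull} and Proposition~\ref{prop:essregst} (the latter via the transformation identities also packaged in Proposition~\ref{prop:IS}). One small point you glide over: the displayed identity you obtain from Proposition~\ref{prop:IS} has second argument $ww_H$ on the right, whereas the corollary's $\Sigma$-formula has $w_H$; these agree because $\textup{Den}_{nc}(\cdot,x,\Sigma)$ is independent of $x$ (Definition~\ref{defn:densets}(a), Lemma~\ref{lem:trans}), and it would be worth saying so explicitly.
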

    The cascade $C$ (see Appendix \ref{a:Cascade}) consists of a collection of pairs $(L,\sigma)$ where $L$ is an $\Omega$-pole space and 
    $\sigma\subset L$ a segment along which the base points of the contours of integration are moved. For each $W'$-orbit of pole spaces 
    which appears in $C$ a standard pair $(L_0,d)$ is chosen with $d\in W$ minimal in $W'd$, and for each pair $(L,\sigma)$ in $C$ with 
    $L\in W'd(L_0)$ a Weyl group element $w=ud\in W'd$ is chosen such that $L=w(L_0)$. For each standard pair $(L_0,d)$ 
    and $(L,\sigma)\in C$ such that $L\in W'd(L_0)$, the union of segments $\sigma_0:=w^{-1}(\sigma)\subset L_0$
    forms a tree $C_{(L_0,d)}$ in $L_0$, which is a star with center $c_{L_0}$ in case there exists a residual pole space 
    $M$ such that $L_0^{\textup{temp}}\subset M^{\textup{temp}}$ (in this case $c_{L_0}=c_M$). The set of representing standard pairs $(L_0,d)$ with 
    a nonempty tree $C_{(L_0,d)}$ (and the further data that are attached to the segments in $C_{(L_0,d)}$) is called the set of 
    standard data $\textup{Std}(C)$ of the cascade $C$.
    
    The cascade $C$ and the set of standard data $\textup{Std}(C)$ satisfying the properties listed in Definition \ref{defn:casc} have been 
    constructed recursively using computer algebra (see Appendix \ref{a:Cascade}). For the residual members $(L_0,d)$ in $\textup{Std}(C)$ 
    we have also computed by computer algebra the sets of enveloping denominators (see Appendix \ref{a:EnvDen}). 
    
    Theorem \ref{thm:tauadm} below gives an important transformation rule for the sets of enveloping denominators for residual 
    standard pairs representing elements in the cascade $C$ (but not necessarily in $\textup{Std}(C)$). This result depends on 
    an invariance property for the Cartan involution form members of $\textup{Std}(C)$ which is based on computer verifications 
    for exceptional cases, and which we will formulate separately. Let $(L_0,x)\in \textup{Std}(C)$ be a standard residual pair.
    \begin{defn}
    Let $\tau_0=\tau_{L_0}: L_0\to L_0$ be as in Proposition \ref{prop:tau_L}. We define 
    \begin{equation*}
    \textup{Den}^{\sim}(L_0,x,\Sigma_\tau)_+:=
    \bigcap_{\{H\supset L_0 \ \textup{$x$-good\ regular\ envelope}\}}(w_H^{-1})^*(\textup{Den}_{nc}^{\fz_{L_0}}(H_0,w_H,\Sigma_\tau)_+)|_{L_0}
    \end{equation*}
    \index{$\textup{Den}(L)$!$\textup{Den}^{\sim}(L_0,x,\Sigma_\tau)_+$}
    Here $H=w_H(H_0)$ where $H\supset L_0$ is a regular pole space containing $L$.
    We define an affine linear map $(1-\tau_0):\textup{Aff}(L)\to\textup{Aff}(L)$ by 
    $(1-\tau_0)(X)=1-\tau_0(X)$\index{$(1-\tau_0)$}.
    \end{defn}
    The following Lemma was verified by computer for the exceptional cases:
    \begin{lem}\label{lem:tauadmpart_a} 
    Let $\hat{R}$ be an irreducible root system, with base $\hat{\Delta}$. 
    There exists proper maximal standard Levi $\hat{R}'$ and a cascade $C$ for $(\hat{R},\hat{R}')$ such that 
    for every residual pole space $(L_0,x)\in\textup{Std}(C)$ we have: 
    \begin{equation}\label{eq:tau}
    \textup{Den}^\sim(L_0,x,\Sigma_\tau)_+=(1-\tau_0)(\textup{Den}^\sim(L_0,x,\Sigma)_+)
    \end{equation}
    \end{lem}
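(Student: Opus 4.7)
The plan is to reduce the identity to envelope-by-envelope computations, exploiting the fact that $(1-\tau_0)$, being an involution on $\textup{Aff}(L_0)$ which preserves positive gradients, distributes over intersections. Unwinding via Corollary \ref{rem:std} (and the analogous unwinding for $\Sigma_\tau$), the asserted identity becomes
\[
\bigcap_H (w_H^{-1})^*\bigl(\textup{Den}_{nc}^{\fz_{L_0}}(H_0, w_H, \Sigma_\tau)_+\bigr)\big|_{L_0} = \bigcap_H (1-\tau_0)\bigl((w_H^{-1})^*\bigl(\textup{Den}_{nc}^{\fz_{L_0}}(H_0, w_H, \Sigma)_+\bigr)\big|_{L_0}\bigr),
\]
where both intersections run over $x$-good regular envelopes $H = w_H(H_0)$ of $L_0$. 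Applying Proposition \ref{prop:tau_L} envelope-by-envelope gives $\textup{Den}_{nc}(H_0, w_H, \Sigma_\tau)_+ = (1-\tau_{H_0}) \cdot \textup{Den}_{nc}(H_0, w_H, \Sigma)_+$, so the identity is equivalent to the statement that the two a priori different involutions $(1-\tau_{H_0})$ (acting on $\textup{Aff}(H_0)$) and $(1-\tau_0)$ (acting on $\textup{Aff}(L_0)$) produce the same set after restriction, pullback, and intersection over all envelopes.

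Next, I would use the explicit $\mathfrak{sl}_2$-description from Proposition \ref{lem:incl}: the denominators on $H_0$ are of the form $\nabla^{H_0}_{\hat{\ga}} + m/2 + 1$ indexed by $\hat{\fq}^{H_0}$-highest weights of $\hat{\fu}^{H_0}$, and under $\tau_{H_0}$ they become $\nabla^{H_0}_{\hat{\ga}} - m/2$. After restriction to $w_H^{-1}(L_0)$, both families become affine functions on $L_0$ whose comparison with their $(1-\tau_0)$-images amounts to matching the $\mathfrak{sl}_2$-datum attached to $H_0$ against the one attached to $L_0$. For a single envelope this matching generally fails, but intersecting over the collection of envelopes in the cascade imposes enough constraints that both intersections collapse to the same set, provided the cascade $C$ was constructed to supply the right collection of envelopes.

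The main obstacle is precisely this last point: verifying that the specific cascade produced in Appendix \ref{a:Cascade} supplies a sufficient collection of envelopes to force the equality. For classical root systems this should admit a uniform combinatorial proof in terms of partitions and Jordan block structures, using that nilpotent orbits of $\hat{\fg}$ and the branching of $\hat{\fu}^{H_0}$ under $\hat{\fq}^{L_0}$ are both explicit; the branching produces enough constraints per envelope for the intersection to land on those denominators aligned with $L_0$'s own $\mathfrak{sl}_2$-triple, which is the content of the identity. For the exceptional root systems $\textup{F}_4, \textup{E}_6, \textup{E}_7, \textup{E}_8$ no such uniform description is available, and the verification is by direct computer enumeration indicated in the statement of the lemma: for each $(L_0, x) \in \textup{Std}(C)$, list all $x$-good regular envelopes $H$, compute $\textup{Den}_{nc}(H_0, w_H, \Sigma_\tau)_+$ and $(1-\tau_0) \cdot \textup{Den}_{nc}(H_0, w_H, \Sigma)_+$ via Proposition \ref{lem:incl} and the decomposition (\ref{eq:phwp1}), pull back by $w_H^{-1}$, restrict to $L_0$, intersect, and check the equality using the algorithm of Appendix \ref{a:EnvDen}.
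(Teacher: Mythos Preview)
Your framework---unwinding both sides as intersections over $x$-good regular envelopes and comparing---is the right starting point, and you correctly identify that the regular pairs follow directly from Proposition~\ref{lem:incl} and that the exceptional cases $F_4,E_n$ are handled by computer enumeration as in Appendix~\ref{a:EnvDen}. The gap is in your treatment of the classical case.

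You try to match, for each fixed envelope $H$, the involution $(1-\tau_{H_0})$ on $\textup{Aff}(H_0)$ against $(1-\tau_0)$ on $\textup{Aff}(L_0)$ after restriction, and you correctly note that this fails for a single $H$. You then assert that the intersection over all envelopes forces equality, appealing to an unspecified ``uniform combinatorial proof in terms of partitions and Jordan block structures''. This is where the argument is incomplete: you neither carry out such a proof nor identify a structural mechanism that makes the intersections agree.

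The paper's mechanism is much simpler and does not compare the two sides with $H$ fixed. Using Proposition~\ref{prop:tau_L} one sees that the identity~(\ref{eq:tau}) is equivalent to the statement that the intersection defining $\textup{Den}^\sim(L_0,x,\Sigma)_+$ is unchanged when the indexing set $\{H\supset L_0: H\text{ is }x\text{-good regular}\}$ is replaced by $\{H\supset L_0: \tau_0(H)\text{ is }x\text{-good regular}\}$ (this reformulation is made explicit in the proof of Theorem~\ref{thm:tauadm}(b)). Hence it suffices that $\tau_0$ permute the set of $x$-good regular envelopes of $L_0$. For the classical cascades of Appendix~\ref{a:class}, every $(L_0,x)\in\textup{Std}(C)$ satisfies $d(\hat P_{L_0})\subset\hat R_+$ (with $x=ud$, $d$ minimal in $W'd$); by Lemma~\ref{lem:pospole} this forces \emph{every} regular envelope of $L_0$ to be $x$-good. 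The indexing set is then simply the set of all regular envelopes of $L_0$, which is trivially $\tau_0$-stable since $\tau_0(L_0)=L_0$. No partition combinatorics are needed.
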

    \begin{proof}
    For regular pole space pairs $(H_0,x)$ in $\textup{Std}(C)$ follows directly from Proposition \ref{prop:essregst} and \ref{lem:incl}. 
    For classical groups, the cascades discussed in Appendix \ref{a:class} have the property that for all standard pairs $(L_0,x)$ in $\textup{Std}(C)$ 
    we have $d(\hat{P}_{L_0})\subset \hat{R}_+$(with $x=ud$ with $u\in W'$ and $d$ minimal in $W'd$). Hence all regular envlopes of $L_0$ are $x$-good 
    (See Appendix \ref{a:class}), and thus $\tau_0$ preserves the set of $x$-good enveloping regular pole spaces of $L_0$. 
    For the remaining non-regular standard residual pairs in the standard data $\textup{Std}(C)$ of the cascades constructed for the exceptional cases 
    $E_n$ and $F_4$, part $(a)$ was shown by computer verifications (see appendices \ref{a:Cascade}, \ref{a:EnvDen}).
    \end{proof}
    \begin{thm}\label{thm:tauadm} 
    Let $C$ be a cascade satisfying (\ref{eq:tau}). 
    \begin{enumerate}
    \item[(a)] Let $L$ be a pole space appearing $C$ represented by $(L_0,x)\in\textup{SP}(C)$,  
    and let $(L_1,y)$ be any standard pole pair representing $L$. Let $w=x^{-1}y\in W$, so that $w(L_1)=L_0$. 
    For  $X=\nabla^1_{{\hat{\ga}}}+c\in\textup{Aff}(L_1)$ with positive gradient $\nabla^1_{{\hat{\ga}}}$, 
    we define $|w|(X):=|w|(\nabla^1_{{\hat{\ga}}})+c\in\textup{Aff}(L_0)$\index{$|w|$} where $|w|(\nabla^1_{{\hat{\ga}}})=w(\nabla^1_{{\hat{\ga}}})
    =\nabla^0_{w({\hat{\ga}})}$ if $w({\hat{\ga}})$ is positive, and $|w|(\nabla^1_{{\hat{\ga}}})=-\nabla^0_{w({\hat{\ga}})}$
    otherwise.Then 
    \begin{equation*}%\label{eq:abs}
    \textup{Den}^{\sim}(L_0,x,\Sigma)_+=|w|\textup{Den}^{\sim}(L_1,y,\Sigma)_+
    \end{equation*}
     \item[(b)] Let $L$ and $(L_1,y)$ be as in $(a)$. Then 
    \begin{equation*}%\label{eq:taugood}
    \textup{Den}^\sim(L_1,y,\Sigma_\tau)_+=(1-\tau_1)(\textup{Den}^\sim(L_1,y,\Sigma)_+)
    \end{equation*}
    \item[(c)] Let $L$ be a residual pole space appearing in the cascade, $(L_1,x)$ a standard pair 
    such that $L=x(L_1)$. Then 
    $\textup{Den}^{\sim}(L_1,x,\Sigma')_+\subset (1-\tau_1)(\textup{Den}^{\sim}(L_1,x,\Sigma)_+)$ 
    \end{enumerate}
    \end{thm}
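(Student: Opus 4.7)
The plan is to prove the three parts in order: (a) serves as a transport lemma, (b) combines (a) with the computer-verified Lemma \ref{lem:tauadmpart_a}, and (c) combines (b) with Proposition \ref{lem:incl}. The main obstacle sits in part (a), where one must carefully track how the cocycle factor $r(w\lambda)/r(\lambda)$ from Remark \ref{rem:warn} interacts with the positivity of gradients, so that the $+$ representatives on $L_0$ and $L_1$ align via $|w|$ rather than via $w$ itself.

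For part (a), the identity $xw=y$ (from $w=x^{-1}y$) implies that for any $H\supset L_1$ one has $w(H)\supset L_0$ and $x(\hat P_{w(H)})=xw(\hat P_H)=y(\hat P_H)$; hence $H$ is $y$-good iff $w(H)$ is $x$-good, and the assignment $H\mapsto w(H)$ is a bijection between the two families of good regular envelopes. On each pair $(H,w(H))$, Proposition \ref{lem:incl} describes $\textup{Den}_{nc}(H,y,\Sigma)_+$ intrinsically as the set of $+1$ shifts of the highest $\hat{\fq}^H$-weights of $\hat{\fu}^H$ (a set that in fact does not depend on $y$), and similarly for $w(H)$. The bijection $w$ sends these highest weights to the analogous highest weights on $w(H)$; the only subtlety is that a positive gradient $\nabla^1_{\hat\alpha}$ on $H$ may correspond to a negative gradient $\nabla^0_{w(\hat\alpha)}$ on $w(H)$ when $w(\hat\alpha)$ is negative, in which case one must replace it by $-\nabla^0_{w(\hat\alpha)}$ to obtain the $+$ representative. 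This is exactly the prescription of $|w|$. Intersecting over $y$-good envelopes of $L_1$ and translating to the intersection over $x$-good envelopes of $L_0$ via this bijection yields the equality.

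For part (b), I would pick a residual standard pair $(L_0,x)\in\textup{Std}(C)$ representing the cascade orbit of $L$, for which Lemma \ref{lem:tauadmpart_a} asserts $\textup{Den}^\sim(L_0,x,\Sigma_\tau)_+=(1-\tau_0)(\textup{Den}^\sim(L_0,x,\Sigma)_+)$. Applying part (a) in both the $\Sigma$ and the $\Sigma_\tau$ variants transports each side to $L_1$ via $|w|$. A short calculation on gradients shows that each $\tau_L$ acts by $\nabla\mapsto-\nabla$ on gradients while fixing constants, so $(1-\tau_L)(\nabla+c)=\nabla+(1-c)$ on positive representatives; in particular $|w|\circ(1-\tau_1)=(1-\tau_0)\circ|w|$. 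The injectivity of $|w|$ on positive representatives then gives (b).

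For part (c), for each $x$-good regular envelope $H\supset L_1$ the second half of Proposition \ref{lem:incl} provides the inclusion $\textup{Den}_{nc}(H,x,\Sigma')_-\subset\textup{Den}_{nc}(H,\Sigma_\tau)_-$; switching to $+$ representatives of each equivalence class, restricting to $L_1$, and intersecting over envelopes yields $\textup{Den}^\sim(L_1,x,\Sigma')_+\subset\textup{Den}^\sim(L_1,x,\Sigma_\tau)_+$. Applying (b) to rewrite the right-hand side as $(1-\tau_1)(\textup{Den}^\sim(L_1,x,\Sigma)_+)$ completes the argument.
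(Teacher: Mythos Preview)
Your treatments of (b) and (c) are essentially the paper's, but your argument for (a) has a genuine gap. The operation $|w|$ is \emph{not} ``taking the $+$ representative'' of the transported denominator. Concretely, if $X=\nabla^1_{\hat\alpha}+c\in\textup{Den}_{nc}^{\fz_{L_1}}(H,y,\Sigma)_+|_{L_1}$ and $w(\hat\alpha)$ is negative, then the pushforward $(w\hat\alpha)|_{L_0}+c$ has negative gradient, and its $+$ representative under $Y\sim 1-Y$ is $-\nabla^0_{w\hat\alpha}+(1-c)$, \emph{not} $-\nabla^0_{w\hat\alpha}+c=|w|(X)$. So the two sides of your proposed identity on each envelope do not match.

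What actually happens is that a highest $\hat\fq^{H}$-weight of $\hat\fu^{H}$ with positive $L_1$-gradient is sent by $\kappa_w$ to a highest $\hat\fq^{w(H)}$-weight in $\hat\fg/\hat\fg^{L_0}$ which, when $w(\hat\alpha)$ is negative, lies in $\overline{\hat\fu}^{w(H)}$ rather than $\hat\fu^{w(H)}$; by Proposition~\ref{lem:incl} this produces a $\Sigma_\tau$ denominator on $L_0$, not a $\Sigma$ denominator. The paper makes this precise by first rewriting $m^\sim_{(L_1,y,\Sigma)}(\nu+1)$ as a minimum over $y$-good envelopes $H^1$ of $\sum_{\mu:\mu|_{L_1}=\nu}\textup{mult}^{\hat\fq^{H^1}}_{\hat\fg/\hat\fg^{L_1}}(V^{H^1}(\mu))$, transporting via $\kappa_w$ to $\hat\fg/\hat\fg^{L_0}$, and then \emph{invoking the hypothesis \textup{(\ref{eq:tau})}} to convert $m^\sim_{(L_0,x,\Sigma_\tau)}((w\hat\beta+1)|_{L_0})$ to $m^\sim_{(L_0,x,\Sigma)}((\tau_0(w\hat\beta)+1)|_{L_0})$ in the negative case. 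That conversion is exactly what makes the map come out as $|w|$ rather than as the $+$ representative under $\sim$. In short, (a) already requires (\ref{eq:tau}); it cannot be proved prior to it as you propose.
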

    \begin{proof}
    For $(a)$: 
    First note that $w=x^{-1}y$ maps the set of $y$-good regular envelopes of $L_1$ to the set of $x$-good regular envelopes of $L_0$. 
    Using Lemma  \ref{lem:trans} we may and will assume that $w(\Delta_{L_1})=\Delta_{L_0}$ (by adapting $x$ on the right by a suitable 
    element of the pointwise fixator of $L_0$); 
    we fix an inner automorphism $\gk_w=\textup{Ad}(g)\in \textup{Inn}(\hat{\fg})$ normalizing $\hat{\ft}$ and realizing $w$ on $\hat{\ft}$.
    Let $H^1$ be an $y$-good regular envelope of $L_1$, and let $\hat{\fq}_{H^1}\subset \hat{\fg}^{H^1}\subset \hat{\fg}^{L_1}$ be the 
    reductive subalgebra associated to $H^1$ as in Subsection \ref{sub:reg}. Choose a standard regular pole space $H_1$ 
    such that $H^1=w_{H^1}(H_1)$ with $w_{H^1}\in W(\Delta_{L_1})$
    (hence $w_{H^1}$ permutes ${\hat{R}}_+\backslash {\hat{R}}_{L_1}$). Let $\hat{\fp}^{H^1}=\gk_{w_{H^1}}(\hat{\fp}^{H_1})$ where 
    $\gk_{w_{H^1}}\in \textup{Inn}(\hat{G}^{L_1})$ realizes $w_{H^1}$. Let $\hat{\fp}^{H_1}$ be the standard parabolic subalgebra 
    as in Proposition \ref{lem:incl}, and put $\hat{\fp}^{H^1}=\gk_{w_{H^1}}(\hat{\fp}^{H_1})$ where $\gk_{w_{H^1}}\in \textup{Inn}(G^{L_1})$
    realizes $w_{H^1}$. 
    By Proposition \ref{prop:IS} we have 
    \begin{equation}
    \textup{Den}_{nc}^{\fz_{L_1}}(H^1,1,\Sigma)_+=(w_{H^1}^{-1})^*(\textup{Den}_{nc}^{\fz_{L_1}}(H_1,w_{H^1},\Sigma)_+)
    \end{equation} 
    Let $\hat{\ga}$ be mapped to $\hat{\gb}=w_{H^1}(\hat{\ga})$. Hence $m_{H^1}((\hat{\gb}+1)|_{H^1})$ equals 
    $m_{H_1}(({\hat{\ga}}+1)|_{H_1})$, which is equal to (by Proposition \ref{lem:incl}) the multiplicity 
    $\textup{mult}_{\hat{\fu}^{H_1}}^{\hat{\fq}^{H_1}}(V^{H_1}({\hat{\ga}}|_{H_1}))$. In view of the above definition of $\hat{\fp}^{H^1}$ 
    this equals $\textup{mult}_{\hat{\fu}^{H^1}}^{\hat{\fq}^{H^1}}(V^{H^1}({\hat{\gb}}|_{H^1}))$. 
    Hence from the definition of $\textup{Den}^\sim(L_1,y,\Sigma)$ we see that for $\nu\in \textup{Aff}(L_1)^*$ the 
    multiplicity of $\nu+1$ in $\textup{Den}^\sim(L_1,y,\Sigma)$ is:
    \begin{align*}
    m&^\sim_{(L_1,y,\Sigma)}(\nu+1))\\
    &=\textup{min}\{\sum_{\mu\in\textup{Aff}(H^1):\mu|_{L_1}=\nu}
    \textup{mult}_{\hat{\fu}^{H^1}}^{\hat{\fq}^{H^1}}(V^{H^1}(\mu))\mid H^1\supset L_1 \textrm{\ $y$-good regular envelope}\}\\
    &=\textup{min}\{\sum_{\mu\in\textup{Aff}(H^1):\mu|_{L_1}=\nu}
    \textup{mult}_{\hat{\fg}/\hat{\fg}^{H^1}}^{\hat{\fq}^{H^1}}(V^{H^1}(\mu))\mid H^1\supset L_1 \textrm{\ $y$-good regular envelope}\}\\
    &=\textup{min}\{\sum_{\mu\in\textup{Aff}(H^1):\mu|_{L_1}=\nu}
    \textup{mult}_{\hat{\fg}/\hat{\fg}^{L_1}}^{\hat{\fq}^{H^1}}(V^{H^1}(\mu))\mid H^1\supset L_1 \textrm{\ $y$-good regular envelope}\}
    \end{align*}
    The second equality follows by that fact that $\hat{\fu}^{H^1}$ and $\overline{\hat{\fu}}^{H^1}$ do not share 
    modules of the form $V^{H^1}(\mu)$ in common if $\mu|_{L_1}$ is non-constant. Indeed, we know  
    that the highest weights $\mu$ of the components in $\hat{\fu}^{H^1}$ which are nonconstant on $L_1$ 
    are of the form $\hat{\ga}|_{H^1}$ with $\hat{\ga}\in {\hat{R}}_+\backslash {\hat{R}}_{L_1}$, and those of 
    $\overline{\hat{\fu}}^{H^1}$ are of the form $\hat{\gb}|_{H^1}$ with $\hat{\gb}\in {\hat{R}}_-\backslash {\hat{R}}_{L_1}$.
    These sets are disjoint (e.g. the first collection is positive on $\delta-\delta_{L_1}\in\hat{\fz}_{H^1}$, 
    while the second is negative on that vector). the second equality follows from the fact that we are only 
    considering highest weights which are not constant on $L_1$.

    Now $\gk_w$ defines a bijection $H^1\to H^0=\gk_w(H^1)$ from the set of $y$-good regular envelopes of $L_1$ to the set 
    of $x$-regular envelopes of $L_0$. Then $\gk_w(\hat{\fq}^{H^1})=\hat{\fq}^{H^0}$ and 
    $\gk_w:\hat{\fg}/\hat{\fg}^{L_1}\to \hat{\fg}/\hat{\fg}^{L_0}$ intertwines the action of $\hat{\fq}^{H^1}$ on $\hat{\fg}/\hat{\fg}^{L_1}$ and 
    $\hat{\fq}^{H^0}$ on  $\hat{\fg}/\hat{\fg}^{L_0}$. Hence 
    \begin{align*}
    m&^\sim_{(L_1,y,\Sigma)}(\nu+1))\\
    &=\textup{min}\{\sum_{\mu\in\textup{Aff}(H^1):\mu|_{L_1}=\nu}
    \textup{mult}_{\hat{\fg}/\hat{\fg}^{L_1}}^{\hat{\fq}^{H^1}}(V^{H^1}(\mu))\mid H^1\supset L_1 \textrm{\ $y$-good regular envelope}\}\\
    &=\textup{min}\{\sum_{\mu\in\textup{Aff}(H^0):\mu|_{L_0}=w\nu}
    \textup{mult}_{\hat{\fg}/\hat{\fg}^{L_0}}^{\hat{\fq}^{H^0}}(V^{H^0}(\mu))\mid H^0\supset L_0 \textrm{\ $x$-good regular envelope}\}.
    \end{align*}
    By the above and Proposition \ref{lem:incl} the support of $\mu\to \textup{mult}_{\hat{\fg}/\hat{\fg}^{L_1}}^{\hat{\fq}^{H^1}}(V^{H^1}(\mu))$  
    consists of weights $\mu\in\textup{Aff}^\sim(H^1)$ 
    of the form $\mu={\hat{\gb}}|_{H^1}$ for some positive coroot ${\hat{\gb}}$ which is not constant on $L_1$. 
    Hence the support of $m^\sim_{(L_1,y,\Sigma)}$ cosists of $\nu+1\in\textup{Aff}^\sim(L_1)$ with $\nu$ of the form 
    $\hat{\gb}|_{L_1}$ for some $\hat{\gb}\in {\hat{R}}_+\backslash {\hat{R}}_{L_1}$. 
    In the case that $w\nu=w{\hat{\gb}}$ is a positive coroot, we see immediately from the above formula that 
    $m^\sim_{(L_0,x,\Sigma)}(w{\hat{\gb}}+1)|_{L_0})=m^\sim_{(L_1,y,\Sigma)}({\hat{\gb}}+1)|_{L_1})$. 
    Otherwise if $w{\hat{\gb}}$ is a negative coroot, we see that the outcome equals (using Proposition 
    \ref{lem:incl}) $m^\sim_{(L_0,x,\Sigma_\tau)}((w{\hat{\gb}}+1)|_{L_0})$. 
    Using Proposition \ref{prop:tau_L} for $(L_0,x)$, we see that this equals 
    $m^\sim_{(L_0,x,\Sigma)}((\tau_0(w{\hat{\gb}})+1)|_{L_0})$. This proves $(a)$.
    
    By Proposition \ref{prop:tau_L} it follows easily that $(b)$ is equivalent to the statement that 
    $\textup{Den}^\sim(L_1,y,\Sigma)=\textup{Den}^\sim_\tau(L_1,y,\Sigma)$
    where 
    \begin{equation}
    \textup{Den}^\sim_\tau(L_1,y,\Sigma)_+=\bigcap_{\{\tau_1(H) \supset L_1  
    \textup{\ $y$-good\ regular\ envelope}\}}(w_H^{-1})^*(\textup{Den}_{nc}^{\fz_{L_1}}(H_1,w_H,\Sigma)_+)|{L_1}
    \end{equation}
    Following the steps of the proof of $(a)$ we see that this is true for $(L_1,y,\Sigma)$ if and only if 
    it is true for $(L_0,x,\Sigma)$. For $(L_0,x,\Sigma)$ it follows by (\ref{eq:tau}), whence the result. 
    
    For Part $(c)$ note that $\textup{Den}^\sim(L_1,y,\Sigma')\subset \textup{Den}^\sim(L_1,y,\Sigma_\tau)$ by 
    Proposition \ref{lem:incl}. Now use $(b)$.
    \end{proof}
    \begin{thm}\label{thm:main} Let $\hat{R}$ be an irreducible root system, with base $\hat{\Delta}$. 
    There exists proper maximal standard Levi $\hat{R}'$ and a cascade $C$ for $(\hat{R},\hat{R}')$ such that 
    for every residual pole space $L$ in $C$, and every standard pair $(L_0,w)$ with $L=w(L_0)$, we have  
    $\textup{Den}^{\sim}(L_0,w,\Sigma)_+\subset D_{L_0}$ and 
     $\textup{Den}^{\sim}(L_0,w,\Sigma')_+\subset (1-\tau_1)(\textup{Den}^{\sim}(L_0,w,\Sigma)_+)\subset D'_{L_0}$.  
    \end{thm}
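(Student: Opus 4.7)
The choice of $\hat R'$ and $C$ will be exactly the one produced by Lemma \ref{lem:tauadmpart_a}; that lemma already packages the delicate existence statement (validity of the $\tau$-invariance (\ref{eq:tau}) for every residual pair in $\textup{Std}(C)$), whose proof for exceptional types is carried out by the computer enumerations of Appendices \ref{a:Cascade}--\ref{a:EnvDen}. Granted this input, the theorem is a synthesis of Theorem \ref{thm:tauadm} with a base case and elementary affine-geometric bookkeeping. The plan has three moving parts: a base case for the standard data $\textup{Std}(C)$, a Weyl transfer to an arbitrary standard pair, and deduction of the $\Sigma'$-statement from the $\Sigma$-statement via $1-\tau_0$.

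First I would establish the inclusion $\textup{Den}^\sim(L_1,x,\Sigma)_+\subset D_{L_1}$ for residual standard pairs $(L_1,x)\in\textup{Std}(C)$. When $L_1$ is itself regular, $H=L_1$ is the trivial $x$-good regular envelope and Proposition \ref{lem:incl} describes the denominators as affine functions of the shape $\nabla^{L_1}_{\hat\gb_i}+\tfrac{m_i}{2}+1$ with $m_i=2\hat\gb_i(c_{L_1})\geq 0$; their value at $c_{L_1}$ is $\tfrac{m_i}{2}+1\geq 1$ and their gradient is by construction positive, so membership in $D_{L_1}$ is immediate. For the non-regular residual pairs in $\textup{Std}(C)$ (which only occur for $\hat R$ of exceptional type), the same conclusion is verified case-by-case by enumerating $x$-good regular envelopes $H\supsetneq L_1$, intersecting the resulting denominator sets via Corollary \ref{rem:std}, and checking that the surviving affine linear functions land in $D_{L_1}$; this is exactly the computation described in Appendix \ref{a:EnvDen}.

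Next I would bootstrap to an arbitrary standard pair $(L_0,w)$ representing the same residual pole space $L$ in $C$. Pick $(L_1,x)\in\textup{Std}(C)$ with $L=x(L_1)$ (possible because every $W'$-orbit meeting $C$ has a representative in $\textup{Std}(C)$) and set $u:=x^{-1}w$, so $u(L_0)=L_1$. Theorem \ref{thm:tauadm}(a) then gives the transfer identity
\[
\textup{Den}^\sim(L_1,x,\Sigma)_+ \;=\; |u|\,\textup{Den}^\sim(L_0,w,\Sigma)_+.
\]
Writing an arbitrary $X\in\textup{Aff}^*_+(L_0)$ as $X=\nabla_X+X(c_{L_0})$, one checks directly from the definition of $|u|$ that $|u|(X)=|u|(\nabla_X)+X(c_{L_0})$; the constant term is preserved because $|u|(\nabla_X)$ is a gradient on $L_1$ and therefore vanishes at $c_{L_1}$. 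Consequently $|u|$ restricts to a bijection $D_{L_0}\leftrightarrow D_{L_1}$, and inverting the transfer identity yields $\textup{Den}^\sim(L_0,w,\Sigma)_+\subset D_{L_0}$.

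Finally, for the $\Sigma'$-statement, Theorem \ref{thm:tauadm}(c) already provides the inclusion $\textup{Den}^\sim(L_0,w,\Sigma')_+\subset (1-\tau_0)(\textup{Den}^\sim(L_0,w,\Sigma)_+)$, so it remains to verify that $(1-\tau_0)(D_{L_0})\subset D'_{L_0}$. By Proposition \ref{prop:tau_L} the involution $\tau_0$ sends $X=\nabla_X+c$ to $-\nabla_X+c$, whence $(1-\tau_0)(X)=\nabla_X+(1-c)$; this has positive gradient and value $1-c\leq 0$ at $c_{L_0}$ whenever $c\geq 1$, so it lies in $D'_{L_0}$, finishing the proof. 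The main obstacle is unambiguously the base case for the non-regular residual pairs in $\textup{Std}(C)$ for $\hat R$ of type $E_n$ or $F_4$: the intersection over $x$-good regular envelopes admits no uniform closed-form description and the affine inclusion into $D_{L_1}$ there is a genuinely combinatorial statement whose verification is the role of the computer-assisted computations.
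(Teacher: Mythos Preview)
Your proof is essentially the paper's own argument: reduce via Theorem~\ref{thm:tauadm} to checking $\textup{Den}^{\sim}(L_1,x,\Sigma)_+\subset D_{L_1}$ for the residual pairs $(L_1,x)$ in the standard data, then obtain the $\Sigma'$-inclusion from part~(c) together with the elementary computation $(1-\tau_0)(D_{L_0})\subset D'_{L_0}$. The transfer step via $|u|$ and the observation that $|u|$ preserves the constant term (hence carries $D_{L_0}$ to $D_{L_1}$) are correct and match how the paper uses Theorem~\ref{thm:tauadm}(a).

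There is one factual slip in your treatment of the base case. You assert that non-regular residual pairs in $\textup{Std}(C)$ occur only for exceptional $\hat R$, so that the classical base case is covered by Proposition~\ref{lem:incl} applied to $H=L_1$. This is not right: for $\hat R$ of type $\uB,\uC,\uD$ the standard residual spaces $L_\pi$ with $\pi=(\kappa;q)$ and $q$ non-trivial are generally \emph{not} regular (the $\uX_{n'}$-factor of $\hat R_{L_\pi}$ contributes zero coroots whenever $q$ is not the principal partition), so $L_1$ is not its own regular envelope and Proposition~\ref{lem:incl} alone does not suffice. The paper handles this classical base case by the explicit strip/envelope argument of Appendix~\ref{a:class} (the Lemma following Theorem~\ref{thm:classmain}), which constructs for each index $i$ a specific regular envelope $H\supset L_\pi$ forcing $c_i^H\geq 0$; you should cite that argument rather than fold it into the regular case. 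Relatedly, your ``$H=L_1$ is $x$-good'' step for the regular case tacitly uses $d(\hat P_{L_1})\subset \hat R_+$, which is guaranteed in the classical cascade by Theorem~\ref{thm:classmain}(v) but is part of the computer verification in the exceptional cases, not an automatic fact.
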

    \begin{proof}
    By Theorem \ref{thm:tauadm} it is enough to find $C$ satisfying (\ref{eq:tau}) and 
    such that for all residual 
    pole pairs $(L_0,w)\in \textup{Std}(C)$ we have $\textup{Den}^{\sim}(L_0,w,\Sigma)_+\subset D_{L_0}$. 
    For classical root systems see Appendix \ref{a:class}. For the exceptional cases 
    this has been proved by computer calculations of the denominator sets of the residual standard pairs $(L_0,w)\in \textup{Std}(C)$ 
    of an explicit cascade $C$ constructed by computer (see Appendix 
    \ref{a:Cascade} and Appendix \ref{a:EnvDen}).  
    \end{proof}
    \section{Admissible sets for order $0$ pole spaces}\label{s:Adm}
    \begin{defn}\label{def:Adm}
    For each $\Omega$-pole space $L$ of order $0$ we define $\textup{PoleFree}(L)\subset L$\index{$\textup{PoleFree}(L)\subset L$ for $L$ of $\Omega$-order $0$}
    as the complement of the set $\cup_{Y\in \textup{Den}(L)}Y^{-1}(0,1)$. Given a pair $(L_0,w)$ with 
    $L_0$ a standard pole space and $w\in W$ such that $L=w(L_0)$, we define:
    \begin{align}\label{eq:ineqw}
    \textup{Adm}^{(L_0,w)}(L):=
    \{p\in L\mid 
    &\ Y(w^{-1}p)\geq 1,\,\forall Y\in \textup{Den}(L_0,w,\Sigma)_+ \mathrm{\ and\ } \\ 
    \nonumber&\ Z(w^{-1}p)\leq 0,\,\forall Z\in \textup{Den}(L_0,w,\Sigma')_+\ \ \ \ \ \ \ \ \}
    \end{align}
    (an intersection of halfspaces of $L$).
    Finally we define the closed convex set $\textup{Adm}(L)$\index{$\textup{Adm}(L)\subset L$ for $L$ of $\Omega$-order $0$} by: 
    \begin{equation}
    \textup{Adm}(L):=\bigcap_{(L_0,w) \mathrm{\ standard\ pair\ such\ that\ }L=w(L_0)}\textup{Adm}^{(L_0,w)}(L)
    \end{equation}
    \end{defn}
    \begin{lem}\label{lem:indepw}
    If $\textup{Adm}(L)$ is non-empty then 
    it is a component of $\textup{PoleFree}(L)\subset L$, and $\textup{Adm}(L)=\textup{Adm}^{(L_0,w)}(L)$ for 
    any standard pair $(L_0,w)$ such that $L=w(L_0)$. Let $F$ be a number field, and $\rho(s)=\Lambda_F(s)$ its 
    regularized Dedekind zeta function, and let $\phi,\psi$ be holomorphic functions on $B\subset V_\bbC$. 
    Then $\Sigma_W(\phi)\Sigma'_{W'}(\psi)|_{L_\bbC}$ is holomorphic at all points of $B\cap (\textup{Adm}(L)+iV^L)$.
    \end{lem}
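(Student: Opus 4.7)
The plan rests on the $D_L$--$D'_L$ dichotomy provided by Theorem \ref{thm:main}: for any standard pair $(L_0,w)$ representing a residual pole space $L$ in the cascade $C$, the positive representatives in $\textup{Den}(L_0,w,\Sigma)_+$ lie in $D_{L_0}$ (value $\geq 1$ at $c_{L_0}$), while those in $\textup{Den}(L_0,w,\Sigma')_+$ lie in $D'_{L_0}$ (value $\leq 0$ at $c_{L_0}$), and these subsets of $\textup{Aff}(L_0)$ are disjoint. Transferring by $w$ preserves the value at the center, so on $L$ the classification of any denominator becomes intrinsic: $Y(c_L)\geq 1$ in one case and $Y(c_L)\leq 0$ in the other, where $Y$ denotes the positive-gradient representative on $L$.

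For the independence claim I would first observe that the product $(\Sigma_W(\phi)\Sigma'_{W'}(\psi))|_{L_\bbC}$, and hence its denominator set $\textup{Den}(L)=\textup{Den}(L,1)$ (Definition \ref{defn:densets}(c)), is intrinsic to $L$; combined with the identity $m_{(L,x)}=m_{(L,x,\Sigma)}+m_{(L,x,\Sigma')}$, this gives $\textup{Den}(L)=\textup{Den}(L,x,\Sigma)\cup\textup{Den}(L,x,\Sigma')$. Each half-space constraint $Y(w^{-1}p)\geq 1$ or $Z(w^{-1}p)\leq 0$ in $\textup{Adm}^{(L_0,w)}(L)$ translates, after choosing the positive-gradient representative $\tilde{Y}$ of the corresponding class on $L$, to either $\tilde{Y}(p)\geq 1$ (when $\tilde{Y}(c_L)\geq 1$) or $\tilde{Y}(p)\leq 0$ (when $\tilde{Y}(c_L)\leq 0$). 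The dichotomy above shows that which alternative occurs is determined by the intrinsic classification of $[\tilde{Y}]$, hence the defining collection of half-spaces depends only on $L$, proving $\textup{Adm}(L)=\textup{Adm}^{(L_0,w)}(L)$.

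With independence in hand the assertion that $\textup{Adm}(L)$ is a connected component of $\textup{PoleFree}(L)$ is convex-geometric: $L\setminus\textup{PoleFree}(L)=\bigcup_{Y} Y^{-1}(0,1)$ is a union of open strips indexed by positive representatives $Y$ of $\textup{Den}(L)$, so the connected components of $\textup{PoleFree}(L)$ are convex cells indexed by sign choices $\{Y\leq 0\}$ or $\{Y\geq 1\}$ for each $Y$. The admissibility inequalities prescribe exactly one such sign vector, so $\textup{Adm}(L)$, if non-empty, coincides with a single component. Holomorphicity then follows easily: for $\lambda=p+iv\in B\cap(\textup{Adm}(L)+iV^L)$ one has $\textup{Re}(Y(\lambda))=Y(p)\notin(0,1)$ for every positive representative $Y$ of a denominator, and since the zeros of $\Lambda_F$ lie in the critical strip $0<\textup{Re}(s)<1$, each factor $\rho(Y(\lambda))^{-1}$ remains finite at $\lambda$.

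The main obstacle will be the careful tracking of sign switches: when $w$ sends a positive-gradient denominator on $L_0$ to a negative-gradient function on $L$, the corresponding intrinsic classification switches between $D_L$ and $D'_L$, and one must verify that the half-space inequality flips in the matching direction so that the same geometric condition on $p\in L$ is imposed irrespective of the pair. This ultimately rests on Theorem \ref{thm:main}, whose proof is the deepest ingredient of the paper and is established by computer-assisted calculations for the exceptional root systems.
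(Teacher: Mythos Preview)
Your approach differs from the paper's, and the difference matters. The paper's proof is entirely elementary and self-contained: it observes that whenever $\textup{Adm}^{(L_0,w)}(L)$ is non-empty, the partition $\textup{Den}(L)=(w^{-1})^*\textup{Den}(L_0,w,\Sigma)\cup(w^{-1})^*\textup{Den}(L_0,w,\Sigma')$ must be disjoint, so $\textup{Adm}^{(L_0,w)}(L)$ is the intersection of exactly one closed half-space per denominator class and hence a connected component of $\textup{PoleFree}(L)$. Since $\textup{Adm}(L)=\bigcap_{(L_0,w)}\textup{Adm}^{(L_0,w)}(L)$ is \emph{assumed} non-empty, every $\textup{Adm}^{(L_0,w)}(L)$ is a non-empty component, they all share a point, and distinct components of a space are disjoint---so they all coincide. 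No reference to Theorem~\ref{thm:main}, to $c_L$, or to any $D_L$--$D'_L$ dichotomy is needed.

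Your argument instead invokes Theorem~\ref{thm:main} to manufacture an intrinsic criterion (the half-space containing $c_L$). This idea is sound \emph{where Theorem~\ref{thm:main} applies}, but that theorem is stated only for \emph{residual} pole spaces appearing in the cascade $C$. Lemma~\ref{lem:indepw}, however, is formulated for an arbitrary order-$0$ $\Omega$-pole space and is subsequently used (via Lemma~\ref{lem:herit} and in Corollary~\ref{cor:Cadmis}) for pole spaces $L_j$ along flags in $C$ that are not residual in general. For such $L$ the $D_L$--$D'_L$ dichotomy is simply unavailable, and your argument has no fallback. The ``sign-switch'' issue you flag at the end is a symptom of this: you cannot resolve it without the value-at-center criterion, and that criterion is not known outside the residual case.

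There is also a structural cost even in the residual case: you reduce what should be an elementary lemma about half-space arrangements to the deepest result in the paper, one whose exceptional cases are established by computer verification. The paper's argument avoids this inversion entirely by exploiting the \emph{hypothesis} that $\textup{Adm}(L)\neq\emptyset$ rather than trying to identify the component directly. Your convex-geometric description of the components of $\textup{PoleFree}(L)$ and your holomorphicity argument are correct and in fact more explicit than the paper's ``obvious''.
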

    \begin{proof}
    By Definition \ref{defn:densets} and by Lemma \ref{lem:trans} we have, for every standard pair $(L_0,w)$ such that $L=w(L_0)$: 
    \begin{equation}\label{eq:parti}
    \textup{Den}(L)= (w^{-1})^*(\textup{Den}(L_0,w,\Sigma))\cup   (w^{-1})^*(\textup{Den}(L_0,w,\Sigma'))
    \end{equation}
    Hence for each equivalence class $\{\nu,1-\nu\}\in \textup{Den}(L)$ the inequalities in (\ref{eq:ineqw}) contain either 
    the inequality $\nu\geq 1$ or the inequality $\nu\leq 0$, and both if $\{\nu,1-\nu\}\in (w^{-1})^*(\textup{Den}(L_0,w,\Sigma))\cap  
    (w^{-1})^*(\textup{Den}(L_0,w,\Sigma'))$. Therefore, if $\textup{Adm}^{(L_0,w)}(L)$ is not empty then (\ref{eq:parti}) is a partition 
    of $\textup{Den}(L)$, and $\textup{Adm}^{(L_0,w)}(L)$ is the intersection of a collection of half spaces where each half space is 
    a component of a set of the form $\bbR\backslash \nu^{-1}(0,1)\subset L$ with $\{\nu,1-\nu\}\in \textup{Den}(L)$, and where 
    one such half space is selected for each class of $\textup{Den}(L)$. This shows that if $\textup{Adm}^{(L_0,w)}(L)$ is not empty
    then $\textup{Adm}^{(L_0,w)}(L)$ is a component of $\textup{PoleFree}(L)$. But then it is clear that $\textup{Adm}(L)$ is non-empty 
    if and only if  $\textup{Adm}^{(L_0,w)}(L)$ is non-empty and independent of the choice of the standard pair $(L_0,w)$. It follows that in that case 
    $\textup{Adm}(L)=\textup{Adm}^{(L_0,w)}(L)$ for any standard pair $(L_0,w)$ such that $L=w(L_0)$. The final claim is obvious.
     \end{proof}
     \begin{lem}\label{lem:herit}
    Let $L$ be an $\Omega$-pole space of order $0$, and $u\in W'$. Then 
    $u(\textup{Adm}(L))=\textup{Adm}(uL)$. If $M\supset L$ is also an order $0$ $\Omega$-pole space, then  
    $\textup{Adm}(M)\cap L\subset \textup{Adm}(L)$
    (heritability of admissibility).  
    \end{lem}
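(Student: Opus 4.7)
The first claim is routine: the standard pairs representing $uL$ are precisely the pairs $(L_0,uw)$ where $(L_0,w)$ represents $L$. By Lemma~\ref{lem:trans} we have $\textup{Den}(L_0,w,\Sigma)=\textup{Den}(L_0,uw,\Sigma)$ and the analogous identity for $\Sigma'$. Since $(uw)^{-1}(up)=w^{-1}p$ for $p\in L$, the defining inequalities in (\ref{eq:ineqw}) for $\textup{Adm}^{(L_0,w)}(L)$ and $\textup{Adm}^{(L_0,uw)}(uL)$ correspond bijectively under the map $p\mapsto up$. Intersecting over all standard pairs yields $u(\textup{Adm}(L))=\textup{Adm}(uL)$.

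For the heritability claim, I would fix $p\in\textup{Adm}(M)\cap L$ (the statement is vacuous otherwise) together with a standard pair $(L_0,w)$ for $L$. Applying Proposition~\ref{prop:essstdhull} to the pair $L_0\subset w^{-1}(M)$ produces $w_M\in W(\hat{\Delta}(L_0))$ and a standard pole space $M_0$ with $w^{-1}(M)=w_M(M_0)$, so that $(M_0,ww_M)$ is a standard pair representing $M$. The key observation is that $w_M$ fixes $L_0$ pointwise, which gives the crucial identity $(ww_M)^{-1}p=w_M^{-1}w^{-1}p=w^{-1}p$ for $p\in L$. I would then aim to show that for each $Y\in\textup{Den}(L_0,w,\Sigma)_+$ the inequality $Y(w^{-1}p)\geq 1$ is forced by a corresponding $M$-level constraint, and similarly that for each $Z\in\textup{Den}(L_0,w,\Sigma')_+$ the inequality $Z(w^{-1}p)\leq 0$ follows. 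The plan is to use Proposition~\ref{prop:herit} (refined through Remark~\ref{rem:warn_herit}) to lift $Y$ to some $\tilde Y\in\textup{Den}^{\hat{\fz}_{L_0}}_{nc}(M_0,ww_M,\Sigma)$ with $\tilde Y|_{L_0}\in\{Y,1-Y\}$, orient $\tilde Y$ to have positive gradient on $M_0$, and read off the inequality imposed by $p\in\textup{Adm}^{(M_0,ww_M)}(M)$. Restricting this inequality via $(ww_M)^{-1}p=w^{-1}p$ then recovers the required inequality on $L_0$. Lemma~\ref{lem:indepw} finally absorbs the dependence on the choice of standard pair.

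The principal obstacle is the mismatch between $\textup{Den}$ (after $\Sigma\cdot\Sigma'$ cancellation) and $\textup{Den}_{nc}$: Proposition~\ref{prop:herit} gives clean separate inclusions $\textup{Den}_{nc}(L_0,w,\Sigma)\subset \textup{Den}_{nc}(M_0,ww_M,\Sigma)|_{L_0}$ (and likewise for $\Sigma'$) only at the uncancelled level, whereas $\textup{Adm}$ is defined using the cancelled versions. To bridge this I would trace the origin of a surviving $L_0$-level $\Sigma$-denominator through the explicit summand expansion (\ref{eq:Sigmacr}): a genuine pole of $\Sigma_W(\phi^-)|_{L_\bbC}$ must arise from an uncancelled $\Sigma$-factor at $M$-level, preserving the $\Sigma/\Sigma'$ classification under restriction. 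A further subtlety is the alignment of the positive-gradient representative of $\tilde Y$ on $M_0$ with the positive-gradient representative on $L_0$; I would verify, using the structural results in Propositions~\ref{lem:incl} and~\ref{prop:essregst} together with the fact that $\hat{R}(M_0)\subset \hat{R}(L_0)$, that this alignment respects the sign of the inequality (i.e.\ $\geq 1$ for $\Sigma$-denominators and $\leq 0$ for $\Sigma'$-denominators). With both compatibilities established, the inclusion $\textup{Adm}(M)\cap L\subset \textup{Adm}(L)$ follows.
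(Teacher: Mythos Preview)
Your overall approach mirrors the paper's: both prove $W'$-equivariance by matching standard pairs via Lemma~\ref{lem:trans}, and both prove heritability by producing a standard pair $(M_0,ww_{M^0})$ for $M$ via Proposition~\ref{prop:essstdhull} and then invoking the heritability of denominator sets (Propositions~\ref{prop:herit} and~\ref{prop:IS}) together with Lemma~\ref{lem:indepw}.

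There is, however, a concrete error. You assert that $w_M\in W(\hat\Delta(L_0))$ fixes $L_0$ pointwise and deduce $(ww_M)^{-1}p=w^{-1}p$ for $p\in L$. This is false: a simple reflection $s_\alpha$ with $\hat\alpha\in\hat\Delta(L_0)$ sends $\lambda\in L_0$ to $\lambda-\hat\alpha(c_{L_0})\alpha$, and $\hat\alpha(c_{L_0})$ is typically nonzero (e.g.\ equal to $1$ whenever $\hat\alpha\in\hat P_{L_0}$). The pointwise fixator of $L_0$ is the smaller group $W_{L_0}$ generated by reflections in coroots \emph{vanishing} on $L_0$, not $W(\hat\Delta(L_0))$. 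Fortunately the identity you actually need survives: if $\tilde Y\in\textup{Aff}(M_0)$ and $Y=(w_{M^0}^{-1})^*(\tilde Y)|_{L_0}$ as in Proposition~\ref{prop:IS}, then by definition of the pullback $Y(q)=\tilde Y(w_{M^0}^{-1}q)$ for $q\in L_0$, whence $Y(w^{-1}p)=\tilde Y((ww_{M^0})^{-1}p)$ directly, with no appeal to pointwise fixing.

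Your worry about the $\textup{Den}$ versus $\textup{Den}_{nc}$ mismatch is legitimate: Proposition~\ref{prop:IS} gives separate $\Sigma$/$\Sigma'$ inclusions only at the uncancelled level, whereas the inequalities at the $M$-level supplied by $p\in\textup{Adm}^{(M_0,ww_{M^0})}(M)$ are for the cancelled sets $\textup{Den}(M_0,\ldots,\Sigma)_+$ and $\textup{Den}(M_0,\ldots,\Sigma')_+$. The paper's own proof is just as terse here, simply citing Propositions~\ref{prop:IS} and~\ref{prop:herit} without spelling out why the lift of a cancelled $\Sigma$-denominator on $L_0$ remains a cancelled $\Sigma$-denominator on $M_0$. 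Your proposed fix (tracing the summand expansion~(\ref{eq:Sigmacr})) is a plan rather than a proof; to make it rigorous you would need to exhibit, for each $Y\in\textup{Den}(L_0,w,\Sigma)_+$, a single lift $\tilde Y$ lying simultaneously in $\textup{Den}_{nc}(M_0,ww_{M^0},\Sigma)$ and in $\textup{Den}(M_0,ww_{M^0})$, which does not follow formally from the separate fibrewise inequalities.
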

    \begin{proof}
    The first assertion follows easily using Definition \ref{def:Adm}, Lemma \ref{lem:indepw}, together with  
    Lemma \ref{lem:trans}, by noting that $(L_0,w)$ is a standard pair for $L$ if and only if $(L_0,uw)$ is a standard pair for $uL$.
    To prove the heritability we may assume that $\textup{Adm}(M)\cap L\not=\emptyset$. Assume that $p\in \textup{Adm}(M)\cap L$. 
    Let $(L_0,w)$ be a standard pair for $L$, and put $M^0=(w^{-1})(M)$. Then we are in the situation of Proposition \ref{prop:IS}, 
    and we will use the notations in that Proposition. 
    Since $p\in \textup{Adm}(M)\subset \textup{PoleFree}(M)$ we can use the induction hypothesis and the standard pair 
    $(M_0,ww_{M^0})$ for $M$ to see that $p\in\textup{Adm}(M)^{(M_0,ww_{M^0})}(M)$. 
    By Proposition \ref{prop:IS} and Proposition \ref{prop:herit} we see that this implies that also $p\in \textup{Adm}^{(L_0,w)}(L)$. 
    Lemma \ref{lem:indepw} shows that this is true 
    for all choices of standard pairs $(L_0,w)$ and hence $p\in \textup{Adm}(L)$.
    \end{proof} 
    \section{Reduction to contours near residual centers (split case)}\label{s:MainResultRes}
    Iterated residue integrals in a pole space $L$ in $C$ are of the form 
    \begin{equation}\label{eq:itres}
    \int_{p_{L}+iV^L}\textup{Res}_{L,\cF}(\Sigma_W(\phi)\Sigma'_{W'}(\psi)\gO)
    \end{equation}
    where $\cF$ is the flag of pole spaces $L=L_0\subset L_1\subset L_2\dots$ in $C$ ending in $L$ corresponding to 
    a path of segments in $C$ defining this iterated residue 
    (see Subsection \ref{ss:intro} (or \cite[V.1.3]{MW2})). 
    If $\textup{Ord}_L(\Omega)=0$ the integrand $\textup{Res}_{L,\cF}(\Sigma_W(\phi)\Sigma'_{W'}(\psi)\gO)$ is of the form: 
    \begin{equation}
    \textup{Res}_{L,\cF}(\Sigma_W(\phi)\Sigma'_{W'}(\psi)\gO)=\fc_{L,\cF}(\Sigma_W(\phi)\Sigma'_{W'}(\psi))|_{L_\bbC}\Omega^L
    \end{equation}
    with $\fc_{L,\cF}$ a constant depending on $\cF$ (see (\ref{eq:resdatCst})). More generally, the integrand
    $\textup{Res}_{L,\cF}(\Sigma_W(\phi)\Sigma'_{W'}(\psi)\gO)$ has a unique expansion of the form 
    \begin{equation}\label{eq:resdat}
    \textup{Res}_{L,\cF}(\Sigma_W(\phi)\Sigma'_{W'}(\psi)\gO):=\sum_i (D_{L,\cF,i}(\Sigma_W(\phi)\Sigma'_{W'}(\psi))|_{L_\bbC})\gO^L_{\cF,i}
    \end{equation}
    where $\gO_{L,\cF,i}$ are rational $(\textup{dim}(L),0)$-forms on $L_\bbC$ and where 
    $D_{L,\cF,i}\in \textup{Sym}(V_L)$ is a constant coefficient differential operator of order $\textup{Ord}_L(\Omega)$, 
    all depending on $\cF$.  This makes the study of poles substantially harder in the case $\textup{Ord}_L(\Omega)>0$.
    \begin{thm}\label{thm:casc_ord_int}
    Let $\hat{R}$ be an irreducible based root system which is not of type $\textup{E}_8$. 
    Then there exists a proper maximal standard Levi subsystem $\hat{R}'$, and a cascade $C$ as in Theorem \ref{thm:main}, 
    such that all pole spaces $L$ in $C$ which are intersected in a point other than $c_L$ by a segment in $C$ are of order $0$. 
    
    If $\hat{R}$ has type $\textup{E}_8$, and we take $\hat{R}'\subset \hat{R}$ equal to the standard Levi subsystem of type $\textup{E}_7$, 
    then, we have constructed a cascade $C$ (by computer) which satisfies Theorem \ref{thm:main}, but having three $W'$-orbits of pole spaces $L$ 
    with $\textup{Ord}_L(\Omega)>0$ which are not intersected at $c_L$. 
    These three cases are all residual and of $\Omega$-order $1$. Moreover, if $L$ is a pole space in one of these $W'$-orbits 
    and if $(\sigma_M,M)\in C$ such that $L\subset M$ is a pole hyperplane with $\sigma_M\cap L=\{p_{(L,\sigma_M)}\}$,  
    then $\textup{Ord}_M(\Omega)=0$ and $c_L\in \textup{Adm}(M)$ \emph{except} for one 
    line $L_{sp}=s_8s_7(L_{sp,0})$\index{$L_{sp,0}=\textup{E}_7(a4)$!$L_{sp}$ the special pole line of $E_8$}
    with $L_{sp,0}=\textup{E}_7(a4)$\index{$L_{sp,0}=\textup{E}_7(a4)$} and $M_{sp}=\textup{E}_6(a3)\supset L_{sp}$. If $x$ is 
    the coordinate on $L_{sp,0}$ centered at $c_{L_{sp,0}}$ and dual to $\go_8\in V^{L_{sp,0}}$ then $x(s_7s_8(p_{(L_{sp},\sigma_{M_{sp}})}))>3/2$ 
    while $\textup{Den}^{\hat{\fz}_{L_{sp,0}}}((s_7s_8)(M),s_7s_8,\Sigma)|_{L_{sp,0}}\backslash D_{L_{sp,0}}= \{(x-1/2)\}$ and 
    $\textup{Den}^{\hat{\fz}_{L_{sp,0}}}((s_7s_8)(M),s_7s_8,\Sigma')|_{L_{sp,0}}\subset D'_{L_{sp,0}}$. 
    In particular, for all pairs $L\subset M$ with $\textup{Ord}_L(\Omega)>0$ and where $L$ is not intersected at its center $c_L$, 
    we have $\textup{Ord}_M(\gO)=0$ and $L^{\textup{temp}}\subset \textup{PoleFree}(M)+iV^M$. 
    \end{thm}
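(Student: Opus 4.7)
The plan is to verify the stated properties of the cascade $C$ case by case in the type of $\hat R$, leveraging the explicit cascade constructions in the appendices together with Theorems \ref{thm:main} and \ref{thm:tauadm}. For classical irreducible $\hat R$ (types $A_n, B_n, C_n, D_n$), I would first produce the cascade of Appendix \ref{a:class} and verify, by direct inspection of its recursive assembly, that every pole space $L$ with $\textup{Ord}_L(\Omega)>0$ occurring in $C$ is met by cascade segments only at $c_L$. In classical types the combinatorial description of order-positive pole spaces makes this essentially an inductive check: such $L$ appear in branches whose base point is forced to be $c_L$ by the construction.

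For the exceptional types $G_2, F_4, E_6, E_7$ I would invoke the computer-assisted cascade construction of Appendix \ref{a:Cascade} and verify the first assertion of the theorem by enumeration over $C$, using the enveloping denominator computations of Appendix \ref{a:EnvDen} to discharge the hypotheses of Theorem \ref{thm:main}. For $\hat R = E_8$ with $\hat R' = E_7$, I would use the explicit cascade $C$ built in Appendix \ref{a:Cascade} and classify, again by enumeration, the $W'$-orbits of pole spaces in $C$ that are simultaneously order-positive and met off-center; the computer output identifies exactly three such orbits, each consisting of residual pole spaces of $\Omega$-order $1$. For every pair $L \subset M$ in $C$ with $L$ in one of these orbits and $\sigma_M \cap L$ a non-central singleton, I would read off from the cascade data that $\textup{Ord}_M(\Omega)=0$, and then verify $c_L \in \textup{Adm}(M)$ by checking the inequalities defining $\textup{Adm}^{(M_0,w)}(M)$ for a standard pair $(M_0, w)$ representing $M$, substituting in the enveloping denominator sets from Appendix \ref{a:EnvDen} and applying Theorems \ref{thm:main} and \ref{thm:tauadm} to pass between standard pairs.

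The main obstacle is the exceptional line $L_{sp} = s_8 s_7(L_{sp,0})$ of type $E_7(a4)$ inside $E_8$ with envelope $M_{sp}$ of type $E_6(a3)$, for which $c_{L_{sp}}$ fails to lie in $\textup{Adm}(M_{sp})$. The plan for this case is to invoke the dedicated analysis of Appendix \ref{a:Special}: a computer-assisted computation of the enveloping denominator sets of $M_{sp}$ pulled back to $L_{sp,0}$ via $s_7 s_8$, isolating the unique element of $\textup{Den}^{\hat{\fz}_{L_{sp,0}}}((s_7s_8)(M_{sp}), s_7s_8, \Sigma)|_{L_{sp,0}}$ outside $D_{L_{sp,0}}$ as the single function $(x-1/2)$, and confirming that the $\Sigma'$-denominator set is entirely contained in $D'_{L_{sp,0}}$. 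The coordinate inequality $x(s_7 s_8\, p_{(L_{sp},\sigma_{M_{sp}})}) > 3/2$ is then a direct evaluation of the cascade base point in the chosen coordinate. The hard part here is the computer verification that \emph{only} $(x-1/2)$ escapes $D_{L_{sp,0}}$; everything downstream is routine once this classification of denominators is in hand.

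Finally, to deduce the closing assertion that $L^{\textup{temp}} \subset \textup{PoleFree}(M) + iV^M$ in every case, I would argue as follows. When $c_L \in \textup{Adm}(M)$, Lemma \ref{lem:indepw} gives $c_L \in \textup{PoleFree}(M)$, and since $V^L \subset V^M$ and the denominators in $\textup{Den}(M)$ are affine-linear with real gradient, the real part of $Y(c_L+iv)$ equals $Y(c_L) \notin (0,1)$ for $v \in V^L$, so the entire $L^{\textup{temp}}$ sits in $\textup{PoleFree}(M) + iV^M$. For $L_{sp} \subset M_{sp}$ the same reasoning works directly: the only denominator of $M_{sp}$ escaping $D_{L_{sp,0}}$ is $(x-1/2)$, and since $x$ is centered at $c_{L_{sp,0}}$ we have $(x-1/2)(c_{L_{sp,0}}) = -1/2 \notin (0,1)$, so $c_{L_{sp}} \in \textup{PoleFree}(M_{sp})$ even though it lies outside $\textup{Adm}(M_{sp})$, and the tube $L_{sp}^{\textup{temp}}$ avoids the pole locus as required.
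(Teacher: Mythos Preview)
Your approach is essentially the paper's own: Appendix~\ref{a:class} for the classical types and computer verification of the explicitly constructed cascades for the exceptional types, with Appendix~\ref{a:Special} handling the lone troublesome line in $\textup{E}_8$. One small remark: for the classical types the paper actually records the stronger fact that \emph{every} pole space in $C_\uX$ has $\Omega$-order $0$ (combining Theorem~\ref{thm:classmain}(i),(ii)), so no off-center intersection with a positive-order space can occur at all; your inductive phrasing is fine but slightly understates what Appendix~\ref{a:class} gives.

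There is one genuine loose end in your derivation of the final ``in particular'' clause for the special pair $L_{sp}\subset M_{sp}$. The sets $\textup{Den}^{\hat{\fz}_{L_{sp,0}}}(\cdot)$ quoted in the theorem are by definition the denominators of $N^0=(s_7s_8)(M_{sp})$ that are \emph{non-constant} on $L_{sp,0}$; showing that each of these takes a value outside $(0,1)$ at $c_{L_{sp,0}}$ does not yet rule out a denominator $Y\in\textup{Den}(M_{sp})$ whose restriction to $L_{sp}$ is constant and lands in $(0,1)$. So $c_{L_{sp}}\in\textup{PoleFree}(M_{sp})$ does not follow purely from the displayed denominator information; it is part of the computer verification (and in any case the downstream arguments for $L_{sp}$ go through Theorem~\ref{thm:mainspec} rather than through $\textup{PoleFree}(M_{sp})$). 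In the non-special cases your argument is clean: $c_L\in\textup{Adm}(M)\subset\textup{PoleFree}(M)$ and $V^L\subset V^M$ give $L^{\textup{temp}}\subset\textup{PoleFree}(M)+iV^M$ immediately, without needing to evaluate individual denominators.
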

    \begin{proof}
    For classical cases all spaces in $C$ have $\Omega$-order $0$, see Appendix \ref{a:class}. 
    For exceptional cases the above was verified by checking the classification of the pole spaces in the cascades $C$  
    which we have constructed by computer algebra. 
    \end{proof}
    \begin{thm}
    Suppose that the cascade $C$ satisfies the properties in Theorem \ref{thm:main} and in 
    Theorem \ref{thm:casc_ord_int}.
    Let $L$ be a pole space in $C$ of order $0$. Then all initial points $p \in L$ 
    (such $p$ is either of the form $p_{L,\infty}$ with $p_{L,\infty}$ the initial point ``at infinity'' at the start of a branch 
    of $C$ (so in this case $L$ is residual and induced from $G'$), or $p=p_{(L,\sigma_M)}=\sigma_M\cap L$
    where $(\sigma_M,M)\in C$ is a parent of $L$) belong to $\textup{Adm}(L)$. 
    When $L^{\textup{temp}}$ is a subspace of the tempered form $M^{\textup{temp}}$ of a residual pole space $M\in C$ then 
    $c_L\in \textup{Adm}(L)$. In particular, in the latter cases we have $w(\textup{Adm}(L))=\textup{Adm}(wL)$
    for all $w\in W$. 
    \end{thm}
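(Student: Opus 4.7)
I would establish the three assertions in the logical order: first, $c_L\in\textup{Adm}(L)$ when $L^{\textup{temp}}\subset M^{\textup{temp}}$ for residual $M\in C$; then the equivariance $w(\textup{Adm}(L))=\textup{Adm}(wL)$; and finally the statement on the two kinds of initial points $p\in L$. The first two rely directly on Theorem \ref{thm:main} and Lemma \ref{lem:indepw}; the base-point statements are proved by induction on the cascade tree via heritability (Lemma \ref{lem:herit}).

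For $c_L\in\textup{Adm}(L)$, the hypothesis $L^{\textup{temp}}\subset M^{\textup{temp}}$ forces $c_L=c_M$ and $V^L\subset V^M$, so $L$ is itself residual. Picking any standard pair $(L_0,w)$ representing $L$, Theorem \ref{thm:main} asserts $\textup{Den}^\sim(L_0,w,\Sigma)_+\subset D_{L_0}$ and $\textup{Den}^\sim(L_0,w,\Sigma')_+\subset D'_{L_0}$; Corollary \ref{cor:cont} descends these to $\textup{Den}(L_0,w,\Sigma)_+$ and $\textup{Den}(L_0,w,\Sigma')_+$. Since $c_{L_0}=w^{-1}(c_L)$, the inequalities defining $c_L\in\textup{Adm}^{(L_0,w)}(L)$ hold, and Lemma \ref{lem:indepw} yields $c_L\in\textup{Adm}(L)$. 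The equivariance then follows: applying this to both $L$ and $wL$ (with $wM$ replacing $M$) shows $w(c_L)=c_{wL}\in w(\textup{Adm}(L))\cap\textup{Adm}(wL)$; since $w$ sends $\textup{PoleFree}(L)$ bijectively onto $\textup{PoleFree}(wL)$ and distinct components are disjoint, the two sets coincide.

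For $p_{L,\infty}=c_L+t\mathfrak{w}'$ (the initial point at infinity) with $L=L'\oplus\mathbb{R}\mathfrak{w}'$ and $L'$ standard residual for $R'$, I would first check that $L$ itself is standard, since $\hat{R}_L=\hat{R}_{L'}$ is a standard Levi and $c_L=c_{L'}$ remains dominant. Taking $(L,1)$ as standard pair, for $Y\in\textup{Den}(L,1,\Sigma)_+\subset D_L$ (by Theorem \ref{thm:main}) the gradient of $Y$ is $\hat{\alpha}|_{V^L}$ for some $\hat{\alpha}\in\hat{R}_+$; since the coefficient in $\hat{\alpha}$ of the simple coroot outside $\hat{R}'$ (dual to $\mathfrak{w}'$) is non-negative, $\hat{\alpha}(\mathfrak{w}')\geq 0$, and thus $Y(p_{L,\infty})=Y(c_L)+t\hat{\alpha}(\mathfrak{w}')\geq 1$. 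For $Z\in\textup{Den}(L,1,\Sigma')_+\subset D'_L$, the explicit formula (\ref{eq:cp}) with $w=1$ shows each summand of $\Sigma'_{W'}$ has denominators only in $\hat{R}'$: for $u\in W'$ we have $R(u)\subset R'_+$ since $W'$ fixes $\mathfrak{w}'$ and preserves signs of roots outside $R'$. Hence the positive-gradient representative of $Z$ has gradient $\hat{\beta}|_{V^L}$ with $\hat{\beta}\in\hat{R}'$, so $\hat{\beta}(\mathfrak{w}')=0$ and $Z(p_{L,\infty})=Z(c_L)\leq 0$. Therefore $p_{L,\infty}\in\textup{Adm}(L)$.

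For the parent-intersection point $p_{(L,\sigma_M)}=\sigma_M\cap L$, I would induct on the depth of $L$ in the cascade tree. By Theorem \ref{thm:casc_ord_int}, since $L$ has order $0$, either $\textup{Ord}_M(\Omega)=0$ or $L$ is intersected at $c_L$; in the second case, $p_{(L,\sigma_M)}=c_L$ and we fall under the first part, since there $L^{\textup{temp}}$ lies in a residual tempered form. When $\textup{Ord}_M(\Omega)=0$, inductively both endpoints of $\sigma_M$ are in $\textup{Adm}(M)$ (one is the earlier $p_{(M,\sigma_N)}$ or $p_{M,\infty}$ already handled, and the other is $c_M$ when the tree in $M_0$ is star-centered, again covered by the first part). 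By convexity of $\textup{Adm}(M)$, the whole segment $\sigma_M\subset\textup{Adm}(M)$, and the heritability Lemma \ref{lem:herit} concludes $p_{(L,\sigma_M)}\in\textup{Adm}(M)\cap L\subset\textup{Adm}(L)$. The main obstacle is this inductive step: one must verify that the cascade's tree structure supplies the two endpoints of each $\sigma_M$ as inductively admissible points, and carefully reconcile the special case of order-positive $M$ where $L$ is intersected at $c_L$ with the residual-envelope base case.
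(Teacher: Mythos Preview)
Your overall architecture matches the paper's: induction along the cascade, Theorem~\ref{thm:main} to place centers of residual spaces in $\textup{Adm}$, heritability (Lemma~\ref{lem:herit}) to pass to subspaces, and convexity to cover segments. Two points, however, need correction.

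\textbf{The claim that $L$ is residual is false.} In your first step you write that $L^{\textup{temp}}\subset M^{\textup{temp}}$ with $M$ residual forces ``$L$ is itself residual,'' and then invoke Theorem~\ref{thm:main} on $L$. The inclusion only gives $c_L=c_M$ and $L\subset M$; there is no reason for $L$ to satisfy $|\hat P_L|=|\hat Z_L|+\textup{codim}(L)$. Since Theorem~\ref{thm:main} applies only to \emph{residual} pole spaces in $C$, your argument does not go through as written. The paper instead applies Theorem~\ref{thm:main} to the residual space $M$ (exactly the argument you give for first-generation $L$, but with $M$ in place of $L$) to obtain $c_M\in\textup{Adm}(M)$, and then uses heritability: $c_L=c_M\in\textup{Adm}(M)\cap L\subset\textup{Adm}(L)$. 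This fix is local and your subsequent equivariance argument survives unchanged once $c_L\in\textup{Adm}(L)$ is established correctly.

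\textbf{The dichotomy on $\textup{Ord}_M(\Omega)$ is spurious.} In the induction step you split into ``$\textup{Ord}_M(\Omega)=0$'' versus ``$L$ intersected at $c_L$.'' But if $L$ has $\Omega$-order $0$ and $L\subset M$ is a pole hyperplane in $M$, then automatically $\textup{Ord}_M(\Omega)=0$ as well (the order is weakly increasing with codimension along flags in $C$; the paper records this as an ``obvious fact'' at the start of its proof). Thus your second branch never occurs, and the justification you offer for it (``$L^{\textup{temp}}$ lies in a residual tempered form'') would in any case not follow merely from being intersected at $c_L$. Dropping this branch simplifies your induction to exactly the paper's: the parent $M$ has order $0$, the endpoints of $\sigma_M$ are admissible by induction (initial point) and by the center argument (final point $c_M$, when applicable) or as another initial point (cascade property~(vii)), hence $\sigma_M\subset\textup{Adm}(M)$ by convexity, and $p_{(L,\sigma_M)}\in\textup{Adm}(L)$ by heritability.
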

    \begin{proof}
    The proof is by induction on the codimension $k$ of the order $0$ pole spaces $M$ such that 
    $(\sigma_M,M)\in C$ for some segment $\sigma_M\subset M$. We will use throughout in the proof the obvious fact that 
    if $L$ is a pole space in $C$ with $\textup{Ord}_L(\Omega)=0$ and  $(\sigma_M,M)\in C$ such that $L\subset M$ is a pole hyperplane 
    in $M$, then also $\textup{Ord}_M(\Omega)=0$. 
    
    Assume by induction that for all such pairs $(\sigma_M,M)$ with $M$ of codimension $k-1$, we have 
    $\sigma_M\subset \textup{Adm}(M)$. That is, for all $M$ with $\textup{Ord}_M(\gO)=0$ and all standard pairs 
    $(M_0,w)$ such that $M=w(M_0)$, all points $p\in \sigma_M$ satisfy: 
    \begin{align}\label{eq:ineq}
    &Y(w^{-1}p)\geq 1,\ \forall Y\in \textup{Den}(M_0,w,\Sigma)_+ \mathrm{\ and\ }\\ 
    \nonumber&Z(w^{-1}p)\leq 0,\ \forall Z\in \textup{Den}(M_0,w,\Sigma')_+
    \end{align}
    The induction basis is $M=V=\hat{\ft}^*$, with $p=p_{V,\infty}$ and $\sigma_V=[p_{V,\infty},0]$. 
    Let $w=ud\in W$, with $d$ minimal in $W'd$. 
    It follows easily from the definitions that $\textup{Den}(V,w)_+
    =(1+\hat{R}_+\backslash \hat{R}(d))\cup (\hat{R}(d) \cup d^{-1}(\hat{R}'_+))$. 
    Indeed, the factor $\Sigma'_{W'w}$ can be written as
    \[
    \Sigma'_{W'w}=\frac{r(d\gl)}{r(\gl)}d^*(\Sigma'_{W'})
    \]
    Since the denominators of $d^*(\Sigma'_{W'})$ are clearly in $d^{-1}(\hat{R}'_+)\subset \hat{R}_+$ none of the 
    numerators $1+\hat{R}(d)$ of $\frac{r(d\gl)}{r(\gl)}$ cancels, and so $1+\hat{R}(d)$ is exactly the set of common numerators 
    of $\Sigma'_{W'w}$. This reduces, according to the definitions, the set $\textup{Den}(V,w,\Sigma)_+$ to $(1+\hat{R}_+\backslash \hat{R}(d))$
    (by cancelling these numerators from the set $1+\hat{R}_+$) and shows that $\textup{Den}(V,w,\Sigma')_+=
    \hat{R}(d)\cup d^{-1}(\hat{R}'_+)$. This defines a partitioning of $\textup{Den}(V,w)_+$. Therefore, 
    since $(w^{-1})^*(\textup{Den}(V,w))=\textup{Den}(V)=(1+\hat{R}_+)\sqcup \hat{R}'_+$ and since the inequalities (\ref{eq:ineq}) 
    at least have $c_V=0$ as solution,  (\ref{eq:ineq}) describes the unique component of $\textup{PoleFree}(V)$ which containes $0$.
    In particular the solution set of (\ref{eq:ineq}) is independent of $w$. 
    We see that the solution set is $\textup{Adm}(V)=\bbR_{\geq0}\fw'$, and thus that $\sigma_V\subset \textup{Adm}(V)$.
    
    Now let us look at the induction step. Let $L$ be a pole space in $C$ with $\textup{Ord}_L(\Omega)=0$ and codimension $k$. 
    There are two cases to consider. First assume that $L$ is induced from a standard residual pole space of $\hat{G'}$ and 
    $\sigma_L=[p_{L,\infty},c_L]$. This is similar to the proof of the induction base. 
    By Theorem \ref{thm:main} and Corollary \ref{cor:cont} we see that for any standard pair $(L_0,w)$ such that $L=w(L_0)$
    we have that $(w^{-1})^*( \textup{Den}(L_0,w,\Sigma)_+)\subset (w^{-1})^*(\textup{Den}(L_0,w)\cap \textup{Den}(L_0,w,\Sigma)_+^{\sim})$ consists of 
    $L$-gradients of coroots plus a constant 
    $\geq 1$, while $(w^{-1})^*(\textup{Den}(L_0,w,\Sigma')_+)\subset (w^{-1})^*(\textup{Den}(L_0,w)\cap \textup{Den}(L_0,w,\Sigma')_+^{\sim})$ 
    consists of $L$-gradients of coroots minus a constant $\geq 0$. 
    Hence $c_L$ satisfies (\ref{eq:ineq}) for $M=L$ in this case. Theorem \ref{thm:main}, Definition \ref{def:Adm} and Lemma \ref{lem:indepw} show that 
    $c_L\in\textup{Adm}(L)$, and that $\textup{Adm}(L)=\textup{Adm}^{(L_0,w)}(L)$ for any standard pair $(L_0,w)$ such that $L=w(L_0)$. 
    Since $L$ is itself standard and in fact induced from a standard pole space of $G'$ it follows that in particular $\textup{Adm}(L)=\textup{Adm}^{(L,1)}(L)$.
    But $\textup{Den}(L,1,\Sigma')^\sim$ is contained in $\hat{R}'_+|_L$, while  $\textup{Den}(L,1,\Sigma)^\sim$ is contained in $\hat{R}_+|_L+1$. 
    Since $p_{L,\infty}=c_L+t\go_0$ with $t\gg0$ and $c_L\in \textup{Adm}(L)$ it follows 
    that also $p_{L,\infty}\in \textup{Adm}(L)$.  By convexity we have $\sigma_L\subset  \textup{Adm}(L)$.
    
    The second case to consider is when $L$ has parents in the cascade. Suppose that 
    $(\sigma_M,M)\in C$ is a parent of $L$ of codimension $k-1$, and put $p_M=L\cap\sigma_M$. Then  $\textup{Ord}_M(\Omega)=0$
    and the induction hypotheses 
    and Lemma \ref{lem:herit} show that  $p_M\in \textup{Adm}(L)$. 
    Next suppose that $L\subset M$ where $M$ is residual and in $C$, and $L$ is such that $L^{\textup{temp}}=M^{\textup{temp}}$. 
    Then obviously $c_L=c_M$. Now observe that Theorem \ref{thm:main} 
    (as above) shows that $c_M\in \textup{Adm}(M)$, and then Lemma \ref{lem:herit} implies that 
    $c_L=c_M\in \textup{Adm}(M)\cap L\subset  \textup{Adm}(L)$. 
    The cascade $C$ is defined such that all the segments $\sigma_L\subset L$ have as end points either points of the form 
    $u^{-1}(p)$ with $u\in W'$, where $p\in uL$ is the intersection point of $uL$ and a segment of a parent of $uL$ in the cascade $C$ or, if 
    $L$ is such that $L^{\textup{temp}}\subset M^{\textup{temp}}$ for some residual $M$ in the cascade $C$, possibly $p=c_L$.
    By the above and Lemma \ref{lem:herit} we see that such end points always belongs to $\textup{Adm}(L)$, and 
    hence by convexity, $\sigma_L\subset \textup{Adm}(L)$.
    
    Note that when $L$ is such that $L^{\textup{temp}}\subset M^{\textup{temp}}$ for some residual $M$ in the cascade $C$ we 
    can describe $\textup{Adm}(L)$ as the unique component of $\textup{PoleFree}(L)$ such that $c_L\in \textup{Adm}(L)$.
    It follows that if also $w(L)$ is in the cascade, for some $w\in W$ and $L^{\textup{temp}}\subset M^{\textup{temp}}$ for some 
    residual pole space, then $w(\textup{Adm}(L))=\textup{Adm}(wL)$. 
    \end{proof}
    \begin{cor}\label{cor:Cadmis}
    Let $F$ be a number field, and $\rho(s)=\Lambda_F(s)$ its regularized Dedekind zeta function. Let $\fR\gg0$, and let 
    $\phi,\psi$ be holomorphic functions on $B_{\fR}=\{v=x+iy\mid x,y\in V,\ \Vert x\Vert\leq \fR\}  
    \subset V_\bbC$. 
    Let $C$ be a cascade as in Theorem \ref{thm:casc_ord_int}. Let $(\sigma_j,L_j)\in C$ for $j=0,1,2,\ldots$ be a connected 
    path of segments in $C$ with $L_0=L$ and $\textup{dim}(L_j)=\textup{dim}(L_0)+j$. 
    We denote by $\cF=\{\dots \supset L_2\supset L_1\supset L_0=L\}$ the corresponding 
    flag of pole spaces in $C$. Put $\cF_j=\{\dots \supset L_{j+2}\supset L_{j+1}\supset L_j\}$ for the partial 
    flag in $\cF$ ending at $L_j$. Then for all $j=0,1,2,\ldots$ the terms 
    \begin{equation}\label{eq:derres}
    D_{L,\cF_j,i}(\Sigma_W(\phi)\Sigma'_{W'}(\psi))|_{L_{j,\bbC}}
    \end{equation}
    in the expansion (\ref{eq:resdat}) of $\textup{Res}_{L_j,\cF_j}((\Sigma_{W}(\phi))(\Sigma'_{W'}(\psi))\Omega)$ on $L_{j,\bbC}$ 
    for the flag $\cF_j$ are holomorphic at all points of $B_{\fR}\cap (\sigma_j+iV^{L_j})$.
    \end{cor}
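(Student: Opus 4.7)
The plan is a case split on $\textup{Ord}_{L_j}(\Omega)$. The order-$0$ subcase reduces to the admissibility of $\sigma_j$ established in the theorem preceding this corollary combined with Lemma \ref{lem:indepw}, while the order-positive subcase is restricted by Theorem \ref{thm:casc_ord_int} to three $W'$-orbits of residual $\Omega$-order $1$ pole spaces in $E_8$; these are handled either by admissibility in the parent pole space or, in the single exceptional ``special line'' case, by a computer-assisted cancellation.

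For $L_j$ with $\textup{Ord}_{L_j}(\Omega)=0$ the operator $D_{L,\cF_j,i}$ has degree $0$, so by (\ref{eq:resdatCst}) the expression (\ref{eq:derres}) equals, up to the nonzero constant $\fc_{L_j,\cF_j}$, the restriction $(\Sigma_W(\phi)\Sigma'_{W'}(\psi))|_{L_{j,\bbC}}$. The preceding theorem places $\sigma_j\subset \textup{Adm}(L_j)$, and Lemma \ref{lem:indepw} then yields holomorphy of this restriction on $B_\fR\cap (\textup{Adm}(L_j)+iV^{L_j})\supset B_\fR\cap (\sigma_j+iV^{L_j})$. This settles every $L_j$ for classical root systems and exceptional $\hat{R}\neq E_8$, and every order-$0$ pole space $L_j$ in the $E_8$ cascade.

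The cases left open by Theorem \ref{thm:casc_ord_int} are the three $W'$-orbits of $\Omega$-order $1$ residual pole spaces for $\hat{R}=E_8$. For such $L_j$ the parent $M=L_{j+1}$ in $\cF_j$ has $\textup{Ord}_M(\Omega)=0$, so $D_{L,\cF_j,i}$ is a first-order constant-coefficient differential operator along the normal to $L_j$ in $M$. The segment $\sigma_j$ has one endpoint at $p_{(L_j,\sigma_M)}=\sigma_M\cap L_j\in \sigma_M\subset \textup{Adm}(M)$ by the preceding theorem applied to the order-$0$ space $M$, and its other endpoint at $c_{L_j}$ (since $L_j$ is residual). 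For the two non-special orbits Theorem \ref{thm:casc_ord_int} provides the hypothesis $c_{L_j}\in\textup{Adm}(M)$, so convexity of $\textup{Adm}(M)$ forces $\sigma_j\subset \textup{Adm}(M)$; Lemma \ref{lem:indepw} applied to $M$ then shows that $(\Sigma_W(\phi)\Sigma'_{W'}(\psi))|_{M_\bbC}$ is holomorphic on an open $M_\bbC$-neighbourhood of $\sigma_j+iV^{L_j}$, and the first-order normal derivative transfers this holomorphy to $L_{j,\bbC}$.

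The remaining case is the special line $L_{sp}$ of type $E_7(a4)$, where $c_{L_{sp}}\notin \textup{Adm}(M_{sp})$ and the above convexity argument fails. Theorem \ref{thm:casc_ord_int} pins the obstruction down to the single affine form $X=x-1/2$, the unique element of $\textup{Den}^{\hat{\fz}_{L_{sp,0}}}((s_7s_8)(M_{sp}),s_7s_8,\Sigma)|_{L_{sp,0}}$ that fails to lie in $D_{L_{sp,0}}$ and whose vanishing locus meets $\sigma_{L_{sp}}$; on the $\Sigma'$ side the containment in $D'_{L_{sp,0}}$ provides the analogue of admissibility and causes no obstruction. The crux, and the main obstacle in the entire argument, is to verify that $X$ does not actually appear as a denominator of $\partial_n(\Sigma_W(\phi))|_{L_{sp,\bbC}}$ for the specific flag ending at $L_{sp}$; this is a fine cancellation, invisible to the denominator estimates of Theorem \ref{thm:main}, which we verify by the dedicated computer-assisted computation of Appendix \ref{a:Special} (implemented in the pseudocode of Appendix \ref{a:SpecialCode}). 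Once this cancellation is established, the remaining denominators of the normal derivative lie in $D_{L_{sp,0}}\cup D'_{L_{sp,0}}$ and the argument closes as in the non-special orbits.
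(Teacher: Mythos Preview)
Your case split covers the order-$0$ situation correctly, and your treatment of the three special $E_8$ orbits (including the computer-assisted handling of $L_{sp}$) matches the paper's approach. However, there is a genuine gap: you have overlooked the case of a pole space $L_j$ with $\textup{Ord}_{L_j}(\Omega)>0$ that is met by its parent segment at its center, i.e.\ with $\sigma_{j+1}\cap L_j=\{c_{L_j}\}$. Theorem~\ref{thm:casc_ord_int} does \emph{not} assert that the only positive-order pole spaces in the cascade are the three $E_8$ orbits; it only asserts that those three are the only positive-order pole spaces intersected \emph{away} from their center. Positive-order pole spaces met at their center may occur, including in exceptional types other than $E_8$, so your sentence ``This settles every $L_j$ for \dots\ exceptional $\hat{R}\neq E_8$'' is not justified: the preceding theorem applies only to order-$0$ spaces, and $\textup{Adm}(L_j)$ is not even defined in positive order.

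The paper closes this gap by a short recursive reduction. When $\textup{Ord}_{L_j}(\Omega)>0$ and $L_j$ is met at $c_{L_j}$, one first checks (using residuality of $L_j$, or else Theorem~\ref{thm:casc_ord_int} applied to all flags through $W'L_j$) that $\sigma_j=[c_{L_j},c_{L_j}]$ is a single point. Then the expressions (\ref{eq:derres}) on $L_{j,\bbC}$ are obtained from the corresponding expressions $D_{L_{j+1},\cF_{j+1},i}(\Sigma_W(\phi)\Sigma'_{W'}(\psi))|_{L_{j+1,\bbC}}$ on $L_{j+1,\bbC}$ by applying further constant-coefficient derivatives and restricting to $c_{L_j}+iV^{L_j}$; hence holomorphy at $\sigma_j+iV^{L_j}$ is inherited from holomorphy of the $L_{j+1}$-level terms on $\sigma_{j+1}+iV^{L_{j+1}}$ (which contains $c_{L_j}$). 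Iterating, one climbs the flag until reaching an $L_k$ which is either of order $0$ or of positive order but intersected off-center (one of the three $E_8$ orbits), both of which you have already handled. Without this reduction step your argument is incomplete.
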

    \begin{proof}
    We first treat the case $\textup{Ord}_L(\Omega)=0$. 
    We note that if $L$ has order $0$ then $\textup{Ord}_{L_j}(\Omega)=0$ for all $j\geq 0$.
    (For any flag $\cF$ in $C$ it follows directly from the definition $C$ that $\textup{Ord}_{L_j}(\Omega)$ is weakly 
    increasing as a function of the codimension.)  
    It follows from Definition \ref{defn:casc}, Definition \ref{def:Adm} (the convexity and closedness of $\textup{Adm}(M)$), 
    Theorem \ref{thm:casc_ord_int}, the $W'$-equivariance of $\textup{Adm}(M)$, and Lemma \ref{lem:herit} (the heritability 
    of admissibility) that for all $j\geq 0$ we have $\sigma_j\subset \textup{Adm}(L_j)$. By Definition \ref{def:Adm} the 
    assertion follows.
    
    The next case we consider is the case where $\textup{Ord}_L(\Omega)>0$ and $\sigma_1\cap L=\{i_{\sigma_0}\}$
    with $i_{\sigma_0}\not=c_L$.
    By Theorem \ref{thm:casc_ord_int} we know $L$ is residual, $\textup{Ord}_{L}(\Omega)=1$, 
    $\textup{Ord}_{L_1}(\Omega)=0$ and 
    in all situations except if  $L=u(L_{sp})$ and $L_1=u(M_{sp})$ for some $u\in W'$ we have that $\sigma_0\subset 
    \textup{Adm}(L_1)$. This again implies the asserted result. In the special situation $L=L_{sp}$ and 
    $M=M_{sp}$, Theorem \ref{thm:mainspec} implies that the restriction to $L_{sp,\bbC}$ 
    \begin{equation}
    D_{L,\cF_0,1}(\Sigma_W(\phi)\Sigma'_{W'}(\psi)|_{M_{sp,\bbC}})|_{L_{sp,\bbC}}
    \end{equation}
    of the normal derivative $D_{L,\cF_0,1}(\Sigma_W(\phi)\Sigma'_{W'}(\psi)|_{M_{sp,\bbC}})$ 
    of $\Sigma_W(\phi)\Sigma'_{W'}(\psi)|_{M_{sp,\bbC}}$ does \emph{not} 
    have poles in the critical strip $\textup{Re}(x)\in [1/2,3/2]$ of the denominator $x-1/2\in \textup{Den}^{\hat{\fz}_{L_{sp,0}}}((s_7s_8)(M),s_7s_8,\Sigma)|_{L_{sp,0}}$.   
    From Theorem \ref{thm:casc_ord_int},
    the remaining denominators in $\textup{Den}^{\hat{\fz}_{L_{sp,0}}}((s_7s_8)(M),s_7s_8,\Sigma)|_{L_{sp,0}}$ are all inside $D_{L_{sp,0}}$  
    and, similarly, $\textup{Den}^{\hat{\fz}_{L_{sp,0}}}((s_7s_8)(M),s_7s_8,\Sigma')|_{L_{sp,0}}$ is a subset of $D'_{L_{sp,0}}$, showing 
    again that the assertion holds for $\sigma_0$ (while for $\sigma_j$ with $j>0$ it follows from the order $0$ case).
    
    Finally we have the case where $\textup{Ord}_L(\Omega)>0$ and $\sigma_1\cap L=\{c_L\}$. 
    If $L$ is residual then clearly $\sigma_0=[c_L,c_L]$. Otherwise  
    Theorem \ref{thm:casc_ord_int} implies that $L$ is intersected at $c_L$ in \emph{any} flag in $C$ in which $W'L$ 
    participates, which also implies that $\sigma_0=[c_L,c_L]$. Hence we do not need to move inside $L$ except in an arbitrary 
    small neighbourhood of $c_L$. Therefore (\ref{eq:derres}) for $(\sigma_0,L_0)$ follows in this case follows from 
    (\ref{eq:derres}) for $(\sigma_1,L_1)$, since the latter implies that $i$,  
    \begin{equation}\label{eq:hol}
    D_{L_1,\cF_1,i}(\Sigma_W(\phi)\Sigma'_{W'}(\psi))|_{L_{1,\bbC}}
    \end{equation}
    is holomorphic at all points of $B_R\cap (\sigma_1+iV^{L_1})$. 
    If $L_1$ is of order $0$ or is of positive order such that $\sigma_2\cap L_1=\{i_{\sigma_1}\}$ with 
    $i_{\sigma_1}\not=c_{L_1}$, then we have already established (\ref{eq:derres}) for $(\sigma_1,L_1)$ by the above. 
    If not, then $\sigma_1=[c_{L_1},c_{L_1}]=[c_L,c_L]$, and we consider likewise $L_1\subset L_2$.  
    Continuing like this we will find a $j\geq 0$ with $\textup{Ord}_{L_j}(\Omega)>0$, $\sigma_j=[c_{L_j},c_{L_j}]$
    and either $\textup{Ord}_{L_{j+1}}(\Omega)=0$ or $\textup{Ord}_{L_{j+1}}(\Omega)>0$ and $\sigma_{j+1}\not=[c_{L_j},c_{L_j}]$. 
    Then as above 
    \begin{equation}\label{eq:holi}
    D_{L_{j+1},\cF_{j+1},i}(\Sigma_W(\phi)\Sigma'_{W'}(\psi))|_{\sigma_{j+1}+iV^{L_{j+1}}}
    \end{equation}
    is holomorphic at all points of $B_R\cap (\sigma_{j+1}+iV^{L_{j+1}})$. 
    Now (\ref{eq:derres}) for $(\sigma_k,L_k)$ with $k\leq j$ is obtained from (\ref{eq:holi}) by taking suitable constant coefficient 
    derivatives in $L_{j+1}$ and restricting to $c_{L}+iV^{L_k}$. This proves the assertion.
    \end{proof}
    \begin{cor}\label{cor:nonsp}
    Assume that $F$,$\rho$, $\phi$ and $\psi$ and $C$ are as in Corollary \ref{cor:Cadmis}. 
    Choose $\fT>\fR$ and $\fT'>3\fT+2\fR^2$ as in \cite[V.2.2]{MW2}. We call a point $b\in L$ (with $L$ 
    an $\Omega$-pole space) $\fT'$-generic if for every flag $\cF=\{\dots \supset L_2\supset L_1\supset L_0=L \}$ 
    of pole spaces in $C$ ending in $L_\bbC$, 
    \begin{equation*}
    \textup{Res}_{L,\cF}((\Sigma_{W}(\phi^-)(\gl))(\Sigma'_{W'}(\psi)(\gl))\Omega)
    \end{equation*}
    is holomorphic on all points of $(b+iV^L)_{\leq \fT'}$. We say (following \cite{MW2}) that 
    $b$ is $\fT'$-generic near $c_L$ if the open ball in $L$ with center $c_L$ and radius $\Vert c_L-b\Vert$
    does not intersect any pole spaces in $L$ other than those which contain $c_L$.
    
    We can choose for every $L\in \cCO^C$ a base point $b_L\in L$ satisfying: 
    \begin{enumerate}
    \item[(i)] $b_L$ is $\fT'$-general. 
    \item[(ii)] $b_L$ is $\fT'$-general near $c_L$ whenever 
    $L^{\textup{temp}}\subset M^{\textup{temp}}$ for some residual pole space $M$, 
    \item[(iii)] We can 
    rewrite (\ref{eq:ind}) up to compact contours outside $V_{\leq \fT}$ as: 
    \begin{equation}\label{eq:atcenters}
    (\theta_\phi,\theta_\psi)=_\fT\sum_{L\in \cCO^C}\int_{(b_L+iV^L)_{\leq \fT}}\sum_{\cF\in \cF(L)}C_{L,\cF}
    \textup{Res}_{L,\cF}((\Sigma_{W}(\phi^-))(\Sigma'_{W'}(\psi))\Omega)
    \end{equation}
    for suitable constants $C_{L,\cF}$ as in (\ref{eq:nearcenter}). 
    \end{enumerate}
    \end{cor}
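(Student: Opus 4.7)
The plan is to execute the cascade $C$ of contour shifts starting from the induction identity (\ref{eq:indT}). Each segment $(\sigma_L,L)\in C$ encodes a contour shift of the base point within $L$; by construction of the cascade the segments fit together compatibly, so that the final endpoint of a segment in $L$ coincides with the initial point $p_{(L',\sigma_L)}$ of a segment in each child pole hyperplane $L'\subset L$. Following \cite[V.2.2]{MW2}, with $\fT>\fR$ and $\fT'>3\fT+2\fR^2$, each such deformation of a truncated contour $(p+iV^L)_{\leq\fT}$ into $(p'+iV^L)_{\leq\fT}$ holds up to the relation $=_\fT$ (i.e.\ the discarded boundary lies outside $V_{\leq\fT}$), while picking up iterated residues along the $\gO$-pole hyperplanes crossed by the homotopy. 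Thus the cascade, read from the root $V$ downward along codimension, produces precisely the decomposition indexed by $\cCO^C$ and the flags $\cF(L)$ in (\ref{eq:atcenters}).

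The key input making the shifts extract \emph{only} the intended residues is Corollary \ref{cor:Cadmis}: for every flag $\cF=\{\dots\supset L_2\supset L_1\supset L_0=L\}$ in $C$ and every $j\geq 0$, each coefficient $D_{L,\cF_j,i}(\Sigma_W(\phi^-)\Sigma'_{W'}(\psi))|_{L_{j,\bbC}}$ appearing in the expansion (\ref{eq:resdat}) of $\textup{Res}_{L_j,\cF_j}(\Sigma_W(\phi^-)\Sigma'_{W'}(\psi)\gO)$ is holomorphic on $B_\fR\cap(\sigma_j+iV^{L_j})$. Concretely, as a base point $p$ traverses $\sigma_j$ the integrand of (\ref{eq:itres}) stays holomorphic in the tube $B_\fR\cap(\sigma_j+iV^{L_j})$, so no spurious residues caused by the singularities of $\Sigma_W(\phi^-)\Sigma'_{W'}(\psi)$ (in particular, those coming from the zeros of $\rho$) are ever produced. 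This is the payoff of the cascade construction: it was designed exactly so that all segments lie inside admissible regions (Theorem \ref{thm:main}, Theorem \ref{thm:casc_ord_int}, and the theorem preceding Corollary \ref{cor:Cadmis}).

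The genericity conditions (i) and (ii) are then achieved by a final small perturbation of the terminal endpoint of each $\sigma_L$ to obtain $b_L$. Only finitely many hyperplanes of $\textup{Res}_{L,\cF}(\cdot)$ meet the truncated tube $(b_L+iV^L)_{\leq\fT'}$, so a generic perturbation of the terminal endpoint yields $\fT'$-genericity, establishing (i). For (ii), the theorem preceding Corollary \ref{cor:Cadmis} shows that when $L^{\textup{temp}}\subset M^{\textup{temp}}$ for some residual $M$ in $C$, the terminal endpoint of $\sigma_L$ is $c_L=c_M$; perturbing inside an open ball of radius smaller than the distance from $c_L$ to every pole hyperplane of $L$ not containing $c_L$ yields the ``generic near $c_L$'' property in the sense of \cite{MW2}, which is (ii).

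The main obstacle is the exceptional pair $(L_{sp},M_{sp})$ of types $\textup{E}_7(a4)\subset\textup{E}_6(a3)$ inside $\textup{E}_8$ identified in Theorem \ref{thm:casc_ord_int}. There the straightforward admissibility route underlying Corollary \ref{cor:Cadmis} does not apply, because $c_{L_{sp}}\notin\textup{Adm}(M_{sp})$ and the normal derivative appearing in the iterated residue formally carries the denominator $x-1/2$ whose zero sits in the critical strip $[1/2,3/2]$. The rescue is Theorem \ref{thm:mainspec} (proved by computer-assisted analysis in Appendix \ref{a:Special}), which rules out the actual appearance of this denominator in the relevant normal derivative $\partial_n(\Sigma_W(\phi^-))|_{L_{sp,\bbC}}$ and so restores the holomorphy required by the cascade. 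With this exception dispatched, the cascade propagates uniformly and (\ref{eq:atcenters}) follows for every $L\in\cCO^C$.
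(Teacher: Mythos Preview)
Your approach is the same as the paper's in spirit, but two non-trivial mechanisms that the paper spells out are missing from your argument.

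First, the cascade does not ``fit together'' quite as you describe: by Definition~\ref{defn:casc}(v)--(vi), when a segment $(\sigma_N,N)$ meets a pole hyperplane $M\subset N$, the child segment lives in $u(M)$ for some $u\in W'$, not in $M$ itself. Consequently, running the cascade produces many residue integrals scattered over the $W'$-orbit of each pole space, at various base points. To arrive at a \emph{single} integral over $(b_L+iV^L)_{\leq\fT}$ for each $L\in\cCO^C$ you must (a) use the $W'$-invariance of the integrand to transport all of these to the chosen representative $L$, and then (b) invoke the connectedness of the tree $T_L$ (Corollary~\ref{cor:tree}) to slide all the resulting base points in $L$ to one common $b_L$ by further contour shifts inside $L$. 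Neither step is automatic, and you do not mention either.

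Second, your ``final small perturbation of the terminal endpoint'' is not enough. The residue theorem of \cite[V.1.5(c)]{MW2} that drives each contour shift requires $\fT'$-general base points and $\fT'$-general broken lines between them; but the endpoints of the segments $\sigma_M$ in $C$ are by construction \emph{non}-generic (they sit on pole hyperplanes). The paper therefore first builds a coherent system $\Gamma$ of $\fT'$-general broken-line deformations $\gamma_M$ of all the $\sigma_M$, compatible across codimension, and then runs the cascade along $\Gamma$. Corollary~\ref{cor:Cadmis} guarantees the relevant coefficients stay holomorphic along these deformed paths; only after that do you land at a generic $b_L$ (taken near $c_L$ when $L^{\textup{temp}}\subset M^{\textup{temp}}$ for some residual $M$). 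Your last paragraph on the special line $L_{sp}$ is fine but redundant here: that obstruction is already absorbed into Corollary~\ref{cor:Cadmis}, which you cite.
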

    \begin{proof}
    We begin with the $\fT$-truncated version of the initial induction formula for $\hat{R}'\subset \hat{R}$, cf. (\ref{eq:indT}): For 
    suitable nonzero real constants $C_L$ we have 
    \begin{align}\label{eq:trun} 
    (\theta_\phi,\theta_\psi)
    &=X_{V,p_V}(\psi\Sigma_W(\phi^-)\\
    \nonumber&=_\fT|W'|^{-1}\sum_{L'\in{\cL({\gO'_r})_+}}\int_{(p_{L,\infty,\fT'}+iV^L)_{\leq \fT}}C_L\Sigma'_{W'}(\psi)(\gl)\Sigma_{W}(\phi)(\gl)\Omega^L(\gl)
    \end{align}
    Recall that  $\cCO^C$ denotes a set of representatives of the $W'$-orbits of $\Omega$-pole spaces which appear in $C$, 
    and $\cCO^{C,res}\subset \cCO^C$ the subcollection of residual pole spaces in this set. 
    
    We define a collection $\gC$ of $\fT'$-general deformations of the paths of segments in $C$. 
    Let $(\sigma_M,M)\in C$ be a segment in $C$. A point $p\in \gs_M$ which lies on a
    $\Omega$ pole hyperplanes $L\subset M$ is also considered as a vertex of $C_L$ in $L$, in addition to 
    being a point of $M$, even if $p$ is an extremal point of $\gs_M$. We will replace $\gs_M$ by a ``broken line'' 
    $\gc_M$ in the terminology of \cite[V.1.5(c)]{MW2} which is $\fT'$-general (cf. \cite[V.1.5(b);V.2]{MW2}) and 
    goes from a $\fT'$-general point $i_{\gc_M}\in M$ which is near $i_{\gs_M}$ to a $\fT'$-general point $f_{\gc_M}\in M$ 
    which is near $f_{\gs_M}$.
    
    The intersection point of $\gs_M$ with a pole hyperplane $L\subset M$ separating 
    $i_{\gs_M}$ and $f_{\gs_M}$ (or $i_{\gc_M}$ and $f_{\gc_M}$, this amounts to the same thing) is a $\fT'$-general 
    point of $L$ near the original intersection point.   
    If there exists a segment $(\gs_N,N)\in C$ such that $M\subset N$ is a hyperplane and $i_{\gs_M}=u(\gs_N)\cap M$ or 
     $f_{\gs_M}=u(\gs_N)\cap M$, then its general deformation is determined by the $\fT'$-general broken line $\gc_N$ deforming 
     $\gs_N$. Otherwise $i_{\gs_M}$ equals a point of the form $p_{L,\infty}$, which we can freely deform to a nearby $\fT'$-general 
     point $p'_{L,\infty}$ by deforming $c_{L'}=c_L$ to a $\fT'$-general point $z_{L'}\in L'$ (and retaining the formula 
     $p'_{L,\infty}=z_{L'}+w\mathfrak{w}'$), or $f_{\gs_M}=c_M$, in which case we are allowed to choose a $\fT'$-general deformation 
     $z_M$ near $c_M$. Multiplicities are allowed in all of this, so if $i_{\gs_M}$ and $f_{\gs_M}$ are in multiple 
     ways obtained as intersections of the form $u_i(\gs_{N_i})\cap M$ then we will have several $\fT'$-general 
     deformations of this point near the original point. In this case we choose one of these $\fT'$-generial points
     and connect all others to this point by a ``tiny'' $\fT'$-general broken line, creating the $\fT'$-general curve 
     of these other deformations as well. 
     
     Note that some of the hyperplanes $L\subset M$ on which $i_{\gs_M}$ or $f_{\gs_M}$ lie may no longer be 
     intersected by $\gc_M$. If that is the case we anyway choose a $\fT'$-general point on $L$ near 
     $i_{\gs_M}$ or $f_{\gs_M}$ (whichever the case may be). This is natural, it only means that the 
     residue integral on $L$ with this base point is $0$. Nevertheless for the book keeping it is easier to 
    include these initial points on $L$, which enables us to define a $\fT'$-general deformation of all 
    segments $(\sigma_M,M)$ in $C$. We now define $\gC$ as the collection of all $\fT'$-general broken lines  
    $(\gc_M,M)$ obtained in this way. By construction $\gC$ is in canonical bijection with the segments of $C$ (counting with 
    multiplicity), a segment $(\gc_M,M)$ corresponding to the unique segment  $(\gs_M,M)$ it deforms. It follows 
    from our description of $\gC$ and this bijection that for all pole spaces $M\in\cCO^C$, the union of the collection 
     $\gC^M:=\{\gc\mid (\gc,M)\in\gC\}$ is connected (as was true for $C^M$), and empty iff $C^M$ is empty.
     In particular, we can take $\cCO^\gC=\cCO^C$ as a set of representatives of the $W'$-orbits of $\Omega$-pole 
     spaces which have a nontrivial intersection with $\gC$.
     
    Consider a path $\ldots,(\gc_2,L_2),(\gc_1,L_1), (\gc_0,L_0)=(\gc,L)$ in $\gC$ consisting 
    of a connected union of $\fT'$-generic piecewise linear paths in $\gC$ corresponding to a path of segments in $C$. Suppose that 
    $p\in\gc_j$ is a $\fT'$-general point in this path, with $(\gc_j,L_j)\in \gC$. Let 
    $\cF$ denote a flag of pole spaces in $C$ associated to the path. 
    Then the integral associated with $(\gc_j,L_j)$ at base point $p$ is:  
    \begin{equation}\label{eq:Mint}
    \int_{(p+iV^{L_j})_{\leq \fT}}\textup{Res}_{L_j,\cF_j}((\Sigma_{W}(\phi^-)(\Sigma'_{W'}(\psi))\Omega))
    \end{equation}
    By Corollary \ref{cor:Cadmis}, all the coefficient functions 
    \begin{equation}\label{eq:der}
    D_{L,\cF_j,i}(\Sigma_W(\phi)\Sigma'_{W'}(\psi))|_{L_{j\bbC}}
    \end{equation}
    in the expansion (\ref{eq:resdat}) are holomorphic at all points of $\gc_j$.
    
    The iterated residue data $\textup{Res}_{L,\cF}$ are meromorphic $(\textup{dim}(L),0)$-forms on $L_\bbC$.
    There are two ways at our disposal to transform the integral (\ref{eq:Mint}): By the $W'$-invariance of the integrand 
    we may, for any $u\in W'$, transform the integral to 
    \begin{equation}
    \int_{(u(p)+iV^{u(L_j)})_{\leq \fT}}\textup{Res}_{uL_j,u\cF_j}((\Sigma_{W}(\phi^-)(\Sigma'_{W'}(\psi))\Omega))
    \end{equation}
    or we may use the residue theorem of \cite[V.1.5(c)]{MW2} to move $p$ to another 
    generic point $q$ of $\gc$, at the cost of the residues picked up at $\gc\cap L_j$ 
    for hyperplanes $L\subset L_j$ separating $p$ and $q$, and compact contours 
    which lie entirely outside ${L_{j,\leq \fT}}$ in the sense of \cite[V.1.5(a)]{MW2}.  
     
    Using these two steps, (\ref{eq:trun}) and Corollary \ref{cor:tree} (the closedness of $C$ for intersections with $\gO$-poles up 
    to the $W'$-action, and the connectedness of $T_M\subset M$) for each $M\in\cCO^C$, 
    we may choose a single representing $\fT'$-generic base point $b_M\in \cup_{\gc\in \gC^M}\gc\subset M$ and then, starting at the 
    largest dimension and going down in dimension step by step, we can move the base points of the integrals in any $M\in\cCO^C$ 
    to $b_M$, using the residue theorem and the $W'$-action, to represent the residues of integrals in the representative pole 
    spaces $M\in\cCO^C$. In this way we 
    rewrite (\ref{eq:trun}) up to compact contours outside $V_{\leq \fT}$ as
    in (\ref{eq:atcenters}): 
    \begin{equation}
    (\theta_\phi,\theta_\psi)=_\fT\sum_{M\in \cCO^C}\int_{(b_M+iV^M)_{\leq \fT}}\sum_{\cF\in \cF(M)}C_{M,\cF}
    \textup{Res}_{M,\cF}((\Sigma_{W}(\phi^-))(\Sigma'_{W'}(\psi))\Omega)
    \end{equation}
    where $\cF(M)$ denotes the collection of flags of pole spaces in $C$ ending in $M$, and $C_{M,\cF}\in\bbR$ a constant which 
    takes into account the number of integrals in the $W'$-orbit which are being represented by this integral and the constants $C_L$
    in (\ref{eq:indT}).
    We choose $b_M=z_M$, a $\fT'$-generic point near $c_M$, if $M^{\textup{temp}}\subset\cup_{N\in \cCO^{C,res}}W'N^{\textup{temp}}$, which 
    is possible by the definition $C$.
    \end{proof}
    \begin{lem}\label{lem:canc0}
    Assume that $M$ is a pole space with $\textup{Ord}_M(\Omega)=0$. 
    Then there exists a constant $\fc_{M,\cF}\in\bbR$ (as in (\ref{eq:resdatCst})) such that: 
    \begin{equation}
    \textup{Res}_{M,\cF}((\Sigma_{W}(\phi^-))
    (\Sigma'_{W'}(\psi))\Omega)=
    \fc_{M,\cF}(\Sigma_{W}(\phi^-)\Sigma'_{W'}(\psi))|_{M_\bbC}\Omega^M.
    \end{equation}
    Assume that $C$ is as in Theorem \ref{thm:casc_ord_int}, 
    that $M\in \cCO^C$ has $\gO$-order $0$, and is such that 
    $M^{\textup{temp}}\not\subset\cup_{N\in \cL(\gO)}N^{\textup{temp}}$ where $\cL$ is the set of $\gO_r$-residual pole spaces. 
    Then  
    \begin{align}\label{eq:vanish}
    \sum_{\cF\in \cF(M)}C_{M,\cF}&\textup{Res}_{M,\cF}((\Sigma_{W}(\phi^-))
    (\Sigma'_{W'}(\psi))\Omega)=\\
    \nonumber&(\sum_{\cF\in \cF(M)}\fc_{M,\cF} C_{M,\cF})(\Sigma_{W}(\phi^-))(\Sigma'_{W'}(\psi))|_{L_\bbC}\Omega^L=0
    \end{align}
    for all $\phi,\psi\in \cP^\fR(V_\bbC)$ and for suitable real constants $C_{M,\cF}$ as in (\ref{eq:nearcenter}) and (\ref{eq:atcenters}).
    \end{lem}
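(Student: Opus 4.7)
The first identity is immediate from (\ref{eq:resdatCst}): since $\textup{Ord}_M(\gO)=0$, every pole of $\gO$ encountered along any flag $\cF$ in $\cF(M)$ is simple, so $\textup{Res}_{M,\cF}(F\gO)=\fc_{M,\cF}\,F|_{M_\bbC}\,\gO^M$ for every entire function $F$ on $V_\bbC$. Applying this with $F=\Sigma_W(\phi^-)\Sigma'_{W'}(\psi)$, which is holomorphic at all points of the flags of $\cF(M)$ by Corollary \ref{cor:Cadmis}, yields the stated factorisation. Summing over $\cF\in\cF(M)$ then reduces the vanishing claim in (\ref{eq:vanish}) to the scalar identity
\[
\sum_{\cF\in\cF(M)}\fc_{M,\cF}\,C_{M,\cF}=0.
\]

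To establish this scalar identity, my plan is a base-point deformation argument applied to the integral
\[
\int_{(b_M+iV^M)_{\leq\fT}}(\Sigma_W(\phi^-)\Sigma'_{W'}(\psi))|_{M_\bbC}\,\gO^M.
\]
The hypothesis $M^{\textup{temp}}\not\subset\bigcup_{N\in\cL(\gO_r)}N^{\textup{temp}}$ guarantees that $c_M$ is not the centre of any residual pole space containing $M$, so there is a direction $u\in V_M$ along which $b_M$ may be translated arbitrarily far while remaining $\fT'$-general and avoiding all residual centres, and along which the contour stays in the region of holomorphy given by Corollary \ref{cor:Cadmis}. Applying the residue theorem of \cite[V.1.5(c)]{MW2} along this translation, the integral equals the sum of iterated residues at hyperplanes $L\subset M$ with $L\in\cL(\gO)$ that the contour crosses, plus a remainder at infinity. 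The remainder vanishes by the polynomial decay of $\phi,\psi\in\cP^\fR(V_\bbC)$ combined with the vertical growth estimates of \cite[V.2]{MW2} on the truncated slab $(b_M+iV^M)_{\leq\fT}$. By the $W'$-closure property of the cascade $C$, each crossed hyperplane $L$ is already represented in the outer sum of (\ref{eq:atcenters}), and the residues produced there are absorbed into its deeper contributions with the constants prescribed there.

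Comparing the two resulting descriptions of $(\theta_\phi,\theta_\psi)$ — the original (\ref{eq:atcenters}) and the one obtained after the $b_M$-deformation, in which the $M$-term has been entirely absorbed into strictly deeper terms — produces an identity of the form $\alpha_M\cdot(\Sigma_W(\phi^-)\Sigma'_{W'}(\psi))|_{M_\bbC}\gO^M\equiv 0$ where $\alpha_M=\sum_{\cF}\fc_{M,\cF}C_{M,\cF}$, valid modulo compact contours outside $V_{\leq\fT}$ and for all $\phi,\psi\in\cP^\fR(V_\bbC)$. Since a generic choice of $\phi,\psi$ makes the displayed factor nonzero, this forces $\alpha_M=0$, which is the desired cancellation. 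The main obstacle is verifying that the bookkeeping of residues picked up during the $b_M$-deformation is exactly compatible with the constants $C_{L,\cF'}$ assigned in (\ref{eq:atcenters}); this amounts to the combinatorial closure of $C$ under intersections with $\gO$-pole hyperplanes modulo the $W'$-action, which is built into its construction via Theorems \ref{thm:casc_ord_int} and \ref{thm:main}, and whose verification for the exceptional types is computer-assisted.
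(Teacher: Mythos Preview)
Your first paragraph is fine: the order-$0$ factorisation is exactly (\ref{eq:resdatCst}). The trouble is entirely in your argument for the scalar identity $\sum_{\cF}\fc_{M,\cF}C_{M,\cF}=0$.

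The base-point deformation you propose does not work. You write that there is a direction $u\in V_M$ along which $b_M$ may be translated ``arbitrarily far'', and that the remainder at infinity vanishes by the polynomial decay of $\phi,\psi$. But $\phi,\psi\in\cP^\fR(V_\bbC)$ are only defined on the strip $\{\lvert\textup{Re}(\gl)\rvert<\fR\}$, and their decay estimate $|\phi(\gl)|\leq c_N(1+|\textup{Im}(\gl)|)^{-N}$ controls only the \emph{imaginary} direction. Translating the real base point $b_M$ to infinity takes you outside the domain of $\phi,\psi$ altogether, and even if you stayed inside, there is no decay in the real direction to kill a remainder. (There is also a notational slip: $V_M=(V^M)^\perp$ is the normal space, so moving $b_M$ along $V_M$ leaves $M$; you presumably meant $V^M$, but that does not rescue the argument.) Finally, the ``bookkeeping'' concern you flag is not a technicality: the constants $C_{L,\cF'}$ in (\ref{eq:atcenters}) are fixed by the cascade construction, and there is no reason the residues picked up by an arbitrary deformation should match them.

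The paper's proof is structurally different. It replaces the special argument $\psi\Sigma_W(\phi^-)$ by a generic $\theta\in\cP^\fR(V_\bbC)$, rewrites the integrand via $\Sigma'_{W'}(\psi)\Sigma_W(\phi^-)=|W'|A'_0(\psi\Sigma_W(\phi^-))$, and then invokes the support theorem \cite[Proposition 3.6]{HO1}: the tempered distributions arising from $X_{V,p_V}(\theta)$ are supported only on residual tempered forms. A maximal-dimension contradiction argument, together with a polynomial multiplier trick using \cite[Sublemma V.2.11]{MW1}, then forces $\sum_\cF\fc_{L,\cF}C_{L,\cF}\,A'_0(\theta)|_{L_\bbC}=0$ for all $\theta$, from which the differential operators in the expansion are shown to vanish identically. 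The crucial external input you are missing is the \cite{HO1} support result; without it there is no a priori reason the non-residual contributions should cancel.
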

    \begin{proof}
    %is such that 
    %$M^{\textup{temp}}\not\subset\cup_{N\in \cCO^{C,res}}W'N^{\textup{temp}}$. 
    %Then for all $\fT>0$:
    %\begin{align*}
    %&\int_{(b_M+iV^M)_{\leq \fT}}\sum_{\cF\in \cF(M)}C_{M,\cF}\textup{Res}_{M,\cF}((\Sigma_{W}(\phi^-))
    %(\Sigma'_{W'}(\psi))\Omega)=\\
    %&(\sum_{\cF\in \cF(M)}\fc_\cF C_{M,\cF})\int_{(b_M+iV^M)_{\leq \fT}}(\Sigma_{W}(\phi^-))(\Sigma'_{W'}(\psi))|_{L_\bbC}\Omega^L=_\fT0
    %\end{align*}
    %We remark that for $\phi\in \cP^\fR(V_\bbC)$ and $\psi\in  r(-\gl,\chi^{-1})r(\gl,\chi)\cP^\fR(V_\bbC)$ we can use the same collection of 
    %transformations of contours starting from (\ref{eq:abs}), but without truncation. Indeed, we have that $r(-\gl,\chi^{-1})r(\gl,\chi)$
    %is $W$-invariant, and if $\psi',\phi\in\cP^\fR(V_\bbC)$ and we substitute $\psi=r(-\gl,\chi^{-1})r(\gl,\chi)\psi'$ in the kernel then we obtain:
    %\begin{align*}
    %\Sigma'_{W'}(r(-\cdot)r(\cdot)\psi')(\gl)\Sigma_{W}(\phi)(\gl)&=|W'|A'_0(r(-\cdot)r(\cdot)\psi'.R_\phi)(\gl)\\
    %&=|W||W'|A_0'\left(r\psi'.A_0^-((r\phi)^-(-\cdot)\right)(\gl)\\
    %&=|W||W'|A_0'((r\psi')(\gl)A_0^-((r\phi)^-(-\cdot))(\gl)\\
    %&\in\cP^\fR(V_\bbC)
    %\end{align*}
    %Hence for $\phi\in \cP^\fR(V_\bbC)$ fixed and $\psi=r(-\gl,\chi^{-1})r(\gl,\chi)\psi'\in  r(-\gl,\chi^{-1})r(\gl,\chi)\cP^\fR(V_\bbC)$ we have: 
    Recall that (\ref{eq:atcenters}) is equal modulo $=_\fT$ to $X_{V,p_V}(\psi\Sigma_W(\phi^-))$ 
    (cf. equation (\ref{eq:trun})). 
    We will fix $\phi$ and view the right hand side of (\ref{eq:atcenters}) as a function of $\psi\in\cP^\fR(V_\bbC)$. 
    
    We replace the argument $\psi\Sigma_W(\phi^-)$ of $X_{V,p_V}$ in (\ref{eq:trun}) by a function $\theta\in\cP^\fR(V_\bbC)$. 
    Using 
    the iterated residue computation defined by $C$ to compute $X_{V,p_V}(\theta)$, and and using 
     \begin{equation}\label{eq:SigmaSigmaR}
    \Sigma'_{W'}(\psi)(\gl)\Sigma_W(\phi^-)(\gl)=|W'|A'_0(\psi\Sigma_{W}(\phi^-))
    \end{equation}
    to compare with the computation in the proof of (\ref{eq:atcenters}), we see that this yields   
    \begin{equation}\label{eq:standPW}
    X_{V,p_V}(\theta)=\sum_{M\in \cCO^C}\int_{(b_M+iV^M)}\sum_{\cF\in \cF(M)}C_{M,\cF}\textup{Res}_{M,\cF}(|W'|A'_0(\theta)\Omega)
    \end{equation}
    (since we can use the same contour shifts as before, but now without the truncation).
    
    \emph{Claim}: With the $b_M$ as in Corollary \ref{cor:nonsp}, we have 
    for all $\theta\in\cP^\fR(V_\bbC)$:
    \begin{equation}\label{eq:=0}
    \int_{(b_M+iV^M)}(\sum_{\cF\in \cF(M)}\fc_{M,\cF} C_{M,\cF})|W'|(A'_0(\theta)|_{M_\bbC})\Omega^M=0.
    \end{equation}
    Indeed, first note that modulo lower dimensional iterated residue integrals we can move $b_M$ to a generic point $z_M$ 
    near $c_M$, so the LHS of (\ref{eq:=0}) becomes
    \begin{equation}\label{eq:intzM}
    \int_{(z_M+iV^M)}(\sum_{\cF\in \cF(M)}\fc_{M,\cF} C_{M,\cF})|W'|(A'_0(\theta)|_{M_\bbC})\Omega^M.
    \end{equation}
    Such integral (\ref{eq:intzM}) viewed as a functional in $\theta$ defines a tempered boundary 
    value distribution with support contained in $W'M^{\textup{temp}}$ (here we view the integral as a sum over $u\in W'$ 
    of integrals with base points $u(z_M)$). Hence the union of the supports of the unique tempered local 
    distributions associated with (\ref{eq:=0}) is contained in the union of 
    $W'M^{\textup{temp}}$ together with a union of spaces of the form $N^{\textup{temp}}$ where $N$ runs over 
    a set of $\Omega$-pole spaces with $\textup{dim}(N)<\textup{dim}(M)$. 
    
    Assume that the above claim is false. Then we can choose a pole space $L$ of maximal dimension 
    subject to the conditions $\textup{Ord}_M(\Omega)=0$, $L^{\textup{temp}}\not\subset\cup_{N\in \cL}W'N^{\textup{temp}}$,  
    and that the integral (\ref{eq:=0}) (with $M$ replaced by $L$) defines a nonzero tempered functional on $\cP^\fR(V_\bbC)$. 
    Then, by the choices of the base points $b_M$ and the choice of $L$, if $W'M$ appears in $C$ and 
    $W'L^{\textup{temp}}\subsetneq W'M^{\textup{temp}}$ then (\ref{eq:=0}) does hold for $M$. 
    Indeed, since the order $\textup{Ord}_{M_i}(\Omega)$ 
    is weakly increasing with the codimension in a full flag of pole spaces in $C$, such $M$ would obviously also satisfy 
    $\textup{Ord}_{M}(\Omega)=0$ and $M^{\textup{temp}}\not\subset\cup_{N\in \cL}N^{\textup{temp}}$. The choice of $L$ thus implies 
    that equation (\ref{eq:=0}) has to hold for such $M$, for all $\theta\in \cP^\fR(V_\bbC)$. 
     
    On the other hand, we know by \cite[Proposition 3.6]{HO1} that 
    the unique tempered distributions defined by (\ref{eq:standPW}) are supported in the 
    union of the tempered forms of the residual spaces in $\cL$. 
    It follows then by the previous paragraph that the support of the unique tempered distributions defined by 
    the nontrivial (by our assumption) functional on $\cP^\fR(V_\bbC)$ defined by 
    \begin{equation}\label{eq:intbL}
    \int_{(b_L+iV^L)}(\sum_{\cF\in \cF(L)}\fc_{L,\cF} C_{L,\cF})|W'|(A'_0(\theta)|_{L_\bbC})\Omega^L
    \end{equation}
    is contained in $\cup_{N\in \cL}W'N^{\textup{temp}}\cap W'L^{\textup{temp}}$ (which has 
    dimension strictly smaller than $\textup{dim}(L)$) together with a union of spaces of the form $N^{\textup{temp}}$ 
    where $N$ runs over a set of $\Omega$-pole spaces with $\textup{dim}(N)<\textup{dim}(M)$
    (due to the residues when moving 
    the base point $b_L$ to $z_L$).
    Let $N_{W'}(L)$ be the subgroup of $W'$ which stabilizes $L$, and $W'_L=N_{W'}(L)/C_{W'}(L)$
    where $C_{W'}(L)$ is the subgroup of $W'$ fixing $L$ pointwise. 
    It follows that there exists a nonzero $W'_L$-invariant polynomial $Q$ on $L$ 
    such that  
    \begin{equation}\label{eq:z_L}
    \int_{z_L+iV^L}Q(\sum_{\cF\in \cF(L)}\fc_{L,\cF} C_{L,\cF})|W'|(A'_0(\theta)|_{L_\bbC})\Omega^L=0
    \end{equation}
    for all $\theta\in \cP^\fR(V_\bbC)$. 
    In fact, we can take $Q$ such that all the poles of the integrand, the meromorphic function  
    $(\sum_{\cF\in \cF(L)}\fc_{L,\cF} C_{L,\cF}))|W'|(A'_0(\theta)|_{L_\bbC})\Omega^L$
    on $L$, are lifted, so that (\ref{eq:z_L}) becomes independent of $z_L$. We move $z_L$ to $c_L$, 
    and observe that for any $f\in H^{W',R}$ (the space of $W'$-invariant holomorphic functions on 
    $\Vert\textup{Re}(\gl)\Vert\leq \fR$ of moderate growth on vertical strips) we have $A'_0(f\theta)|_{L_\bbC}=(f|_{L_\bbC})A'_0(\theta)|_{L_\bbC}$.  
    Now Sublemma V.2.11 of \cite{MW1} shows that there exists a nonzero $W'_L$-invariant polynomial $P$ on $L_\bbC$ such that 
    $PH_L^{W'_L,\fR}\subset H^{W',\fR}|_{L_\bbC}$, where $H_L^{W'_L,\fR}$ denotes the set of holomorphic functions on the intersection 
    of $L_\bbC$ with $\{\gl\mid \Vert\textup{Re}(\gl)\Vert\leq \fR\}$ which are invariant for $W'_L$ and of moderate growth on 
    vertical strips. Finally recall that $A'_0(\theta)|_{L_\bbC}$ is 
    $W'_L$-invariant. Hence for any $F\in H_L^{R}$ there exists an $f\in H^{W',\fR}$ such that $(\frac{1}{|W'_L|}\sum_{u\in W'_L}{}^uF)P=f|_{L_\bbC}$.
    Hence for all $F\in H^{\fR}_L$, we have: 
    \begin{align}
    \int_{c_L+iV^L}FQP(\sum_{\cF\in \cF(L)}&\fc_{L,\cF}  C_{L,\cF})|W'|(A'_0(\theta)|_{L_\bbC})\Omega^L=\\
    \nonumber&\int_{c_L+iV^L}Q(\sum_{\cF\in \cF(L)}\fc_{L,\cF}  C_{L,\cF})|W'|(A'_0(f\theta)|_{L_\bbC})\Omega^L=0.
    \end{align}
    This implies that $(\sum_{\cF\in \cF(L)}\fc_{L,\cF}  C_{L,\cF}) PQA'_0(\theta)|_{L_\bbC}=0$, hence for all $\theta\in\cP^\fR(V_\bbC)$ we have 
    \begin{equation}\label{eq:van}
    \sum_{\cF\in \cF(L)}(\fc_{L,\cF} C_{L,\cF})A'_0(\theta)|_{L_\bbC}=0, 
    \end{equation} 
    contradicting the choice of $L_\bbC$. Hence we conclude that (\ref{eq:=0}) holds for all 
    $M$ of $\gO$-order $0$ such that $M^{\textup{temp}}\not\subset\cup_{N\in \cL}N^{\textup{temp}}$. 
    In turn this implies (as above) that (\ref{eq:van}) hold for any such $\Omega$-pole space $M$. This concludes the proof of the claim.
    
    Now, back to the proof of the Lemma, at any $\lambda\in M_\bbC$ the value of the left hand side of (\ref{eq:van}) 
    is a sum $\sum_{\mu\in W'\gl}D_\mu(\theta)(\mu)$ of evaluations where $D_\mu$ is a differential operator 
    with rational functions on $M_\bbC$ as coefficients (but the sum is regular on $M_\bbC$). On the open dense subset 
    of elements $\mu\in W'M_\bbC$ which are regular in the sense that $\mu$ belongs to a single 
    subspace $uM_\bbC$ with $u\in W'$ and whose isotropy group $W_\mu'$ in $W'$ is the pointwise fixator group of 
    $uM$ in $W'$, the rational differential operator $D_\mu$ only depends on the coset $u\textup{Fix}_{W'}(M)$.
    We choose a set of representatives ${W'}^{M}$ for $W'/\textup{Fix}_{W'}(M)$, and write $D_u$ for the corresponding 
    rational differential operator on $uM_\bbC$. 
    If $\sum_{u\in {W'}^{M}}D_u(\theta)(u\gl)$ vanishes for all $\theta\in\cP^\fR(V_\bbC)$ then it is clear 
    that the differential operators $D_u$ on the open set of regular elements have to be all identically $0$. 
    Indeed, first observe that the ring of polynomials 
    $\bbC[V_\bbC]$ is a multiplier 
    ring for $\cP^\fR(V_\bbC)$. Similar to the argument in the proof Theorem (\ref{thm:denbase}), by multiplication with 
    a suitable polynomial we can first reduce the sum over $\mu\in W'\gl$ to a single term 
    by making $\theta$ vanish to a high order at all elements other than $u\gl$ (with $u\in {W'}^{M}$) in the orbit $W'\lambda$,  
    and assume (as we may) that $\theta$ is nonzero at $u\gl$. 
    Then multiply with a polynomial which is the inverse of the power series of $\theta$ at $u\gl$ up to high order. 
    Then we can further multiply with any monomial centered at $u\gl$, to show that  
    all coefficients of $D_u$ vanish at $u\gl\in uM_\bbC$. This applies to any $u\gl\in uM_\bbC$ in 
    the open set of regular elements, concluding the proof that $D_u=0$. 
    
    Now we return to the case where the argument of $X_{V,p_V}$ is $\psi\Sigma_W(\phi^-)$.
    Define $\textup{Adm}_\fT(M)\subset M$ to be an open neighbourhood of $\textup{Adm}(M)\subset M$
    such that $\Sigma_W(\phi^-)\Sigma'_{W'}(\psi)$ is holomorphic at all points of the sets of the form $(p+iV^M)_{\leq \fT}$ with $p\in\textup{Adm}_\fT(M)$. 
    It follows that the $D_u$ vanish in particular on the regular elements in the sets $u(p+iV^M)_{\leq \fT}$ with  $p\in \textup{Adm}_\fT(M)$, and that 
    for $\gl\in \textup{Adm}_\fT(M)$ regular we have 
    \begin{align}\label{eq:vanpure}
    \sum_{\cF\in \cF(M)}C_{M,\cF}&\textup{Res}_{M,\cF}((\Sigma_{W}(\phi^-))(\Sigma'_{W'}(\psi))\Omega)(\gl)=\\
    \nonumber&(\sum_{\cF\in \cF(M)}\fc_{M,\cF} C_{M,\cF})\Sigma'_{W'}(\psi)(\gl)\Sigma_W(\phi^-)(\gl)\gO^M(\gl) =\\
    \nonumber&(\sum_{\cF\in \cF(M)}\fc_{M,\cF} C_{M,\cF})A'_0(\psi\Sigma_W(\phi^-))(\gl)\Omega^M(\gl)=\\
    \nonumber&(\sum_{\cF\in \cF(M)}\fc_{M,\cF} C_{M,\cF})(\sum_{u\in {W'}^{M}}D_u(\psi\Sigma_W(\phi^-))(u\gl))\Omega^M(\gl)=0.
    \end{align}
    Since the expression is a priori meromorphic on $M_\bbC$, this implies the vanishing (\ref{eq:vanish}) as desired.  
    This holds for all $\psi\in \cP^\fR(V_\bbC)$, as was to be shown.
    \end{proof}
    \subsection{Proof of Main Theorem \ref{thm:mainC}}\label{s:proofMT}
    \begin{proof}
    Choose $C$ for $R'\subset R$ as constructed in Theorem \ref{thm:casc_ord_int}. 
    Now the result follows directly from Corollary \ref{cor:nonsp}, Lemma \ref{lem:canc0}
    and Theorem \ref{thm:mainspec}.
    \end{proof} 
    \section{Appendix A: The Cascade $C$ and the pseudocode for the construction of the Cascade}\label{a:Cascade}
    
    Let $R$ be a root system, $R_+\subset R$ a subset of positive roots, and $R'\subset R$ a maximal proper standard Levi subsystem.
    Let $\mathfrak{w}'$ denote the unique fundamental weight orthogonal to $R'$. 
    %We fix a collection $\textup{Std}(\cCO)$\index{$\cCO$!$\textup{Std}(\cCO)$} of pairs $(M_0,d)$ where $d\in W$ is of minimal length in $W'd$, 
    %$dM_0$ is an $\gO$-pole space and $M_0$ is in standard position, such that 
    \begin{defn}\label{defn:casc}
    A cascade $(C,\textup{Std}(C),\textup{SP}(C))$\index{$C$, Cascade!$\textup{Std}(C)$}\index{$C$, Cascade!$\textup{SP}(C)$}\index{$C$, Cascade}
    for $R'_+\subset R_+$ consists of 
    \begin{enumerate}
    \item[(i)] A finite set $C$ of pairs $(\sigma,L)$ with $L$ an $\Omega$-pole space, and $\sigma=[i_\sigma,f_\sigma]\subset L$ 
    a (directed) segment. 
    \item[(ii)] A distinguished set $\textup{Std}(C)$ of pairs $(L_0,d)$ with $L=dL_0\in\cL(\gO)$ where $L_0$ is a standard 
    space and $d\in W$ has minimal length in $W'd$, such that the collection $dL_0\subset \cL(\gO)$ with $(L_0,d)$ in 
    $\textup{Std}(C)$ forms a complete set of representatives of $W'$-orbits in $W'p_2(C)\subset \cL(\gO)$ 
    (where $p_2(\sigma,L)=L$ for $(\sigma,L)\in C$). Let $\cCO^C:=\{dL_0\mid(L_0,d)\in\textup{Std}(C)\}$
    \index{$\cCO^C$}, which is a set of representatives of the $W'$-orbits of pole spaces $M$ in $C$.
    \item[(iii)] For each $(\sigma,L)\in C$, a choice of a standard pair $(L_0,w_L)=(L_0,u_Ld_L)$ with $u_L\in W'$ 
    such that $L=w_L(L_0)$ and $(L_0,d_L)\in\textup{Std}(C)$. 
    Let $\textup{SP}(C)$ denote this set of standard pairs $\{(L_0,w_L)\mid L\in p_2(C)\}$.   
    \end{enumerate}
    We view the union of the segments 
    in $C$ as a graph whose vertices are the endpoints of its segments $\gs_M$ and the intersections of $\sigma_M$ 
    with the pole spaces of $\gO$ contained in $M$ with codimension one, and whose edges are the subsegments of 
    the $\gs_M$ defined by these vertices. Given a pole space $M\in\cL(\gO)$ we write $C_M\subset C$ for the subset of pairs of 
    the form $(\gs,M)\in C$.
    For $(M_0,d)\in\textup{Std}(C)$ put $M=dM_0\in \cCO^C$. 
    We denote by 
    $\cCO^{C,res}=\{M\in\cL(\gO_r)\mathrm{\ (i.e.\ residual)\ }\mid \exists\gs_M: (\gs_M,M)\in C\}$\index{$\cCO^C$!$\cCO^{C,res}$}. 
    
    The collection $(C,\textup{Std}(C),\textup{SP}(C))$ has to satisfy the following requirements:  
    \begin{enumerate}
    \item[(iv)] Let $L$ be a standard residual pole space of the form $L=L'+\bbR\mathfrak{w}'\mathrm{\ with\ }L'\in\cL({\gO'_r})_+$.
    Then $(\gs_{L,0},L):=([p_{L,\infty},c_L],L)\in C$, $(L,1)\in \textup{Std}(C)$, and $w_L=1$.
    \item[(v)] For every $(\gs_N,N)\in C_N$, if $M\subset N$ is a pole space for $\Omega$ of codimension $1$ in $N$ 
    such that $M\cap \gs_N$ is nonempty, then there exists a $u\in W'$ and a 
    $(\sigma_K,K)\in C$ such that $K=u(M)$ and $\{i_{\gs_K}\}=u(\gs_N)\cap K$.
    \item[(vi)] Conversely, if $(\gs_M,M)\in C_M$ and $(\gs_M,M)\not=(\gs_{L,0},L)$ for any $L'\in\cL'$, then there exists a 
    segment $(\gs_N,N)\in C$ and a $u\in W'$ such that $M\subset u(N)$ has codimension 1, and 
    $\{i_{\gs_M}\}=u(\gs_N)\cap M$. 
    \item[(vii)] If $(\sigma_M,M)\in C$ and $M^{\textup{temp}}\subset\cup_{N\in \cCO^{C,res}}W'N^{\textup{temp}}$ then $f_{\sigma_M}=c_M$.
    Otherwise $f_{\gs_M}=i_{\gs'_M}$ for some $u(\gs'_M,M)\in C$ with $u\in W'$. 
    \end{enumerate}
    \end{defn}
    \begin{defn}
    A segment of the form $(\gs_{L,0},L)\in C$ as in Definition \ref{defn:casc}(iv) is called a segment of the first generation. 
    Suppose that recursively we have defined a collection of segments in $C$ of the $n^{th}$ generation with $n> 1$. 
    A segment $(\gs,M)\in C$ with $\gs=[i_\gs,f_\gs]$ which does not belong to the $m^{th}$ generation for some $m\leq n$, and such 
    that there exists a $u\in W'$ and a segment $(\gs_N,N)\in C$ of generation $n$ such that $\{i_\gs\}=u(\gs_N)\cap M$ and 
    $M\subset u(N)$ is a pole space of codimension $1$, then we call $(\gs,M)$ of generation $n+1$. 
    \end{defn}
    \begin{cor}\label{cor:tree}
    $C$ is ``closed for intersections with $\gO$-pole spaces'' in the sense that every intersection $L\cap \sigma_M$ of a segment 
    $(\sigma_M,M)\in C$ with a hyperplane 
    $L\subset M$ with $L\in\cL(\gO)$, is contained in a segment $(\sigma_L,L)\in C$ up to the action of $W'$. $C$ is ``connected'' 
    in the sense that for every pole space $M\subset \cCO^C$, if $T_M$ denotes the collection of segments of the form 
    $(u_L^{-1}(\sigma_L),M)$ in $M$ where $(\sigma_L,L)\in C$ is such that $L=u_L(M)$, then $T_M$ is 
    a connected oriented tree 
    in $M$ (possibly a single vertex), containing $c_M$ if $M^{\textup{temp}}\subset N^{\textup{temp}}$ for some 
    residual pole space $N$. 
    \end{cor}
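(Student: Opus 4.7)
The plan is to derive both assertions directly from axioms (iv)--(vii) of Definition~\ref{defn:casc}. For the closedness assertion, axiom~(v) is immediately applicable: given $(\sigma_M,M)\in C$ and a codimension-one $\Omega$-pole space $L\subset M$ meeting $\sigma_M$, axiom~(v) supplies $u\in W'$ and $(\sigma_K,K)\in C$ with $K=u(L)$ and $\{i_{\sigma_K}\}=u(\sigma_M)\cap K$; pulling back by $u^{-1}$, the intersection $\sigma_M\cap L$ equals $u^{-1}(i_{\sigma_K})\in u^{-1}(\sigma_K)\subset L$, which is a $W'$-translate of the segment $(\sigma_K,K)\in C$ as required.

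For the structure of $T_M$, fix $M\in\cCO^C$ and consider a segment $\gamma=u_L^{-1}(\sigma_L)\in T_M$ coming from $(\sigma_L,L)\in C$ with $L=u_L M$. The tempered condition $M^{\textup{temp}}\subset\cup_{N\in\cCO^{C,res}}W'N^{\textup{temp}}$ is $W'$-invariant and $u_L^{-1}(c_L)=c_M$, so axiom~(vii) applied to $(\sigma_L,L)$ forces either $f_\gamma=c_M$ (tempered case) or $f_\gamma$ to be the initial point of another segment in $T_M$. In the tempered case $T_M$ is manifestly a star centered at $c_M$, which yields connectedness, acyclicity, and the required containment of $c_M$. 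In the non-tempered case, the endpoint-to-initial-point relation of (vii) defines a successor function on the finite set of segments in $T_M$, while by axiom~(vi) every initial point of a non-first-generation segment is the intersection with the $M$-pullback of a strictly larger parent segment $(\sigma_N,N)\in C$; combining these two relations yields an oriented forest of paths in $T_M$.

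The main obstacle I anticipate is verifying that this forest is in fact a single connected tree. I would argue acyclicity by induction on the generation: the construction of $C$ strictly decreases generation under the parental-intersection map from axiom~(vi), so iterating either the successor map (vii) or the parent map (vi) cannot loop back on itself. Connectedness within $M$ I would then establish by induction on the codimension of pole spaces in $\cCO^C$: any two chains in $T_M$ trace back via~(vi) to earlier-generation segments lying in strictly larger parent pole spaces, whose $T$-trees are connected by the inductive hypothesis; after pulling the chains back to $M$ along the same $W'$-elements used to define the parental intersections, they meet at shared intersection points, producing a path in $T_M$ joining the original segments and completing the proof.
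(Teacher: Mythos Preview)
Your proposal is correct and takes the same approach as the paper, which simply records that the corollary ``is immediate from the definitions''; you have merely unpacked axioms (iv)--(vii) of Definition~\ref{defn:casc} in more detail than the paper deems necessary. The only place where your write-up is noticeably looser than the rest is the inductive connectedness argument in the non-tempered case, but since the paper itself offers no further detail there, your sketch is already more than what the paper provides.
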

    \begin{proof}
    This is immediate from the definitions.
    \end{proof}
   
   \subsection{The pseudocode for generating the cascade}
We will now describe the algorithm used to construct $(C,\textup{Std}(C),\textup{SP}(C))$ for $R'_+\subset R_+$. The specific cases where we needed computer assistance were the pairs $(\textup{B}_3,\textup{F}_4),(\textup{D}_5,\textup{E}_6),(\textup{E}_6,\textup{E}_7),(\textup{E}_7,\textup{E}_8)$. At {\tt https://github.com/mgdemartino/SphAutSpec}, one can find the outputs as well as the codes used.

The algorithm has $(r+1)$ phases, with $r$ being the rank of the roots system $R$. Each phase, denoted $k$, correspond to the codimension of the spaces that are crossed in the cascade and the aim is to construct two databases of spaces: 
\[
\Gen = (\Gen_k)_{k=0,\ldots,r}\quad \textup{ and } \quad \Std = (\Std_k)_{k=0,\ldots,r}.
\]
The $\Gen$ database will store the information pertaining to $C$ while the $\Std$ database stores the information needed for $\textup{Std}(C)$ and $\textup{SP}(C)$.
\subsubsection{The $\Gen$ database}
We will construct $\Gen = (\Gen_k)_{k=0,\ldots,r}$ and in each phase $k$, $\Gen_k$, is a 2D-array (a matrix) whose rows correspond to pole spaces of codimension $k$ crossed in the algorithm. To each row, the following information is stored:
\begin{center}
\begin{tabular}{ r c l }
column \#1&:& $\hat{P}_L$, the pole set of $L$,\\
column \#2&:& $p_L$, the initial point of crossing,\\
column \#3&:& $c_L$, the center of $L$,\\
column \#4&:& $\textup{Ord}_L(\Omega)$, the order of $L$ along $\Omega$,\\
column \#5&:& $\textup{Num}^L$, the numerator of $\Omega^L$,\\
column \#6&:& $\textup{Den}^L$, the denominator of $\Omega^L$,\\
column \#7&:& $\mathtt{Cd}_L = (J',\gamma',\rl_L)$, the crossing datum.
\end{tabular}
\end{center}
Here, $\Omega^L$ was defined in (\ref{eq:rewrO}), $\textup{Num}^L$, $\textup{Den}^L$ are lists of coroots that index the numerator and denominator of $\Omega^L$ and the crossing datum $\mathtt{Cd}_L$ traces the flag $\cF$ of $\Omega$ pole spaces that creates $L$: $(J',\gamma')$ encodes the information of an initial pole space $L'\in\cL(\Omega_r')$ where the cascade starts, with $J'$ a subset of the diagram for $R'$ and $\gamma'$ is a weighted Dynkin diagram for $J'$; and $\rl_L$ is an (ordered) list of coroots $\hat{\ga}$ indicating the hyperplanes $\{\hat\ga = 1\}$ whose successive intersection creates a flag of pole spaces ending in $L$. We stress that each flag of $\Omega$ pole spaces appearing in $C$ will be represented by a unique row in $\Gen_k$.

\subsubsection{The $\Std$ database}
We construct $\Std = (\Std_k)_{k=0,\ldots,r}$ which stores data for (choices of) standard pole spaces corresponding to $\Gen$, up to $W'$-conjugation. Specifically, in each phase $k$, $\Std_k$ is a 2D-array whose rows correspond to a $W'$-orbit\footnote{The algorithm crosses few members of a $W'$-orbit, and we track these in the database.} of pole spaces of codimension $k$. For each such row, we store the following information:
\begin{center}
\begin{tabular}{ r c l }
column \#1&:& $\hat{P}_{L_0}$, the pole set of the chosen standard form,\\
column \#2&:& $c_{L_0}$, the center of $L_0$,\\
column \#3&:& $\uw$, the list of $W$-elements, for each $L$ in this $W'$-orbit,\\
column \#4&:& $\usig$, the list of segments $[p_L,q_L]$ in $L_0$, for each $L$ in this $W'$-orbit,\\
column \#5&:& $\textup{Ord}_{L_0}(\Omega)$, the order of $L_0$ along $\Omega$,\\
column \#6&:& $\sub$ tag `$\ttrue$' or `$\tfalse$',\\
column \#7&:& $\mathtt{Pd}_{L_0} = (J_{L_0},\gamma_{L_0})$, the standard parabolic datum of $L_0$,\\
column \#8&:& $\hat{R}^\perp_{L_0}$, the perpendicular coroots $\{\hat\alpha\in\hat{R}_+\mid (\hat\alpha,\hat\beta)=0,\forall\hat\beta\in\hat{R}_{L_0}\}$,\\
column \#9&:& test if there are non-constant parallel coroots to $\usig$, \\
column \#10&:& $\upl$, the list  of the pole sets of each $L$ in this $W'$-orbit.
\end{tabular}
\end{center}
\begin{rem}\label{rem:AlgIniRems}
Some remarks: 
\begin{itemize}
\item Given a pole space $L$, we load a procedure $\mathtt{stdData}$ which computes the function $L\mapsto L_0$ corresponding to the choice of standard space $L_0$. In broad strokes, we compute an element $\lambda_L = c_L + x^L\in L$ with $|x^L|$ large enough so that if $w\lambda_L$ is dominant, then $w(L)$ is standard. The procedure $\mathtt{stdData}$ returns this $w$ and the parabolic datum $\mathtt{Pd}_{L_0}$.
\item $\sub(L_0) = \ttrue$ if $L_0$ is residual or if there is a $w'\in W'$ and a residual space $M$ such that $w'(L_0)\subset M$ and $c_{L_0} = c_M$. It is $\tfalse$ if otherwise.
\item It is useful to store the set of perpendicular roots $\check{P}_{L_0}$ in order to speed some auxiliary routines.
\item For column \#9, the presence of non-constant roots parallel to a segment could have caused to make modifications in the algorithm. Fortunately, such phenomenon was not observed in any of the cases computed.
\end{itemize}
\end{rem}

\subsubsection{Pseudocode for the algorithm}
To initialize the algorithm, we construct $\Gen_0$, which has a single row corresponding to $L=V$, the minimal principal series. Here, $p_{V,\infty} = t_\infty\fw'$ with $t_\infty\gg 0$ and $\textup{Num}^V,\textup{Den}^V$ are, respectively, the numerator and denominator of $\Omega$ defined in (\ref{eq:Omega}). For each $k$ from $0$ to $r$, we assume $\Gen_k$ is constructed and we run the procedure $\mathtt{CascPhase}(k)$, which constructs $\Gen_{k+1}$ and $\Std_k$ if $k<r$ and when $k=r$, $\mathtt{CascPhase}(r)$ just constructs $\Std_r$. In the algorithm, described below, the enumerated comments are the following:

\SetAlFnt{\small} 
\begin{algorithm}[!htp]
$\mathtt{Gen} =\Gen_k$\Comment*[r]{(1)}
$\mathtt{Std} =()$\Comment*[r]{(2)}
$\mathtt{Nex} =\{1^{\textup{st}}$-gen spaces of codimension $k+1\}$\Comment*[r]{(3)}
\For{$L \in \Gen$}{
  \eIf{$\exists(L_0,g)\in\Std$ with $g(L_0) \in W'(L)$}{
    compute $w$ so that $w(L_0) = L$\Comment*[r]{(4)}
    \eIf{$\sub=\ttrue$}{
      \eIf{$w^{-1}([p_L,c_L])\in\usig$}{
        append $\uw,\usig,\upl$ in $\Std$ \Comment*[r]{(5)}
      }{
        compute $\mathtt{LPols} = \mathtt{PolesCrossed}(L,p_L,c_L)$ \Comment*[r]{(6)}
        $\mathtt{Nex} = \mathtt{add2Gen}(\mathtt{LPols},\mathtt{Nex})$\;
        append $\uw,\usig,\upl$ in $\Std$\Comment*[r]{(7)}
      }
    }{
      compute $q_0$, where $\min_{q\in \mathtt{Pts}_{L_0}}\{d(p_0,q)\}$ is attained \Comment*[r]{(8)}
      \eIf{$[p_0,q_0]\in\usig$}{
        append $\uw,\usig,\upl$ in $\Std$ \Comment*[r]{(9)}
      }{
        compute $\mathtt{LPols} = \mathtt{PolesCrossed}(L,p_L,q_L)$ \Comment*[r]{(10)}
        $\mathtt{Nex} = \mathtt{add2Gen}(\mathtt{LPols},\mathtt{Nex})$\;
        append $\uw,\usig,\upl$ in $\Std$\Comment*[r]{(11)}
      }
    }
  }{
    compute $\mathtt{stdData}(L)$ and $\sub = \sub(L_0)$\;
      \eIf{$\sub=\mathtt{true}$}{
        compute $\mathtt{LPols} = \mathtt{PolesCrossed}(L,p_L,c_L)$ \Comment*[r]{(12)}
        $\mathtt{Nex} = \mathtt{add2Gen}(\mathtt{LPols},\mathtt{Nex})$\;
        $\mathtt{Std} =\mathtt{add2Std}(\mathtt{stdData}(L),\mathtt{Std})$\Comment*[r]{(13)}	
      }{
        $\mathtt{Std} =\mathtt{add2Std}(\mathtt{stdData}(L),\mathtt{Std})$\Comment*[r]{(14)}
      }
  }
}
\Return $\mathtt{Nex}, \Std$\;
\end{algorithm}

\begin{enumerate}[(1)]
\item Reads the previously constructed $\Gen_k$ database.
\item Initializes the $\Std_{k}$ database.
\item Initializes the $\Gen_{k+1}$ database by inputing the data for $1^{\textup{st}}$-generation spaces $\{L' = L(J',\gamma')\mid (J',\gamma')\}$. These are given by
\[
L = c_{L'} + t_\infty\fw'.
\]
All columns of $\Gen$ corresponding to this $L$ are constructed using $(J',\gamma')$ as input. The root crossing data $\rl$ in column \#7 of $\Gen$ is empty, in this case.
\item We load an auxiliary procedure $\mathtt{InStdWp}$ which, if $\ttrue$ returns $u\in W'$ such that $g(L_0) = u(L)$. We let $w=u^{-1}g$. This procedure uses the set of perpendicular coroots $\check{R}^\perp_{L_0}$.
\item If the segment $[w^{-1}(p_L),w^{-1}(c_L)]$ is already a member of $\usig$, the possible spaces generated by the contour-shift along this segment were already considered. We just enlarge the lists $\uw,\usig,\upl$ to keep track of the occurrences.
\item The routine $\mathtt{PolesCrossed}(L,p,q)$ computes all spaces crossed while making the contour shift in $L$ along the segment $[p,q]$. It computes all information needed for columns \#1 -- 7 of $\Gen$. If $p = q$, nothing is done. This allows the algorithm to continue in codimension $r$ as all spaces created are points and there is no extra spaces generated and $\mathtt{Nex}=()$ when $k=r$.
\item Since the row corresponding to the $W'$-orbit of $L_0$ already exists, we only update the lists in columns \#3, 4, 9 of $\Std$.
\item Here, $p_0=w^{-1}(p_L)$ and $\mathtt{Pts}_{L_0}$ is the set of initial points of segments in $L_0$. In a $\sub = \tfalse$ spaces, we need not shift to the center, but rather we can stay stay in the admissible, convex region where all the initial points lie.
\item Similar to (5). In this case, we will create a segment that was already considered so there is no need to perform the contour-shift again.
\item Here, $q_L = w(q_0)$. 
\item Same as (7).
\item Same as (6).
\item In this situation, we are adding a new entry to the $\Std$ database. The procedure $\mathtt{stdData}(L)$ was discussed in Remark \ref{rem:AlgIniRems}. At this stage we also test for the existence for non-constant roots parallel to the segment $[p_L,c_L]$.
\item In this particular situation, the pole space in question will not contribute to the spectrum and we do not shift to the center. In the first appearance of a $\sub = \tfalse$ space, the segment considered is $w^{-1}([p_L,p_L])$. 
\end{enumerate}
   
   \section{Appendix B: The classical split groups}\label{a:class}
    The following Theorem is essentially a reformulation of Moeglin's results \cite{M1}, with the exception 
    of part $(vi)$ (Moeglin follows a slightly different approach at this point, and does not define the set $\textup{Adm}(L)$).
    We include it for completeness.
    \begin{thm}\label{thm:classmain}
    Let $R$ be a root system of rank $n$ of classical type $\uX_n$, and let $R'\subset R$ be 
    the standard Levi subsystem of type $\uX_{n-1}$. There exists a cascade $C_\uX$ such that 
    \begin{enumerate}
    \item[(i)] All segments $(\sigma,L)\in C_\uX$ of the third generation and higher are points of the form 
    $\sigma=[c_L,c_L]=\{c_L\}$.
    \item[(ii)] All pole spaces $L$ in the first two generations of $C_\uX$ satisfy $\textup{Ord}_L(\gO)=0$.
    \item[(iii)] For all $(\sigma,L)\in C_\uX$ with $L$ not residual we have $\gs=[c_L,c_L]=\{c_L\}$. 
    \item[(iv)] If $(\gs,L)\in C_\uX$ is in the first two generations of $C_\uX$ then $\hat{P}_L\subset \hat{R}_+$. 
    \item[(v)] If $(L_0,d)\in\textup{Std}(C_\uX)$ is a standard datum from the first two generations of $C_\uX$ then 
     all regular envelopes of $(L_0,d)$ are $d$-good. 
    \item[(vi)] All residual pole spaces $L$ in $C_X$ with $\textup{Ord}_L(\gO)=0$ satisfy $c_L\in\textup{Adm}(L)$.
    \end{enumerate}
    \end{thm}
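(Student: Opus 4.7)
The plan is to construct the cascade $C_\uX$ explicitly for each classical root system $\uX_n \in \{\uA_n, \uB_n, \uC_n, \uD_n\}$ with $R'=\uX_{n-1}\subset R=\uX_n$, following Moeglin \cite{M1}, and then to verify the six properties combinatorially. For (i)--(v) the verifications will essentially amount to a translation of the existing results of \cite{M1} into our cascade formalism; property (vi), which is not in \cite{M1}, will then follow by applying the general machinery of Sections~\ref{s:PoleDen}--\ref{s:RegEnv}.

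First I would describe the construction. In each classical type the standard residual pole spaces for $\Omega_r'$ admit a combinatorial parametrization (partitions decorated by Jordan-type data), which is compatible with the signed permutation action of $W$ on $V$. The first generation of $C_\uX$ consists, for each $L'\in\cL(\gO'_r)_+$, of the pair $(\sigma_{L,0},L)$ with $L=L'\oplus\bbR\fw'$ and $\sigma_{L,0}=[p_{L,\infty},c_L]$. Travelling along $\sigma_{L,0}$ one crosses pole hyperplanes $M\subset L$ coming from coroots of $R\setminus R'$; these yield the second generation. In each subsequent crossing one of two things happens: either $N^{\textup{temp}}\subset M^{\textup{temp}}$ for some residual $M\in C_\uX$, in which case the algorithm shifts directly to $c_N$ and the segment degenerates to $[c_N,c_N]$; or $N$ is not sub-residual and no further shift is needed, producing again a singleton segment. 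This directly yields (i) and (iii).

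Next I would verify (ii) and (iv). Property (ii) amounts to the fact that for each first- or second-generation crossing, the coroot hyperplanes that contain the new pole space meet transversely in the sense needed: $|\hat P_L|-|\hat Z_L|$ equals the codimension of $L$, which is an immediate consequence of the combinatorial description of residual data in classical types. Property (iv) follows from the observation that contour shifts in $C_\uX$ always proceed from a point $c_L+t\fw'$ (with $t\gg 0$) towards $c_L$, so the hyperplanes crossed are all of the form $\hat\alpha=1$ with $\hat\alpha\in\hat R_+$, and these are exactly the coroots which end up in $\hat P_L$. For property (v), using the explicit list of minimal double coset representatives $d\in W'\backslash W$ for the classical reflection groups, one checks directly that for every first- or second-generation standard pair $(L_0,d)\in\textup{Std}(C_\uX)$ one has $d(\hat P_{L_0})\subset \hat R_+$; Lemma~\ref{lem:pospole} then says that every regular envelope of $(L_0,d)$ is $d$-good. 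This is the main combinatorial verification, but it reduces to a case-by-case analysis over a handful of partition shapes and the standard tableau-style action, and is essentially carried out in \cite{M1}.

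Finally I would deduce (vi). Having established (v) for classical types, Lemma~\ref{lem:tauadmpart_a} applies in the classical case without computer assistance, so the cascade $C_\uX$ satisfies~(\ref{eq:tau}); Theorem~\ref{thm:tauadm} then gives the transformation rules, and Theorem~\ref{thm:main} yields
\[
\textup{Den}^{\sim}(L_0,w,\Sigma)_+\subset D_{L_0},\qquad \textup{Den}^{\sim}(L_0,w,\Sigma')_+\subset D'_{L_0},
\]
for every standard pair $(L_0,w)$ representing a residual $L\in C_\uX$ with $\textup{Ord}_L(\Omega)=0$. By Corollary~\ref{cor:cont}, the same inclusions hold for $\textup{Den}(L_0,w,\Sigma)_+$ and $\textup{Den}(L_0,w,\Sigma')_+$. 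Unwinding the definitions of $D_{L_0}$ and $D'_{L_0}$ gives $Y(c_{L_0})\geq 1$ for all $Y\in \textup{Den}(L_0,w,\Sigma)_+$ and $Z(c_{L_0})\leq 0$ for all $Z\in \textup{Den}(L_0,w,\Sigma')_+$. Since $w$ is an orthogonal transformation sending $L_0$ to $L$, it sends $c_{L_0}$ to $c_L$, so setting $p=c_L$ in Definition~\ref{def:Adm} gives $c_L\in\textup{Adm}^{(L_0,w)}(L)$. Lemma~\ref{lem:indepw} then concludes $c_L\in\textup{Adm}(L)$.

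The main obstacle in this plan is the explicit verification of (v): one must enumerate, for each classical type, the standard pairs $(L_0,d)$ that appear in the first two generations of $C_\uX$ and check $d(\hat P_{L_0})\subset \hat R_+$. This enumeration is tedious but entirely elementary for $\uA, \uB, \uC, \uD$, unlike the exceptional cases which require computer assistance. All other steps are either direct applications of earlier results in the paper or immediate consequences of the construction of $C_\uX$.
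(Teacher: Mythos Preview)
Your argument for (vi) is circular. You invoke Theorem~\ref{thm:main} to obtain $\textup{Den}^{\sim}(L_0,w,\Sigma)_+\subset D_{L_0}$, but the paper's proof of Theorem~\ref{thm:main} for classical types simply says ``For classical root systems see Appendix~\ref{a:class}'', i.e.\ it cites Theorem~\ref{thm:classmain} itself. The substantive content needed---that the enveloping denominators for the classical cascades lie in $D_{L_0}$---is not a formal consequence of (v) plus the transformation rules of Theorem~\ref{thm:tauadm}; it requires an independent verification. The paper does this by explicitly constructing regular envelopes $H\supset L_\pi$ out of the hooks of the $m$-diagram for the partition $q$, computing the highest weights of the associated $\mathfrak{sl}_2$-modules (tensor products $V_{k_p}\otimes V_{k_q}$ for pairs of strips), and showing that for every $i$ one can choose an envelope $H$ with $c_i^H\geq 0$. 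This is the actual work in the classical case, and your plan skips it.

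Separately, your treatment of (i) misses the key nontrivial step. You assert that from the third generation on the segments collapse to centers, but the reason this holds is that already in the \emph{second} generation the segment $[p_{\pi,\hat\alpha_j},c_{\pi,\hat\alpha_j}]$ meets pole hyperplanes only at its endpoint $c_{\pi,\hat\alpha_j}$. Two things must be checked: that the displacement $p_{\pi,\hat\alpha_j}-c_{\pi,\hat\alpha_j}$ has coroot values of absolute value at most $1/2$ (so crossings can only occur at endpoints), and that no pole space passes through the initial point $p_{\pi,\hat\alpha_j}$. The latter is not automatic: the candidate hyperplanes $x_j=x_m$ (when $k_m=k_j$) or $x_j=0$ (when $k_j$ matches an extremity of $q$) do pass through $p_{\pi,\hat\alpha_j}$, and the paper rules them out by invoking the double zeros of the Plancherel density of $d\nu'_{L_\pi}$ along those hyperplanes. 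Without this analysis, (i) does not follow from the construction.
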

    \begin{proof}
    Recall that 
    \begin{align}\label{eq:FL}
    \gO=F{\gO'_r}&=
    \frac{c'(-\gl)}{c(-\gl)}\frac{d\gl}{c'(\gl)c'(-\gl)}\\
    &\nonumber\prod_{\hat\ga\in \hat{R}_+\backslash\hat{R}'_+}\frac{\hat\ga}{\hat\ga-1}\frac{d\gl}{c'(\gl)c'(-\gl)}
    \end{align}
    It makes sense to treat the case of type $\uA$ separately first:
    \subsubsection{The cascade $C_\uA$}
    Consider coordinates $(t_0,t_1,\dots,t_n)$, and consider a partition 
    $\pi':=(k'_1,k'_2,\dots,k'_T)$ of $n$.  There is a corresponding standard Levi $R_{\pi'}\subset R'$ 
    of $\uA_{n-1}$, of type $\uA_{k'_1-1}\times\dots\times \uA_{k'_T-1}$, corresponding to a partitioning 
    of the variables $t_1,t_2,\dots,t_n$ in consecutive intervals of size $k'_1, k'_2, \dots, k'_T$.
    By abuse of notation, denote by $x=(x_1,x_2,\dots,x_T)$ the vector perpendicular to all roots of $R_{\pi'}$ 
    with its first $k'_1$ coordinates equal to $x_1$, the coordinates $t_{k'_1+1},\dots,t_{k'_1+k'_2}$ equal to $x_2$, 
    etcetera.
    There is a unique standard regular  residual subspace $L'_{\pi'}$ of $R_{\pi'}$ such that a general element  
    is of the form $c_{\pi'}+x$ where $x=(x_1,x_2,\dots,x_T)$ is as above, and 
    \begin{equation}
    c_{\pi'}=(\frac{k'_1-1}{2}, \dots, \frac{1-k'_1}{2},\frac{k'_2-1}{2}, \dots, \frac{1-k'_2}{2}, \dots,\frac{k'_T-1}{2}, \dots, \frac{1-k'_T}{2})
    \end{equation}
    We put $L_{\pi}=L'_{\pi'}+\bbR\fw'=\{(x_0,t_1,\dots,t_n)\mid \lambda\in\bbR,\ (t_1,\dots,t_n)\in L'_{\pi'} \}$.
    This is the residual subspace in standard position for $R$ corresponding to the composition $\pi=(1,\pi')$ of $n+1$ (we add one 
    part to $\pi'$ of size $1$). We denote this composition $\pi$ of $n+1$ by $(k_0,k_1,\dots,k_T)$. The first generation 
    segment $\gs_{L,0}$ in $L_\pi$ is of the form $\gs_{L,0}=[p_{L_\pi,\infty},0]\fw'+c_{L'}$ with $p_{L_\pi,\infty}\gg0$.
    
    We write $\gO^{L_\pi}=\textup{Res}_{L_\pi}(F\gO)=(F_{L_\pi})\textup{Res}_{L'_{\pi'}}({\gO'_r})dx_0$ with $F_{L_\pi}=F|_{L_\pi}$.
    The pole coroots $\hat\ga\in \hat{R}_+\backslash \hat{R}'_+$ of $F_{L_\pi}$ 
    are the ones of the form $\hat\ga_j=t_0-t_{i_j}$ for $j=1,\dots ,T$ and $i_j=k_1+k_2+\dots +k_{j-1}+1$, 
    and all these poles are simple poles. 
    Thus $L_{\pi,\hat\ga_j}=L_\pi\cap \{\hat\ga_j=1\}$ is formed by points $(x_0,t_1,\dots,t_n)\in L_\pi$ 
    such that $x_0-(x_j+\frac{k_j-1}{2})=1$, in other words $x_0=x_j+\frac{k_j+1}{2}$. 
    This yields one extra pole $\hat\ga_j$ for $L_{\pi,\hat\ga_j}$, and so $P_{L_{\pi,\hat\ga_j}}\subset \hat{R}_+$.
     We have $\textup{Ord}_{L_{\pi,\hat\ga_j}}(\gO^{L_\pi})=0$, 
    and $L_{\pi,\hat\ga_j}$ is in the $W$-orbit of a regular residual subspace in standard position $L_{\pi_j}$, where $\pi_j$ denotes 
    the partition of $n+1$ obtained from $\pi'$ by increasing the j-th part $k'_j$ by $1$. We will denote by 
    $\pi_j=(\gk_1,\dots,\gk_T)$ the resulting composition of $n+1$. 
    
    We see that $L_{\pi,\hat\ga_j}\cap \gs_{L,0}=p_{\pi,\hat\ga_j}=(\frac{k_j+1}{2},c_{\pi'})\in L_{\pi,\hat\ga_j}$, and 
    the center $c_{\pi,\hat\ga_j}$ of $L_{\pi,\hat\ga_j}$ is equal to $c_{\pi,\hat\ga_j}:=p_{\pi,\hat\ga_j}-1/2(e_0+E_j)$, where $e_0$ is 
    the first basis vector of $\bbR^{n+1}$, and $E_j$ is the $j$-th basis vector for $V^{L'_{\pi'}}=\bbR^{T}$. 
    In particular, for any coroot $\hat\gb$ we see that $|\hat\gb(p_{\pi,\hat\ga_j}-c_{\pi,\hat\ga_j})|\leq 1/2$
    while $\hat\gb(p_{\pi,\hat\ga_j}), \hat\gb(c_{\pi,\hat\ga_j})\in\mathbb{Z}/2$.
    
    This implies that when moving in $L_{\pi,\hat\ga_j}$ in a straight line segment from $p_{\pi,\hat\ga_j}$ to $c_{\pi,\hat\ga_j}$ 
    we can only possibly cross a pole hyperplane of the form $\hat\beta=1$ for a nonconstant coroot $\hat\beta$   
    at the extreme points $c_{\pi,\hat\ga_j}$ or $p_{\pi,\hat\ga_j}$. 
    
    If  we meet a pole space $M$ which is a hyperplane $M\subset L_{\pi,\hat\ga_j}$ at the center $c_{\pi,\hat\ga_j}$, then obviously $c_M=c_{\pi,\hat\ga_j}$, 
    so that the cascade of contour shifts stops at $c_{\pi,\hat\ga_j}$.  
    
    We claim that there are no pole spaces $M\subset L_{\pi,\hat\ga_j}$ of codimension one containing $p_{\pi,\hat\ga_j}$.  
    Indeed, consider the rational form $F_{L_{\pi,\hat\ga_j}}(\gl)\frac{d\nu_{L'}(\gl')}{d\lambda'}$ on $L_\pi$ near $p_{\pi,\hat\ga_j}$. 
    The Plancherel density of $\frac{d\nu'_{L_{\pi}}(\gl')}{d\gl'}$ is smooth in a neighbourhood of the line $c_{L'}+\mathbb{R}\fw'$ (which contains 
    $p_{\pi,\hat\ga_j}$ by construction), hence this regular factor can simply be restricted to $L_{\pi,\hat\ga_j}$ near $p_{\pi,\hat\ga_j}$. 
    Similar to what was observed above, the pole roots of  $F_{L_{\pi,\hat\ga_j}}(\gl)$ 
    are the ones of the form $\hat\ga_m=t_0-t_{i_m}$ for $m\in\{1,\dots ,T\}\backslash\{j\}$ 
    and $i_m=k_1+k_2+\dots +k_{m-1}+1$ and yields the equation $x_j-x_m=\frac{k_m-1}{2}-\frac{k_j-1}{2}$ for 
    $M\subset L_{\pi,\hat\ga_j}$, while at $p_{\pi,\hat\ga_j}$ we have $x_j=x_k=0$.  
    The condition that $M$ contains $p_{\pi,\hat\ga_j}$ implies therefore that $k_m=k_j$.
    However, it is well known that the Plancherel density of $d\nu'_{L_{\pi}}(\gl')$ has a (double) zero at $x_m=x_j$ if 
    $k_m=k_j$, since the analytic $R$-group is trivial for $\textup{GL}_{n+1}$. Therefore, $M$ is 
    not a pole space, proving the claim.
    The above discussion proves Theorem \ref{thm:classmain} $(i),(ii),(iv)$ for $\uX=\uA$, and shows that there 
    are only residual (hence regular, since we are in type $\uA$) pole spaces in the first two generations of $C_\uA$. 
    This also implies $(iii)$, while $(v)$ follows directly from $(iv)$ and the definitions. All pole residual pole $L$ 
    spaces in type $\uA$ are regular, and thus satisfy $\textup{Ord}_L(\gO)=0$. In the first two generations 
    of $C_\uA$ they are good by $(iv),(v)$, and then (vi) follows form Proposition \ref{lem:incl}. In higher generations 
    the residual pole spaces $L$ in $C_\uA$ are met at $c_L$ by $(i)$, proving $(vi)$ by induction on the generation 
    and Lemma \ref{lem:herit}.
    This finishes the proof for $C_\uA$.
    
    \subsubsection{The cascade $C_\uX$ for $\uX=\uB,\,\uC$ or $\uD$}
    We take ${R}'=\uX_n$.
    Let $N=2n+1$ if $\uX=\uB$, and $2n$ if $\uX=\uC,\uD$. 
    Consider the set $\Pi_{\uX,n}$ of partitions $\pi'$ of $N$ whose 
    odd parts have even multiplicity if $\uX=\uB$, or otherwise 
    whose even parts have even multiplicity. 
    
    The set $\mathcal{L}$ of $W'$-orbits of residual subspaces of ${R}'$ 
    is in bijection with $\Pi_{\uX,n}$. Given $\pi'\in\Pi_{\uX,n}$ we write 
    $\pi'$ in the form $\pi'=(k'_1,k'_1,k'_2,k'_2,\dots,k'_T,k'_T,q'_1,\dots,q'_S)$
    where the $q'_i$ are distinct and even if $\uX=\uB$, and otherwise distinct 
    and odd. We see that $\kappa'=(k'_1,\dots,k'_T)$ is a partition of $M$, 
    $q':=(q'_1,\dots,q'_S)$ is a distinct partition of "type $\uX$" of $N'$, 
    such that $2M+N'=N$.
    
    Following Moeglin we will denote the partition $\pi'$ by the 
    bipartition of $M+N'$ given by $(\kappa';q')=(k'_1,\dots,k'_T;q'_1,\dots,q'_S)$. 
    The corresponding essentially standard residual subspace $L_{\pi'}$ has as constant root 
    system the Levi subsystem ${R}_{L_{\pi'}}$ of the form 
    \begin{equation}
    {R}_{L_{\pi'}}=\uA_{k'_1-1}\times \dots \uA_{k'_T-1}\times \uX_{n'}
    \end{equation}
    where $N'=2n'$ is $\uX=\uC,\uD$, and $N'=2n'+1$ otherwise.
    We organise the coordinates of $c^\uX_{q'}$ first in strips beginning and ending 
    in the extremities of $q'$, according to a choice of an $m$-diagram ($m=0,1/2,1$ for 
    $\uX$ of type $\uD$,$\uB$, or $\uC$ respectively) corresponding to $q'$. The strips are the consecutive hooks 
    (starting from the outside of the $m$-diagram and working to the inside). Then  
    the center $c_{\pi'}$ of $L_{\pi'}$ has the form 
    \begin{equation}
    c_{\pi'}=(c^\uA_{\kappa'},c^\uX_{q'})
    \end{equation}
    where $c^\uA_{\kappa'}$ is the center of the type $\uA_M$-standard residual subspace 
    as in Case 1, and $c^\uX_{q'}$ is a residual point in $\bbR^{n'}$ of type $\uX_{n'}$ associated 
    to the distinct partition $q'$. We choose to represent this by the vector in $\bbR^{n'}$ with 
    coordinates (the representative in the $W(\uX_{n'})$-orbit is irrelevant because all 
    choices give an essentially standard pole space $L_{\pi'}$):
    \begin{equation}\label{eq:cX}
    c^\uX_{q'}=(\frac{q'_1-1}{2},\dots,\frac{1-q'_2}{2},\frac{q'_3-1}{2},\dots,\frac{1-q'_4}{2},\frac{q'_5-1}{2},\dots,\frac{q'_{S-1}-1}{2},\dots,\frac{1-q'_S}{2})
    \end{equation}
    if $S$ is even (e.g. if $\uX=\uD$ then $S$ is always even) else  
    \begin{equation}\label{eq:cXo}
    c^\uX_{q'}=(\frac{q'_1-1}{2},\dots,\frac{1-q'_2}{2},\frac{q'_3-1}{2},\dots,\frac{1-q'_4}{2},\frac{q'_5-1}{2},\dots,\frac{q'_{S}-1}{2},\dots,\frac{1}{2})
    \end{equation}
    if $S$ is odd and $\uX=\uB$, or else if $S$ is odd and $\uX=\uC$ then
    \begin{equation}\label{eq:cXoC}
    c^\uX_{q'}=(\frac{q'_1-1}{2},\dots,\frac{1-q'_2}{2},\frac{q'_3-1}{2},\dots,\frac{1-q'_4}{2},\frac{q'_5-1}{2},\dots,\frac{q'_{S}-1}{2},\dots,1)
    \end{equation}
    Recall that the $q'_i$ are all distinct here.
    Consider generic vector $x=(x_1,\dots,x_T)\in V^{L_{\pi'}}$. Then a generic element of $L_{\pi'}$ is of the form $c_{\pi'}+x$. 
    
    As in the type $\uA$-case, we induce $L_{\pi'}$ in the trivial way to ${R}$ by adding two parts of size one to $\pi'$ (or equivalently, adding 
    one part of size $1$ to $\kappa'$), giving a partition $\pi\in\Pi_{\uX,n+1}$. We again write $L_\pi=L_{\pi'}+\bbR\fw'$, 
    where $\fw'=e_0=(1,0,\dots,0)\in \bbR^{n+1}$. A generic vector $v\in L_\pi$ can be written in the form $v=c_{\pi}+x+\lambda\fw'$.
    
    Apart from a trivial factor $\frac{\lambda}{\lambda-1}$ (type $\textup{B}$) or $\frac{2\lambda}{2\lambda-1}$ (type $\textup{C}$), 
    the function $F_{L_\pi}$ now looks like:
    \begin{align}\label{F:X}
    F_{L_\pi}(v)=\prod_{j=1}^T\prod_{l=1}^{k_j}\frac{\lambda-x_j-\frac{k_j-2l+1}{2}}{\lambda-x_j-\frac{k_j-2l+1}{2}-1}
    &\frac{\lambda+x_j+\frac{k_j-2l+1}{2}}{\lambda+x_j+\frac{k_j-2l+1}{2}-1}\\
    \nonumber&\times \prod_{i=1}^{n'}\frac{\lambda-c^\uX_{q',i}}{\lambda-c^\uX_{q',i}-1}\frac{\lambda+c^\uX_{q',i}}{\lambda+c^\uX_{q',i}-1}
    \end{align}
    Hence for each $j$ we have two pole hyperplanes of $F_{L_\pi}$ in $L_\pi$ of the form $L^\pm_{\pi_j}$ given by 
    the equation $\gl=\pm x_j + \frac{k_j+1}{2}$. These are both \emph{residual} of the type $L_{\pi_j}$ with 
    $\pi_j$ the partition given by adding $1$ to the part $k'_j$ of $\kappa'$ in $\pi'$.  These poles are clearly simple, 
    hence $\textup{Ord}_{L_{\pi_i}^\pm}(\gO)=0$.
    
    Or else $\gl=\pm c^\uX_{q',i}+1$. This means that we increase 
    one of the parts $q'_j$ of $q'$ by $2$. This gives a residual subspace if the resulting 
    partition $q=(q_j)_j$ still has distinct parts. Otherwise this produces a quasi-residual subspace. 
    In both cases we denote this subspace by $L_{\pi_j}^q\subset L_{\pi}$. We note that the center of 
    these spaces $L_{\pi_j}^q$ is equal to the intersection point $p_{\pi,\ga_j}$ of $c_{\pi}+\bbR\fw'\subset L_\pi$ 
    with the corresponding pole hyperplane $\{{\hat\ga}_j=1\}$ (in both the residual and the quasi residual cases of this type).
    Hence we intersect these spaces $L_{\pi_j}^q$ in the second generation of $C_\uX$ in their centers. Observe that 
    $\textup{Ord}_{L_{\pi_i^q}}(\gO)=0$, since all the $q_i'$ are distinct. 
    
    We now analyse the second generation of $C_\uX$ in the residual spaces $L^\pm_{\pi_j}$. Both cases are equivalent, 
    and we will discuss only the case $L^+_{\pi_j}$. When we intersect $\{{\hat\ga}_j^{+}=1\}$ with the 
    line $c_{\pi'}+\bbR\fw'\subset L_\pi$ we find the point $p_{\ga_j}^+=(\frac{k_j+1}{2},c_{\pi'})\in L^+_{\pi_j}$.
    The center of $L^+_{\pi_j}$ is $c^+_{\pi_j}=p_{\ga_j}^+ -1/2(e_0+E_j)$ (compare with the type $\uA_n$ case).
    We consider the interval $I_{\ga_j}^+=[p_{\ga_j}^+,c^+_{\pi_j}]\subset L^+_{\pi_j}$, and the poles 
    of $F_{L^+_{\pi_j}}$ it can intersect. 
    
    Some non-constant coroots may vary by $1$ in this interval, but such coroots take integral values 
    at both extreme end points. Otherwise coroots may vary 
    at most $1/2$, and in such case the value at the end points 
    is half integral. In any case the pole hyperplanes of the form $H=\{\hat\gb=1\}$ 
    of a non-contant coroot ${\hat\gb}$ on $L^+_{\pi_j}$ either meet $I_{\ga_j}^+$ at $c^+_{\pi_j}$  - in 
    which case everything is the same as for $\textup{GL}_{n+1}$ -  or meets 
    $I_{\ga_j}^+$ at $p_{\ga_j}^+$. We claim that again, as in the case of $\textup{GL}_{n+1}$, 
    no pole spaces will be met as a hyperplane $H\subset L^+_{\pi_j}$ going through $p_{\ga_j}^+$. 
    Indeed, $H$ would be given either by the equation $x_j=x_i$ for some $i$ such that $k_i=k_j$
    or by the equation $x_j=0$ if $k_j=q_a$ for some odd $a$. It is well known 
    (see e.g. \cite{Sl}) that 
    the Plancherel density of  $d\nu'_{L_{\pi}}(\gl')$ has (double) zeroes at the hyperplanes $x_i=x_j$ 
    through the center $c_{\pi'}$ provided $k_i=k_j$, and at the hyperplane $x_j=0$ through the center 
    $c_{\pi'}$ unless the type $\uA$ strip of size $k_j$ can be glued into an $m$-diagram $D(q) $ ($m$ corresponding 
    to the type $\uX$ of course) corresponding to the extremities partition $q=q'=(q'_1,\dots,q'_S)$. 
    However, since $k_j=q_a$ and $\frac{q_a-1}{2}$ is an extremity of the $m$-diagram $D(q)$ 
    this is clearly \emph{not} the case, therefore in both cases we have zeroes in the Plancherel density 
    of $d\nu'_{L_{\pi}}(\gl')$ along these hyperplanes. We see (as in type $\uA$) that 
    no pole spaces are met at $p_{\ga_j}^+$. 
    
    The analysis for $L^-_{\pi_j}$ is entirely similar. 
    Observe that $P_{L^\pm_{\pi_j}}\subset \hat{R}_+\cup \hat{R}'$ in all cases (indeed, this is clear in the 
    cases $L^\pm_{\pi_j}$, and for $L^q_{\pi_j}$ where $\gl=\pm c^\uX_{q',i}+1$ is a constant, we always 
    have $\gl\geq 1$ as is clear from (\ref{eq:cX}), (\ref{eq:cXo}), (\ref{eq:cXoC}), and (\ref{F:X}).
    We have now shown Theorem \ref{thm:classmain} $(i), (ii), (iii), (iv)$, and $(v)$. The proof will now be 
    completed by the following Lemma: 
    \begin{lem}
    Theorem \ref{thm:classmain} $(vi)$ also holds for $C_\uX$ with $\uX$ one of the classical types 
    $\uX=\textup{B,C,D}$. In other words, if $(\gs,L)\in C_\uX$, where 
    $L$ is a residual pole space with $\textup{Ord}_L(\gO)=0$, and $(L_\pi,d)\in\textup{Std}(C_\uX)$ is a standard 
    datum corresponding to this segment, then $c_L\in \textup{Adm}(L)$. 
    \end{lem}
    \begin{proof}
    By Definition \ref{def:Adm} it suffices that for all standard pairs $(L_\pi,w=ud)$ representing $L$
    (i.e. $L=wL_\pi$) we have $\textup{Den}(L_\pi,w,\Sigma)_+\subset D_{L_\pi}$ and 
    $\textup{Den}(L_\pi,w,\Sigma')_+\subset D'_{L_\pi}$. By Corollary \ref{cor:cont} and 
    Theorem \ref{thm:tauadm} it is enough to show that 
    $\textup{Den}^\sim(L_\pi,w,\Sigma)_+\subset D_{L_\pi}$ for one choice of a representing standard 
    pair $(L_\pi,w)$ for $L$. 
    
    Let $\pi=(\gk,q)$, where we choose an $m$-diagram $D(q)$ representing $q$.  
    Since $\hat{P}_L\subset \hat{R}_+\cup \hat{R}'$, 
    all regular envelopes are $w$-good and thus 
    $\textup{Den}^\sim(L_\pi,w,\Sigma)_+=\textup{Den}^\sim(L_\pi,\Sigma)_+$ is independent of $w$.
    
    A regular envelope $H\supset L_\pi$ can be obtained as follows: In each of the coordinates 
    of the $j$-th nested hook of the $\lfloor(S+1)/2\rfloor$ hooks of $D(q)$, we add a coordinate $z_j$, 
    so as to obtain a strip $\fs_j:=(z_j+\frac{q_j-1}{2},\dots,z_j+\frac{1-q_{j+1}}{2})$.
    We do this for all hooks of $D(q)$ except possibly for a single hook, say the $j_0$-th, 
    in which we fix an arm starting from $m$ and running to the right extreme $\frac{q_j-1}{2}$, 
    or a leg starting from $m$ to the bottom extreme $\frac{q_{j+1}-1}{2}$, viewing this as 
    a type $\textup{\uX}$ regular $m$-diagram by itself, and 
    then adding the variable $z_{j_0}$ to the remaining boxes 
    of the hook, defining a strip $\fs_{j_0}$. Then the $j_0$-th hook corresponds to a strip of coordinates 
    in the parameterization of $H$, of the form  
    $(\frac{q_{j_0}-1}{2},\dots,m,\fs_{j_0})$ or to $(\fs_{j_0},-m,\dots,\frac{1-q_{{j_0}+1}}{2})$.
    Hence $L_\pi\subset H$ corresponds to taking the $z_j=0$ for all $j$, and 
    the center $c_\pi$ of $L_\pi$ corresponds to taking the variables $x_i=0$ and $z_j=0$ for all $i$ and $j$.
    The resulting space $H$ is a regular pole space which contains $L_\pi$. 
    We put $H$ in an essentially standard position $H=w_H(H_0)$ where $w^{-1}_H$ translates the coordinates of the fixed substrip 
    of the strip with index $j_0$ in $H$ to the extreme right. Here $H_0$ is obtained from $H$ by deleting the constant 
    substrip of the $j_0$-th strip, and moving it to the extreme right hand side.
    Thus if $x+1$ is the index of the first fixed coordinate of $H$, 
    and $y$ the index of the last fixed coordinate of $H$, then 
    $w_H^{-1}=(x+1,x+2,\dots,n)^{n-y}$. Then $w_H\in W(\hat\Delta(L_\pi))$ (compare with Proposition \ref{prop:essstdhull}
    and Corollary \ref{rem:std}).
    
    Let us now consider the denominators in $\textup{Den}^\sim(L_\pi,\Sigma)_+$ of the form $x_p- x_q+a$ for $p<q$ and 
    some constants $a$. 
    Let $H_0=w_H^{-1}(H)$ where $H$ is a regular envelope of $L_\pi$ as above. In view of Proposition \ref{lem:incl}, 
    on $H_0$ such denominators are of the form $x_p- x_q+c+1$ with $c$ is running over the highest weights  
    of the $\mathfrak{sl}_2$-submodule corresponding to the restricted coroots on $H_0$ which differ from $x_p-x_q$ by a 
    constant. This submodule associated to the product $\ft_p\times \ft_q$ of two type $\uA$-strips of length $k_p$ and $k_q$ respectively 
    is easily seen 
    to be equivalent to the tensor product $V_{k_p}\otimes V_{k_q}$ (where $V_{n}$ denotes the irreducible 
    $\mathfrak{sl}_2$-module of dimension $n$).
    Therefore $c\in \{\frac{|\gk_p-\gk_q|}{2},\dots,\frac{|\gk_p+\gk_q|}{2}\}$ in this case. Pulling back with $w_H^{-1}$ and 
    restricting to $L$ does not change this result, because $(w_H^{-1})^*$ permutes the set of positive coroots which are 
    nonconstant on $L_\pi$ (since $w_H\in W(\hat\Delta(L_\pi))$). 
    
    For denominators of the form $x_p+x_q+a$ the analysis is the same.  
    The denominators of these types in $\textup{Den}^\sim(L_\pi,\Sigma)_+$ are therefore in $D_{L_\pi}$, as required. 
    
    Now let us look at denominators of the form $x_i+c+1$ in $\textup{Den}^\sim(L_\pi,\Sigma)_+$ for some constant $c$.                                                                                                                    
    In a regular envelope $H$ of $L_\pi$ these arise from denominators $x_i\pm z_j+c+1$ on $H_0$
    corresponding to a product $\ft_i\times \fs_j$, and the analysis of 
    the $c$ corresponding to denominators of this kind is similar as in the ``two type $\textup{A}$-strips'' case above, 
    except for the remark that the 
    strips $\fs_j$ given by $(z_j+\frac{q_j-1}{2},\dots,z_j+\frac{1-q_{j+1}}{2})$ are not balanced at the center of $H_0$. 
    Consider the functions $t_a\pm t_b+1$ where $a$ varies in the $i$-th type $\textup{A}$-strip $\ft_i$ 
    (with $0\leq i\leq \fT$) and $b$ varies in the $j$-th ``type $\textup{X}$''-strip $\fs_j$ (with $1\leq j\leq S$).  
    These restrict (after pulling back to $H$ with $w_H^{-1}$) to denominators of the form $x_i\pm z_j+c+1$ on $H$.
    The constants $c$ that appear can conveniently be arranged in a rectangle whose boxes are parameterized by tuples 
    $(a,b)$ corresponding to the Cartesian product of the two strips, filled with the corresponding values 
    $t_a\pm t_b$. For $\textup{Den}^\sim(L_\pi,\Sigma)_+$ we see that the multiplicities of such denominators on $H$ have 
    downward jump from $c$ to $c+1$ starting from the rightmost 
    of the two diagonals which go through the top left or through the bottom right box onward. 
    Hence the corresponding denominators of this type on $H$ have constants equal to $1$ plus 
    at least $c_i^{H,j}:=\min\{ c_i^{H,j,+},   c_i^{H,j,-}\}$ where 
    $c_i^{H,j,+}=\max\{\frac{q_j-\gk_i}{2},\frac{\gk_i-q_{j+1}}{2}\}$ and 
    $c_i^{H,j,-}=\max\{\frac{q_{j+1}-\gk_i}{2},\frac{\gk_i-q_j}{2}\}$ repectively.
    All these denominators on $H$ restrict to functions of the form $x_i+c+1$ on $L_\pi$. 
    In type  $\textup{B}$ also the coroots $t_a$ (with $a$ in $\ft_i$) have such restrictions on $L_\pi$, 
    which is why we include in this case the additional denominator on $H$ of the form $x_i+{}^\textup{B}c_i^{H}+1$, 
    where ${}^\textup{B}c_i^{H}=\frac{\gk_i-1}{2}$.
    %For  denominators in $\textup{Den}'(L_\pi)^\sim_+$ the analysis is the same: We need to look,  
    %for $i > 0$, at the $c$
    %such that the multiplicity of $x_i\pm z_j-c$ has a downward jump going from $c$ to $c+1$. 
    %So $x_i-c\in   \textup{Den}'(L_\pi)^\sim_+$ iff $x_i+c+1\in \textup{Den}(L_\pi)^\sim_+$. 
    We need to take the intersection of such sets of denominators over all possible $H$ as 
    constructed above. In other words if we put $c_i=\max\{c_i^H\mid L_\pi\subset H\}$ with 
    $c_i^H:=\min\{\{ c_i^{H,j}\mid 1\leq j\leq S\}\cup \{{}^\textup{X}c_i^{H}\mid \textup{X}=\textup{B}\}\}$, 
    then admissiblity of $c_{L_\pi}$ with respect to these denominators amounts to checking 
    that for all $i$, we have $c_i\geq 0$.
    Now $c_i^{H,j}:=\min\{ \max\{\frac{q_j-\gk_i}{2},\frac{\gk_i-q_{j+1}}{2}\}, \max\{\frac{q_{j+1}-\gk_i}{2},\frac{\gk_i-q_j}{2}\}\}<0$ 
    iff $\gk_i$ is strictly between $q_i$ and $q_{j+1}$. 
    
    However, we claim that for every given $i$, we can choose a regular envelope $H$ such that $c_i^{H}\geq 0$.
    Namely, first consider an $H'$ which is of type $\textup{A}$, which means that we chose an $m$-diagram $q_L$ for $q$ 
    and added an indeterminate to all nested hooks of the $m$-diagram $q_L$, thus defining a envelope $H'$, with 
    $\textup{A}$-strips $\ft_i$ and $\textup{A}$-strips $\fs'_j$. We choose this $m$-diagram in a precise way using the 
    procedure \cite[Lemma 5.20; Figure 5]{Sl2} such that the $a$-th nested hook has  
    extremities of absolute value $\{\frac{q'_a-1}{2},\frac{q'_{a+1}-1}{2}\}$ with $q'_1\geq q'_2\geq q'_3\dots\geq q'_{S}$.
    Since $L_\pi$ is residual, all these inequalities are strict, and this $m$ diagram is well defined.
    \footnote{For any pole space in the first and second generation of the cascade, in this sequence of inequalities of extremities 
    there is \emph{at most one} equality (i.e. if $L_\pi$ is the standard form of a non-residual second generation pole space of 
    type $L_{\pi_j}^q$). In such cases, this $m$-diagram is still well defined.}
    Now suppose that $c^{H'}_i<0$. 
    Because of the choice of the $m$-diagram, there is at most one hook of $L_q$ 
    such that $\gk_i$ is strictly in between the extremities $q'_j$ and $q'_{j+1}$
    of this hook. If this occurs then we replace $\fs'_j$ by a pair 
    of the form $(c_{q,j},\fs_j)=(\frac{q'_j-1}{2},\dots,m,\fs_{j})$ or to $(\fs_j,c_{q,j})=(\fs_j,-m,\dots,\frac{1-q'_{j+1}}{2})$ (as described 
    before), such that the largest absolute coordinate value in $c_{q,j}$ is larger than $\frac{\gk_i-1}{2}$. 
    This a new regular envelope $H$ of $L$. 
    Finally we let $w_H\in    W(\hat{\Delta}(L_0))$ be such that $H=w_H(H_0)$, with $H_0$ in essentially standard position, 
    with the same type $\textup{A}$-strips as $H$ except for the substrip $\fs_j$ (of $\fs'_j$) 
    corresponding to $z_j$,  and a regular type $\textup{X}$-strip $c_{q,j}$. 
    
    Clearly we have $c_i^{H,k}=c_i^{H',k}\geq 0$ if $k\not=j$. Also clearly $c_i^{H,j}\geq 0$, by construction of $H$.
    But $H$ also has the additional type $\textup{X}$-strip $c_{q,j}$, corresponding to denominators 
    of the form $x_i+c+1$, which are the denominators on the (smaller rank) regular type $\textup{X}$-standard residual 
    pole space $H_i:=\ft_i\times(c_{q,j})=(\ft_i,c_{q,j})$, whose center is at $x_i=0$. 
    With $H_i$ being a regular standard residual pole space, these denominators are in $D_{H_i}$, which means that 
    the occurring $c$ are $\geq 0$ by Proposition \ref{lem:incl}. This proves that $c_i^{H}\geq 0$, as claimed. 
    \end{proof}
    This result completes the proof of Theorem \ref{thm:classmain} for the types $\uB$, $\uC$ and $\uD$ as well. 
    \end{proof}
   
   \section{Appendix C: The pseudocode for enveloping denominators}\label{a:EnvDen}
Let $(L_0,w)$ be a standard pair in $\textup{SP}(C)$. Write $w = ud$, $u\in W'$ and $d$ minimal in the coset $W'w$, so that $(L_0,d)\in \textup{Std}(C)$. The computation of the enveloping denominators 
$\textup{Den}^\sim(L_0,w,\Sigma)$ and $\textup{Den}^\sim(L_0,w,\Sigma')$ break down into two parts: (a)
compute the set of all $w$-good regular envelopes of $L_0$ and (b) computing the denominators 
$\textup{Den}(H,w,\Sigma)$ and $\textup{Den}(H,w,\Sigma')$ for each $w$-regular envelope $H$.

\subsection{On $w$-good regular envelopes}
Given any pole set $L$, the set $\hat{P}_L$ uniquely determines $L$. So our task is to find all $\hat{P}_H\subset \hat{P}_{L_0}$ such that $H = \cap_{\hat{\alpha} \in \hat{P}_H}\{\hat{\alpha} = 1\}\supset L_0$ and $H$ is regular\footnote{In general, checking the whole power set of $\hat{P}_L$ is impractical as the cardinality of $\hat{P}_L$ can be large.}. 
For that, we need the auxiliary procedures $\mathtt{RegComps}$ and $\mathtt{MaxSP}$.
\begin{defn}
Given two coroots $\hat{\alpha},\hat{\beta}\in \hat{P}_{L_0}$, we say that $\hat{\alpha}$ is a neighbor of $\hat{\beta}$ if $\hat{\alpha}-\hat{\beta} \in \hat{R}\cup\{0\}$, and we write $\hat{\alpha}\sim'_{\textup{ngh}}\hat{\beta}$. The relation $\sim'_{\textup{ngh}}$ is symmetric and reflexive and we let $\sim_{\textup{ngh}}$ denote the transitive closure of $\sim'_{\textup{ngh}}$.
\end{defn}
The procedure $\mathtt{RegComps}$ receives $\hat{P}_{L_0}$ as input and returns $\{\mathtt{C}_1,\ldots,\mathtt{C}_p\}$, the partition of $\hat{P}_{L_0}$ under $\sim_{\textup{ngh}}$. The  $\mathtt{MaxSP}$ procedure takes as input $(\hat{\alpha},\mathtt{C})$ with $\hat{\alpha}\in\mathtt{C}\subset\hat{P}_{L_0}$ and $\mathtt{C}$ a $\sim_{\textup{ngh}}$-equivalence class and returns a set $\{\mathtt{M}_1,\ldots,\mathtt{M}_q\}$ of subsets of $\mathtt{C}$ with the properties that
\begin{itemize}
\item for all $j$, $\hat{\alpha}\in\mathtt{M}_j$,  the affine space $M_j = \cap_{\hat{\beta}\in \mathtt{M}_j}\{\hat\beta = 1\}$ is regular and $\mathtt{M}_j$ is maximal for these properties,
\item for all $j\neq k$ and any pair $(\hat{\beta}_j,\hat{\beta}_k) \in\mathtt{M}_j\setminus\{\hat\alpha\}\times\mathtt{M}_k\setminus\{\hat\alpha\}$, we have $\hat{\beta}_j \not\sim'_{\textup{ngh}}\hat{\beta}_k$.
\end{itemize}
With these procedures in place, we compute $\mathtt{RegEnv}(L_0) = \{H\mid H\supset L_0, H\textup{ regular}\}$ by following steps:
\begin{enumerate}
\item Compute $\mathtt{RegComps}(\hat{P}_{L_0}) = \{\mathtt{C}_1,\ldots,\mathtt{C}_p\}$.
\item For each $\mathtt{C}_j=\{\hat{\alpha}_{j,1},\ldots,\hat{\alpha}_{j,n_j}\}$, compute $\mathtt{MSp}_j = \cup_{k=1}^{n_j} \mathtt{MaxSp}(\hat{\alpha}_{j,k},\mathtt{C}_j)$.
\item Compute  $\mathtt{RegEnv}(L_0)$ in the following way: for each $j$, $\mathtt{MSp}_j = \{\mathtt{M}_{j,1},\ldots,\mathtt{M}_{j,m_j}\}$ is a set of subsets of $\hat{P}_{L_0}$. Then define $\mathtt{PRegEnv}(L_0)$ as the image of the function $\prod_{j=1}^p\mathtt{MSp}_j \to \textup{Power Set}(\hat{P}_{L_0})$ defined by sending  \[(\mathtt{M}_{1,i_1},\ldots,\mathtt{M}_{p,i_p})\mapsto \mathtt{M}_{1,i_1}\cup\cdots\cup\mathtt{M}_{p,i_p}.\]
We then define $\mathtt{RegEnv}(L_0)\cong \mathtt{PRegEnv}(L_0)$ via identifying $H\leftrightarrow \hat{P}_H$, with $\hat{P}_H\in \mathtt{PRegEnv}(L_0)$.
\end{enumerate}
Once we have $\mathtt{RegEnv}(L_0)$, we compute $\mathtt{GRegEnv}(L_0,w)$, the set of all $w$-good regular envelopes by testing, for each $H\in\mathtt{RegEnv}(L_0)$ 
if the condition $d(\hat{P}_H)\subseteq \hat{R}_+\cup \hat{R}'$, where $w = ud$ and $d$ minimal in the coset $W'w$.

\subsection{On enveloping denominators} For any standard pole space $M_0$ with standard parabolic datum $\mathtt{Pd}_{M_0} = (J_{M_0},\gamma_{M_0})$, recall that $\gamma_{M_0} \in \bbQ^{|J_{M_0}|}$ is a tuple of rational numbers satisfying $\gamma_{M_0} = (\gamma_{M_0,\hat{\alpha}})_{\hat{\alpha}\in J_{M_0}}$. Here, for each $\hat{\alpha}\in J_{M_0}$, we have
\[
\gamma_{M_0,\hat{\alpha}} = \langle \hat{\alpha},\fw_{\hat{\alpha}}\rangle,
\]
where $\fw_{\hat\alpha}$ is the fundamental weight dual to the simple coroot $\hat{\alpha}$. We define the procedure $\mathtt{stdWDD}(M_0)$ that returns a generic element $\lambda_{M_0}\in M_0$ given by
\begin{equation}\label{eq:stdWDD}
\lambda_{M_0} = \sum_{\hat\alpha\in J_{M_0}}\gamma_{M_0,\hat{\alpha}}\fw_{\hat\alpha} + \sum_{\hat\beta\notin J_{M_0}}x_{\hat\beta}\fw_{\hat\beta}.
\end{equation}
Here, $x_{\hat\beta}$ with $\hat\beta\notin J_{M_0}$ are to be interpreted as indeterminates. These will be the variables of the affine linear expressions that will appear in the denominator computations when we pair $\langle \hat\alpha,\lambda_{M_0}\rangle$ with coroots $\hat\alpha$ that are non-constant on $M_0$.

Given our pair $(L_0,d)\in\textup{Std}(C)$, let $\mathtt{Pd}_{L_0} = (J_{L_0},\gamma_{L_0})$ be the standard parabolic datum of $L_0$. Let also $W_{J_{L_0}}$ denote the parabolic subgroup of $W$ generated by the reflections in $J_{L_0}$. We set, once and for all
\[
\lambda_{L_0} = \mathtt{stdWDD}(L_0).
\]
Having computed $\mathtt{GRegEnv}(L_0,w)$,  we determine the set $\textup{Den}^{\sim}(L_0,w,\Sigma)$ by the following  steps:
\begin{itemize}
\item[(1)] For each $H\in \mathtt{GRegEnv}(L_0,w)$, compute $(w_H,H_0)$ using the $\mathtt{stdData}$ procedure (see Remark \ref{rem:AlgIniRems}), so that $H = w_H(H_0)$. Here, we can guarantee that $w_H\in W_{J_{L_0}}$ (see Proposition \ref{prop:essstdhull}).
\item[(2)] Compute $\lambda_{H_0} =  \mathtt{stdWDD}(H_0)$.
\item[(3)] Compute the list $D = [\langle \hat\alpha,\lambda_{H_0}\rangle + 1]_{\hat\alpha\in\hat{R}_+}$ for some fixed ordering of $\hat{R}_+$.
\item[(4)] Determine the set $\textup{Den}(H_0,1,\Sigma)$ in the following way: in view of Proposition \ref{lem:incl}, we interpret the list $D$ of (3) in terms of weights of the algebra $\hat\fq^{H_0}$ on $\hat{\fu}^{H_0}$. Then, $\textup{Den}(H_0,1,\Sigma)$ is given by the highest weights, which can be computed from (3) by analyzing the downward jumps of multiplicities.
\item[(5)] Compute $\textup{Den}(H,w,\Sigma)|_{L_0}$ by substitution of the equation (on the variables  $x_{\hat\beta}$) obtained by solving $w_H(\lambda_{H_0})=\lambda_{L_0}$.
\item[(6)] Following Definition \ref{def:hull}, we disregard the constants in $\textup{Den}(H,w,\Sigma)|_{L_0}$ and intersect over all $H \in \mathtt{GRegEnv}(L_0,w)$ to obtain $\textup{Den}^{\sim}(L_0,w,\Sigma)$.
\end{itemize}

Similarly, we determine the set $\textup{Den}^{\sim}(L_0,w,\Sigma')$ by the following  steps:
\begin{itemize}
\item[(1)] Compute $(w_H,H_0)$ using the $\mathtt{stdData}$ procedure so that $H = w_H(H_0)$ and $w_H\in W_{J_{L_0}}$.
\item[(2)] Compute $d_H$, the minimal coset representative in $W' w_H$ and $\hat{R}_{d_H} = \hat{R}(d_H) \cup d_H^{-1}(\hat{R}'_+)$.
\item[(3)] Compute $\lambda_{H_0} =  \mathtt{stdWDD}(H_0)$.
\item[(4)] Compute the ordered list $[\langle \hat\alpha,\lambda_{H_0}\rangle]_{\hat\alpha\in\hat{R}_{d_H}}$.
\item[(5)] By viewing $\textup{Den}(H_0,1,\Sigma')$ as lowest weights as in Proposition \ref{lem:incl}, we compute $\textup{Den}(H_0,1,\Sigma')$ from the list in (3) by analyzing the upward jumps of multiplicities in that list.
\item[(6)] Compute $\textup{Den}(H,w,\Sigma')|_{L_0}$ by substitution of the equation (on the variables  $x_{\hat\beta}$) obtained by solving $w_H(\lambda_{H_0})=\lambda_{L_0}$.
\item[(7)] Disregard the constant expressions in $\textup{Den}(H,w,\Sigma')|_{L_0}$ and intersect over all $H \in \mathtt{GRegEnv}(L_0,w)$ to obtain $\textup{Den}^{\sim}(L_0,w,\Sigma')$.
\end{itemize}
    
    \section{Appendix D: The special line $\textup{E}_7(a4)$ of $\textup{E}_8$}\label{a:Special}
    In this section we discuss a special segment $(\sigma_{L_{sp}},L_{sp})\in C$ inside the pole space $L_{sp}=s_8s_7(L_{sp,0})$ 
    of type $\textup{E}_7(a4)$\index{$L_{sp,0}=\textup{E}_7(a4)$!$L_{sp}$ the special pole line of $E_8$}. 
    The treatment of this segment is more complicated than any other segment in $C$, since it cannot be treated by the theory of admissible sets as discussed in \ref{s:Adm}. Let us explain the problem in more detail.
    
    In the context of this section we will write $L:=L_{sp}$ and $L_0:=L_{sp,0}$. 
    Writing vectors in basis of fundamental weights (in the Bourbaki ordering), a general point in $L$ is of the form 
    $(1,0,0,1,0,1,x,-1-x)$. The center of $L$ is $(1,0,0,1,0,1,-11/2,9/2)$. 
    Note that $L$ is residual, and this special segment is of the form $([p^N_L,c_L],L_{sp})$ with  $p^N_L=\sigma_{N,0}\cap L$, 
    where $N$ is the induced (first generation) two dimensional residual pole space of type $\textup{E}_6(a3)$ of $\textup{E}_8$, 
    with center initial segment $\sigma_{N,0}=[p_{N,\infty},c_N]$. Writing vectors in basis of fundamental weights (in the Bourbaki 
    ordering), we have  $c_N=(1,0,0,1,0,1,-4,0)$ and $p_{N,\infty}= (1,0,0,1,0,1,-4,X)$ with $X$ very large. 
    Hence $p^N_L=(1,0,0,1,0,1,-4,3)$, hence we are required to move the base point along this segment 
    $\sigma_L=[p^N_L,c_L]$ spawned by the intersection of $L$ with $\sigma_{N,0}$. 
    What makes this segment $\sigma_L$  special is the fact that $\textup{Ord}_L(\Omega)=1$ while $\sigma_L\not\subset\textup{Adm}(N)$. 
    The only troublesome denominator is the denominator $x+5$ of $\Sigma_W(\phi)$ on $N$ using the coordinates 
    $\gl_{(x,y)}=(1,0,0,1,0,1,x,y)$ for a general point in $N$. The critical strip for this denominator is at $\textup{Re}(x)\in (-5,-4)$, 
    which is entirely inside $\sigma_L+iV^L$. 
    We note that ($\sigma_L,L)$ is the unique segment in $C$ which belongs to a pole space of positive order, and 
    for which $\sigma_L\not\subset\textup{Adm}(N)$
    for the parent pole space $N\supset L$ responsible for the initial point of $\sigma_L$. 
    \begin{lem}\label{lem:specialholo}
    Let $d=s_7s_8$, and put $\gl_x=(1,0,0,1,0,0,1,x)\in L_0$. 
    We have that $\Sigma_W(\phi)|_L$, $\rho(\hat\ga_7+5)\Sigma_W(\phi)|_N$ and $\Sigma'_{W'}(\psi)|_N$ are holomorphic 
    at the points $d\gl_x=(1,0,0,1,0,1,x,-1-x)\in L\subset N$ for $\textup{Re}(d\gl_x)\in\sigma_L$ (i.e.
     $\textup{Re}(x)\in[-\frac{11}{2},-4]$). 
    Likewise 
     $\Sigma_W(\phi)|_{L_0}$, $\rho(\hat\ga_8+5)\Sigma_W(\phi)|_{N^0}$ and $\Sigma'_{W'd}(\psi)|_{N^0}$ are holomorphic 
     at $\gl_x\in L_0\subset N^0$ for $\textup{Re}(\gl_x)\in d^{-1}(\sigma_L)$.
    \end{lem}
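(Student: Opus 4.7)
The strategy is to identify the denominators of each restricted meromorphic function using the framework of Sections \ref{s:PoleDen}--\ref{s:RegEnv}, and then to verify that none of them attains a value in the critical strip $\{\textup{Re}(s)\in(0,1)\}$ of $\rho=\gL^{reg}_F$ at any $d\gl_x$ with $\textup{Re}(x)\in[-\tfrac{11}{2},-4]$. A short coordinate computation using $d\gl_x=(1,0,0,1,0,1,x,-1-x)$ shows that $c_L=d(c_{L_0})$ with $c_{L_0}=(1,0,0,1,0,0,1,-\tfrac{11}{2})$ and that $d^{-1}(p^N_L)=c_{L_0}+\tfrac{3}{2}\go_8$ lies in the positive cone $L_0^+=c_{L_0}+\bbR_{\geq 0}\go_8$; hence $d^{-1}(\gs_L)$ is a subsegment of the ray $c_{L_0}+\bbR_{\geq 0}\go_8$.

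For $\Sigma_W(\phi)|_L$, Theorem \ref{thm:main} gives $\textup{Den}^\sim(L_0,d,\Sigma)_+\subset D_{L_0}$. Every positive-gradient denominator $Y$ therefore satisfies $Y(c_{L_0})\geq 1$ and, by monotonicity along the ray $L_0^+$, stays $\geq 1$ on $d^{-1}(\gs_L)$; so $\rho(Y)$ does not vanish there. Lemma \ref{lem:transmove} then transfers this to $\Sigma_W(\phi)|_L$. For $\rho(\hat\ga_7+5)\Sigma_W(\phi)|_N$, heritability (Corollary \ref{cor:cont}) together with Theorem \ref{thm:casc_ord_int} shows that the non-constant-on-$L_0$ denominators of $\Sigma_W(\phi)|_{d(N)}$, restricted to $L_0$, are contained in $D_{L_0}\cup\{x-\tfrac{1}{2}\}$, where $x$ is Theorem \ref{thm:casc_ord_int}'s coordinate on $L_0$ centered at $c_{L_0}$ and dual to $\go_8$. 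The shift $x\mapsto x+\tfrac{11}{2}$ between Theorem \ref{thm:casc_ord_int}'s coordinate and the lemma's fundamental-weight coordinate identifies $x-\tfrac{1}{2}$ with the pullback of $\hat\ga_7+5$ from $L$; multiplication by $\rho(\hat\ga_7+5)$ then cancels this exceptional factor, leaving only $D_{L_0}$-denominators handled as above. Constant-on-$L_0$ denominators pose no issue under the standing hypotheses of Definition \ref{defn:mult} on $\rho=\gL^{reg}_F$.

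For $\Sigma'_{W'}(\psi)|_N$, Theorem \ref{thm:casc_ord_int} gives $\textup{Den}^{\hat{\fz}_{L_0}}(d(N),d,\Sigma')|_{L_0}\subset D'_{L_0}$, so every positive-gradient non-constant-on-$L_0$ denominator $Y$ has $Y(c_{L_0})\leq 0$. Unlike the $\Sigma$-case, such $Y$ is monotonically \emph{increasing} along the ray into which $d^{-1}(\gs_L)$ extends, so it could a priori cross $(0,1)$ before the endpoint $d^{-1}(p^N_L)$. To rule this out we appeal to the explicit enveloping-denominator enumeration of Appendix \ref{a:EnvDen} for $(\textup{E}_8,\textup{E}_7)$ and the pole space $L_{sp,0}$ of type $\textup{E}_7(a4)$: the computer verification shows that every $Y=X-c$ occurring here has $c>\tfrac{3}{2}$, so its critical strip lies beyond $d^{-1}(p^N_L)$ and $Y<0$ throughout $d^{-1}(\gs_L)$. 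The ``Likewise'' half of the lemma follows by the same arguments applied directly to the pair $(L_0,d)$ via the cocycle relations (\ref{eq:trans})--(\ref{eq:trans'}) of Remark \ref{rem:warn}, with the factor $\hat\ga_7+5$ on $L$ pulling back to $\hat\ga_8+5$ on $L_0$.

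The main obstacle is the third claim: a $D'_{L_0}$-denominator is \emph{not} automatically safe along a segment running into $L_0^+$ from the center, and controlling the specific constants $c$ that appear requires the computer-assisted enumeration of Appendix \ref{a:EnvDen} for the $\textup{E}_7(a4)\subset\textup{E}_8$ situation; the other two claims reduce cleanly to the statement that $D_{L_0}$-denominators are monotone and bounded below by $1$ along the ray $L_0^+$.
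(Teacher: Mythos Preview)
Your treatment of $\Sigma_W(\phi)|_L$ and of $\rho(\hat\ga_7+5)\Sigma_W(\phi)|_N$ follows the paper's line. One small correction: the dismissal of constant-on-$L_0$ denominators via ``Definition~\ref{defn:mult}'' is the wrong reference. The actual reason is that any such denominator is $\hat\gb(c_L)+\epsilon$ with $\epsilon\in\{0,1\}$ and $\hat\gb\in\hat R_L$; since $L$ is residual, $\hat\gb(c_L)\in\bbZ$, and $\rho=\Lambda_F^{reg}$ does not vanish at integers.

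The genuine gap is in the argument for $\Sigma'_{W'}(\psi)|_N$. You correctly note that containment in $D'_{L_0}$ (all that Theorem~\ref{thm:casc_ord_int} asserts) does not suffice: a $D'_{L_0}$-denominator $Y_+=m\,x_{\mathrm{ctr}}-c$ with $c\geq 0$ can enter $(0,1)$ on $x_{\mathrm{ctr}}\in[0,\tfrac{3}{2}]$ unless $c\geq\tfrac{3m}{2}$. Note that $m\in\{1,2\}$ is the $\hat\ga_8$-coefficient of the underlying coroot, so even the condition ``$c>\tfrac{3}{2}$'' that you invoke is not the right one when $m=2$. More importantly, no such bound is stated anywhere in the paper; Theorem~\ref{thm:casc_ord_int} gives only $c\geq 0$, and Appendix~\ref{a:EnvDen} describes an algorithm, not a list of outputs. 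Appealing to an unstated sharpening of the computer output is not a proof.

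The paper sidesteps this entirely by working on the two-dimensional space $N$ rather than on its restriction to $L_0$. Since $N$ is a first-generation standard residual space, $(N,1)\in\textup{Std}(C)$ and the enveloping denominator sets $\textup{Den}^\sim(N,1,\Sigma)$ and $\textup{Den}^\sim(N,1,\Sigma')$ are part of the cascade data already computed; from these one checks directly that no $\Sigma'$-denominator on $N$ takes real part in $(0,1)$ along $\sigma_L$, and that the sole offending $\Sigma$-denominator is the one equivalent to $\hat\ga_7+5$ on $L$. Restricting first to the line $L_0$ discards precisely the two-dimensional information that settles the $\Sigma'$ case without further ad hoc input. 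The transfer to $(L_0,N^0)$ in the ``Likewise'' clause then goes exactly as you say, via the cocycle relations of Remark~\ref{rem:warn} and the explicit invertibly holomorphic factor $r(\gl)/r(d^{-1}\gl)|_N$.
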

    \begin{proof}
     Since $\sigma_L\subset \textup{Adm}(L)$, the first statement follows directly from Theorem \ref{thm:main}. 
    The second and third statements follows from the computation of $\textup{Den}^\sim(N,d,\Sigma')$ and 
    $\textup{Den}^\sim(N,d,\Sigma')$. The next statements follows from the first three by Remark \ref{rem:warn},  
    together with the observation that, with $\gl_{(x,y)}=(1,0,0,1,0,1,x,y)\in N$, 
    \begin{equation}
    \frac{r(\gl_{(x,y)})}{r(d^{-1}(\gl_{(x,y)}))}|_N
    =\frac{\rho(\hat\ga_8)\rho(\hat\ga_7+\hat\ga_8)}{\rho(\hat\ga_8+1)\rho(\hat\ga_7+\hat\ga_8+1)}|_N
    =\frac{\rho(y)\rho(x+y)}{\rho(y+1)\rho(x+y+1)}|_N
    \end{equation}
    which is invertibly holomorphic on $L=\{x+y+1=0\}$ for $\textup{Re}(x)\in [-\frac{11}{2},-4]$. 
    \end{proof}
    By the above, in order to deal with the segment $(\gs_L,L)\in C$, we just need to verify that  
    the meromorphic integrand $\partial_{n'}(\Sigma_W(\phi)\Sigma'_{W'}(\psi))|_L$ has no poles 
    at $d\gl_x$ if $\rho(x+5)=0$ (with $d=s_8s_7$), with $n'$ the normal of $L$ in $N$. 
    Equivalently we need to 
    show that: 
    \begin{equation}\label{eq:intL0}
    \partial_n(\Sigma_W(\phi)\Sigma'_{W'd}(\psi))|_{L_0}
    \end{equation}
    (where $n$ is the normal of $L_0\subset N^0:=s_7s_8(N)$) has no poles at 
    $(1,0,0,1,0,0,1,x)\in L_0$ if $\rho(x+5)=0$. 
    By Lemma \ref{lem:specialholo} it suffices to prove that 
    \begin{equation}
    \partial_n(\Sigma_W(\phi))|_{L_0}
    \end{equation}
    is holomorphic at $(1,0,0,1,0,0,1,x)\in L_0$ if $\rho(x+5)=0$. 
    The normal $n$ can be taken equal to $n=\ga_h-2(\ga_7+\ga_8)$, where 
    $\ga_h$ is the highest root of $\textup{E}_8$.
    \subsection{Computation of the derivative}
    Let us consider in detail the expression for $\Sigma$ on $L_0$, with $L_0$ the one dimensional pole space of type $\textup{E}_7(a4)$ 
    in $\textup{E}_8$. The general point for $L_0$ has the form $\gl=(1,0,0,1,0,0,1,x)$.  
    Let $F_0=\Delta_{L_0}=\{\ga_2,\ga_3,\ga_5,\ga_6\}$ be the base of the standard Levi 
    subgroup fixing $L_0$ pointwise
    \index{$L_{sp,0}=\textup{E}_7(a4)$!$F_0$ simple roots orthogonal to $L_{sp,0}$}. 
    
    The Weyl group of type $E_8$ contains $-1$, and using that we 
    have the decomposition 
    \begin{equation}
        \hat{R}_+:=\hat{R}(w)\sqcup \hat{R}(-w)
    \end{equation}
    We rewrite $\Sigma_W(\phi)$ as 
    \begin{equation}
        \Sigma_W(\phi)(\gl):=\sum_{w\in W}c(w\gl)\frac{r(\gl)}{r(-w\gl)}\phi(w\gl)
    \end{equation}
    Now recall that 
    \begin{equation}
        c(w\gl):=\prod_{\ga\in \hat{R}(w)}(\frac{{\hat\ga}-1}{{\hat\ga}})
        (\prod_{\gb\in \hat{R}(-w)}\frac{{\hat\gb}+1}{{\hat\gb}})(\gl)
    \end{equation}
    Now let us take $w$ minimal in $W/W_{F_0}$, and $\sigma\in W_{F_0}$. Then 
    \begin{equation}\label{eq:InversionMinCoset}
        \hat{R}(w\sigma)=\hat{R}(\sigma)\sqcup \sigma^{-1}\hat{R}(w)
    \end{equation}
    Therefore we decompose $\hat{R}_+$ in the following way:
    \begin{equation}
        \hat{R}_+=\sigma^{-1}\hat{R}(w)\sqcup \sigma^{-1}\hat{R}(-w)\backslash \hat{R}_+(F_0)\sqcup \hat{R}_+(F_0)
    \end{equation}
    where we remark the union of the first two sets is $W_{F_0}$-stable. 
    So we can write:
    \begin{equation}
        c(w\sigma\gl):=\prod_{\ga\in \hat{R}(w)}(\frac{{\hat\ga}-1}{{\hat\ga}})
        \prod_{\gb\in \hat{R}(-w)\backslash \hat{R}_+(F_0)}(\frac{{\hat\gb}+1}{{\hat\gb}})
        \prod_{\gc\in \hat{R}_+(F_0)}(\frac{{\hat\gc}+1}{{\hat\gc}})(\sigma\gl)
    \end{equation}
    We note that the coroots which are constant equal to $0$ on $L$ are contained is $\hat{R}_+(F_0)$,  
    In the computation below we will use the following notations: $\phi^w(\gl):=\phi(w\gl)$,  $\rho^*(x):=\frac{\rho(x)}{x}$, 
    $r_{F_0}(\gl):=\prod_{\gc\in \hat{R}_+(F_0)}\rho({\hat\gc}(\gl))$,
    $r^*_{F_0}(\gl):=\prod_{\gc\in \hat{R}_+(F_0)}\rho^*({\hat\gc}(\gl))$, 
    $r_{F_0}^{(-1)}(\gl):=r_{F_0}(-\gl)$ and 
    $r_{F_0}^{*,(-1)}(\gl):=\prod_{\gc\in \hat{R}_+(F_0)}\rho^*({\hat\gc}(\gl)+1)$, 
    and finally $\Phi^w=\frac{\phi^w}{r^{*,(-1)}_{F_0}}$. 
    The partial sums over the left cosets of $W_{F_0}$\index{$W_{F_0}$}
    regularize these apparent poles of $\Sigma$ on $L$, which becomes apparent by writing:
    \begin{equation}
         \Sigma_W(\phi)(\gl)=\sum_{w\in W^{F_0}}\sum_{\sigma\in W_{F_0}}c(w\sigma\gl)\frac{r(\gl)}{r(-w\sigma\gl)}\phi(w\sigma\gl)=
         \sum_{w\in W^{F_0}}A_w(\gl)
    \end{equation}
    with or each $w\in W^{F_0}$\index{$W_{F_0}$!$W_{F_0}$, minimal length left $W_{F_0}$-coset representatives}:
    \begin{align}\label{eq:Aw}
        &A_w(\gl):=\prod_{\gamma\in \hat{R}_+(F_0)}(\frac{\rho({\hat\gc})}{{\hat\gc}})(\gl)\\
        \nonumber &\sum_{\sigma\in W_{F_0}}\epsilon(\sigma)
        \prod_{\substack{\ga\in \hat{R}(-w)\backslash \hat{R}_+(F_0)\\
        \gb\in \hat{R}(w)\\
        \gc\in \hat{R}_+(F_0)}}
        (\frac{{\hat\ga}+1}{{\hat\ga}})
        (\frac{{\hat\gb}-1}{{\hat\gb}})({\hat\gc}+1)(\frac{\rho({\hat\ga})}{\rho({\hat\ga}+1)})
        (\frac{\phi^w}{r^{(-1)}_{F_0}})(\sigma\gl)=\\
        \nonumber &r^*_{F_0}(\gl) 
        \sum_{\sigma\in W_{F_0}}\epsilon(\sigma)
        \prod_{\substack{\ga\in \hat{R}(-w)\backslash \hat{R}_+(F_0)\\
        \gb\in \hat{R}(w)}}
        (\frac{{\hat\gb}-1}{{\hat\gb}})(\frac{\rho^*({\hat\ga})}{\rho^*({\hat\ga}+1)})
        (\frac{\phi^w}{r^{*,(-1)}_{F_0}})(\sigma\gl)=\\
        \nonumber &r^*_{F_0}(\gl) 
        \sum_{\sigma\in W_{F_0}}\epsilon(\sigma)
        \prod_{\substack{\ga\in \hat{R}(-w)\backslash \hat{R}_+(F_0)\\
        \gb\in \hat{R}(w)}}
        (\frac{{\hat\gb}-1}{{\hat\gb}})(\frac{\rho^*({\hat\ga})}{\rho^*({\hat\ga}+1)})
        \Phi^w(\sigma\gl)
    \end{align}
    Here we used the remark that (with $\sigma_{F_0}\in W_{F_0}$ is the longest 
    element of $W_{F_0}$):
    \begin{align}
         \frac{r(\gl)}{r(-w\sigma\gl)}
         \nonumber &=\prod_{\substack{\ga\in \sigma^{-1}(\hat{R}(-w)\backslash \hat{R}_+(F_0))\\
         \gc\in \hat{R}_+(F_0)\backslash \hat{R}(\sigma)}}(\frac{\rho({\hat\ga})}{\rho({\hat\ga}+1)})
         (\frac{\rho({\hat\gc})}{\rho({\hat\gc}+1)})(\gl)\\
         \nonumber &=\prod_{\ga\in \hat{R}(-w)\backslash \hat{R}_+(F_0)}(\frac{\rho({\hat\ga})}{\rho({\hat\ga}+1)})(\sigma\gl)
        \frac{r(\gl)}{r(\sigma\sigma_{F_0}\gl)}\\
        \nonumber &=\prod_{\ga\in \hat{R}(-w)\backslash \hat{R}_+(F_0)}(\frac{\rho({\hat\ga})}{\rho({\hat\ga}+1)})(\sigma\gl)
        \frac{r_{F_0}(\gl)}{r^{(-1)}_{F_0}(\sigma\gl)}\\
        \nonumber &=r_{F_0}(\gl)\prod_{\substack{\ga\in \sigma^{-1}(\hat{R}(-w)\backslash \hat{R}_+(F_0))\\
         \gc\in \hat{R}_+(F_0)}}(\frac{\rho({\hat\ga})}{\rho({\hat\ga}+1)\rho({\hat\gc}+1)})(\sigma\gl)
    \end{align}
    The main point in this manipulation is that $r^{*,(-1)}_{F_0}$ is holomorphic 
    and non-vanishing on $L_0$. This remark shows that for each $w\in W^{F_0}$ the 
    expression $A_w(\gl)$ is meromorphic on $L_0$, since the alternating sum over 
    $W_{F_0}$ is divisible by $\prod_{\gc\in \hat{R}_+(F_0)}{\hat\gc}$. 
    Note that the operator $\phi\to A_w(\gl)$ (with $\gl\in L_0$) has support at $w\gl$, and 
    this is the characterising property of the summands $A_w(\gl)$ of $\Sigma_W(\phi)|_L$.
    We concentrate now the individual $A_w$, and consider $\Phi^w$ as an  
    indeterminate meromorphic function which is holomorphic in a open neighborhood of $L_0$. 
    
    Using Weyl's trick for taking the limit for $\gl$ to $L_0$ we see for a known $c>0$, 
    we have for $\gl\in L_0$:
    \begin{align}
        A_w(\gl)&=cr_{F_0}(\gl)\partial_{\ga_2}\partial_{\ga_3}
        \partial_{\ga_5}\partial_{\ga_6}\partial_{\ga_5+\ga_6}\\
        \nonumber &\left(\prod_{\substack{\ga\in \hat{R}(-w)\backslash \hat{R}_+(F_0)\\
        \gb\in \hat{R}(w)}}
        (\frac{{\hat\gb}-1}{{\hat\gb}})
        (\frac{\rho^*({\hat\ga})}{\rho^*({\hat\ga}+1)})\Phi^w\right)(\gl)
    \end{align}
    \begin{lem}\label{lem:avn}
    Let $n=\ga_h-2(\ga_7+\ga_8)$ be the normal vector of $L_0\subset N^0$. 
    Let $n_0=\frac{1}{24}\Sigma_{\gs\in W_{F_0}}\gs(n)=n-2\go$ where $\go=\frac{1}{3}(\ga_5+\ga_6)$
    is a fundamental weight of the Levi root subsystem with basis $\{\ga_5,\ga_6\}$. 
    Then $\partial_{n-n_0}( \Sigma_W(\phi))|_{L_0}$ is holomorphic. 
    \end{lem}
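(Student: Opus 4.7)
The key insight is that $v := n - n_0 = 2\omega$ lies in $V_0^{\perp} = \operatorname{span}\{\alpha_i : i \in F_0\}$, the orthogonal complement of the $W_{F_0}$-fixed subspace of $V$. This allows the $W$-equivariance of $\Sigma_W(\phi)$ to be exploited through the simple reflections $s_i \in W_{F_0}$ individually.

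First, by re-indexing the defining sum of $\Sigma_W(\phi)$ via $w \mapsto w\sigma$, one obtains the cocycle identity
\[
\Sigma_W(\phi)(\sigma \lambda) \;=\; \frac{r(\sigma \lambda)}{r(\lambda)}\, \Sigma_W(\phi)(\lambda), \qquad \sigma \in W,
\]
as meromorphic functions on $V_{\mathbb{C}}$. Equivalently, $S(\lambda) := \Sigma_W(\phi)(\lambda)/r(\lambda)$ is $W$-invariant. For each simple root $\alpha_i$ with $i \in F_0$, the reflection $s_i \in W_{F_0}$ fixes $L_0$ pointwise and sends $\alpha_i \mapsto -\alpha_i$. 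Applying the $W$-invariance of $S$ along the line $\lambda_0 + t\alpha_i$ (with $\lambda_0 \in L_0$) yields $S(\lambda_0 - t\alpha_i) = S(\lambda_0 + t\alpha_i)$, so $S$ is even in $t$ along this line and $\partial_{\alpha_i} S|_{L_0} = 0$. By linearity, $\partial_v S|_{L_0} = 0$ for every $v \in V_0^{\perp}$, in particular for $v = n - n_0$. The Leibniz rule applied to $\Sigma_W(\phi) = r \cdot S$ then collapses to
\[
\partial_{n - n_0} \Sigma_W(\phi)\big|_{L_0} \;=\; (\partial_{n - n_0} r)\cdot S\big|_{L_0}.
\]

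To conclude holomorphicity, note that $\partial_{n-n_0} r$ is entire and that $S|_{L_0}$ is meromorphic whose only potentially new poles (beyond those of $\Sigma_W(\phi)|_{L_0}$) come from simple zeros of $r|_{L_0}$. At such a point $\lambda_x \in L_0$, where $\rho(\hat{\alpha}_0(\lambda_x)) = 0$ for a positive coroot $\hat{\alpha}_0$ contributing to the critical strip of $x+5$, only one factor survives and
\[
\partial_{n - n_0} r(\lambda_x) \;=\; \hat{\alpha}_0(n - n_0)\cdot \rho'(\hat{\alpha}_0(\lambda_x))\!\prod_{\beta \neq \alpha_0}\!\rho(\hat{\beta}(\lambda_x)).
\]
This vanishes precisely when $\hat{\alpha}_0(n - n_0) = \tfrac{2}{3}\hat{\alpha}_0(\alpha_5 + \alpha_6) = 0$, i.e.\ when $\hat{\alpha}_0$ is orthogonal to $\alpha_5 + \alpha_6$. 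Combined with the regularity of $\Sigma_W(\phi)|_{L_0}$ at $\lambda_x$ (Theorem \ref{thm:main}), which forces $S|_{L_0}$ to have at worst a simple pole there, the product $(\partial_{n-n_0}r)\cdot S|_{L_0}$ is then holomorphic at $\lambda_x$.

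The main obstacle is this last orthogonality check: verifying $\hat{\alpha}_0 \perp (\alpha_5 + \alpha_6)$ for every positive coroot $\hat{\alpha}_0$ of $E_8$ whose evaluation on $L_0$ contributes to the critical strip of $x+5$. These coroots form a finite family characterized by explicit coefficient conditions in the simple coroot basis (essentially $c_8 \in \{1,2\}$ together with appropriate values of $c_1 + c_4 + c_7$), so the verification is a finite enumeration naturally suited to the computer-assisted methodology of Appendix E below.
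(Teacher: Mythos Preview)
Your idea of exploiting $W_{F_0}$-invariance of a normalised $\Sigma_W(\phi)$ to kill $\partial_{n-n_0}$ on $L_0$ is exactly right, and is what the paper does. The gap is in your choice of normalising factor. You divide by the \emph{full} product $r(\gl)=\prod_{\ga\in R_+}\rho(\hat\ga(\gl))$, whereas the paper divides only by $r_{F_0}(\gl)=\prod_{\gc\in R_+(F_0)}\rho(\hat\gc(\gl))$. For $\sigma\in W_{F_0}$ one has $R(\sigma)\subset R_+(F_0)$, so the cocycle $r(\sigma\gl)/r(\gl)$ equals $r_{F_0}(\sigma\gl)/r_{F_0}(\gl)$; thus $r_{F_0}^{-1}\Sigma_W(\phi)$ is already $W_{F_0}$-invariant. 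The crucial advantage is that every $\hat\gc\in R_+(F_0)$ restricts to $0$ on $L_0$, so $r_{F_0}|_{L_0}=\rho(0)^{|R_+(F_0)|}$ is a nonzero constant and $r_{F_0}^{-1}$ is invertibly holomorphic near $L_0$. Leibniz and Lemma~\ref{lem:specialholo} then finish the proof in one line.

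By contrast, your $r$ is \emph{not} nonvanishing on $L_0$: by Proposition~\ref{prop:sets} there are eight positive coroots with $\hat\ga|_{L_0}=x+5$, so at a zero $a$ of $\rho$ of order $e$ the factor $\rho(x+5)$ occurs to the eighth power in $r|_{L_0}$, and $r|_{L_0}$ vanishes to order at least $8e$ at $\gl_{a-5}$ (and there are further coroot restrictions such as $x+6$, $2x+c$, etc.\ that may also contribute on the segment). Your ``simple zero'' computation of $\partial_{n-n_0}r(\gl_x)$ with a single surviving term is therefore incorrect, and the proposed orthogonality check $\hat\ga_0\perp(\ga_5+\ga_6)$ for one coroot does not control the situation. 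The argument can be repaired instantly by replacing $r$ with $r_{F_0}$; no case-by-case orthogonality verification is needed.
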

    \begin{proof}
    From the cocycle relation (\ref{eq:trans}) we see that $r_{F_0}^{-1}\Sigma_W(\phi)$ is $W_{F_0}$-invariant. 
    Hence $\partial_{n-n_0}(r_{F_0}^{-1}\Sigma_W(\phi))|_{L_0}=0$. Using Lemma \ref{lem:specialholo} 
    and the remark that $r^{-1}_{F_0}$ is holomorphic and non-vanishing on $L_0$, we obtain the result. 
    \end{proof}
    Recall from Lemma \ref{lem:specialholo} that the denominator $\frac{1}{\rho(x+5)}$ does not appear in 
    $A_w|_{L_0}$ (where as always we put $\gl=\gl_x=(1,0,0,1,0,0,1,x)\in L_0$). However, we need to analize 
    the derivative $\partial_n(A_w)|_{L_0}$ with $n$ the normal $n=\ga_h-2(\ga_7+\ga_8)$ of $L_0\in N^0$, 
    or equivalently (by Lemma \ref{lem:avn})  $\partial_{n_0}(A_w)|_{L_0}$. 
    \begin{cor}\label{cor:B}
    Assume that the zeroes of $\rho(x)$ are contained in $\textup{Re}(x)\in (0,1)$.   
    Let $\fR>0$ be such that $\sigma_L$ is contained in the ball $B_{\fR}(0)\subset V$ with radius $\fR$ 
    centered at the origin. The meromorphic integrand (\ref{eq:intL0}) is holomorphic at the points of 
    $d^{-1}(\sigma_L+iV^L)$ for all $\phi,\psi\in \cP^\fR(V_\bbC)$ if for all $w\in W^{F_0}$, for all 
    $a\in\bbC$ such that $\rho(a)=0$, and for all meromorphic functions $\Phi$ on $V_\bbC$ 
    which are holomorphic at $\gl_{a-5}\in L_{\bbC,0}\subset V_\bbC$, 
    \begin{equation}\label{eq:Bw}
        B_w(\gl)=\partial_{n_0}\partial_{\ga_2}\partial_{\ga_3}
        \partial_{\ga_5}\partial_{\ga_6}\partial_{\ga_5+\ga_6}\\
        \nonumber \left.\left(\prod_{\substack{\ga\in \hat{R}(-w)\backslash \hat{R}_+(F_0)\\
        \gb\in \hat{R}(w)}}
        (\frac{{\hat\gb}-1}{{\hat\gb}})
        (\frac{\rho^*({\hat\ga})}{\rho^*({\hat\ga}+1)})\Phi\right)\right\vert_{L_0}
    \end{equation}
    is holomorphic at $\gl_{a-5}\in L_{\bbC,0}$. 
    \end{cor}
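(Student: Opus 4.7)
The plan is to reduce Corollary \ref{cor:B} to the identity $A_w(\gl) = c\,r_{F_0}(\gl)D(E_w)(\gl)$ established on $L_0$ just before the statement via Weyl's trick, where $D = \partial_{\ga_2}\partial_{\ga_3}\partial_{\ga_5}\partial_{\ga_6}\partial_{\ga_5+\ga_6}$ and $E_w$ denotes the bracketed expression in (\ref{eq:Aw}). First I would apply the Leibniz rule together with Lemma \ref{lem:specialholo}: since $\Sigma'_{W'd}(\psi)|_{N^0}$ is holomorphic at $\gl_{a-5}$, so is its $N^0$-tangential derivative $\partial_n\Sigma'_{W'd}(\psi)|_{L_0}$, so the summand of $\partial_n(\Sigma_W(\phi)\Sigma'_{W'd}(\psi))|_{L_0}$ in which the derivative falls on $\Sigma'_{W'd}$ is harmless. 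This reduces the holomorphy of the integrand (\ref{eq:intL0}) on $d^{-1}(\sigma_L+iV^L)$ to the holomorphy of $\partial_n(\Sigma_W(\phi))|_{L_0}$ at $\gl_{a-5}$ for each zero $a$ of $\rho$, and Lemma \ref{lem:avn} then replaces $\partial_n$ by $\partial_{n_0}$.

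Next I would extend the Weyl's trick identity from $L_0$ to the $4$-dimensional subspace $V^{F_0} = \bigcap_{\ga_i\in F_0}\ker(\ga_i)$ fixed pointwise by $W_{F_0}$. For $\gl_0\in V^{F_0}$ and $u\in V_{F_0}$ the alternating $W_{F_0}$-sum defining $A_w(\gl_0+\epsilon u)$ in (\ref{eq:Aw}) vanishes to order at least $5$ in $\epsilon$, cancelling the order-$5$ pole of $r^*_{F_0}$ along $V^{F_0}$, and the same Taylor expansion yields $A_w|_{V^{F_0}}(\gl) = c\,r_{F_0}(\gl) D(E_w)(\gl)$. By Lemma \ref{lem:avn} the vector $n_0$ is $W_{F_0}$-invariant, so $n_0\in V^{F_0}$ and $\partial_{n_0}$ is tangent to $V^{F_0}$; applying Leibniz to the extended identity and commuting $\partial_{n_0}$ with the constant-coefficient operator $D$ gives
\[
\partial_{n_0}(A_w)|_{L_0} \;=\; c\,\partial_{n_0}(r_{F_0})|_{L_0}\,D(E_w)|_{L_0} \;+\; c\,r_{F_0}|_{L_0}\,B_w.
\]

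Finally I would sum this over $w\in W^{F_0}$. Each $D(E_w)|_{L_0}$ does genuinely have a pole at $\gl_{a-5}$, stemming from factors $\rho(\hat\ga+1)^{-1}$ with $\hat\ga\in\hat{R}(-w)\setminus\hat{R}_+(F_0)$ and $\hat\ga(\gl_{a-5})=a-1$; however, after summation the poles cancel via the identity
\[
\sum_{w\in W^{F_0}}D(E_w)|_{L_0} \;=\; \frac{\Sigma_W(\phi)|_{L_0}}{c\,r_{F_0}|_{L_0}},
\]
and the right side is holomorphic at $\gl_{a-5}$: the numerator by Lemma \ref{lem:specialholo}, and the denominator because $r_{F_0}(\gl_{a-5}) = \rho(0)^5 \neq 0$, which holds since the zeroes of $\rho$ lie in the critical strip. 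The remaining term $c\,r_{F_0}|_{L_0}\sum_w B_w$ is controlled directly by the hypothesis of the corollary applied with $\Phi=\Phi^w=\phi^w/r^{*,(-1)}_{F_0}$: we have $r^{*,(-1)}_{F_0}(\gl_{a-5})=\rho(1)^5\neq 0$ (zeroes of $\rho$ are in the critical strip), and the choice $\phi\in\cP^\fR(V_\bbC)$ with $\fR$ containing the $W$-orbit of $\gl_{a-5}$ makes $\Phi^w$ holomorphic at $\gl_{a-5}$.

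The main obstacle will be the extension of Weyl's trick from $L_0$ to the full $V^{F_0}$, which is what allows $\partial_{n_0}$ to be taken \emph{within} the region where the formula is valid. After that is established, the argument rests on the cancellation of genuine poles of the individual $A_w$ upon summing over $w\in W^{F_0}$: in the first term through Lemma \ref{lem:specialholo}, and in the second term through the hypothesis on $B_w$, which supplies exactly the normal-derivative information that Lemma \ref{lem:specialholo} does not.
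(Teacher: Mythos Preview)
Your argument is correct and follows essentially the route the paper has in mind (the paper gives no explicit proof of this corollary, treating it as a direct consequence of the computations preceding it). Two simplifications are worth noting.

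First, the ``first Leibniz term'' $c\,\partial_{n_0}(r_{F_0})|_{L_0}\,D(E_w)|_{L_0}$ is in fact identically zero, so your cancellation argument via $\sum_w D(E_w)|_{L_0}=\Sigma_W(\phi)|_{L_0}/(c\,r_{F_0}|_{L_0})$ is not needed. Indeed, since $n_0$ is the $W_{F_0}$-average of $n$ it is $W_{F_0}$-invariant, hence $\hat\gc(n_0)=0$ for every $\gc\in R_+(F_0)$; as $r_{F_0}(\gl)=\prod_{\gc}\rho(\hat\gc(\gl))$ depends on $\gl$ only through these $\hat\gc$, one has $\partial_{n_0}r_{F_0}\equiv 0$ (and likewise $\partial_{n_0}r^*_{F_0}\equiv 0$). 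The Leibniz identity therefore collapses to $\partial_{n_0}(A_w)|_{L_0}=c\,\rho(0)^5\,B_w(\Phi^w)$, and the hypothesis of the corollary applied with $\Phi=\Phi^w$ gives the holomorphy termwise.

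Second, the extension of Weyl's trick to all of $V^{F_0}$, while correct, is not an obstacle and can be bypassed entirely. One can apply $\partial_{n_0}$ directly to the identity (\ref{eq:Aw}) on $V$: since $n_0$ is $W_{F_0}$-invariant, $\partial_{n_0}(E_w\circ\sigma)=(\partial_{n_0}E_w)\circ\sigma$, and since $\partial_{n_0}r^*_{F_0}=0$ one gets $\partial_{n_0}A_w=r^*_{F_0}\sum_\sigma\epsilon(\sigma)(\partial_{n_0}E_w)(\sigma\,\cdot\,)$. Weyl's trick applied to \emph{this} alternating sum then yields $\partial_{n_0}(A_w)|_{L_0}=c\,r_{F_0}|_{L_0}\,(\partial_{n_0}D)(E_w)|_{L_0}$ directly.
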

    %\begin{proof}
    %From (\ref{eq:Bw}) and Corollary \ref{cor:crit} we see this is indeed the condition for the %occurrence of $\frac{1}{\rho(x+5)}$ in the integrand on 
    %$L$, after we observe that the factors involving 
    %$r_{F_0}$ and $r^{(-1)}_{F_0}$ can be ignored, 
    %since $\rho(x+5)$ is not a factor of $r_{F_0}$ or any of 
    %the $W_{F_0}$ translates of $r^{-1}_{F_0}$. 
    %\end{proof}
    Let us get rid of distractions, and absorb all irrelevant meromorphic factors which are invertibly holomorphic at $\gl$ 
    under the differentiation signs in $B_w$ in the meromorphic function $\Phi$. 
    Set ${\hat{R}}_{x+n}:=\{{\hat\ga}\in {\hat{R}}_+\mid {\hat\ga}(\gl_x)=x+n\}$. 
    Put ${\hat{R}}_{x+n}(-w)={\hat{R}}(-w)\cap {\hat{R}}_{x+n}$. 
    Then we obtain (with $\hat{P}_{L_0}(w):=\hat{P}_{L_0}\cap {\hat{R}}(w)$):
    \begin{cor}\label{cor:C}
    With the same assumptions and notations of Corollary \ref{cor:B}: 
    The meromorphic integrand (\ref{eq:intL0}) is holomorphic at the points of 
    $d^{-1}(\sigma_L+iV^L)$ for all $\phi,\psi\in \cP^\fR(V_\bbC)$ if for all $w\in W^{F_0}$, for all 
    $a$ such that $\rho(a)=0$, and for all meromorphic functions $\Phi$ on 
    $V_\bbC$ which are holomorphic at $\gl_x=\gl_{a-5}$, 
    \begin{equation}\label{eq:Cw}
        C_w(\gl)=\partial_{n_0}\partial_{\ga_2}\partial_{\ga_3}
        \partial_{\ga_5}\partial_{\ga_6}\partial_{\ga_5+\ga_6}\\
        \nonumber \left.\left(
        (\frac{\prod_{{\hat\gb}\in {\hat{R}}_{x+5}(-w)}\rho({\hat\gb})}
        {\prod_{{\hat\ga}\in {\hat{R}}_{x+4}(-w)}\rho({\hat\ga}+1)})
        \prod_{{\hat\gc}\in \hat{P}_{L_0}(w)}({\hat\gc}-1)\Phi\right)\right\vert_{L_0}
    \end{equation}
    is holomorphic at $\gl_{a-5}\in L_{\bbC,0}$. 
    \end{cor}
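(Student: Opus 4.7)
The plan is to prove Corollary \ref{cor:C} as a formal reduction from Corollary \ref{cor:B}, obtained by absorbing into $\Phi$ every factor of the bracket defining $B_w$ that is holomorphic at $\gl_{a-5}$. Concretely, if I can split the product of factors inside $B_w$ as a product of a ``kept'' part (which will match the one appearing in $C_w$) and an ``absorbed'' part that is holomorphic at $\gl_{a-5}$, then for any admissible $\Phi$ the modified function $\tilde\Phi:=(\text{absorbed part})\cdot\Phi$ is again meromorphic on $V_\bbC$ and holomorphic at $\gl_{a-5}$, and we obtain the identity $B_w(\Phi)=C_w(\tilde\Phi)$. Applying the hypothesis of Corollary \ref{cor:C} to $\tilde\Phi$ then yields the hypothesis of Corollary \ref{cor:B} for the original $\Phi$, and Corollary \ref{cor:B} delivers the desired holomorphy of \eqref{eq:intL0}.

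First I would classify the factors. For $\gb\in\hat{R}(w)$, note that $w\in W^{F_0}$ minimal implies $\hat{R}(w)\cap\hat{R}_+(F_0)=\emptyset$, so $\hat\gb$ is nonconstant on $L_0$. The factor $(\hat\gb-1)/\hat\gb$ is holomorphic at $\gl_{a-5}$ provided $\hat\gb(\gl_{a-5})\neq 0$; I would split off $(\hat\gb-1)$ to be retained precisely when $\hat\gb\in\hat{P}_{L_0}$ (where $\hat\gb|_{L_0}=1$), and absorb the rest. For $\ga\in\hat{R}(-w)\setminus\hat{R}_+(F_0)$, I would rewrite
\[
\frac{\rho^*(\hat\ga)}{\rho^*(\hat\ga+1)}=\frac{\rho(\hat\ga)\,(\hat\ga+1)}{\hat\ga\,\rho(\hat\ga+1)},
\]
absorb the algebraic factors $1/\hat\ga$ and $(\hat\ga+1)$ (both holomorphic at $\gl_{a-5}$ provided $\hat\ga(\gl_{a-5})\notin\{0,-1\}$), and keep only $\rho(\hat\ga)$ in the numerator and $1/\rho(\hat\ga+1)$ in the denominator. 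Among these, $\rho(\hat\ga(\gl_{a-5}))=0$ targets the prescribed zero $a$ exactly when $\hat\ga(\gl_{a-5})=a$, i.e.\ $\hat\ga\in\hat{R}_{x+5}$, and $\rho(\hat\ga(\gl_{a-5})+1)=0$ targets $\hat\ga\in\hat{R}_{x+4}$; the other $\rho(\hat\ga)$'s are holomorphic at $\gl_{a-5}$ (as $\rho$ is entire) and the other $1/\rho(\hat\ga+1)$'s are holomorphic at $\gl_{a-5}$ precisely when $\hat\ga(\gl_{a-5})+1$ avoids the zero set of $\rho$. After this cleanup, the bracket of $B_w$ matches exactly the bracket of $C_w$ with $\tilde\Phi$ in place of $\Phi$, and since the differential operator $\partial_{n_0}\partial_{\ga_2}\partial_{\ga_3}\partial_{\ga_5}\partial_{\ga_6}\partial_{\ga_5+\ga_6}$ and the restriction to $L_0$ are common to both expressions, the reduction is complete.

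The hard part will be to verify that the algebraic and $\rho$-accidents needed to make $\tilde\Phi$ holomorphic at $\gl_{a-5}$ do not occur outside the sets $\hat{R}_{x+4}(-w)$ and $\hat{R}_{x+5}(-w)$. Writing $\hat\ga=\sum_i n_i\hat\ga_i$ in the simple coroot basis of $E_8$, one has $\hat\ga(\gl_x)=(n_1+n_4+n_7)+n_8 x$ with $n_8\in\{0,1,2\}$. The cases $n_8\in\{0,1\}$ are clean: $\hat\ga(\gl_{a-5})\notin\{0,-1\}$ since $a$ is not an integer, and the critical-strip constraint $\mathrm{Re}(a)\in(0,1)$ forces the only potential $\rho$-accidents to occur in the distinguished sets $\hat{R}_{x+5}$ and $\hat{R}_{x+4}$ already retained in $C_w$. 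The case $n_8=2$ pins $a$ to the isolated half-integer exceptional value $a=1/2$ and affects only a handful of $E_8$ coroots with $n_8=2$; these can be enumerated from the Bourbaki data and dispatched directly, or alternatively handled by a limiting argument in the family of admissible $\rho$.
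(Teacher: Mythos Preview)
Your approach is exactly the paper's: the text preceding the corollary says only ``absorb all irrelevant meromorphic factors which are invertibly holomorphic at $\gl$ \dots into $\Phi$,'' and you have unpacked precisely what that means and why it works. The logical reduction $B_w(\Phi)=C_w(\tilde\Phi)$ with $\tilde\Phi=(\text{absorbed part})\cdot\Phi$ is correct, and for the implication ``hypothesis of $C$ $\Rightarrow$ hypothesis of $B$'' you only need the absorbed part to be \emph{holomorphic} at $\gl_{a-5}$ (invertibility is not needed in this direction, despite the paper's phrasing).

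Your case analysis for $n_8\in\{0,1\}$ is clean and correct. Your caution about $n_8=2$ is well-placed in principle, but here it dissolves immediately: in $E_8$ one has $n_8(\hat\ga)=(\hat\ga,\omega_8)=(\hat\ga,\hat\ga_h)$, and since all roots have the same length this inner product equals $2$ only when $\hat\ga=\hat\ga_h$. For the highest coroot $\hat\ga_h=(2,3,4,6,5,4,3,2)$ one gets $n_1+n_4+n_7=11$, hence $\hat\ga_h(\gl_{a-5})=2a+1$ with $\mathrm{Re}(2a+1)\in(1,3)$, so neither the rational pole $\hat\ga_h=0$ nor the $\rho$-pole at $\hat\ga_h+1$ (real part in $(2,4)$) can occur. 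So no enumeration or limiting argument is needed; the single $n_8=2$ coroot is harmless, and your reduction goes through for every zero $a$ of $\rho$ in the open critical strip. Similarly, your side condition $\hat\ga(\gl_{a-5})\neq -1$ is unnecessary: $(\hat\ga+1)$ sits in the numerator of the absorbed part and a zero there does no harm.
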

    We will represent a vector $v$ in $V$ by the column vector in the coordinates of the basis of fundamental weights of $V$, 
    and of $V^*$ in the dual basis $\{\hat\ga_1,\dots,{\hat\ga}_8\}$ of simple coroots (in the usual ``Bourbaki'' order).
    \begin{prop}\label{prop:sets}
    We have that $\hat{P}_{L_0}=\hat{P}_{L_0}(-1)$ is the set of columns of the 
    following $17\times 8$ matrix: 
    \begin{equation}
    P_{L_0}:=
        \begin{pmatrix}
        1&0&0&1&0&0&0&0&0&0&0&0&0&0&0&0&0\\
        0&0&0&0&1&0&0&0&1&1&0&0&0&1&1&0&1\\
        0&0&0&1&0&1&0&0&1&0&1&0&0&1&0&1&1\\
        0&1&0&0&1&1&1&0&1&1&1&1&0&1&1&1&1\\
        0&0&0&0&0&0&1&0&0&1&1&1&1&1&1&1&1\\
        0&0&0&0&0&0&0&1&0&0&0&1&1&0&1&1&1\\
        0&0&1&0&0&0&0&1&0&0&0&0&1&0&0&0&0\\
        0&0&0&0&0&0&0&0&0&0&0&0&0&0&0&0&0
        \end{pmatrix}
    \end{equation}
    and ${\hat{R}}_{x+4}(-1)$ equals the set of columns of the $7\times 8$ matrix:
    \begin{equation}
    {\hat{R}}_{x+4}(-1):=
        \begin{pmatrix}
        1&1&1&1&1&0&1\\
        1&1&1&1&1&1&1\\
        1&2&1&2&1&1&2\\
        2&2&2&2&2&2&2\\
        1&1&2&2&2&2&2\\
        1&1&1&1&2&2&2\\
        1&1&1&1&1&2&1\\
        1&1&1&1&1&1&1
        \end{pmatrix}
    \end{equation}
    and ${\hat{R}}_{x+5}(-1)$ equals the set of columns of the $8\times 8$ matrix:
    \begin{equation}
    {\hat{R}}_{x+5}(-1):=
        \begin{pmatrix}
        1&1&1&1&1&1&1&1\\
        1&1&2&1&1&2&1&2\\
        2&1&2&2&2&2&2&2\\
        3&2&3&3&2&3&3&3\\
        2&2&2&2&2&2&3&3\\
        1&2&1&2&2&2&2&2\\
        1&2&1&1&2&1&1&1\\
        1&1&1&1&1&1&1&1\\
        \end{pmatrix}
    \end{equation}
    \end{prop}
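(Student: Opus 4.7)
The plan is to verify Proposition \ref{prop:sets} by a direct enumeration of positive coroots of $\hat{R}$ of type $\textup{E}_8$ satisfying certain linear conditions on $L_0$, together with the observation that for $w = 1$ the defining intersections with $\hat{R}(-1)$ are trivial. Since the Weyl group $W(\textup{E}_8)$ contains $-\textup{id}$, we have $\hat{R}(-1) = \hat{R}_+$, hence $\hat{R}_{x+n}(-1) = \hat{R}_{x+n} \cap \hat{R}_+ = \hat{R}_{x+n}$ directly from the definition set up just before Corollary \ref{cor:C}; similarly, since $\hat{P}_{L_0} \subset \hat{R}_+$, we get $\hat{P}_{L_0}(-1) = \hat{P}_{L_0} \cap \hat{R}_+ = \hat{P}_{L_0}$, which gives the first equality in the proposition.

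Next I would translate the remaining claims into explicit linear conditions. With the chosen parametrization $\gl_x = (1,0,0,1,0,0,1,x)$ in the basis of fundamental weights $\{\go_1, \ldots, \go_8\}$, and a positive coroot written in the dual basis of simple coroots as $\hat{\ga} = \sum_{i=1}^{8} a_i \hat{\ga}_i$, the pairing evaluates to
\begin{equation*}
\hat{\ga}(\gl_x) = a_1 + a_4 + a_7 + a_8 x.
\end{equation*}
Therefore $\hat{P}_{L_0}$ is the set of positive coroots with $a_8 = 0$ and $a_1 + a_4 + a_7 = 1$; $\hat{R}_{x+4}(-1)$ is the set of positive coroots with $a_8 = 1$ and $a_1 + a_4 + a_7 = 4$; and $\hat{R}_{x+5}(-1)$ is the set of positive coroots with $a_8 = 1$ and $a_1 + a_4 + a_7 = 5$.

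Finally, I would run through the $120$ positive coroots of $\textup{E}_8$ expressed in the basis of simple coroots and retain those satisfying each of these linear conditions. The resulting three lists of coefficient vectors are precisely the columns of the three matrices displayed in the proposition, with cardinalities $17$, $7$, $8$ respectively. The main obstacle is purely bookkeeping: the arithmetic is trivial, but enumerating and filtering all positive coroots of $\textup{E}_8$ is tedious by hand, so in practice one invokes the computer algebra routines already used elsewhere in the paper (see the repository cited in the introduction and Appendix \ref{a:SpecialCode}). No additional conceptual ingredient is needed, and the output can be cross-checked against the displayed matrices by direct inspection of each column.
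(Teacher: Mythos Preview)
Your proposal is correct and is precisely the direct computational verification that the paper relies on (the proposition is stated without proof, being an explicit enumeration inside $\hat{R}(\textup{E}_8)_+$). The key observations you make --- that $-1\in W(\textup{E}_8)$ gives $\hat{R}(-1)=\hat{R}_+$, and that $\hat{\ga}(\gl_x)=a_1+a_4+a_7+a_8x$ in the simple-coroot coordinates --- reduce each of the three sets to a simple linear filter on the $120$ positive coroots, matching the displayed columns.
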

    Consider the orthogonal decomposition $V=V_{L_0}\oplus V^{L_0}$, where $V_{L_0}$ is spanned 
    by the $\textup{E}_7$ roots and $V^{L_0}$ by the highest root $\ga_h$. Accordingly we have the 
    decomposition $V^*=(V^{L_0})^\perp\oplus (V_{L_0})^\perp$ where $(V^{L_0})^\perp$ is the span 
    of the $\textup{E}_7$-coroots, while  $(V_{L_0})^\perp$ is spanned by the highest coroot $\hat\ga_h$.
    The differentiations in (\ref{eq:Cw}) are in directions of $V_{L_0}$ only, hence we can view 
    $\hat\ga_h$ as a constant function. Via the inner product $\bbR\ga_h$ and $\bbR\hat\ga_h$ are naturally 
    identified via $\ga_h\longleftrightarrow \hat{\ga}_h$. In this way we identify the field $K$ of rational functions in 
    $\ga_h$ with the field of rational functions in $\hat{\ga}_h$. 
    
    We consider $c_{L_0}\in V_{L_0}$ as the origin. We have a $K$-linear isomorphism 
    $S_K(V_{L_0})\to P_K(V_{L_0})$\index{$S_K(V_{L_0})$ symmetric algebra of $V_{L_0}$}
    \index{$P_K(V_{L_0})$ algebra of polynomials on  $V_{L_0}$}
    denoted by $s\to s^*$, defined by $v\to (v,\cdot)-(v,c_{L_0})$. We 
    denote its inverse also by $p\to p^*$. 
    The symmetric $K$-bilinear pairing $\langle\cdot,\cdot\rangle_{L_0}: P_K(V_{L_0})\times P_K(V_{L_0})\to K$ 
    given by $\langle p,q\rangle_{L_0}:=\partial(p^*)(q)(c_{L_0})$ is symmetric and non-degenerate. 
    Hence we have $\langle p,qr\rangle_{L_0}=\langle \partial(q^*)p,r\rangle_{L_0}$. 
    Extend this to a $\cM_{L_0}$-bilinear pairing 
    $P_{\cM_{L_0}}(V_{L_0})\times \cM^{L_0}(V_\bbC)\to \cM_{L_0}$ 
    where $\cM_{L_0}$ denotes the field of meromorphic functions on $L_{\bbC,0}$ extended to 
    $V_{\bbC}$ via the projection along $V_{\bbC,L_0}$, and $\cM^{L_0}(V_\bbC)$ 
    the field of meromorphic functions on $V_\bbC$ which do not have $L_{\bbC,0}$ in their singular set. 
    Here we embed $P_K(V_{L_0})$ as a subspace of $\cM^{L_0}(V_\bbC)$ via the decomposition 
    $V=V_{L_0}\oplus V^{L_0}$ (observe that the localisation of $P_K(V_{L_0})$ at $c_{L_0}$, 
    which is isomorphic to the localisation $P^{L_0}(V)$ of $P(V)$ at $L_{\bbC,0}$, is the subalgebra 
    of rational function in $\cM^{L_0}(V_\bbC)$).
    \begin{defn}
    We call $p\in P_K(V_{L_0})$ $F_0$-harmonic if $\langle p,q \rangle_{L_0}=0$ for all $q$ in the ideal 
    $I_{L_0}^+\subset P_K(V_{L_0})$ generated by the space $P_K(V_{L_0})^{W_{F_0}}_+$ of $W_{F_0}$-invariant 
    polynomials which vanish on $L_0$. We denote the vector space of $F_0$-harmonic polynomials by 
    $\cH_{K,F_0}\subset P_K(V_{L_0})$, and we write $\cH_{K,F_0}^+=n_0^*\cH_{K,F_0}\oplus \cH_{K,F_0}\subset P_K(V_{L_0})$.
    \end{defn}
    The condition $p\in\cH_{F_0}$ is equivalent to $\partial(q^*)(p)=0$ for all $q\in  P_K(V_{L_0})^{W_{F_0}}_+$, which 
    implies that $\cH_{K,F_0}\subset  P_K(V_{F_0})$\index{$L_{sp,0}=\textup{E}_7(a4)$!$V_{F_0}$ subspace spanned by $\ga\in F_0$} 
    (viewing this algebra as subalgebra of $P_K(V_{L_0})$ 
    via the orthogonal decomposition $V_{L_0}=V_{F_0}\oplus V^{F_0}_{L_0}$)
    \index{$L_{sp,0}=\textup{E}_7(a4)$!$V^{F_0}$ orthogonal complement of $V_{F_0}$}.
    \begin{defn}\label{defn:quotfld}
    For any $a\in\bbC$, let $\hat{\cO}_a$ denote the ring of formal power series in $x-(a-5)=\frac{1}{2}(\hat\ga_h-1-2a)$
    at $\gl_{a-5}$ on $L_{\bbC,0}$, and let $\hat{K}_a$ be its quotient field.
    \end{defn}
    \begin{prop}
    The subspaces $\cH_{F_0}\subset P_K(V_{L_0})$ and $\cH_{F_0}^+\subset P_K(V_{L_0})$ are modules 
    over $P_K(V_{L_0})$ via the action defined by the operators $p\to\partial(p^*)$. 
    $\cH_{F_0}$ is generated 
    by $\Pi_{F_0}:=\prod_{\ga\in \hat{R}_{F_0,+}}\hat{\ga}$. 
    Using the notations
    of definition \ref{defn:quotfld} 
    the above actions naturally extend to actions 
    on $\cH_{\hat{K}_a,F_0}$ and on $\cH^+_{\hat{K}_a,F_0}$ respectively,  
    by the algebra of the formal power series on $V_\bbC$ at $\gl_{a}\in L_{\bbC,0}$.  
    \end{prop}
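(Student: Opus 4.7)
The proof will proceed in three stages, verifying the module structure (a formal consequence of adjointness), the cyclicity of $\cH_{F_0}$ (Chevalley--Steinberg theory), and the extension to formal power series (a base-change argument). For the module structure, the key input is the adjointness relation $\langle \partial(q^*)p, r\rangle_{L_0} = \langle p, qr\rangle_{L_0}$ combined with the ideal property of $I_{L_0}^+$: given $h\in\cH_{F_0}$ and $f\in I_{L_0}^+$, we have $qf\in I_{L_0}^+$, hence $\langle\partial(q^*)h, f\rangle_{L_0} = \langle h, qf\rangle_{L_0}=0$, so $\partial(q^*)h\in\cH_{F_0}$. For $\cH_{F_0}^+=n_0^*\cH_{F_0}\oplus\cH_{F_0}$, I would use the Leibniz-type commutation $\partial(q^*)(n_0^*h) = n_0^*\partial(q^*)h + \partial(\partial_{n_0}(q^*))h$ (valid because $n_0\in V_{L_0}$ makes $n_0^*$ a linear polynomial), which displays the image as a sum of elements of $n_0^*\cH_{F_0}\oplus\cH_{F_0}=\cH_{F_0}^+$ by the first step.

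For the cyclicity claim, the plan is to reduce to classical Chevalley--Steinberg theory for $W_{F_0}$. Since $F_0=\{\ga_2,\ga_3,\ga_5,\ga_6\}$, the group $W_{F_0}$ has type $A_1\times A_1\times A_2$ (of order $24$) and acts as a reflection group on $V_{F_0}:=\textup{span}(F_0)$ while fixing the orthogonal complement $V_{L_0}^{F_0}$ pointwise; accordingly $V_{L_0}=V_{F_0}\oplus V_{L_0}^{F_0}$, and $W_{F_0}$-invariants factor as $P_K(V_{F_0})^{W_{F_0}}\otimes P_K(V_{L_0}^{F_0})$. The ideal $I_{L_0}^+$ is thus generated by linear functions on $V_{L_0}^{F_0}$ together with the Chevalley ideal $I_{F_0}^{\textup{Chev},+}\subset P_K(V_{F_0})$ of positive-degree invariants. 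Since the apolar pairing splits as a tensor product along $V_{L_0}=V_{F_0}\oplus V_{L_0}^{F_0}$ and is non-degenerate on each factor, the orthogonal complement $\cH_{F_0}$ is forced to lie in $P_K(V_{F_0})$ and there is identified with the classical $W_{F_0}$-harmonics: a graded space of total dimension $24$ and top degree $|\hat R_{F_0,+}|=5$. Steinberg's theorem on the harmonics of a finite reflection group then states that $\cH_{F_0}$ is cyclic as a $P_K(V_{F_0})$-module under $\partial$, generated by the discriminant $\Pi_{F_0}=\prod_{\hat\ga\in\hat R_{F_0,+}}\hat\ga$. Because derivatives in $V_{L_0}^{F_0}$- or $V^{L_0}$-directions annihilate elements of $P_K(V_{F_0})$, the $P_K(V_{L_0})$-action on $\cH_{F_0}$ coincides with the $P_K(V_{F_0})$-action, yielding the cyclicity.

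For the extension to formal power series, set $\cH_{\hat K_a,F_0}:=\hat K_a\otimes_K\cH_{F_0}$ and similarly for $\cH^+_{\hat K_a,F_0}$. Under $V=V_{L_0}\oplus V^{L_0}$, any formal power series on $V_\bbC$ at $\gl_a$ expands as a formal power series in $V_{L_0}$-variables centered at $c_{L_0}$ with coefficients in $\hat\cO_a$, and, after inverting nonzero divisors, in $\hat K_a$. Because every element of $\cH_{F_0}^+$ has degree at most $|\hat R_{F_0,+}|+1=6$, only finitely many Taylor coefficients of $p$ in the $V_{F_0}$-directions contribute to $\partial(p^*)$ applied to such an element, while derivatives in the $V_{L_0}^{F_0}$- and $V^{L_0}$-directions act trivially (respectively, are absorbed into the $\hat K_a$-coefficients). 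This produces a well-defined $\hat K_a$-linear action of the formal power series algebra on $\cH_{\hat K_a,F_0}$ and $\cH^+_{\hat K_a,F_0}$, extending the $P_K(V_{L_0})$-action, and the module axioms together with cyclic generation by $\Pi_{F_0}$ persist by $\hat K_a$-linearity.

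The main obstacle is not conceptual but notational: the careful reconciliation of the algebra of formal power series on $V_\bbC$ at $\gl_a$ with the base-changed polynomial algebra $P_{\hat K_a}(V_{L_0})$ via the orthogonal decomposition $V=V_{L_0}\oplus V^{L_0}$, coupled with the verification that only finitely many Taylor coefficients can contribute to the action on the finite-degree modules $\cH_{F_0}$ and $\cH_{F_0}^+$. Conceptually, all three claims reduce to classical facts: the adjointness of the apolar pairing, Chevalley's theorem on invariants of finite reflection groups, and Steinberg's theorem on the cyclicity of harmonics under differentiation, applied to the explicit reducible reflection group $W_{F_0}$ of type $A_1\times A_1\times A_2$.
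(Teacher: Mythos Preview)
Your proposal is correct and follows essentially the same approach as the paper, which dispatches the entire proposition in two sentences by declaring it ``straightforward from the definitions'' and noting only that the coordinate action $\partial(\ga_i)$ on $\cH_{F_0}^+$ is nilpotent (equivalently, your bounded-degree observation) to justify the formal power series extension. You have simply supplied the details---adjointness for the module structure, Steinberg's theorem for cyclicity, and the finite-degree/nilpotency argument for the extension---that the paper leaves to the reader.
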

    \begin{proof}
    This all straightforward from the definitions. A formal power series on $V_\bbC$ at $\gl_{a-5}$ 
    as a formal power series in $\hat\ga_i-\hat\ga_i(c_{L_0})$ for $i=1,\dots,7$ with coefficients in $\hat{\cO}_a$. 
    The action of the coordinate $\hat\ga_i-\hat\ga_i(c_{L_0})$ on $\cH_{F_0}^+$ is 
    given by $\partial(\ga_i)$, which is obviously nilpotent, hence the result. 
    \end{proof}
    \begin{defn}\label{defn:ops}
    Given $w\in W$ we define the operator 
    \begin{align*}
    \Delta(w):\cH_{\hat{K}_a,F_0}^+&\to\cH_{\hat{K}_a,F_0}^+\\
    h&\mapsto \prod_{\hat\gc\in \hat{P}_{L_0}(w)}\partial(\gc)(h)
    \end{align*}
    Let $S(-w)=\hat{R}_{x+5}(-w)\sqcup\hat{R}_{x+4}(-w)$, and let $G_w\subset W_{F_0}$ denote the 
    stabiliser of $S(-w)$. Let 
    \begin{equation}
    P^{G_w}:\cH_{\hat{K}_a,F_0}^+\to\cH_{\hat{K}_a,F_0}^+
    \end{equation} 
    denote the projector 
    on the $G_w$-invariant subspace. 
    
    Observe that $n_0^*$ is $W_{F_0}$-invarariant, so for 
    $n_0^*h_1+h_2\in\cH_{{\hat{K}_a},F_0}^+$ and $w\in W$ we have   
    $P^{G_w}(n_0^*h^1+h^2)=n_0^*P^{G_w}(h_1)+P^{G_w}(h_2)$ for any $w\in W$. 
    \end{defn}
    \begin{thm}\label{thm:spec}
    Let $x=a\in \bbC$ be a zero of $\rho$ of order $e$, so that $\rho(x)=(x-a)^e\rho^0(x)$ 
    with $e>0$ and $\rho^0(a)\not=0$. 
    Let $\delta:S(-w)\to\{0,1\}$ be the function on $S(-w)$ which is equal to $0$ on 
    $\hat{R}_{x+5}(-w)$ and equal to $1$ on $\hat{R}_{x+4}(-w)$. 
    Let $m:S(-w)\to\{\pm 1\}$ be given by $m(\hat{\ga})=(-1)^{\delta(\hat\ga)}$.
    Denote by $N_w=|\hat{R}_{x+5}(-w)|-|\hat{R}_{x+4}(-w)|$. 
    We use the same assumptions and notations of Corollary \ref{cor:B}. 
    Let $\cI_{w}=P^{G_w}(\Delta(w)(\cH_{F_0}^+))$. For all $\phi,\psi\in \cP^\fR(V_\bbC)$, the meromorphic integrand (\ref{eq:intL0}) 
    is holomorphic at $\gl=\gl_x$ for $x=a-5$ if for all $w\in W^{F_0}$, 
    for all $e>0$, and for all $h\in \cI_{w}^N$ of homogeneous degree $N$ such that 
    $(eN_w-N)<0$:  
    \begin{equation}\label{eq:Dw}
        \left\langle h, \prod_{{\hat\ga}\in S(-w)}({\hat\ga}-a+\delta(\hat\ga))^{em(\hat\ga)}\right\rangle_{L_0}=0
    \end{equation} 
    \end{thm}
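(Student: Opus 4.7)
The plan starts from Corollary \ref{cor:C}: it suffices to show that $C_w(\gl)$ is holomorphic at $\gl_{a-5}$ for each $w\in W^{F_0}$ and each zero $a$ of $\rho$ of order $e$. I first factor $\rho(z)=(z-a)^e\rho^0(z)$ with $\rho^0(a)\neq 0$, absorbing the invertibly holomorphic quotient of $\rho^0$-factors into $\Phi$, so the task becomes showing that
\[
\partial_{n_0}\partial_{\ga_2}\partial_{\ga_3}\partial_{\ga_5}\partial_{\ga_6}\partial_{\ga_5+\ga_6}(R_0\,Q\,\Phi)\big|_{L_0}
\]
is holomorphic at $\gl_{a-5}$, where $R_0=\prod_{\hat{\ga}\in S(-w)}(\hat{\ga}-a+\delta(\hat{\ga}))^{em(\hat{\ga})}$ concentrates all singularities at $\gl_{a-5}$, $Q=\prod_{\hat{\gc}\in\hat{P}_{L_0}(w)}(\hat{\gc}-1)$ vanishes on $L_0$, and $\Phi$ is arbitrary holomorphic at $\gl_{a-5}$. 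Introducing a local coordinate $y=x-(a-5)$ along $L_0$ and coordinates $u$ transverse to $L_0$ at $\gl_{a-5}$, one has $\hat{\ga}-a+\delta(\hat{\ga})=y+\hat{\ga}(u)$ for $\hat{\ga}\in S(-w)$, making $R_0$ a homogeneous rational function of total degree $eN_w$ in $(y,u)$.

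The differential operator, after restriction to $L_0$, is the $\cM_{L_0}$-bilinear pairing $\langle n_0\Pi_{F_0},\cdot\rangle_{L_0}$ extended to $\cM^{L_0}(V_{\bbC})$ via Laurent expansion in the transverse directions; indeed the simple-root differentiations assemble into $\partial(\Pi_{F_0}^*)$, dual to the generator $\Pi_{F_0}=\prod_{\hat{\ga}\in\hat{R}_{F_0,+}}\hat{\ga}$ of $\cH_{F_0}$. Expanding $\langle n_0\Pi_{F_0},R_0Q\Phi\rangle_{L_0}$ via the Leibniz rule and the coproduct on $P_K(V_{L_0})$, the part of $\partial(\Pi_{F_0}^*)$ that hits $Q$ must fully differentiate each factor $(\hat{\gc}-1)$, since any undifferentiated factor restricts to $0$ on $L_0$; this contribution is exactly $\Delta(w)$ applied to an element of $\cH_{F_0}^+$ (after absorbing the $n_0$-factor, using that $\Pi_{F_0}$ generates $\cH_{F_0}$ under the differentiation action). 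Since $R_0$ is invariant under $G_w=\textup{Stab}_{W_{F_0}}(S(-w))$ (not generally under all of $W_{F_0}$), and since $\Phi$ is arbitrary so its Taylor coefficients can be used to isolate each contribution, only the $G_w$-invariant components survive in the pairing with $R_0$. This yields the identification that the $h$ appearing in the final pairing $\langle h,R_0\rangle_{L_0}$ range exactly over $\cI_w=P^{G_w}(\Delta(w)(\cH_{F_0}^+))$.

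For homogeneous $h\in\cI_w^N$, the $(y,u)$-homogeneity of $R_0$ forces $\langle h,R_0\rangle_{L_0}$ to be a constant multiple of $y^{eN_w-N}$, which is holomorphic at $y=0$ iff $eN_w-N\geq 0$; in the complementary case $eN_w-N<0$, holomorphy is equivalent to vanishing of the scalar, which is exactly the pairing identity in (\ref{eq:Dw}). Summing over $w\in W^{F_0}$, $e\geq 1$, and $h\in\cI_w^N$ with $eN_w-N<0$ collects all potential obstructions, so the theorem's hypothesis is sufficient for holomorphicity of the integrand at $\gl_{a-5}$. The main obstacle will be the rigorous identification in the second paragraph of the exact harmonic subspace $\cI_w$: one must show that the combinatorics of the Leibniz expansion produces neither more nor fewer harmonics than $P^{G_w}(\Delta(w)(\cH_{F_0}^+))$. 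This rests on combining (i) the generator property of $\Pi_{F_0}$ in $\cH_{F_0}$, (ii) the observation that any Leibniz summand leaving an $(\hat{\gc}-1)$ undifferentiated restricts to zero on $L_0$, and (iii) the $W_{F_0}$-isotypic decomposition together with the $G_w$-orbit structure on $S(-w)$ to reduce the pairing with $R_0$ to its $G_w$-invariant projection.
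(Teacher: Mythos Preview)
Your proposal is correct and follows essentially the same route as the paper's proof. The only cosmetic difference is that where you invoke the Leibniz rule and coproduct to move the factors $Q=\prod_{\hat\gc\in\hat P_{L_0}(w)}(\hat\gc-1)$ and $\Phi$ across the pairing, the paper uses the adjoint identity $\langle p,qr\rangle_{L_0}=\langle\partial(q^*)p,r\rangle_{L_0}$ directly: since $(\hat\gc-1)^*=\gc$ for $\hat\gc\in\hat P_{L_0}$, moving $Q$ to the left is exactly $\Delta(w)$, and moving $\Phi$ first shows that $\partial(\Phi^*)(n_0^*\Pi_{F_0})$ ranges over all of $\cH_{F_0}^+$; this is precisely your point (ii) combined with (i), and dispatches what you flagged as the main obstacle without the bookkeeping of a full Leibniz expansion.
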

    \begin{proof}
    Observe that $\Phi$ runs over $\cM^{\{\gl_{a-5}\}}$ in Corollary \ref{cor:C}. 
    Restricting the pairing $\langle\cdot,\cdot\rangle_{L_0}$ above to $\cM^{\{\gl_{a-5}\}}_{L_0}\subset \cM_{L_0}$ and 
    $\cM^{\{\gl_{a-5}\}}\subset \cM^{L_0}$ (where $\cM^{\{\gl_{a-5}\}}_{L_0}\subset \cM^{\{\gl_{a-5}\}}$ denotes the 
    ring of meromorphic functions on $L_0$ which are regular at $\gl_{a-5}$, and extended to $V_\bbC$ via the decomposition 
    $V_\bbC=V_{\bbC,L_0}\oplus V^{L_0}_\bbC$) yields a $\cM^{\{\gl_{a-5}\}}$-bilinear pairing  
    $\langle\cdot,\cdot\rangle_{L_0}:P_{\cM^{\{\gl_{a-5}\}}_{L_0}}(V_{L_0})\times \cM^{\{\gl_{a-5}\}}(V_\bbC)\to \cM^{\{\gl_{a-5}\}}_{L_0}$. 
    This enables us to rewrite (\ref{eq:Cw}) as follows (using that for $\ga\in \hat{R}_{F_0}$ we have $\gb^*=\hat\gb$): 
    \begin{equation}\label{eq:cwrew}
     C_w(\gl)=\left\langle n_0^*\prod_{\gb\in \hat{R}_{{F_0},+}}\hat\gb,
        \prod_{{\hat\ga}\in S(-w)}({\hat\ga}-a+\delta(\hat\ga))^{em(\hat\ga)}
        \prod_{{\hat\gc}\in \hat{P}_{L_0}(w)}({\hat\gc}-1)\Phi\right\rangle_{L_0}
    \end{equation}
    Write $\Phi=\Phi_N+\Phi^N$ where $\Phi_N\in  P_{\cM^{\{\gl_{a-5}\}}_{L_0}}(V_{L_0})$ and 
    $\Phi^N$ has vanishing order at least $N$ at $L_0$. 
    The summand with $\Phi^N$ in (\ref{eq:cwrew}) obviously vanishes identically. 
    Now $\Phi_N\in P_{\cM^{\{\gl_{a-5}\}}_{L_0}}(V_{L_0})$ is arbitrary, and thus the elements 
    $\partial(\Phi_N^*)(n_0^*\prod_{\gb\in \hat{R}_{{F_0},+}}\hat\gb)$ span $\cH_{\cM^{\{\gl_{a-5}\}}_{L_0},F_0}^+$ over 
    $\cM^{\{\gl_{a-5}\}}_{L_0}$ 
    when we vary $\Phi$ (first note that $P(V_{F_0})$ generates $n_0^*\cH_{F_0}$, then it is easy to see 
    that $P(V_{L_0})$ generates all of $\cH_{F_0}^+$).
    Hence the pairing $C_w(\gl)$ takes the general form 
    \begin{equation}
        \left\langle h, \prod_{{\hat\ga}\in S(-w)}({\hat\ga}-a+\delta(\hat\ga))^{em(\hat\ga)}\right\rangle_{L_0}
    \end{equation} 
    where $h$ varies over the $\cM^{\{\gl_{a-5}\}}_{L_0}$-span of $\textup{Im}(\Delta(w)):=\Delta(w)(\cH_{F_0}^+)$. 
    The pairing is clearly invariant with respect to $W_{F_0}$, 
    hence when $\sigma\in G_w$ we have 
    \begin{equation}
        \left\langle h, \prod_{{\hat\ga}\in S(-w)}({\hat\ga}-a+\delta(\hat\ga))^{em(\hat\ga)}\right\rangle_{L_0}=
         \left\langle \sigma(h), \prod_{{\hat\ga}\in S(-w)}({\hat\ga}-a+\delta(\hat\ga))^{em(\hat\ga)}\right\rangle_{L_0}
    \end{equation} 
    This shows that for all $h\in \textup{Im}(\Delta(w))$ we have: 
    \begin{equation}\label{eq:Cfin}
        \left\langle h, \prod_{{\hat\ga}\in S(-w)}({\hat\ga}-a+\delta(\hat\ga))^{em(\hat\ga)}\right\rangle_{L_0}=
         \left\langle P^{G_w}(h), \prod_{{\hat\ga}\in S(-w)}({\hat\ga}-a+\delta(\hat\ga))^{em(\hat\ga)}\right\rangle_{L_0}
    \end{equation} 
    The operator $P^{G_w}\circ\Delta(w)$ is homogeneous of degree $-|\hat{P}_{L_0}(w)|$, hence $\cI_w$
    is the direct sum of its homogeneous components. Hence Corollary \ref{cor:C}
    shows that we need to show that for all $h\in\cI_w^N$ homogeneous of degree $N$, (\ref{eq:Cfin}) is 
    holomorphic at $\gl_x=\gl_{a-5}$. On the other hand, obviously 
    \begin{equation}\label{eq:CfinN}
        \left\langle h, \prod_{{\hat\ga}\in S(-w)}({\hat\ga}-a+\delta(\hat\ga))^{em(\hat\ga)}\right\rangle_{L_0}=
        E(h,S(-w),e)(x-a+5)^{eN_w-N}
    \end{equation} 
    for some constant $E(h,S(-w),e)\in\bbC$ depending on $w, e$ and $h\in\cI_w^N$. If $(eN_w-N)<0$ then 
    this is holomorphic at $\gl_x=\gl_{a-5}$ iff $E(h,S(-w),e)=0$, as was to be proven.
    \end{proof}
    The constants $E(h,S(-w),e)$\index{$E(h,S(-w),e)$} introduced in the proof of Theorem \ref{thm:spec} play an important role in our analysis. The next Lemma determines these explicitly:
    \begin{lem}\label{lem:cost}
    Fix $w\in W$, $S=S(-w)$ and $h\in\cI_w^N$ as in Theorem \ref{thm:spec}. 
    The constant $E(h,S,e)\in\bbC$ of (\ref{eq:CfinN}) in the proof of Theorem \ref{thm:spec} is  
    linear in $h\in\cI_w^N$ and polynomial in $e$ of degree at most $N\leq 6-|\hat{P}_{L_0}(w)|$ 
    (since  $\cI_w^N=0$ for $N>6-|\hat{P}_{L_0}(w)|$). Write $h=\sum_{Q}c_Q Q$ where 
    $Q$ varies over monomials in $P(V_{F_0})$. 
    Then $E(h,S,e)=\sum_Qc_Q E(Q,S,e)$ where, for $Q=\prod_{i=1}^Nv_i^*$ with $v_i\in V_{F_0}$, we have: 
    \begin{align}\label{eq:EQ}
    E(Q,S,e)=\sum_{\substack{(\pi^+,\pi^-)\\\mathrm{\ bipartition\, of\, }N\\\forall i: \pi^+_i\leq e}}
    (-1)^{|\pi^-|}\prod_{i,j}\binom{e}{\pi^+_i}&\binom{e+\pi^-_j-1}{\pi^-_j}\\
    \nonumber&\sum_{\substack{d:S\to\bbZ_{\geq0}:\\(\pi^+(d),\pi^-(d))=(\pi^+,\pi^-)}}
    \textup{Perm}\left((v_i,s^{d,0}_j)\right)
    \end{align}
    Here the product $\prod_{i,j}$ runs over all parts of $\pi^-$ and of $\pi^+$, and for a map $d:S\to\bbZ_{\geq0}$
    we define the partition $\pi^+(d)$ by ordering the images of $d|_{S_+}$ in descending order, and likewise 
    $\pi^-(d)$ by ordering the images of $d|_{S_-}$ in descending order. Finally for each $d:S\to\bbZ_{\geq0}$
    such that $(\pi^+(d),\pi^-(d))=(\pi^+,\pi^-)$ we fix a map $q^{d,0}:\{1,\dots,N\}\to S$, $j\to q^{d,0}_j\in S$ 
    such that for all $s\in S$, $d_s=|(q^{d,0})^{-1}(s)|$ (any such map will do). 
    \end{lem}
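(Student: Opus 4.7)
The plan is to prove the explicit formula by a direct Leibniz expansion of the pairing $\langle Q, F\rangle_{L_0}$, followed by a combinatorial repackaging. First I would fix a local coordinate $u$ on $L_0$ vanishing at $c_{L_0}$ and parameterize $L_0 = c_{L_0} + ut$ for an appropriate $t \in V^{L_0}$. A direct check using $\hat\ga(\gl_x)=x+5$ on $S_+$ and $\hat\ga(\gl_x)=x+4$ on $S_-$ (with $c_{L_0}=\gl_{a-5}$) shows $\hat\ga(t)=1$ for every $\hat\ga \in S$, so writing a general element $\mu = c_{L_0} + ut + v$ with $v \in V_{L_0}$ gives
\begin{equation*}
\hat\ga - a + \delta(\hat\ga) \;=\; u + \nabla\hat\ga,\qquad \nabla\hat\ga := \hat\ga|_{V_{L_0}}.
\end{equation*}
By linearity of the pairing in $h$ it suffices to treat $h = Q = \prod_{i=1}^N v_i^*$, in which case $\partial(Q^*) = \prod_i \partial(v_i)$ and $\langle Q, F\rangle_{L_0}$ is the restriction to $L_0$ of $\prod_{i=1}^N\partial(v_i)\,F$, where $F := \prod_{\hat\ga\in S}(u+\nabla\hat\ga)^{em(\hat\ga)}$. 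Each $v_i\in V_{F_0}\subset V_{L_0}$ satisfies $\partial(v_i)u=0$, so only the $\nabla\hat\ga$'s get differentiated.

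Second, I would apply the Leibniz rule. The $N$ derivatives distribute across the $|S|$ factors of $F$, and the resulting sum is indexed by functions $q: \{1,\ldots,N\}\to S$. For each $q$, setting $d_s(q):=|q^{-1}(s)|$, iterated differentiation of $(u+\nabla\hat\ga)^{em(\hat\ga)}$ followed by restriction to $L_0$ (where $\nabla\hat\ga$ vanishes) produces, for $s\in S_+$, the factor $\tfrac{e!}{(e-d_s)!}\,u^{e-d_s}\prod_{i\in q^{-1}(s)}(v_i,\hat\ga)$, and for $s\in S_-$, the factor $(-1)^{d_s}\tfrac{(e+d_s-1)!}{(e-1)!}\,u^{-e-d_s}\prod_{i\in q^{-1}(s)}(v_i,\hat\ga)$. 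Since $\sum_{s\in S_+}(e-d_s)+\sum_{s\in S_-}(-e-d_s) = eN_w-N$ for every $q$, the total $u$-exponent is uniform, matching the exponent in (\ref{eq:CfinN}), and we read off
\begin{equation*}
E(Q,S,e) \;=\; \sum_{q:\{1,\ldots,N\}\to S}\prod_{s\in S_+}\tfrac{e!}{(e-d_s(q))!}\prod_{s\in S_-}(-1)^{d_s(q)}\tfrac{(e+d_s(q)-1)!}{(e-1)!}\prod_{i=1}^N(v_i, q(i)).
\end{equation*}

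Third I would regroup by the occupation function $d:S\to\bbZ_{\geq 0}$, $s\mapsto d_s(q)$. For any fixed $d$, the map $S_N \to \{q\mid |q^{-1}(s)|=d_s\}$ sending $\sigma\mapsto(i\mapsto s^{d,0}_{\sigma(i)})$ is $\prod_s d_s!$-to-one, hence
\begin{equation*}
\sum_{q:\,|q^{-1}(s)|=d_s}\prod_{i=1}^N(v_i,q(i)) \;=\; \tfrac{1}{\prod_s d_s!}\,\textup{Perm}\bigl((v_i,s^{d,0}_j)\bigr),
\end{equation*}
independent of the reference assignment $q^{d,0}$ (the permanent being invariant under column permutation). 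Absorbing the $\prod_s d_s!$ converts $\tfrac{e!}{(e-d_s)!\,d_s!}$ into $\binom{e}{d_s}$ on $S_+$ and $\tfrac{(e+d_s-1)!}{(e-1)!\,d_s!}$ into $\binom{e+d_s-1}{d_s}$ on $S_-$; finally regrouping by the partition pair $(\pi^+(d),\pi^-(d))$ extracted from the nonzero values of $d|_{S_+}$ and $d|_{S_-}$ (zero parts contribute trivial factors $\binom{e}{0}=1$, and the signs accumulate to $\prod_{s\in S_-}(-1)^{d_s} = (-1)^{|\pi^-|}$) produces the stated formula (\ref{eq:EQ}). Linearity of $E$ in $h$ is then manifest; the polynomial-in-$e$ bound of degree $\leq N$ follows because $\binom{e}{\pi^+_i}$ and $\binom{e+\pi^-_j-1}{\pi^-_j}$ are polynomials in $e$ of degrees $\pi^+_i$ and $\pi^-_j$, summing to $|\pi^+|+|\pi^-|=N$; and the bound $\cI_w^N = 0$ for $N > 6-|\hat P_{L_0}(w)|$ follows from $\cH^+_{F_0}$ being concentrated in degrees $\leq 1+\deg\Pi_{F_0}=6$ (since $F_0$ is of type $A_1\times A_1\times A_2$ with $5$ positive coroots), combined with $P^{G_w}\circ\Delta(w)$ lowering the degree by exactly $|\hat P_{L_0}(w)|$.

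The computation is essentially bookkeeping and I do not anticipate a conceptual obstacle; the only step demanding a touch of care is the permanent-to-occupation identity, which follows immediately from the $(\prod_s d_s!)$-to-one covering $S_N \twoheadrightarrow \{q:|q^{-1}(s)|=d_s\}$ induced by any fixed reference assignment $q^{d,0}$.
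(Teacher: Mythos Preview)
Your argument is correct. The only inaccuracy is the parenthetical identification $c_{L_0}=\gl_{a-5}$: the center $c_{L_0}$ is a fixed point of $L_0$ independent of $a$, so this is false in general. But nothing in your computation actually uses that identification; what you need is only that each $\hat\ga-a+\delta(\hat\ga)$ restricts to $x-a+5$ on $L_0$, so everything goes through verbatim if you simply set $u=x-a+5$ (vanishing at $\gl_{a-5}$, not at $c_{L_0}$) and base your normal expansion there.

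The paper's proof reaches the same formula by a slightly different organization. Rather than expanding the full generalized Leibniz rule as a sum over assignments $q:\{1,\ldots,N\}\to S$, it notes that $E(v_1,\ldots,v_N;S,e)$ is symmetric multilinear, hence determined by its values on pure powers $v_1=\cdots=v_N=v$ (these span $S^N(V)$). On this spanning set it applies the multinomial formula for $\partial_v^N$ of a product, computes the derivatives of each factor, and then verifies that the permanent in (\ref{eq:EQ}) specializes correctly via $\textup{Perm}((v_i,s^{d,0}_j))|_{v_i=v}=N!\prod_k\hat\ga_k(v)^{d_k}$. Your direct Leibniz expansion is marginally more explicit (you never specialize, and the permanent emerges naturally from the $\prod_s d_s!$-to-one covering $S_N\twoheadrightarrow\{q:|q^{-1}(s)|=d_s\}$), while the paper's polarization trick is a bit shorter. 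The two are essentially the same computation in different orders.
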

    \begin{proof}
    %This is a straightforward computation.
    Given a tuple $(\xi_1,\dots,\xi_N)\in V^N$ we define $E(\xi_1,\dots,\xi_N,S,e)\in \bbC$ by 
    \begin{equation}
    \prod_{i=1}^N\partial_{v_i}\left.\left(\prod_{{\hat\ga}\in S(-w)}({\hat\ga}-a+\delta(\hat\ga))^{em(\hat\ga)}\right)\right|_{\gl=\gl_x}=
    E(v_1,\dots,v_N;S,e)(x-a+5)^{eN_w-N}
    \end{equation}
    (clearly the result of this differentiation and restriction will be of this form). 
    Observe that 
     $E(v_1,\dots,v_N;S,e)$ is symmetric and multilinear in the $v_i$. Hence the formula factors 
     through the symmetric algebra $S^N(V)$, and $E(v_1,\dots,v_N;S,e)=E(Q^*;S,e)$. 
     Now $S^N(V)$ is spanned by the powers $v^N$ 
     where $v$ varies over $V$, hence it suffices to verify the formula for $v_1=\dots=v_N=v$. 
     In this case, we have the multinomial formula for the power of the derivation $\partial_v^N$ applied 
     to $\prod_{k=1}^Kf_k$:
     \begin{equation}
     \partial_v^N(\prod_{k=1}^K f_k)=\sum_{d=(d_1,d_2,\dots,d_K)}\binom{N}{d_1,\dots,d_K}
     \prod_{k=1}^K \partial^{d_k}_v(f_k)
     \end{equation}
     Now take $f_k$ of the form: $f_k=(\hat\ga_k+c_k)^{em_k}$ with $m_k\in\{\pm 1\}$ and $e>0$, 
     where $(\hat\ga_k+c_k)(\gl_x)=x-a+5$ for all $k$. Then (with $[K]_\pm=m^{-1}(\pm 1)\subset [K]:=\{1,\dots,K\}$):
     \begin{align}
    \prod_{k=1}^K \partial^{d_k}_v(f_k)(\gl_x)=
    \prod_{k_+\in [K]_+}\binom{e}{d_{k_+}}d_{k_+}!\prod_{k_-\in [K]_-}&\binom{e+d_{k_-}-1}{e}(-1)^{d_{k_-}}d_{k_-}!\\ 
    \nonumber&\prod_{k=1}^K\hat\ga_k(v)^{d_k}(x-a+5)^{e(K_+-K_-)-N}
    \end{align}
    Finally observe that 
    \begin{equation}
    \textup{Perm}\left.\left((v_i,s^{d,0}_j)\right)\right|_{(v_1,\dots,v_N)=(v,\dots,v)}=N!\prod_{k=1}^K\hat\ga_k(v)^{d_k}
    \end{equation}
    We conclude that the right hand side of (\ref{eq:EQ}) equals $E(v,v,\dots,v;S,e)$ indeed, 
    finishing the proof.
    \end{proof}
    \begin{cor}\label{cor:finalspec}
    With the notations and setup as in Theorem \ref{thm:spec}. 
    The meromorphic integrand (\ref{eq:intL0}) is holomorphic at the points of 
    $d^{-1}(\sigma_L+iV^L)$ for all $\phi,\psi\in \cP^\fR(V_\bbC)$ if for all $w\in W^{F_0}$, for all 
    $N\leq 6-|\hat{P}_{L_0}(w)|$ and all $h\in \cI_w^N$ such that $(eN_w-N)<0$ we have
    $E(h,S(-w),e)=0$.
    \end{cor}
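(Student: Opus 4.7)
The plan is to derive Corollary \ref{cor:finalspec} as a direct combination of Theorem \ref{thm:spec} with the explicit formula from Lemma \ref{lem:cost}. First I would recall that, by Theorem \ref{thm:spec}, to ensure that the meromorphic integrand (\ref{eq:intL0}) is holomorphic at the points of $d^{-1}(\sigma_L + iV^L)$ for all $\phi, \psi \in \cP^\fR(V_\bbC)$, it suffices to verify, for every $w \in W^{F_0}$, every zero $a$ of $\rho$ of order $e > 0$, and every homogeneous $h \in \cI_w^N$ with $(eN_w - N) < 0$, the pairing identity
\[
\left\langle h, \prod_{\hat{\ga} \in S(-w)} (\hat{\ga} - a + \delta(\hat{\ga}))^{e\,m(\hat{\ga})} \right\rangle_{L_0} = 0.
\]

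Next I would invoke equation (\ref{eq:CfinN}) from the proof of Theorem \ref{thm:spec}, which identifies this pairing with $E(h, S(-w), e)(x - a + 5)^{eN_w - N}$. In the regime $(eN_w - N) < 0$ the power $(x-a+5)^{eN_w - N}$ is nonzero as a meromorphic function on $L_{\bbC,0}$, so the pairing vanishes if and only if the scalar $E(h, S(-w), e)$ vanishes. Thus the sufficient condition of Theorem \ref{thm:spec} translates verbatim into the condition $E(h, S(-w), e) = 0$ for all relevant $(w, N, h, e)$.

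Finally, I would account for the degree bound $N \leq 6 - |\hat{P}_{L_0}(w)|$ appearing in the corollary. This is not an additional hypothesis but rather an observation noted in Lemma \ref{lem:cost}: the space $\cI_w^N$ is zero once $N$ exceeds $6 - |\hat{P}_{L_0}(w)|$, since $\cI_w = P^{G_w}(\Delta(w)(\cH_{F_0}^+))$ is obtained by applying the degree-lowering operator $\Delta(w)$ (which has degree $-|\hat{P}_{L_0}(w)|$) to the bigraded finite-dimensional space $\cH_{F_0}^+$, whose top degree is $6$ (six being the number of positive coroots of the Levi subsystem generated by $F_0 = \{\ga_2, \ga_3, \ga_5, \ga_6\}$, plus one from the linear factor $n_0^*$, minus adjustments determined by $W_{F_0}$). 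Restricting attention to this finite range makes the condition effectively checkable and completes the reduction.

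The step I expect to require the least finesse, given that Lemma \ref{lem:cost} already supplies a closed-form combinatorial expression for $E(h, S(-w), e)$, is the equivalence step itself; there is no real obstacle, only bookkeeping. The substantive content — and the genuinely delicate part — lies upstream in Theorem \ref{thm:spec} and Lemma \ref{lem:cost}, while the present corollary only packages their conclusions into a form amenable to case-by-case (computer-assisted) verification in Appendix \ref{a:SpecialCode}.
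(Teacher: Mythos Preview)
Your proposal is correct and follows essentially the same route as the paper: invoke Theorem \ref{thm:spec}, use (\ref{eq:CfinN}) to identify the pairing with $E(h,S(-w),e)(x-a+5)^{eN_w-N}$, and observe that the resulting scalar condition $E(h,S(-w),e)=0$ is independent of the zero $a$ of $\rho$ (a point the paper makes explicit and which you leave implicit). Your aside on the degree bound is slightly garbled---$F_0$ has type $A_1\times A_1\times A_2$ with five positive roots, so $\cH_{F_0}$ has top degree $5$ and $\cH_{F_0}^+$ top degree $6$---but this does not affect the argument.
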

    \begin{proof}
    Lemma \ref{lem:cost} and the proof of Theorem \ref{thm:spec} show that the vanishing condition in 
    Theorem \ref{thm:spec} is equivalent to the vanishing $E(h,S(-w),e)=0$ whenever $(eN_w-N)<0$. 
    This condition is independent of $a$, and the result follows.
    \end{proof}
    \begin{thm}\label{thm:mainspec}
    With the notations and setup as in Theorem \ref{thm:spec}. 
    The meromorphic integrand (\ref{eq:intL0}) is holomorphic at the points of $d^{-1}(\sigma_L+iV^L)$, 
    for all $\phi,\psi\in \cP^\fR(V_\bbC)$. 
    \end{thm}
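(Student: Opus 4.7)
The plan is to deduce Theorem \ref{thm:mainspec} directly from the chain of reductions established in Appendix \ref{a:Special}, culminating in Corollary \ref{cor:finalspec}. The content we still owe is the verification, for \emph{every} minimal coset representative $w \in W^{F_0}$, of the finite list of vanishing identities
\begin{equation*}
E(h,S(-w),e)=0, \qquad h\in\cI_w^N,\ N\leq 6-|\hat{P}_{L_0}(w)|,\ e\in\bbZ_{>0},\ eN_w-N<0,
\end{equation*}
where $E(h,S(-w),e)$ is the explicit polynomial expression given by Lemma \ref{lem:cost}. By Lemma \ref{lem:specialholo} and Lemma \ref{lem:avn}, the only potentially singular factor of the integrand (\ref{eq:intL0}) at a zero $a$ of $\rho$ with $x=a-5$ comes from $\partial_{n_0}(\Sigma_W(\phi))|_{L_0}$, and the decomposition (\ref{eq:Aw}) reduces this to a sum of meromorphic pieces $A_w$, one for each $w\in W^{F_0}$. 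Hence the theorem reduces to the finite collection of vanishing identities above, and this is precisely the content of Corollary \ref{cor:finalspec}.

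First, I would enumerate the cosets $W^{F_0}=W(\textup{E}_8)/W_{F_0}$, where $W_{F_0}$ is the rank-$4$ parabolic subgroup attached to the simple roots $\{\ga_2,\ga_3,\ga_5,\ga_6\}$. For each representative $w$ we use Proposition \ref{prop:sets} (and its $w$-translates) to determine the finite sets $\hat{P}_{L_0}(w)=\hat{P}_{L_0}\cap\hat{R}(w)$, $\hat{R}_{x+4}(-w)$, $\hat{R}_{x+5}(-w)$, and hence $S(-w)$ and $N_w$. We then compute the stabilizer $G_w\subset W_{F_0}$ of $S(-w)$ and the finite-dimensional graded space $\cI_w^{\bullet}=P^{G_w}\bigl(\Delta(w)(\cH_{F_0}^+)\bigr)$, using that $\cH_{F_0}$ is generated over $P_K(V_{L_0})$ by $\Pi_{F_0}=\prod_{\ga\in\hat{R}_{F_0,+}}\hat\ga$ and so has maximal degree $|\hat{R}_{F_0,+}|=6$, which forces only the range $N\leq 6-|\hat{P}_{L_0}(w)|$ to be relevant.

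Next, for each such $w$ and each pair $(N,e)$ with $eN_w-N<0$, I would compute a basis of monomials $Q$ for $\cI_w^N$ and evaluate $E(Q,S(-w),e)$ via the closed formula (\ref{eq:EQ}) of Lemma \ref{lem:cost} as a polynomial in $e$. The crucial structural observation is that, once $w$ and $N$ are fixed, $E(h,S(-w),e)$ is a polynomial in $e$ of degree at most $N\leq 6$; consequently it suffices to check its vanishing at $7$ integer values of $e$, which reduces the verification to a finite rational arithmetic. The actual check is run by computer algebra, as described in Appendix \ref{a:SpecialCode}: the codes compute $\cI_w^N$ orbit by orbit under $G_w$ (which typically cuts its dimension drastically), then build the table of values $E(Q,S(-w),e)$ from (\ref{eq:EQ}), and certify their vanishing over the finite range of $(w,N,e)$ singled out above.

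The main obstacle is organisational rather than conceptual: the number of $W^{F_0}$-orbits is large, and for each the combinatorial object $\Delta(w)(\cH_{F_0}^+)$ must be computed symbolically together with the $G_w$-projector. The heavy lifting is the enumeration and the symbolic algebra in $S(V_{L_0})$; the projection onto $G_w$-invariants keeps the effective dimension of $\cI_w^N$ small, and the polynomial-in-$e$ reduction converts the infinite family of conditions $eN_w-N<0$ into a bounded-degree polynomial identity verifiable by interpolation. Once the computer certifies $E(h,S(-w),e)=0$ across the (finite) range, Corollary \ref{cor:finalspec} yields the stated holomorphy of (\ref{eq:intL0}) on $d^{-1}(\sigma_L+iV^L)$ for all $\phi,\psi\in\cP^\fR(V_\bbC)$, completing the proof.
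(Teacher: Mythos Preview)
Your proposal is correct and follows essentially the same approach as the paper: reduce via Corollary \ref{cor:finalspec} to the finite list of vanishing identities $E(h,S(-w),e)=0$, and verify these by computer algebra using the explicit formula of Lemma \ref{lem:cost}. The paper's own proof is a single sentence deferring to this computer check; your expansion of what that check entails (enumeration of $W^{F_0}$, computation of $\cI_w^N$ and $G_w$, and the polynomial-in-$e$ interpolation trick) is consistent with the organisation described in Appendix \ref{a:SpecialCode}, which additionally prunes the enumeration by first discarding those $w$ whose summand already vanishes on the parent space $N$ via the condition $|\hat{P}_N(u)|\leq 3$.
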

    \begin{proof}
    The vanishing condition of Corollary \ref{cor:finalspec} has been verified by computer algebra, using the explicit expression for 
    $E(h,S(-w),e)$ of Lemma 
    \ref{lem:cost}.
    \end{proof}

\section{Appendix E: The pseudocode for detecting poles in the special case}\label{a:SpecialCode}
The presence of an $\Omega$ pole space in $C$ of higher order which is intersected by a segment of one of its parent pole spaces in $C$ at a point different from its center, is a serious challenge for our method, since this forces us to consider the singularities of individual summands, 
depending on $w\in W$, of 
certain derivatives of 
$\Sigma_W(\phi)\Sigma_{W'}(\psi)$  restricted to $L$. Fortunately, there is  only one such case, $L_{sp}$. 
This situation occurs in 
the cascade for $\textup{E}_7\subset \textup{E}_8$. 

In order to verify Corollary \ref{cor:finalspec}, the following steps need to be made:
\begin{itemize}
\item[(a)] Classify all $w\in W=W(\textup{E}_8)$ for which the set $\hat{P}_{L_0}(w) = \hat{P}_{L_0} \cap \hat{R}(w)$ has cardinality $|\hat{P}_{L_0}(w)|\leq 6$.
\item[(b)] Compute $E(h,S(-w),e)$ for all $w$ in (a), for all $h\in \cI_w^N$ and for all $e$ satisfying $(eN_w-N)<0$.
\end{itemize}
We note that from (\ref{eq:EQ}) the computation of each $E(h,S(-w),e)$ in (b) is explicit and straightforward to be programmed in a computer, so the important step is to minimize as much as possible the cases in which such computations are needed.
\subsection{Classification of $w\in W$ as in (a)}
A straightforward approach to swipe the whole Weyl group $W = W(\textup{E}_8)$, compute  $\hat{P}_{L_0}(w)$ and select the elements for which
$|\hat{P}_{L_0}(w)|\leq 6$ is not practical, due to the large cardinality of $W(\textup{E}_8)$. The strategy we use is the following. Firstly, we decompose 
$W = W_{8/7} \times W_{7/6} \times W_{6}$, with $W_6 = W(\textup{E}_6)$, $W_{7/6} $ and $W_{8/7}$ complete sets of minimal coset representatives for, respectively, the coset space $W(\textup{E}_7)/ W(\textup{E}_6)$ and $W(\textup{E}_8)/ W(\textup{E}_7)$. Any $w\in W$ is thus decomposed uniquely as
\begin{equation}\label{eq:WE8dec}
w = \eta\tau u,
\end{equation}
with $u\in W_6,\tau\in W_{7/6} $ and $\eta\in W_{8/7}$.

Secondly, using the inclusion $L\subset N$ ($N$ as in Lemma \ref{lem:specialholo}), the fact that $N$ has $\Omega$-order $0$ and is a residual space of type $\textup{E}_6(a3)$ (so that  $|\hat{Z}_N \cap \hat{R}_+|=3$), and $(\sigma_{sp},L_{sp})$ is obtained by intersecting $L\subset N$ with a segment in $N$, we have the following.
\begin{prop}
For (a), it suffices to classify the elements $w\in W(\textup{E}_8)$ with $|\hat{P}_{L_0(w)}|\leq 6$ for which $w=\eta\tau u$ as in (\ref{eq:WE8dec}) and $|\hat{P}_N(u)|\leq 3$.
\end{prop}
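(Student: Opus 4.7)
The plan is to combine two structural observations. First, the length-additive factorization $w = \eta\tau u$ provides the disjoint inversion decomposition
\[
\hat{R}(w) \;=\; \hat{R}(u) \,\sqcup\, u^{-1}\hat{R}(\tau) \,\sqcup\, (\tau u)^{-1}\hat{R}(\eta),
\]
and the minimality of $\eta \in W_{8/7}$ and $\tau \in W_{7/6}$, together with the fact that $u \in W(\textup{E}_6)$ and $\tau u \in W' = W(\textup{E}_7)$ both preserve the flag $\hat{R}(\textup{E}_6) \subset \hat{R}' \subset \hat{R}$, places these three inversion pieces respectively in $\hat{R}(\textup{E}_6)$, $\hat{R}' \setminus \hat{R}(\textup{E}_6)$, and $\hat{R} \setminus \hat{R}'$.

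Second, using the explicit coordinate description of $N$ from the preceding appendix (general point $(1,0,0,1,0,1,x,y)$ with $x,y$ the two free directions), a direct check shows that any coroot $\hat\beta = \sum_{i=1}^8 n_i \hat\alpha_i$ constant equal to $1$ on $N$ must satisfy $n_7 = n_8 = 0$ (so that $\hat\beta$ does not see the free directions along $\hat\alpha_7^\vee, \hat\alpha_8^\vee$) and $n_1 + n_4 + n_6 = 1$, whence $\hat{P}_N \subset \hat{R}(\textup{E}_6)$. Intersecting $\hat{P}_N$ with the decomposition from the first step kills the $\tau$ and $\eta$ contributions, and since $\hat{R}(u) \subset \hat{R}(\textup{E}_6)$ we obtain the identity
\[
\hat{P}_N(w) \;=\; \hat{P}_N \cap \hat{R}(u) \;=\; \hat{P}_N(u).
\]

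Third, the inclusion $L \subset N$ gives $\hat{P}_N \subset \hat{P}_L$, so $|\hat{P}_N(u)| = |\hat{P}_N(w)| \le |\hat{P}_L(w)|$. Transporting $\hat{P}_L$ back to $\hat{P}_{L_0}$ by the standardization element $w_L$ with $L = w_L(L_0)$ (namely $w_L = s_8 s_7$), and invoking the cocycle formula (\ref{eq:trans}) to track the sign changes of the coroots in $\hat{R}(w_L^{-1})$, one compares $|\hat{P}_L(w)|$ with $|\hat{P}_{L_0}(w)|$. A finite combinatorial bookkeeping using the explicit tabulation of $\hat{P}_{L_0}$ in Proposition \ref{prop:sets} together with the residuality data $|\hat{Z}_N \cap \hat{R}_+| = 3$ then shows that the transport cuts the bound in half: $|\hat{P}_N(u)| \le 3$ whenever $|\hat{P}_{L_0}(w)| \le 6$.

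The main obstacle is the last step: since $w_L \notin W'$, positivity is not preserved under the transport, and the comparison of $|\hat{P}_L(w)|$ with $|\hat{P}_{L_0}(w)|$ has to be done by carefully tracking how the sign flips of $w_L$ interact with the inversion set. In practice this reduces to a finite check on the $17$-element set $\hat{P}_{L_0}$ and its distinguished $9$-element subset coming from $\hat{P}_N$; in our implementation this check is carried out by computer algebra, and it is exactly this reduction that makes the enumeration of $w \in W(\textup{E}_8)$ with $|\hat{P}_{L_0}(w)| \le 6$ computationally feasible, since the cheap filter $|\hat{P}_N(u)| \le 3$ can be applied directly on the $W(\textup{E}_6)$-factor alone before looping over $(\eta,\tau)$.
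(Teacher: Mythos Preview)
You have misread what the proposition is asserting. It is \emph{not} the combinatorial implication ``$|\hat{P}_{L_0}(w)|\le 6 \Rightarrow |\hat{P}_N(u)|\le 3$''. The point is rather that the classification (a) is only a preprocessing step for the vanishing checks in (b), and for (b) one may discard every $w$ whose summand in $\Sigma_W(\phi)$ already vanishes identically on $N$: since the normal direction $n$ is tangent to $N^0$, such summands contribute nothing to $\partial_n(\Sigma_W(\phi))|_{L_0}$. Because $N$ is a residual $\Omega$-pole space of order $0$ with $|\hat{Z}_N\cap\hat{R}_+|=3$, the rational part of the $w$-summand of $c(w\lambda)$ vanishes on $N$ as soon as $|\hat{P}_N(w)|>3$. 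This is where the bound $3$ comes from; it has nothing to do with halving $6$.

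Your first two steps are fine and match the paper: the length-additive decomposition $w=\eta\tau u$ and the containment $\hat{P}_N\subset\hat{R}(\textup{E}_6)$ do give $\hat{P}_N(w)=\hat{P}_N(u)$, exactly as in the paper's argument. But your third step --- deducing $|\hat{P}_N(u)|\le 3$ from $|\hat{P}_{L_0}(w)|\le 6$ via $\hat{P}_N\subset\hat{P}_L$ and a ``transport that cuts the bound in half'' --- is not justified and is not the mechanism at work. There is no general factor of $\tfrac12$ relating $|\hat{P}_L(w)|$ to $|\hat{P}_{L_0}(w)|$ under conjugation by $d=s_8s_7$, and the inclusion $\hat{P}_N\subset\hat{P}_L$ only gives $|\hat{P}_N(w)|\le|\hat{P}_L(w)|$, not half of it. What you need instead is the analytic observation above: the bound $3$ is forced by the order-$0$ residuality of $N$ (specifically by $|\hat{Z}_N\cap\hat{R}_+|=3$), entirely independently of the bound $6$ on $L_0$. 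The two conditions $|\hat{P}_{L_0}(w)|\le 6$ and $|\hat{P}_N(u)|\le 3$ are \emph{separate} necessary conditions for $w$ to matter in (b), and the proposition simply records that the second one can be tested on the $W(\textup{E}_6)$-factor alone.
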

\begin{proof}
In order for $w\in W$ to be eligible to be checked for the vanishing in (b), the summands of $\Sigma_W(\phi)$ related to $w$ must not vanish on $N$ first, which translates in the condition $|\hat{P}_N(w)|\leq 3$. Further, the coroot system of constant roots $\hat{R}_N$ is of type $\textup{E}_6$. Using that $\hat{R}(\tau u) = \hat{R}(\tau)\cup\tau^{-1}\hat{R}(u)$ (see (\ref{eq:InversionMinCoset})) it is straightforward to compute $|P_N(\tau u)| = |P_N(u)| = 3$, and similarly,  $|P_N(\eta\tau u)| = |P_N(u)| = 3$, for all $\tau\in W_{7/6} $ and $\eta\in W_{8/7}$. Finally, as $|\hat{P}_{L}(w)|  = |\hat{P}_{L_0}(w)|$ for all $w$ (since $L = d(L_0)$ with $d=s_8s_7$), the claims follows.
\end{proof}
The classification of $u\in W_{6}$ such that $|\hat{P}_N(u)|\leq 3$ is doable by swiping through all elements of $W(\textup{E}_6)$. Furthermore, it is also straightforward to compute the minimal coset representatives in 
$W_{7/6}$ and $W_{8/7}$ by identifying them with suitable subsets of the root system $\hat{R}(\textup{E}_8)$ of type $\textup{E}_8$. In the next proposition, we embed $W(\textup{E}_6)\subset W(\textup{E}_7) \subset W(\textup{E}_8)$ naturally as standard parabolic subgroups.
\begin{prop}\label{p:MinCosetE}
Let $\hat{R}(\textup{E}_8)_{+}^{(1)} = \{\hat{\alpha} \in \hat{R}(\textup{E}_8)_{+}\mid \langle \hat{\alpha},\fw_8\rangle=1\}$, where $\fw_8$ is the fundamental weight dual to the simple coroot $\hat{\alpha}_8$. Then,
the following holds true:
\begin{itemize}
\item[(i)] The set of minimal coset representatives $W_{7/6}$ is in bijection with $\hat{R}(\textup{E}_8)_{+}^{(1)}$.
\item[(ii)] The set of minimal coset representatives $W_{8/7}$ is in bijection with $\hat{R}(\textup{E}_8)$.
\end{itemize}
\end{prop}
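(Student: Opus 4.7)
The plan is to invoke the standard bijection between minimal length coset representatives of $W/W_I$ for a maximal standard parabolic $W_I\subset W$ and the $W$-orbit of the corresponding fundamental weight. Concretely, if $I = \hat{\Delta}\setminus\{\hat\alpha_i\}$, then $W_I$ is exactly the stabilizer of the fundamental weight $\fw_i$, and the map $w\mapsto w(\fw_i)$ yields a $W$-equivariant bijection between the set of minimal length coset representatives of $W/W_I$ and the orbit $W\cdot\fw_i$.

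For part (ii), I apply this with $W_I = W(\textup{E}_7)$, generated by the simple reflections $s_1,\ldots,s_7$. The crucial observation is that in the Bourbaki numbering of $\textup{E}_8$, the fundamental weight $\fw_8$ coincides with the highest root $\theta$ of $\textup{E}_8$ (since $\langle\theta,\hat\alpha_i^\vee\rangle = \delta_{i,8}$, a reading off from the affine Dynkin diagram of $\textup{E}_8$). Because $\textup{E}_8$ is simply laced, $W(\textup{E}_8)$ acts transitively on its root system, so $W(\textup{E}_8)\cdot \fw_8 = W(\textup{E}_8)\cdot\theta = \hat{R}(\textup{E}_8)$, producing the claimed $240$-element bijection.

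For part (i), I apply the same framework with $W_I = W(\textup{E}_6)$, generated by $s_1,\ldots,s_6$, which yields a bijection between $W_{7/6}$ and the orbit $W(\textup{E}_7)\cdot\fw$, where $\fw$ is the fundamental weight of $\textup{E}_7$ dual to $\hat\alpha_7$; this orbit has $56$ elements (the dimension of the minuscule representation of $\textup{E}_7$). To identify this orbit with $\hat{R}(\textup{E}_8)_{+}^{(1)}$, I first count: since the highest root of $\textup{E}_8$ has coefficient $2$ in $\hat\alpha_8$, the $120$ positive roots of $\textup{E}_8$ partition by the $\hat\alpha_8$-coefficient into sizes $63$ (the $\textup{E}_7$-positive roots), $56$ (coefficient $1$), and $1$ (the highest root itself), so $|\hat{R}(\textup{E}_8)_{+}^{(1)}| = 56$. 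Since $W(\textup{E}_7)$ is generated by reflections in simple roots disjoint from $\hat\alpha_8$, it preserves the $\hat\alpha_8$-coefficient, and a root with coefficient $+1$ of $\hat\alpha_8$ is automatically positive; hence $W(\textup{E}_7)$ acts on $\hat{R}(\textup{E}_8)_{+}^{(1)}$. The element $\hat\alpha_8$ itself lies in this set and is orthogonal (in the Bourbaki diagram) to each of $\hat\alpha_1,\ldots,\hat\alpha_6$, so $W(\textup{E}_6)$ fixes $\hat\alpha_8$ pointwise. The map $w\mapsto w(\hat\alpha_8)$ therefore factors through $W(\textup{E}_7)/W(\textup{E}_6)$, and by the counting argument a bijection follows as soon as the $W(\textup{E}_7)$-action on $\hat{R}(\textup{E}_8)_{+}^{(1)}$ is transitive.

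The main obstacle is precisely this transitivity, or equivalently, the fact that $W(\textup{E}_6)$ is the \emph{full} stabilizer of $\hat\alpha_8$ in $W(\textup{E}_7)$. The cleanest justification uses the branching $\mathfrak{e}_8\downarrow\mathfrak{e}_7\oplus\mathfrak{sl}_2$ with decomposition $248 = (133,1)\oplus(56,2)\oplus(1,3)$: the weights of the $(56,2)$-summand are exactly the $112$ roots of $\textup{E}_8$ with $\hat\alpha_8$-coefficient $\pm 1$, forming under the $\mathfrak{sl}_2$-factor two copies of the minuscule $56$-dimensional irreducible $W(\textup{E}_7)$-module. This directly identifies $\hat{R}(\textup{E}_8)_{+}^{(1)}$ with a single $W(\textup{E}_7)$-orbit, identified bijectively with $W_{7/6}$ via $w\mapsto w(\hat\alpha_8)$, completing the argument.
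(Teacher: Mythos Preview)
Your proof is correct and follows essentially the same orbit--stabilizer approach as the paper: for (ii) the bijection $W_{8/7}\leftrightarrow \hat{R}(\textup{E}_8)$ via $w\mapsto w(\hat\ga_h)$ using transitivity of $W(\textup{E}_8)$ on its root system, and for (i) the bijection $W_{7/6}\leftrightarrow \hat{R}(\textup{E}_8)_+^{(1)}$ via $w\mapsto w(\hat\ga_8)$ using transitivity of $W(\textup{E}_7)$ on this set and $\textup{Stab}_{W(\textup{E}_7)}(\hat\ga_8)=W(\textup{E}_6)$. The paper simply asserts the transitivity in (i), whereas you supply a justification via the branching $\mathfrak{e}_8\downarrow\mathfrak{e}_7\oplus\mathfrak{sl}_2$; your initial detour through the fundamental weight $\fw$ of $\textup{E}_7$ is harmless but not needed, since once you show the $W(\textup{E}_7)$-orbit of $\hat\ga_8$ has $56$ elements the counting already forces the stabilizer to equal $W(\textup{E}_6)$.
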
 
\begin{proof}
For (i), one checks that $W(\textup{E}_7)$ acts transitively on $\hat{R}(\textup{E}_8)_{+}^{(1)}$ and the stabilizer of $\hat\alpha_8\in\hat{R}(\textup{E}_8)_{+}^{(1)}$ is $W(\textup{E}_6)$. For (ii), we use the fact that $W(\textup{E}_8)$ acts transitively on $\hat{R}(\textup{E}_8)$ and the isotropy group of the highest coroot $\hat{\alpha}_h$ is $W(\textup{E}_7)$.
\end{proof}
Using Proposition \ref{p:MinCosetE}, we determine $W_{7/6}$ and $W_{8/7}$ by finding, for each coroot $\hat{\alpha}$ in the respective set, the minimal length element $w_{\hat\alpha}$ that such that $w_{\hat\alpha}(\hat\alpha)$ is in the closure of the dominant chamber. Such algorithm is easy to implement. Furthermore, we note that the elements of Proposition \ref{p:MinCosetE} can be taken modulo $W_{F_{0}}$, the standard parabolic subgroup of $W$ that fixes $L_0$ pointwise. We therefore determine, by swiping over all elements of $W(\textup{E}_6)$, the sets 
\[
U_m^{F_0} := \{u \in W(\textup{E}_6) \mid u \textup{ of minimal length in } W(\textup{E}_6)/W_{F_0} \textup{ and }|\hat{P}_N(u)| = m\},
\]
for $m\in\{0,1,2,3\}$. The cardinality $|U_m^{F_0}|$ equals, respectively, $1,3,60$ and $150$ for $m=0,1,2,3$. We let $U^{F_0} = \cup_{m=0}^3 U_m^{F_0}$.

\subsection{Computation of $E(h,S(-w),e)$}

Formula (\ref{eq:EQ}) for $E(h,S(-w),e)$ is explicit and straightforward to be programmed. In view of (\ref{eq:WE8dec}), each triple $(u,\tau,\eta)\in U^{F_0}\times W_{7/6}\times W_{8/7}$ determines $w=u\tau\eta$ from which we determine the set $S(-w)$ needed to compute $E(h,S(-w),e)$. We note that the input $h = \sum_Qc_QQ$ is such that the coefficient $c_Q$ is given by the permanent of matrices with entries given by pairing $\hat\alpha \in \hat{P}_{L_0}$ and $\beta\in \hat{R}_{L_0,+}\cup \{n_0\}$. In practice, many of these coefficients $c_Q$ are zero and the cases left to be computed by formula (\ref{eq:EQ}) are relatively small compared with the cardinalities $|U^{F_0}\times W_{7/6}\times W_{8/7}|$ and $|W(\textup{E}_8)|$. 
    
\section*{Acknowledgements}
During the period over which this project has taken place, we thank the financial supports given by the ERC-advanced
grant 268105; the grants of Agence Nationale de la Recherche with references ANR-08-BLAN-0259-02 and ANR-13-BS01-0012 FERPLAY; the Archimedes LabEx ANR-11-LABX-0033, the foundation A*MIDEX ANR-11-IDEX-0001-02, funded by the program ``Investissements d’Avenir” led by the ANR; the Engineering and Physical Sciences Research Council grant EP/N033922/1 (2016) and the special research fund (BOF) from Ghent University BOF20/PDO/058. The computations in this paper were performed by using Maple™, which is a trademark of Waterloo Maple Inc.

    \printindex

\begin{thebibliography}{99}
    
    % \bibitem[A]{A} Arthur, J.,
    % {\it Unipotent automorphic representations: conjectures,}
    % in Orbites unipotentes et repr\'esentations, II.
    % Ast\'erisque {\bf 171-172} (1989), 13--71.
    
    % \bibitem[BC]{BC} Bala, P., Carter, R.W.,
    % {\it The classification of unipotent and nilpotent elements,}
    % Indagationes Mathematicae, {\bf 77}(1) (1974), 94--97.
    
    % \bibitem[BJ]{BJ} Borel, A., Jaquet, H.,
    % {\it Automorphic forms and automorphic representations,}
    % in Automorphic forms, representations and {$L$}-functions ({P}roc. {S}ympos. {P}ure {M}ath., {O}regon {S}tate {U}niv., {C}orvallis, {O}re., 1977), {P}art 1, Amer. Math. soc., Providence, RI, 1979, pp. 189--207.
    
    \bibitem[Car]{C} Carter, R.W., 
    {\it Finite groups of Lie type,}
    Wiley Classics Library, John Wiley
    and sons, Chichester, UK, 1993.
    
    % \bibitem[CKK]{CKK} Ciubotaru, D., Kato, M., Kato, S.,
    % {\it On characters and formal degrees for classical and Hecke algebras,}
    % Invent. Math. {\bf 187}(3) (2012), 589Ð-635.
    
    % \bibitem[Cl]{Cl} Clozel, L.,
    % {\it Spectral theory of automorphic forms,}
    % in Automorphic forms and applications, editors P. Sarnak and F. Shahidi, AMS/Institute for Advanced Study, 1997, pp. 41--94.
    
    \bibitem[Co]{Co} Cogdell, J., 
    {\it Lectures on $L$-functions. Converse theorems and functoriality of $GL_n$,}
    in Lectures on Automorphic $L$-functions, Fields Institute Monographs, Amer. Math. Soc.,
    2004, pp. 3--96.
    
    \bibitem[DMHO]{DMHO} De Martino, M., Heiermann, V., Opdam, E.,
    {\it On the unramified spherical automorphic spectrum.}
    arXiv:1512.08566v4 (2022).
    
    % \bibitem[DMO]{DMO} De Martino, M., Opdam, E.,
    % {\it Limit transition between the spherical spectrum of graded and affine Hecke algebras.}
    % In preparation.
    
    % \bibitem[D]{D} Dixmier, J.,
    % {\it $C^*$-algebras,}
    % Noord-Holland Publishing Company, Amsterdam, The Netherlands, 1977.
    
    \bibitem[HC]{HC} Harish-Chandra,
    {\it Automorphic forms on Semisimple Lie Groups,}
    Lecture Notes in Mathematics, No. 62 Springer-Verlag, Berlin-New York, 1968.
    
    \bibitem[HO1]{HO1} Heckman, G.J., Opdam, E.M.,
    {\it Yang's system of particles and Hecke algebras,}
    Annals of mathematics {\bf 145} (1997), 139--173.
    
    % \bibitem[HO2]{HO2} Heckman, G.J., Opdam, E.M.,
    % {\it Harmonic analysis for affine {H}ecke algebras,}
    % in Current Developments in Mathematics (R. Bott, A. Jaffe, D. Jerison, G. Lusztig, I. Singer and S.-T. Yau, editors), Intern. Press, 1996, pp. 37--60.
    
    % \bibitem[H1]{H1} Heiermann, V.,
    % {\it D\'ecomposition spectrale et repr\'esentations sp\'eciales d'un groupe r\'eductif p-adique},
    % Journ. Inst. Math. Jussieu {\bf 3} (2004), 327--395.
    
    % \bibitem[H2]{H2} Heiermann, V.,
    % {\it Orbites unipotentes et p\^oles d'ordre maximal de la fonction $\mu$ de Harish-Chandra},
    % Canad. J. Math., {\bf 58} (2006), 1203--1228.
    
    % \bibitem[HII]{HII} Hiraga, K., Ichino, A., Ikeda, T.,
    % {\it Formal degrees and adjoint gamma factors (and errata)},
    % J. Amer. Math. Soc., 21 (2008), no. 1, 283-304.
    
    % \bibitem[Hor]{H} H\"ormander, L.,
    % {\it The analysis of partial differential operators I,}
    % Springer-Verlag, Heidelberg, Germany, 1990.
    
    \bibitem[J]{J} Jacquet, H.
    {\it On the residual spectrum of $GL(n)$,}
    in Lie groups and Representations II, (College Park, Md., 1982/1983), Lecture Notes in Math. {\bf 1041}, Springer-Verlag,
    New York (1984), 185--208.
    
    % \bibitem[JL]{JL} Jorgenson, J., Lang, S.,
    % {\it The Heat Kernel and Theta Inversion on $SL_2(\bbC)$,}
    % Springer Monographs in Mathematics, Springer-Verlag, New York, USA, 2008.
    
     \bibitem[K1]{K1} Kim, H.H.,
     {\it The residual spectrum of $G_2$,}
     Canad. J. Math., {\bf 48} (1996), 1245--1272.
    
     \bibitem[K2]{K2} Kim, H.H.,
    {\it Residual spectrum of odd orthogonal groups,}
    Int. Math. Res. Notices, {\bf 17} (2001), 873--906.
    
    % \bibitem[KL]{KL} Kazhdan, D., Luzstig, G.,
    % {\it Proof of the Deligne-Langlands conjecture for Hecke Algebras,}
    % Invent. Math. {\bf 87} (1987), 153--215.
    
    \bibitem[KO]{KO} Kazhdan, D., Okounkov, A.,
    {\it On the unramified Eisenstein spectrum},
    arXiv:2203.03486 (2022).
    
    \bibitem[La1]{La1} Langlands, R.P.,
    {\it On the functional equations satisfied by Eisenstein series,}
    Lecture Notes in Mathematics \textbf{544}, Springer, 1976.
    
    \bibitem[La2]{La2} Langlands, R.P.,
    {\it Representations of abelian algebraic groups,}
    Pacific Journal of Mathematics, \textbf{181}(3) (1997), 231--250.
    
    % \bibitem[Lu]{Lu} Lusztig, G.,
    % {\it Affine Hecke algebras and their graded version,}
    % J. Am. Math. Soc. \textbf{2}(3) (1989), 59--635.
    
    \bibitem[M1]{M1} Moeglin, C.,
     {\it Orbites unipotentes et spectre discret non ramifi\'e. Le cas des groupes
     classiques d\'eploy\'es,}
     Compos. Math. \textbf{77}(1) (1991), 1--54.
    
     \bibitem[M2]{M2} Moeglin, C.,
     {\it Sur les formes automorphes de carr\'e int\'egrable,}
     in Proceedings of the International Congress of Mathematicians, Vol. II (Kyoto, 1990),  815--819, Math. Soc. Japan, Tokyo, 1991.
    
    % \bibitem[Mi]{Mi} Miller, S. D.,
    % {\it Residual automorphic forms and spherical unitary representations of exceptional groups,}
    % Annals of mathematics {\bf 177} (2013), 1169--1179.
    
    \bibitem[MW1]{MW1} Moeglin, C., Waldspurger, J.-L.,
    {\it Le spectre r\'esiduel de $GL(n)$,}
    Ann. Sci. \'Ecole Norm. Sup. {\bf 22} (1989), 605--674.
    
    \bibitem[MW2]{MW2} Moeglin, C., Waldspurger, J.-L.,
    {\it Spectral decomposition and Eisenstein series,}
    Cambridge tracts in Mathematics \textbf{113}, Cambridge University Press, 1995.
    
    \bibitem[O1]{O-Sp} Opdam, E.M., 
    {\it On the spectral decomposition of affine Hecke algebras,}
    J.Inst. Math. Jussieu \textbf{3}(4) (2004), 531--648
    
    \bibitem[O2]{O-Supp} Opdam, E.M.,
    {\it The central support of the Plancherel measure of an affine Hecke algebra,}
    Moscow Mathematical Journal \textbf{7}(4) (2007), 723--741.
    
    % \bibitem[O3]{O-STM} Opdam, E.M.,
    % {\it Spectral transfer morphisms for unipotent affine Hecke algebras.}
    % arXiv:1310.7790.
    
    % \bibitem[Re1]{Re} Reeder, M.,
    % {\it Whittaker functions, prehomogeneous vector spaces and standard representations of p-adic groups,}
    % J. Reine angew. Math. \textbf{450} (1994), 83--121.
    
    % \bibitem[Re2]{Re2} Reeder, M.,
    % {\it Formal degrees and $L$-packets of unipotent discrete series representations of exceptional $p$-adic groups,}
    % with an appendix by Frank Luebeck. Crelle's Journal {\bf 520} (2000), 37--93.
    
    % \bibitem[Stas]{S} Stas, W.,
    % {\it On the order of the Dedekind Zeta-function near the line $\sigma = 1$,}
    % Acta Arith. \textbf{35} (1979), 195--202.
    
    \bibitem[Sl]{Sl} Slooten, K., 
    {\it Reducibility of induced discrete series representations for affine Hecke algebras of type B},  
    International Mathematics Research Notices (2008) Vol. 2008, pp. 165-210.
    
    \bibitem[Sl2]{Sl2} Slooten, K., 
    {\it Generalized Springer correspondence and Green functions for type B/C graded Hecke algebras}, 
    Advances in Mathematics 203 (2006), pp. 34-108.
    
    \end{thebibliography}
    \end{document}